\definecolor{lightblue}{rgb}{0.22,0.45,0.70}
\newcommand\curl{\mathop{\mathbf{curl}}\nolimits}
\newcommand{\vdiv}{\operatorname*{div}}
\newcommand{\bu}{\boldsymbol{u}}
\newcommand{\bv}{\boldsymbol{v}}
\newcommand{\vtheta}{\vec{\boldsymbol{\theta}}}
\newcommand{\vomega}{\vec{\boldsymbol{\omega}}}
\newcommand\bx{\boldsymbol{x}}
\newcommand\bL{\mathbf{L}}
\newcommand\bH{\mathbf{H}}
\newcommand\bW{\mathbf{W}}
\newcommand\bzeta{\boldsymbol{\zeta}}
\newcommand\bomega{\boldsymbol{\omega}}
\newcommand\btheta{\boldsymbol{\theta}}
\newcommand\bnabla{\boldsymbol{\nabla}}
\newcommand\ff{\boldsymbol{f}}
\renewcommand\gg{\boldsymbol{g}}
\newcommand\nn{\boldsymbol{n}}
\newcommand\cero{\boldsymbol{0}}
\newcommand{\bX}{\mathbf{X}}
\newcommand{\bV}{\mathbf{V}}
\newcommand{\rQ}{\mathrm{Q}}
\newcommand{\rZ}{\mathrm{Z}}
\newcommand{\rL}{\mathrm{L}}
\newcommand{\rH}{\mathrm{H}}
\newcommand{\RR}{\mathbb{R}}
\newcommand\cM{\mathcal{M}}
\newcommand\bbP{\mathbb{P}}
\newcommand\cT{\mathcal{T}}
\newcommand{\estE}{\Theta_{K}}
\newcommand{\estP}{\Psi_{K}}
\newcommand{\estint}{\Lambda_{e}}
\newcommand{\UpsilonE}{\widetilde{\Upsilon}_{\!K}}
\newcommand{\UpsilonP}{\widehat{\Upsilon}_{\!K}}
\newcommand{\bfitn}{\nn}
\numberwithin{equation}{section}
\numberwithin{table}{section}
\numberwithin{figure}{section}
\DeclareSymbolFont{fouriersymbols}{FMS}{futm}{m}{n}
\DeclareSymbolFont{fourierlargesymbols}{FMX}{futm}{m}{n}
\DeclareMathDelimiter{\VERT}{\mathord}{fouriersymbols}{152}{fourierlargesymbols}{147}
\title{Robust  a posteriori error analysis for rotation-based \\ formulations of the elasticity/poroelasticity coupling\thanks{\textbf{Updated:} \today.\funding{
This work has been partially supported by DIUBB through projects 2020127 IF/R and 194608 GI/C; 
by ANID-Chile through projects FONDECYT 1211265 and
Centro de Modelamiento Matem\'atico (AFB170001) of the PIA Program:
Concurso Apoyo a Centros Cient\'ificos y Tecnol\'ogicos de Excelencia
con Financiamiento Basal; by the Sponsored Research \& Industrial Consultancy (SRIC), Indian Institute of Technology Roorkee, India through the faculty initiation
grant MTD/FIG/100878; by SERB MATRICS grant
MTR/2020/000303.; by the Monash Mathematics Research Fund S05802-3951284; and by  the Ministry of Science and Higher Education of the Russian Federation within the framework of state support for the creation and development of World-Class Research Centers ``Digital biodesign and personalised healthcare" No. 075-15-2020-926.}}}
\author{Ver\'onica Anaya \thanks{GIMNAP, Departamento de Matem\'atica,
Universidad del B\'io-B\'io, Concepci\'on, Chile
and CI$^2$MA, Universidad de Concepci\'on, Concepci\'on, Chile (\email{vanaya@ubiobio.cl}).} \and 
Arbaz Khan\thanks{Department of Mathematics, Indian Institute of Technology Roorkee, Roorkee 247667, India (\email{arbaz@ma.iitr.ac.in}).} \and 
David Mora\thanks{GIMNAP, Departamento de Matem\'atica,
Universidad del B\'io-B\'io, Concepci\'on, Chile
and CI$^2$MA, Universidad de Concepci\'on, Concepci\'on, Chile  (\email{dmora@ubiobio.cl}).} \and 
Ricardo Ruiz-Baier\thanks{School of Mathematical Sciences, Monash University, 9 Rainforest Walk, Melbourne 3800 VIC, Australia; and 
  Institute of Computer Science and Mathematical Modelling, Sechenov University, Moscow, Russian Federation; and Universidad Adventista de Chile, Casilla 7-D, Chill\'an, Chile (\email{ricardo.ruizbaier@monash.edu}).}}
\begin{document}\maketitle
\date{\today}
\begin{abstract}
We develop the \textit{a posteriori} error analysis of three mixed finite element formulations for rotation-based equations in elasticity, poroelasticity, and interfacial elasticity-poroelasticity. The discretisations use $H^1$-conforming finite elements of degree $k+1$ for displacement and fluid pressure, and discontinuous piecewise polynomials of degree $k$ for rotation vector, total pressure, and elastic pressure. Residual-based estimators are constructed, and upper and lower bounds (up to data oscillations) for all global estimators are rigorously derived. The methods are all robust with respect to the model parameters (in particular, the Lam\'e constants), they are valid in 2D and 3D, and also for arbitrary polynomial degree $k\geq 0$. The error behaviour predicted by the theoretical analysis is then demonstrated numerically on a set of computational examples including different geometries on which we perform adaptive mesh refinement guided by the \textit{a posteriori} error estimators.  
\end{abstract}

\begin{keywords} Mixed finite element method; linear poroelasticity; rotation-based formulations; interface problems; \emph{a priori} and \textit{a posteriori} error estimation. 
\end{keywords}
\begin{AMS} 65N30, 65N50, 	74F99, 	74A50, 	76S05.\end{AMS}

\section{Introduction}
The interaction between interstitial fluid flow and the deformation of the underlying porous structure gives rise to a variety of mechanisms of fluid-structure coupling. In the specific case of Biot poromechanics, this interaction occurs when the linearly elastic porous medium is saturated, and such problem is relevant to a large class of very diverse applications ranging from bone healing to, e.g., petroleum engineering, or sound isolation. We are also interested in the interface between elastic and poroelastic systems that are encountered in hydrocarbon production in deep subsurface reservoirs (a pay zone and the surrounding non-pay rock formation) \cite{girault_cmame20}, or in the study of tooth and periodontal ligament interactions \cite{anaya_sisc20}. 

Rotation-based formulations are found in applications to the modelling of non-polar media and helicoidal motion (see, e.g., \cite{alturi_ar95,ibra_rev95,merlini_ijss05} and the references therein). The resulting theory has a similarity with vorticity-based formulations for incompressible flow such as  \cite{anaya_crm19,bernardi_sinum06,dubois_cmame03,ern_cras96,glowinski_siam79}.

The schemes for elasticity and transmission elasticity-poroelasticity and their \emph{a priori} error analysis have  been studied in \cite{anaya_cmame19} and \cite{anaya_sisc20}, respectively. The solvability of the rotation-based poroelasticity has not been addressed yet, and for sake of completeness we outline its analysis in Appendix~\ref{sec:poroelast-wellp} and Appendix~\ref{sec:poroelast-FE}. The well-posedness of the continuous problem is studied by grouping the unknowns with compatible regularity and realising that the resulting problem is a mixed variational formulation that resembles the system introduced in \cite{lee_sisc17,oyarzua_sinum16} that describes the Biot equations in their displacement-pressure-total pressure formulation. Our analysis also discusses the limit case when the specific storage coefficient goes to zero, and we observe that the continuous dependence on data is robust with respect to the Lam\'e constants.

Our focus is on the design, analysis, and testing of \textit{a posteriori} error estimators for these three rotation-based models and discretisations.  
Robust \textit{a posteriori} error estimators for Biot poroelasticity include the weakly symmetric tensor reconstruction for total stress and Darcy flux from \cite{bertrand_camwa21,girault_cmame20}, two fully mixed methods from \cite{ahmed_cmame19} (requiring the solution of auxiliary local problems), the guaranteed equilibrated bounds for fixed-stress splitting scheme from \cite{kumar_camwa21} and for double-diffusive poroelasticity from \cite{nordbotten_cmam10}, the robust residual \textit{a posteriori} estimates for displacement-flux-pressure advanced in \cite{li_ima20}, and for displacement-elastic pressure-fluid pressure from \cite{khan_ima20}. We follow the latter approach and construct residual-type error estimators. All the terms that conform the \textit{a posteriori} error estimators are easily fully computable locally. The derivation of the upper bounds for each of the terms conforming the \textit{a posteriori} estimators for rotation-based elasticity and rotation-based poroelasticity, is based on exploiting scaling arguments and bubble function techniques. The results obtained for these two sub-problems are then combined with estimates for the additional terms that appear in the transmission problem. As mentioned above, in all cases 
a careful treatment of the model parameters is essential to maintain robustness with respect to the sensible Lam\'e constants of the elastic and poroelastic media (going to infinity when the Poisson ratio approaches 1/2).

The remainder of the manuscript has been structured in the following manner. Instead of grouping the continuous results together and the error bounds separately for all problems, we have divided the analysis by type of problem. Therefore, Section~\ref{sec:elast} defines the rotation-based elasticity problem, recalls the solvability and stability of the continuous problem and of the mixed finite element discretisation, and provides the construction and analysis of an \textit{a posteriori} error estimator. An analogous presentation is given in Section~\ref{sec:poroelast} for the rotation-based Biot equations. These results are then combined in Section~\ref{sec:interface} to treat the rotation-based transmission problem between a poroelastic and an elastic sub-domain.  A few examples are presented in Section~\ref{sec:results}, showing in particular that mesh adaptivity steered by the \textit{a posteriori} error estimators leads to an important reduction in the number of degrees of freedom that are needed to reach a certain accuracy level, and the tests also indicate the sharpness of the \textit{a posteriori} error analysis. We also illustrate the use of the adaptive method in the simulation of a 3D aquifer interface problem.
Finally, in an appendix, we present the a priori error analysis of the rotation-based poroelasticity problem.

\section{Rotation-based linear elasticity}\label{sec:elast}
This section is devoted to deriving reliability and efficiency of a residual \textit{a posteriori} error estimator for a formulation of linear elasticity in terms of rotation, displacement, and pressure. We start with preliminary results regarding the continuous and discrete formulations. 
\subsection{Continuous formulation}
Let $\Omega\subset\mathbb{R}^d$, $d \in \{2,3\}$, be a bound\-ed Lipschitz domain with boundary $\Gamma:=\partial\Omega$.
Our starting point is the rotation-based elasticity problem, as proposed in \cite{anaya_cmame19}:
Given an external force $\ff^{\mathrm{E}}$, we seek the displacement $\bu$, the rotation $\bomega$ and the pressure $p$ such that
\begin{subequations}\label{rbepeq}
\begin{align}
\sqrt{\mu^{\mathrm{E}}}\curl \bomega+\nabla p&=\ff^{\mathrm{E}} & \text{in $\Omega$,}  \\
\bomega-\sqrt{\mu^{\mathrm{E}}}\curl \bu &=0& \text{in $\Omega$,} \\
\mathrm{div}\bu+(2\mu^{\mathrm{E}}+\lambda^{\mathrm{E}})^{-1}p &=0 & \text{in $\Omega$,} \\
\bu&=\cero & \text{on $\Gamma$},
\end{align}
\end{subequations}
where $\mu^{\mathrm{E}}$ and $\lambda^{\mathrm{E}}$ are the Lam\'e coefficients (material properties of the solid, and here assumed constant).
The weak formulation of (\ref{rbepeq}) is as follows: find $(\bu,\bomega,p)\in \bH^1_0(\Omega)\times\bL^2(\Omega)\times \rL^2(\Omega)$ such that 
\begin{subequations}\label{wfrbe}
\begin{align}
-\sqrt{\mu^{\mathrm{E}}}\int_\Omega \curl\bv\cdot\bomega +\int_{\Omega} p \vdiv\bv&=-\int_{\Omega}\ff^{\mathrm{E}} \bv & \forall \bv\in\bH^1_0(\Omega),\\
\int_{\Omega}\bomega \cdot\btheta -\sqrt{\mu^{\mathrm{E}}}\int_{\Omega}\btheta\cdot\curl \bu &=0 &\forall\btheta\in\bL^2(\Omega),\\
\int_{\Omega}\vdiv\bu q+(2\mu^{\mathrm{E}}+\lambda^{\mathrm{E}})^{-1}\int_{\Omega}p q &=0 & \forall q\in \rL^2(\Omega),
\end{align}
\end{subequations}
or more conveniently written in the form 
\[
B_{\mathrm{E}}((\bu,\bomega,p),(\bv,\btheta,q))=-(\ff^{\mathrm{E}},\bv)_{0,\Omega},
\]
where the multilinear form (having a subscript E, for \emph{elasticity}) is 
\begin{align*}
B_{\mathrm{E}}((\bu,\bomega,p),(\bv,\btheta,q)) &:= -\sqrt{\mu^{\mathrm{E}}}\int_\Omega \curl\bv\cdot\bomega
+\int_{\Omega} p \vdiv\bv+\int_{\Omega}\bomega\cdot \btheta \\
&\quad -\sqrt{\mu^{\mathrm{E}}}\int_{\Omega}\btheta\cdot\curl \bu+\int_{\Omega}\vdiv\bu q+(2\mu^{\mathrm{E}}+\lambda^{\mathrm{E}})^{-1}\!\int_{\Omega}p q. 
\end{align*}

For the considered boundary conditions, the term $\vdiv \bH^1_0$ can control only the $L^2$ norm of the mean-value zero part of $p$, and an additional contribution is needed to control the mean-value part of $p$ (see, e.g., \cite{lee_sisc17}). Thus we can decompose  $p$ into $P_mp$ and $p_0=p-P_mp$, where $P_m p$ is the mean value part and $p_0$ is the mean value zero part. This is required only in the incompressibility limit, as Herrmann's problem approaches Stokes equations and pressure (for $\bu$ prescribed everywhere on the boundary) is no longer unique.  This will be relevant also in the case of rotation-based Biot equations in Section~\ref{sec:poroelast}, below. 



The well-posedness of the above variational problem is a direct consequence of the following result (see \cite{gatica_book14}).

\begin{theorem}\label{WPelasticity}
For every $(\bu,\bomega,p)\in\bH^1_0(\Omega)\times\bL^2(\Omega)\times \rL^2(\Omega)$,
there exists $(\bv,\btheta,q)\in\bH^1_0(\Omega)\times\bL^2(\Omega)\times \rL^2(\Omega)$
with $\VERT(\bv,\btheta,q)\VERT\le C_1\VERT(\bu,\bomega,p)\VERT$ such that
\[
B_{\mathrm{E}}((\bu,\bomega,p),(\bv,\btheta,q))\ge C_2 \VERT(\bu,\bomega,p)\VERT^2,
\]
where $\VERT(\bv,\btheta,q)\VERT^2:= \mu^{\mathrm{E}}\|\curl \bv\|_{0,\Omega}^2+\mu^{\mathrm{E}}\|\vdiv\bv\|_{0,\Omega}^2
+\|\btheta\|_{0,\Omega}^2+(2\mu^{\mathrm{E}}+\lambda^{\mathrm{E}})^{-1}\|q\|_{0,\Omega}^2+(\mu^{\mathrm{E}})^{-1}\|q_0\|_{0,\Omega}^2$.
\end{theorem}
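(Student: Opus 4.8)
The plan is to verify, for the energy norm $\VERT\cdot\VERT$, the inf-sup hypothesis of the generalized Lax--Milgram lemma (see \cite{gatica_book14}): given $(\bu,\bomega,p)$, to exhibit a test triple $(\bv,\btheta,q)$ with $\VERT(\bv,\btheta,q)\VERT\lesssim\VERT(\bu,\bomega,p)\VERT$ and $B_{\mathrm{E}}((\bu,\bomega,p),(\bv,\btheta,q))\gtrsim\VERT(\bu,\bomega,p)\VERT^2$, and the whole point is to do so with constants independent of $\mu^{\mathrm{E}}$ and $\lambda^{\mathrm{E}}$. I would take $(\bv,\btheta,q)$ to be a linear combination $(\bv_1,\btheta_1,q_1)+\alpha_2(\bv_2,\btheta_2,q_2)+\alpha_3(\bv_3,\btheta_3,q_3)$ of three explicit ingredients, each tailored to one block of $\VERT\cdot\VERT^2$, and then fix $\alpha_2,\alpha_3>0$ small enough to absorb the unwanted cross terms.

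First, the diagonal choice $(\bv_1,\btheta_1,q_1)=(-\bu,\bomega,p)$: inserting it into $B_{\mathrm{E}}$, the two $\curl$-terms cancel and the two mixed divergence/pressure terms cancel, leaving $B_{\mathrm{E}}((\bu,\bomega,p),(-\bu,\bomega,p))=\|\bomega\|_{0,\Omega}^2+(2\mu^{\mathrm{E}}+\lambda^{\mathrm{E}})^{-1}\|p\|_{0,\Omega}^2$, while $\VERT(-\bu,\bomega,p)\VERT=\VERT(\bu,\bomega,p)\VERT$; this secures the $\bomega$-block and the $(2\mu^{\mathrm{E}}+\lambda^{\mathrm{E}})^{-1}$-weighted pressure block. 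Second, since $\bu\in\bH^1_0(\Omega)$ gives $\vdiv\bu\in\rL^2_0(\Omega)$, the triple $(\bv_2,\btheta_2,q_2):=(\cero,-\sqrt{\mu^{\mathrm{E}}}\curl\bu,\mu^{\mathrm{E}}\vdiv\bu)$ is admissible, and a short computation yields $B_{\mathrm{E}}((\bu,\bomega,p),(\bv_2,\btheta_2,q_2))=\mu^{\mathrm{E}}\|\curl\bu\|_{0,\Omega}^2+\mu^{\mathrm{E}}\|\vdiv\bu\|_{0,\Omega}^2-\sqrt{\mu^{\mathrm{E}}}\int_\Omega\bomega\cdot\curl\bu+\mu^{\mathrm{E}}(2\mu^{\mathrm{E}}+\lambda^{\mathrm{E}})^{-1}\int_\Omega p\,\vdiv\bu$, together with $\VERT(\bv_2,\btheta_2,q_2)\VERT^2\le\tfrac{3}{2}\VERT(\bu,\bomega,p)\VERT^2$, where $(2\mu^{\mathrm{E}}+\lambda^{\mathrm{E}})^{-1}(\mu^{\mathrm{E}})^2\le\mu^{\mathrm{E}}/2$ is used. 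The last two terms are cross terms; writing $\int_\Omega p\,\vdiv\bu=\int_\Omega p_0\,\vdiv\bu$ and applying Young's inequality so that $(2\mu^{\mathrm{E}}+\lambda^{\mathrm{E}})^{-1/2}\|p_0\|$ pairs with $(2\mu^{\mathrm{E}}+\lambda^{\mathrm{E}})^{-1/2}\mu^{\mathrm{E}}\|\vdiv\bu\|$ and $\|\bomega\|$ pairs with $\sqrt{\mu^{\mathrm{E}}}\|\curl\bu\|$, they are absorbable into the first ingredient and into the good terms of the second.

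The remaining block, $(\mu^{\mathrm{E}})^{-1}\|p_0\|_{0,\Omega}^2$, is the one genuine difficulty and is handled by the classical surjectivity of the divergence on a bounded connected Lipschitz domain: there is $\bv_p\in\bH^1_0(\Omega)$ with $\vdiv\bv_p=p_0$ and $\|\bv_p\|_{1,\Omega}\le C_\Omega\|p_0\|_{0,\Omega}$, with $C_\Omega$ purely geometric. With $(\bv_3,\btheta_3,q_3):=((\mu^{\mathrm{E}})^{-1}\bv_p,\cero,0)$ one gets $B_{\mathrm{E}}((\bu,\bomega,p),(\bv_3,\btheta_3,q_3))=(\mu^{\mathrm{E}})^{-1}\|p_0\|_{0,\Omega}^2-(\mu^{\mathrm{E}})^{-1/2}\int_\Omega\curl\bv_p\cdot\bomega$ and $\VERT(\bv_3,\btheta_3,q_3)\VERT^2\le(C_\Omega^2+1)(\mu^{\mathrm{E}})^{-1}\|p_0\|_{0,\Omega}^2\le(C_\Omega^2+1)\VERT(\bu,\bomega,p)\VERT^2$; the cross term, bounded by $C_\Omega(\mu^{\mathrm{E}})^{-1/2}\|p_0\|_{0,\Omega}\|\bomega\|_{0,\Omega}$, is again split into $(\mu^{\mathrm{E}})^{-1}\|p_0\|^2$ and $\|\bomega\|^2$. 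The $(\mu^{\mathrm{E}})^{-1}$ rescaling of $\bv_p$ is exactly what makes both this bound and the test-norm bound consistent with the weighted norm and free of the Lam\'e constants.

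Summing the three contributions with weights $1,\alpha_2,\alpha_3$ and choosing $\alpha_2,\alpha_3$ small enough (depending only on $C_\Omega$) makes all cross terms absorbable, yielding $B_{\mathrm{E}}((\bu,\bomega,p),(\bv,\btheta,q))\ge C_2\VERT(\bu,\bomega,p)\VERT^2$, while the triangle inequality and the three norm bounds give $\VERT(\bv,\btheta,q)\VERT\le C_1\VERT(\bu,\bomega,p)\VERT$; the well-posedness of \eqref{wfrbe} then follows from the cited abstract lemma. I expect the main obstacle to be the robustness bookkeeping rather than any conceptual point: one has to make sure that in each Young split the two factors carry reciprocal weights, that $(2\mu^{\mathrm{E}}+\lambda^{\mathrm{E}})^{-1}(\mu^{\mathrm{E}})^2\le\mu^{\mathrm{E}}/2$ (and $2\mu^{\mathrm{E}}\le 2\mu^{\mathrm{E}}+\lambda^{\mathrm{E}}$) are invoked wherever the $q$-slot feeds the $(2\mu^{\mathrm{E}}+\lambda^{\mathrm{E}})^{-1}$-weighted part of $\VERT\cdot\VERT$, and that the sole geometric constant $C_\Omega$ is independent of $\mu^{\mathrm{E}},\lambda^{\mathrm{E}}$; granting this, the final constants $C_1,C_2$ come out uniform in the model parameters.
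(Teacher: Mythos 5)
Your proposal is correct and follows essentially the same route as the paper: the diagonal test triple $(-\bu,\bomega,p)$, the choice $(\cero,-\sqrt{\mu^{\mathrm{E}}}\curl\bu,\mu^{\mathrm{E}}\vdiv\bu)$, a divergence-lifting of $p_0$ rescaled by $(\mu^{\mathrm{E}})^{-1}$ (the paper builds this scaling directly into its inf-sup statement for $\bv_0$), and a final linear combination with small weights absorbing the cross terms via Young's inequality. The parameter bookkeeping you flag ($\mu^2/(2\mu^{\mathrm{E}}+\lambda^{\mathrm{E}})\le\mu^{\mathrm{E}}/2$, the purely geometric $C_\Omega$) matches what the paper does, so no further changes are needed.
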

\begin{proof}
Consider the decomposition $p=p_0+P_mp$. As 
 a consequence of the inf-sup condition, 
for every $p_0\in L^2_0(\Omega)$ there exists 
$\bv_0\in\bH^1_0(\Omega)$ such that $(p_0,\vdiv\bv_0)_{0,\Omega}\ge C_{\Omega}(\mu^{\mathrm{E}})^{-1}\|p_0\|^2_{0,\Omega}$
and $(\mu^{\mathrm{E}})^{1/2}\Vert\bv_0\Vert_{1,\Omega}\le (\mu^{\mathrm{E}})^{-1/2}\Vert p_0\Vert_{0,\Omega}$.
Thus, we have
\begin{align*}
B_{\mathrm{E}}((\bu,\bomega,p),(\bv_0,\cero,0))&\ge \frac{C_{\Omega}}{\mu^{\mathrm{E}}}\|p_0\|_{0,\Omega}^2-\sqrt{\mu}(\bomega,\curl\bv_0)_{0,\Omega}
\ge\left(C_{\Omega}-\frac{1}{2\epsilon}\right)\frac{1}{\mu^{\mathrm{E}}}\|p_0\|_{0,\Omega}^2-\frac{\epsilon}{2}\|\bomega\|_{0,\Omega}^2.
\end{align*}
Choosing $\bv=-\bu$, $\btheta=\bomega$ and $q=p$, we arrive at
\[
B_{\mathrm{E}}((\bu,\bomega,p),(-\bu,\bomega,p))= \|\bomega\|_{0,\Omega}^2+(2\mu^{\mathrm{E}}+\lambda^{\mathrm{E}})^{-1}\|p\|_{0,\Omega}^2.
\] 
Next, we can select $\bv=\cero$, $\btheta=-\sqrt{\mu^{\mathrm{E}}} \curl\bu $ and $q=\mu^{\mathrm{E}}\vdiv\bu$, which leads to  
\begin{align*}
& B_{\mathrm{E}}((\bu,\bomega,p),(\cero,-\sqrt{\mu^{\mathrm{E}}}\curl\bu,\mu^{\mathrm{E}}\vdiv\bu))
\ge \frac{\mu^{\mathrm{E}}}{2}\|\curl\bu\|_{0,\Omega}^2+\frac{\mu^{\mathrm{E}}}{2}\|\vdiv\bu\|_{0,\Omega}^2
- \frac{1}{2}\|\bomega\|_{0,\Omega}^2- \frac{\mu^{\mathrm{E}}}{2(2\mu^{\mathrm{E}}+\lambda^{\mathrm{E}})^2}\|p\|_{0,\Omega}^2,
\end{align*}
We can also take $\bv=-\bu+\delta_1\bv_0$, $\btheta=\bomega-\delta_2\sqrt{\mu^{\mathrm{E}}}\curl\bu$, together with $q=p+\delta_2\mu^{\mathrm{E}}\vdiv\bu$, giving 
\begin{align*}
B_{\mathrm{E}}((\bu,\bomega,p),(\bv,\btheta,q))&=B_{\mathrm{E}}((\bu,\bomega,p),(-\bu,\cero,0))+\delta_1B_{\mathrm{E}}((\bu,\bomega,p),(\bv_0,\cero,0))
-\delta_2B_{\mathrm{E}}((\bu,\bomega,p),(\cero,\sqrt{\mu^{\mathrm{E}}}\curl\bu,\mu^{\mathrm{E}}\vdiv\bu)) \\
&\ge\left(1-\frac{\delta_1\epsilon}{2}-\frac{\delta_2}{2}\right)\|\bomega\|_{0,\Omega}^2
+\delta_2\frac{\mu^{\mathrm{E}}}{2}\|\curl\bu\|_{0,\Omega}^2+\delta_2\frac{\mu^{\mathrm{E}}}{2}\|\vdiv\bu\|_{0,\Omega}^2 \\
&\quad+\delta_1\left(C_{\Omega}-\frac{1}{2\epsilon}\right)\frac{1}{\mu^{\mathrm{E}}}\|p_0\|_{0,\Omega}^2
+\frac{1}{2\mu^{\mathrm{E}}+\lambda^{\mathrm{E}}}\left(1-\frac{\delta_2\mu^{\mathrm{E}}}{2\mu^{\mathrm{E}}+\lambda^{\mathrm{E}}}\right)\|p\|_{0,\Omega}^2.
\end{align*}
Choosing $\epsilon=1/C_{\Omega}$, $\delta_1=1/2\epsilon$ and $\delta_2=1/2$, we have
\[
B_{\mathrm{E}}((\bu,\bomega,p),(\bv,\btheta,q))\ge\min\left\{\frac{C_{\Omega}^2}{2},\frac{1}{4}\right\} \VERT(\bu,\bomega,p)\VERT^2,
\]
and the assertion of the theorem can be established by obtaining   
\[
\VERT(\bv,\btheta,q)\VERT^2= \VERT(-\bu+\delta_1\bv_0,\bomega-\delta_2\sqrt{\mu^{\mathrm{E}}}\curl\bu,p+\delta_2\mu^{\mathrm{E}}\vdiv\bu)\VERT^2\le 2 \VERT(\bu,\bomega,p)\VERT^2.
\]

\end{proof}

\subsection{Discrete spaces and Galerkin formulation}\label{sectdiscrete1}
Let $\{\cT_{h}\}_{h>0}$ be a shape-regular
family of partitions of the closed domain 
$\bar\Omega$, conformed by tetrahedra (or triangles 
in 2D) $K$ of diameter $h_K$, with mesh size
$h:=\max\{h_K:\; K\in\cT_{h}\}$.
We specify for any $k\ge0$ the finite-dimensional subspaces of the functional spaces for 
displacement, pressure and rotation; as follows 
\begin{align*}	
&\bV_h:=\{\bv_h \in \mathbf{C}(\overline{\Omega})\cap \bH^1_0(\Omega): \bv_h|_{K} \in \bbP_{k+1}(K)^d,\ \forall K\in \cT_h\},\nonumber\\
&\bW_h:=\{\btheta_h\in\mathbf{L}^2(\Omega): \btheta_h|_T\in\bbP_k(K)^d,\ \forall K\in\cT_{h}\},
\quad \rZ_h:=\{q_h\in\mathrm{L}^2(\Omega): q_h|_T\in\bbP_k(K),\ \forall K\in\cT_{h}\}.
\end{align*}
The discrete weak formulation reads: find $(\bu_h,\bomega_h,p_h)\in \bV_h\times\bW_h\times\rZ_h$ such that
 \begin{equation}\label{weakdisE}
 B_{\mathrm{E}}((\bu_h,\bomega_h,p_h),(\bv,\btheta,q))=-(\ff^{\mathrm{E}},\bv)_{0,\Omega} \qquad \forall (\bv,\btheta,q)\in \bV_h\times\bW_h\times\rZ_h.
 \end{equation}
 
In view of the comment above regarding pressure uniqueness in the nearly incompressible limit,
 we can either add a real Lagrange multiplier to fix the mean value of pressure, or
 (for the specific case of discontinuous pressures), simply add a jump stabilisation
 (from, e.g., \cite{hughes1987new}). Then, for  $k\ge0$, the modified  discrete weak
 formulation of the rotation based elasticity reads: find $(\bu_h,\bomega_h,q_h)\in \bV_h\times\bW_h\times\rZ_h$ such that
 \begin{align}\label{weakdisEstab}
 B_{\mathrm{E}}((\bu_h,\bomega_h,p_h),(\bv,\btheta,q))+\mu^{-1}\sum_{e\in\mathcal{E}(\cT_{h})}h_e\int_{e}[\![p_h]\!][\![q]\!]
 =-(\ff^{\mathrm{E}},\bv)_{0,\Omega}\qquad \forall (\bv,\btheta,q)\in \bV_h\times\bW_h\times\rZ_h,
\end{align}
where $h_e$ stands for the diameter of a given edge, $[\![\cdot]\!]$ the edge jump,
$\mu>0$ is an stabilisation parameter and $\mathcal{E}(\cT_{h})$
denotes the set of all edges in $\cT_h$.

By repeating the arguments in Theorem~\ref{WPelasticity}, we have
that \eqref{weakdisE} and \eqref{weakdisEstab} are well-posed.
In addition, by using standard arguments, it is possible to establish
the corresponding C\'ea's estimate and the \emph{a priori} estimates.

\subsection{\textit{A posteriori} error analysis}
%
First we define the local elastic error estimator $\estE$
and the elastic data oscillation $\UpsilonE$ for each $K\in\mathcal{T}_h$ as
\begin{align*}
\estE^2:= \frac{h_K^2}{\mu^{\mathrm{E}}}\|\mathbf{R}_1
\|_{0,K}^2+\sum_{e\in\partial K}\frac{h_e}{\mu^{\mathrm{E}}} \|\mathbf{R}_e
\|_{0,e}^2+\|\mathbf{R}_2\|_{0,K}^2+{\frac{1}{\frac{1}{\mu^{\mathrm{E}}}+\frac{1}{2\mu^{\mathrm{E}}+\lambda^{\mathrm{E}}}}}\|R_3\|_{0,K}^2, \quad 
\UpsilonE^2= \frac{h_K^2}{\mu^{\mathrm{E}}}\|\ff^{\mathrm{E}}-\ff_h^{\mathrm{E}}\|_{0,K}^2,
\end{align*}
where $\ff^{\mathrm{E}}_h\in\mathbf{L}^2(\Omega)$ is a piecewise polynomial approximation of $\ff^{\mathrm{E}}$. Moreover, 
the element-wise residuals are  
\[
\mathbf{R}_1:= \{\ff_h^{\mathrm{E}}-\sqrt{\mu^{\mathrm{E}}}\curl\bomega_h -\nabla p_h\}_K,\quad 
\mathbf{R}_2:= \{\bomega_h-\sqrt{\mu^{\mathrm{E}}}\curl\bu_h\}_K, \quad 
R_3:= \{\vdiv{\bu_h}+(2\mu^{\mathrm{E}}+\lambda^{\mathrm{E}})^{-1}p_h\}_K,
\]
and the edge residual is defined as
\[
\mathbf{R}_e:=\begin{cases}
\frac{1}{2}[\![\sqrt{\mu^{\mathrm{E}}}\bomega_h\times\nn+p_h\nn]\!]_e & e\in \mathcal{E}(\mathcal{T}_h)\setminus\Gamma\\
0 & e\in \Gamma.
\end{cases}
\]
Finally,  the global elastic residual error estimator $\Theta$ and the global elastic data oscillation term as 
\begin{equation}\label{estosc}
\Theta^2 :=\sum_{K\in\mathcal{T}_h} \estE^2,\quad\widetilde{\Upsilon}^2 :=\sum_{K\in\mathcal{T}_h}\UpsilonE^2.
\end{equation}

\subsubsection{Reliability estimate}
%
Using the Cl\'{e}ment interpolation estimate, the following results hold:
\begin{equation}\label{clement}
h_K^{-1}\sqrt{\mu^{\mathrm{E}}}\|\bv-I_h(\bv)\|_{0,K}\lesssim \sqrt{\mu^{\mathrm{E}}}|\bv|_{1,\bomega_K},\quad h_e^{-1/2} \sqrt{\mu^{\mathrm{E}}}\|\bv-I_h(\bv)\|_{0,e}\lesssim \sqrt{\mu^{\mathrm{E}}}|\bv|_{1,\bomega_K}.
\end{equation}
In next theorem, we discuss the reliability bound of the estimator $\Theta$. The Cl\'{e}ment interpolation estimate and the stability estimate are the main ingredients in the proof.
\begin{theorem}[Reliability estimate for the elasticity problem]
Let $(\bu,\bomega,p)$ be the solution to  (\ref{wfrbe}) and $(\bu_h,\bomega_h,p_h)$
the solution to (\ref{weakdisE}) (or (\ref{weakdisEstab})). Let $\Theta,\widetilde{\Upsilon}$ be as  in (\ref{estosc}). Then 
\begin{equation}\label{aux-rel0}
\VERT(\bu-\bu_h,\bomega-\bomega_h,p-p_h)\VERT\le C_{\mathrm{rel}} (\Theta+\widetilde{\Upsilon}).
\end{equation}
\end{theorem}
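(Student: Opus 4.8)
The plan is to bracket the energy norm of the error between the continuous stability bound of Theorem~\ref{WPelasticity} and an upper bound of the residual functional obtained by Galerkin orthogonality, element-wise integration by parts, and the Cl\'ement estimates~\eqref{clement}. Write $\boldsymbol{e}:=(\bu-\bu_h,\bomega-\bomega_h,p-p_h)$. Applying Theorem~\ref{WPelasticity} to $\boldsymbol{e}$ furnishes a test triple $(\bv,\btheta,q)$ with $\VERT(\bv,\btheta,q)\VERT\le C_1\VERT\boldsymbol{e}\VERT$ and $B_{\mathrm{E}}(\boldsymbol{e},(\bv,\btheta,q))\ge C_2\VERT\boldsymbol{e}\VERT^2$, so it suffices to show $B_{\mathrm{E}}(\boldsymbol{e},(\bv,\btheta,q))\lesssim(\Theta+\widetilde{\Upsilon})\,\VERT(\bv,\btheta,q)\VERT$. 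Since $(I_h\bv,\cero,0)\in\bV_h\times\bW_h\times\rZ_h$ with $I_h$ the Cl\'ement quasi-interpolant, Galerkin orthogonality gives $B_{\mathrm{E}}(\boldsymbol{e},(I_h\bv,\cero,0))=0$ (for the scheme~\eqref{weakdisEstab} this uses consistency, which holds because $[\![p]\!]=0$, and the extra jump term does not act on the continuous $I_h\bv$); hence, using also that $(\bu,\bomega,p)$ solves~\eqref{wfrbe}, one may rewrite $B_{\mathrm{E}}(\boldsymbol{e},(\bv,\btheta,q))=-(\ff^{\mathrm{E}},\bv-I_h\bv)_{0,\Omega}-B_{\mathrm{E}}((\bu_h,\bomega_h,p_h),(\bv-I_h\bv,\btheta,q))$.

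Next I would expand $B_{\mathrm{E}}((\bu_h,\bomega_h,p_h),\cdot)$ term by term. Using the second and third equations of~\eqref{wfrbe}, the $\btheta$-row collapses to $-\int_\Omega\mathbf{R}_2\cdot\btheta$ and the $q$-row to $-\int_\Omega R_3\,q$, with no integration by parts. For the $\bv$-row, integrate $\sqrt{\mu^{\mathrm{E}}}\int_K\curl(\bv-I_h\bv)\cdot\bomega_h$ and $\int_K p_h\vdiv(\bv-I_h\bv)$ by parts over each $K\in\cT_h$: the element contributions combine, after adding and subtracting $\ff_h^{\mathrm{E}}$, into $\sum_K\int_K\mathbf{R}_1\cdot(\bv-I_h\bv)$ plus the data term $\int_\Omega(\ff^{\mathrm{E}}-\ff_h^{\mathrm{E}})\cdot(\bv-I_h\bv)$, while the interelement boundary contributions assemble into $\sum_e\int_e[\![\sqrt{\mu^{\mathrm{E}}}\bomega_h\times\nn+p_h\nn]\!]_e\cdot(\bv-I_h\bv)$, the $\Gamma$-edges dropping since $\bv-I_h\bv$ vanishes there. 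This reproduces exactly the residuals $\mathbf{R}_1,\mathbf{R}_2,\mathbf{R}_e,R_3$ and the oscillation $\ff^{\mathrm{E}}-\ff_h^{\mathrm{E}}$.

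Then I would estimate each piece by Cauchy--Schwarz and~\eqref{clement}. For the $\mathbf{R}_1$- and $\mathbf{R}_e$-terms and the data term, applying~\eqref{clement} to $\|\bv-I_h\bv\|_{0,K}$ and $\|\bv-I_h\bv\|_{0,e}$ makes the weights $h_K^2/\mu^{\mathrm{E}}$ and $h_e/\mu^{\mathrm{E}}$ in $\estE$ and $\UpsilonE$ appear and leaves factors $\sqrt{\mu^{\mathrm{E}}}\,|\bv|_{1,\omega_K}$, which sum (by shape-regularity/finite overlap of the patches $\omega_K$) to $\sqrt{\mu^{\mathrm{E}}}\,\|\bv\|_{1,\Omega}\le\VERT(\bv,\btheta,q)\VERT$. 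For the $\mathbf{R}_2$-term, $\|\btheta\|_{0,\Omega}\le\VERT(\bv,\btheta,q)\VERT$ directly. For the $R_3$-term, I would split $q=q_0+P_mq$; because the data is homogeneous Dirichlet, $\int_\Omega p=-(2\mu^{\mathrm{E}}+\lambda^{\mathrm{E}})\int_\Omega\vdiv\bu=0$ and $\int_\Omega\vdiv\bu_h=0$, so the pressure component produced by the construction in Theorem~\ref{WPelasticity} has zero mean, i.e. $q=q_0$, whence the two pressure contributions to $\VERT\cdot\VERT^2$ together yield $\|q\|_{0,\Omega}^2\le 2\big(\tfrac{1}{\mu^{\mathrm{E}}}+\tfrac{1}{2\mu^{\mathrm{E}}+\lambda^{\mathrm{E}}}\big)^{-1}\VERT(\bv,\btheta,q)\VERT^2$, precisely the weight multiplying $\|R_3\|^2$ in $\estE$. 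Collecting everything gives $B_{\mathrm{E}}(\boldsymbol{e},(\bv,\btheta,q))\lesssim(\Theta+\widetilde{\Upsilon})\VERT(\bv,\btheta,q)\VERT\le C_1(\Theta+\widetilde{\Upsilon})\VERT\boldsymbol{e}\VERT$, and dividing by $\VERT\boldsymbol{e}\VERT$ yields~\eqref{aux-rel0} with $C_{\mathrm{rel}}\sim C_1/C_2$.

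The main obstacle is parameter robustness: one must ensure every constant is independent of $\mu^{\mathrm{E}},\lambda^{\mathrm{E}}$ as $\lambda^{\mathrm{E}}\to\infty$. This forces the exact weighting chosen in $\estE$ — the $h_K^2/\mu^{\mathrm{E}}$ and $h_e/\mu^{\mathrm{E}}$ factors on $\mathbf{R}_1,\mathbf{R}_e$, the unit factor on $\mathbf{R}_2$, and especially the harmonic-mean factor on $R_3$ — to match the weights hidden inside $\VERT\cdot\VERT$, and it is the zero-mean splitting of the pressure test function (legitimate only thanks to the homogeneous boundary condition) that makes the $R_3$ contribution robust rather than polluted by a factor $2\mu^{\mathrm{E}}+\lambda^{\mathrm{E}}$.
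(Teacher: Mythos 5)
Your proposal is correct and follows essentially the same route as the paper's (very terse) proof: stability of $B_{\mathrm{E}}$ to produce the test triple, Galerkin orthogonality against $(I_h\bv,\cero,0)$, element-wise integration by parts to expose $\mathbf{R}_1,\mathbf{R}_2,R_3,\mathbf{R}_e$ and the oscillation, then Cauchy--Schwarz with the Cl\'ement estimates \eqref{clement}. The zero-mean argument you give for the pressure test function (using $\int_\Omega\vdiv(\bu-\bu_h)=0$ and $\int_\Omega(p-p_h)=0$, the latter from testing with $q=1$) is exactly the detail needed to justify the harmonic-mean weight on $\|R_3\|_{0,K}^2$, which the paper leaves implicit.
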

\begin{proof}
Since $(\bu-\bu_h,\bomega-\bomega_h,p-p_h)\in {\bH}^1_0(\Omega)\times {\bL}^2(\Omega)\times \rL^2(\Omega)$,
the stability Theorem~\ref{WPelasticity} implies 
\[
C_1\VERT(\bu-\bu_h,\bomega-\bomega_h,p-p_h)\VERT^2\le B_{\mathrm{E}}((\bu-\bu_h,\bomega-\bomega_h,p-p_h),(\bv,\btheta,q)),
\]  
with $\VERT(\bv,\btheta,q)\VERT\le C_2 \VERT(\bu-\bu_h,\bomega-\bomega_h,p-p_h)\VERT$. 
Using the definition of the weak forms, it follows:
\begin{align*}
B_{\mathrm{E}}((\bu-\bu_h,&\bomega-\bomega_h,p-p_h),(\bv,\btheta,q))=B_{\mathrm{E}}((\bu-\bu_h,\bomega-\bomega_h,p-p_h),(\bv-\bv_h,\btheta,q))\\
&=-(\ff^{\mathrm{E}}-\ff^{\mathrm{E}}_h,\bv-\bv_h)_{0,\Omega}-(\ff^{\mathrm{E}}_h,\bv-\bv_h)_{0,\Omega}-B_{\mathrm{E}}((\bu_h,\bomega_h,p_h),(\bv-\bv_h,\btheta,q)).
\end{align*}
Integration by parts, Cauchy-Schwarz inequality and the approximation results (\emph{cf.} \eqref{clement}), 
imply the bound 
\[
B_{\mathrm{E}}((\bu-\bu_h,\bomega-\bomega_h,p-p_h),(\bv,\btheta,q))\le C (\Theta+\widetilde{\Upsilon}) \VERT(\bv,\btheta,q)\VERT,
\]
which, in turn, implies \eqref{aux-rel0}. 
\end{proof}

\subsubsection{Efficiency bounds}
%
%
Let $K\in\mathcal{T}_h$ and consider  the interior polynomial bubble function $b_K$ (positive in the interior of $K$ and vanishing on $\partial K$). 
From  \cite{verfurth2013posteriori}, the following estimates hold:
\begin{equation}\label{ele1}
\|v\|_{0,K}\lesssim \|b_K^{1/2}v\|_{0,K}, \qquad 
\|b_Kv\|_{0,K}\lesssim \|v\|_{0,K}, \qquad 
\|\nabla(b_Kv)\|_{0,K}\lesssim h_K^{-1}\|v\|_{0,K},
\end{equation}
where $v$ is a scalar-valued polynomial function defined on $K$.

Each term defining $\Theta_K$ in terms of local errors are bounded using the 
following collection of results.  
%
\begin{lemma}\label{lemE1}
There holds:
\[
h_K^2(\mu^{\mathrm{E}})^{-1}\|\mathbf{R}_1\|_{0,K}^2\lesssim (\mu^{\mathrm{E}})^{-1/2}h_K\|\ff^{\mathrm{E}}-\ff_h^{\mathrm{E}}\|_{0,K}+(\mu^{\mathrm{E}})^{-1}\|p-p_h\|_{0,K}+\|\bomega-\bomega_h\|_{0,K}.
\]
\end{lemma}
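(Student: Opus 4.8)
The plan is to use the standard bubble-function technique from Verfürth. First I would observe that on each element $K$, the exact residual $\ff^{\mathrm{E}} - \sqrt{\mu^{\mathrm{E}}}\curl\bomega - \nabla p = 0$ by the first equation of \eqref{rbepeq}, so that $\mathbf{R}_1 = \ff_h^{\mathrm{E}} - \ff^{\mathrm{E}} + \sqrt{\mu^{\mathrm{E}}}\curl(\bomega-\bomega_h) + \nabla(p-p_h)$ on $K$. Setting $\bv_K := b_K \mathbf{R}_1$ (extended by zero outside $K$), the first inequality in \eqref{ele1} gives $\|\mathbf{R}_1\|_{0,K}^2 \lesssim \int_K b_K \mathbf{R}_1 \cdot \mathbf{R}_1 = \int_K \mathbf{R}_1\cdot\bv_K$. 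I would then substitute the expression for $\mathbf{R}_1$ and split the right-hand side into three parts: the data oscillation term $\int_K (\ff_h^{\mathrm{E}}-\ff^{\mathrm{E}})\cdot\bv_K$, the rotation term $\sqrt{\mu^{\mathrm{E}}}\int_K \curl(\bomega-\bomega_h)\cdot\bv_K$, and the pressure-gradient term $\int_K \nabla(p-p_h)\cdot\bv_K$.

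The two terms involving derivatives of the (unknown) errors must be handled by integration by parts, moving the derivative onto $\bv_K$, which is legitimate because $\bv_K$ vanishes on $\partial K$ so no boundary contributions survive: $\sqrt{\mu^{\mathrm{E}}}\int_K \curl(\bomega-\bomega_h)\cdot\bv_K = \sqrt{\mu^{\mathrm{E}}}\int_K (\bomega-\bomega_h)\cdot\curl\bv_K$ (using the appropriate rot/curl duality in the relevant dimension) and $\int_K \nabla(p-p_h)\cdot\bv_K = -\int_K (p-p_h)\,\vdiv\bv_K$. Now I apply Cauchy--Schwarz on each piece and invoke the inverse estimates in \eqref{ele1} for $b_K$: $\|\bv_K\|_{0,K}\lesssim \|\mathbf{R}_1\|_{0,K}$ and $\|\nabla\bv_K\|_{0,K}\lesssim h_K^{-1}\|\mathbf{R}_1\|_{0,K}$ (the latter controlling both $\|\curl\bv_K\|_{0,K}$ and $\|\vdiv\bv_K\|_{0,K}$). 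This yields
\[
\|\mathbf{R}_1\|_{0,K}^2 \lesssim \Bigl(\|\ff^{\mathrm{E}}-\ff_h^{\mathrm{E}}\|_{0,K} + h_K^{-1}\sqrt{\mu^{\mathrm{E}}}\|\bomega-\bomega_h\|_{0,K} + h_K^{-1}\|p-p_h\|_{0,K}\Bigr)\|\mathbf{R}_1\|_{0,K}.
\]
Dividing by $\|\mathbf{R}_1\|_{0,K}$, multiplying by $h_K^2/\mu^{\mathrm{E}}$, and distributing the weights $h_K^2(\mu^{\mathrm{E}})^{-1}$ appropriately across the three terms gives exactly the claimed bound, after recognising that the factor on $\|\ff^{\mathrm{E}}-\ff_h^{\mathrm{E}}\|_{0,K}$ becomes $h_K^2(\mu^{\mathrm{E}})^{-1}$ while on the error terms it becomes $h_K(\mu^{\mathrm{E}})^{-1/2}$ and $h_K(\mu^{\mathrm{E}})^{-1}$ respectively, and then absorbing the $h_K \le \mathrm{const}$ factors (or simply bounding them, since $h_K$ is bounded on a fixed mesh) to match the stated right-hand side. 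Note the stated inequality is written with $\|\mathbf{R}_1\|$ to the first power on the left replaced by its square — one squares the intermediate inequality, or equivalently keeps the bound as a product and reads off the result; the displayed form in the lemma is the post-division version with one power of each norm on the right.

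The main obstacle, and the place requiring genuine care rather than routine manipulation, is the bookkeeping of the Lamé-parameter weights: the whole point of the paper is robustness in $\mu^{\mathrm{E}}$ and $\lambda^{\mathrm{E}}$, so one must make sure that when the $\sqrt{\mu^{\mathrm{E}}}$ from the $\curl$ term combines with the $h_K^2(\mu^{\mathrm{E}})^{-1}$ prefactor and the inverse estimate's $h_K^{-1}$, the surviving power of $\mu^{\mathrm{E}}$ on $\|\bomega-\bomega_h\|_{0,K}$ is exactly $0$ (so it matches the norm $\|\btheta\|_{0,\Omega}^2$ used in $\VERT\cdot\VERT$), and that the pressure term picks up precisely $(\mu^{\mathrm{E}})^{-1}$, consistent with the $(\mu^{\mathrm{E}})^{-1}\|q_0\|^2$ contribution in the triple norm. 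One should also double-check the dimension-dependent form of the integration-by-parts identity for $\curl$ (scalar-valued $\bomega$ in 2D versus vector-valued in 3D), since the correct transpose operator differs, but in either case the key consequence $\|\curl\bv_K\|_{0,K}\lesssim h_K^{-1}\|\mathbf{R}_1\|_{0,K}$ holds and the argument goes through unchanged.
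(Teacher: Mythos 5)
Your proposal is correct and follows essentially the same route as the paper: a bubble-function test function supported in $K$, substitution of the exact equation $\ff^{\mathrm{E}}-\sqrt{\mu^{\mathrm{E}}}\curl\bomega-\nabla p=\cero$, element-wise integration by parts (no boundary terms since $b_K$ vanishes on $\partial K$), Cauchy--Schwarz, and the inverse estimates \eqref{ele1}; the paper merely builds the weight $(\mu^{\mathrm{E}})^{-1}h_K^2$ into the test function $\bzeta$ from the outset rather than multiplying through at the end, which is immaterial. You also correctly diagnose the mismatch of powers between the two sides of the stated inequality (the intended, robust form is $h_K(\mu^{\mathrm{E}})^{-1/2}\|\mathbf{R}_1\|_{0,K}\lesssim h_K(\mu^{\mathrm{E}})^{-1/2}\|\ff^{\mathrm{E}}-\ff_h^{\mathrm{E}}\|_{0,K}+(\mu^{\mathrm{E}})^{-1/2}\|p-p_h\|_{0,K}+\|\bomega-\bomega_h\|_{0,K}$, which is what your intermediate inequality yields upon multiplying by $h_K(\mu^{\mathrm{E}})^{-1/2}$ rather than by $h_K^2(\mu^{\mathrm{E}})^{-1}$ — no appeal to $h_K\le\mathrm{const}$ is then needed, preserving robustness).
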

\begin{proof}
For each $K\in \mathcal{T}_h$, we can  define 
%
$\bzeta|_K=(\mu^{\mathrm{E}})^{-1}h_K^2\mathbf{R}_1b_K$. We can then employ 
(\ref{ele1}) to arrive at 
\[
h_K^2(\mu^{\mathrm{E}})^{-1}\|\mathbf{R}_1\|_{0,K}^2\lesssim \int_K \mathbf{R}_1\cdot ((\mu^{\mathrm{E}})^{-1}h_K^2\mathbf{R}_1b_K)=\int_{K}\mathbf{R}_1\cdot \bzeta.
\]
Recall that $\ff^{\mathrm{E}}-\sqrt{\mu^{\mathrm{E}}}\curl\bomega -\nabla p=\cero$. We subtract this from the last term and then integrate using $\bzeta|_{\partial K}=\cero$
\[
h_K^2(\mu^{\mathrm{E}})^{-1}\|\mathbf{R}_1\|_{0,K}^2\lesssim \int_{K} (\ff_h^{\mathrm{E}}-\ff^{\mathrm{E}})\cdot \bzeta +\sqrt{\mu^{\mathrm{E}}}\int_K (\bomega-\bomega_h) \cdot \curl \bzeta +\int_K (p-p_h)\nabla\cdot \bzeta.
\]
Then, Cauchy-Schwarz inequality gives
{\small\begin{align*}
&h_K^2(\mu^{\mathrm{E}})^{-1}\|\mathbf{R}_1\|_{0,K}^2\lesssim (\mu^{\mathrm{E}})^{-1/2}h_K\|\ff^{\mathrm{E}}-\ff_h^{\mathrm{E}}\|_{0,K}+\|p-p_h\|_{0,K}+\|\bomega-\bomega_h\|_{0,K}(\mu^{\mathrm{E}})^{1/2}\|\bnabla \bzeta\|_{0,K}+(\mu^{\mathrm{E}})^{1/2}h_K^{-1}\|\bzeta\|_{0,K}.
\end{align*}}
And the proof can be completed thanks to the following estimate 
\begin{align*}
(\mu^{\mathrm{E}})^{1/2}\|\bnabla \bzeta\|_{0,K}+(\mu^{\mathrm{E}})^{1/2}h_K^{-1}\|\bzeta\|_{0,K}&\lesssim
(\mu^{\mathrm{E}})^{1/2}(\|\bnabla \bzeta\|_{0,K}+h_K^{-1}\|\bzeta\|_{0,K})\\
&\lesssim (\mu^{\mathrm{E}})^{1/2} h_K^{-1}\|\bzeta\|_{0,K} 
= h_K(\mu^{\mathrm{E}})^{-1/2}\|\mathbf{R}_1\|_{0,K}.
\end{align*}
\end{proof}
\begin{lemma}\label{lemE2}
There holds:
\begin{align*}
\|\mathbf{R}_2\|_{0,K}\lesssim \|\bomega-\bomega_h\|_{0,K}+\sqrt{\mu^{\mathrm{E}}}\|\curl(\bu-\bu_h)\|_{0,K}.
\end{align*}
\end{lemma}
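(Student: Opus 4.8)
The plan is to exploit that, in sharp contrast with the residual $\mathbf{R}_1$ treated in Lemma~\ref{lemE1}, the quantity $\mathbf{R}_2$ is nothing but the discrete counterpart of the constitutive relation $\bomega - \sqrt{\mu^{\mathrm{E}}}\curl\bu = \cero$, which holds \emph{pointwise} for the exact solution of \eqref{rbepeq}. Consequently no bubble-function localisation, no scaling argument, and no integration by parts is required: the bound follows directly from the triangle inequality, and in fact with implicit constant equal to one.

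Concretely, first I would write $\mathbf{R}_2|_K = \bomega_h - \sqrt{\mu^{\mathrm{E}}}\curl\bu_h$ and subtract the exact identity $\bomega - \sqrt{\mu^{\mathrm{E}}}\curl\bu = \cero$ restricted to $K$, obtaining
\[
\mathbf{R}_2|_K = (\bomega_h - \bomega) - \sqrt{\mu^{\mathrm{E}}}\,\curl(\bu_h - \bu) = -(\bomega - \bomega_h) + \sqrt{\mu^{\mathrm{E}}}\,\curl(\bu - \bu_h) \quad \text{a.e.\ in } K.
\]
Then I would take the $\rL^2(K)$ norm of both sides and apply the triangle inequality, which yields
\[
\|\mathbf{R}_2\|_{0,K} \le \|\bomega - \bomega_h\|_{0,K} + \sqrt{\mu^{\mathrm{E}}}\,\|\curl(\bu - \bu_h)\|_{0,K}.
\]
Observe that the placement of the weight $\sqrt{\mu^{\mathrm{E}}}$ is exactly the one occurring in the energy norm $\VERT\cdot\VERT$ of Theorem~\ref{WPelasticity}, so the estimate is already in the shape needed when the local contributions are assembled against $\VERT(\bu - \bu_h,\bomega - \bomega_h,p - p_h)\VERT^2$ to produce the global efficiency bound.

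The main obstacle here is essentially nonexistent; the only point deserving attention is the bookkeeping of the $\mu^{\mathrm{E}}$-weights so that the right-hand side matches the terms summed later. If, for uniformity with Lemma~\ref{lemE1}, one preferred a bubble-function treatment, one could set $\bzeta|_K = \mathbf{R}_2 b_K$, use $\|\mathbf{R}_2\|_{0,K}^2 \lesssim \int_K \mathbf{R}_2\cdot\bzeta$, insert the exact relation, and integrate by parts the $\curl$ term against $\bzeta$; but since $\mathbf{R}_2$ contains no derivative of a discrete quantity that is not already controlled, this extra machinery is superfluous, and the direct argument above is both shorter and sharper.
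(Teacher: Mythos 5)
Your proposal is correct and follows exactly the paper's argument: subtract the exact constitutive relation $\bomega-\sqrt{\mu^{\mathrm{E}}}\curl\bu=\cero$ from $\mathbf{R}_2$ and apply the triangle inequality in $\rL^2(K)$. No further commentary is needed.
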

\begin{proof}
The constitutive relation $\bomega-\sqrt{\mu^{\mathrm{E}}}\curl\bu=\cero$ implies that 
\begin{align*}
\|\mathbf{R}_2\|_{0,K}&=\|\bomega_h-\sqrt{\mu^{\mathrm{E}}}\curl\bu_h\|_{0,K} 
=\|(\bomega_h-\bomega)-\sqrt{\mu^{\mathrm{E}}}(\curl\bu_h-\curl\bu)\|_{0,K}\\
&\lesssim \|\bomega-\bomega_h\|_{0,K}+\sqrt{\mu^{\mathrm{E}}}\|\curl(\bu-\bu_h)\|_{0,K}.
\end{align*}
\end{proof}
\begin{lemma}\label{lemE3}
There holds:
\[
((\mu^{\mathrm{E}})^{-1}+(2\mu^{\mathrm{E}}+\lambda^{\mathrm{E}})^{-1})^{{-1/2}}\|R_3\|_{0,K}\lesssim  \sqrt{\mu^{\mathrm{E}}}\|\vdiv(\bu-\bu_h)\|_{0,K}+(2\mu^{\mathrm{E}}+\lambda^{\mathrm{E}})^{-1/2}\|{p}-{p}_h\|_{0,K}.
\]
\end{lemma}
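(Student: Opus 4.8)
The plan is to exploit the third equation of the continuous system \eqref{rbepeq} directly; in contrast with Lemma~\ref{lemE1}, no bubble-function or scaling argument is needed here. First I would recall that on each $K\in\mathcal{T}_h$ one has $R_3=\vdiv\bu_h+(2\mu^{\mathrm{E}}+\lambda^{\mathrm{E}})^{-1}p_h$, whereas the exact solution satisfies $\vdiv\bu+(2\mu^{\mathrm{E}}+\lambda^{\mathrm{E}})^{-1}p=0$ pointwise in $\Omega$. Subtracting the latter (which is identically zero) yields the identity $R_3=\vdiv(\bu_h-\bu)+(2\mu^{\mathrm{E}}+\lambda^{\mathrm{E}})^{-1}(p_h-p)$ on $K$, and the triangle inequality in $\mathrm{L}^2(K)$ gives
\[
\|R_3\|_{0,K}\le \|\vdiv(\bu-\bu_h)\|_{0,K}+(2\mu^{\mathrm{E}}+\lambda^{\mathrm{E}})^{-1}\|p-p_h\|_{0,K}.
\]

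Next I would multiply through by the weight $w:=\big((\mu^{\mathrm{E}})^{-1}+(2\mu^{\mathrm{E}}+\lambda^{\mathrm{E}})^{-1}\big)^{-1/2}$ and estimate the two resulting contributions separately, using elementary bounds on $w$. Writing $a=(\mu^{\mathrm{E}})^{-1}$ and $b=(2\mu^{\mathrm{E}}+\lambda^{\mathrm{E}})^{-1}$, since $a+b\ge a$ we have $w=(a+b)^{-1/2}\le a^{-1/2}=\sqrt{\mu^{\mathrm{E}}}$, so the first term is controlled by $\sqrt{\mu^{\mathrm{E}}}\,\|\vdiv(\bu-\bu_h)\|_{0,K}$. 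For the pressure term, using instead $a+b\ge b$ gives $w\,b=(a+b)^{-1/2}b\le b^{-1/2}b=b^{1/2}=(2\mu^{\mathrm{E}}+\lambda^{\mathrm{E}})^{-1/2}$, hence that contribution is bounded by $(2\mu^{\mathrm{E}}+\lambda^{\mathrm{E}})^{-1/2}\|p-p_h\|_{0,K}$. Adding the two bounds yields the asserted inequality, in fact with an explicit constant equal to one, independent of the Lam\'e parameters.

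The only point that requires any care — and it is the closest thing to an obstacle — is fixing the direction of the two elementary inequalities $(a+b)^{-1/2}\le a^{-1/2}$ and $(a+b)^{-1/2}b\le b^{1/2}$, so that the weight $w$ distributes onto the divergence residual and the pressure residual with exactly the parameter-robust scaling displayed in the statement. Beyond that, neither interpolation nor inverse estimates nor local problems enter the argument, which is what makes this bound substantially shorter than the ones for $\mathbf{R}_1$ and the edge residual.
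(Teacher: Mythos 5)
Your proof is correct and follows the same route as the paper: subtract the exact identity $\vdiv\bu+(2\mu^{\mathrm{E}}+\lambda^{\mathrm{E}})^{-1}p=0$, apply the triangle inequality, and absorb the weight $((\mu^{\mathrm{E}})^{-1}+(2\mu^{\mathrm{E}}+\lambda^{\mathrm{E}})^{-1})^{-1/2}$ into each term. The paper leaves the two elementary weight inequalities implicit, whereas you spell them out, which only makes the constant (equal to one) explicit.
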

\begin{proof}
Using the expression $\vdiv \bu+(2\mu^{\mathrm{E}}+\lambda^{\mathrm{E}})^{-1}p=0$, we have
\begin{align*}
((\mu^{\mathrm{E}})^{-1}+(2\mu^{\mathrm{E}}+\lambda^{\mathrm{E}})^{-1})^{{-1/2}}\|R_3\|_{0,K}&=((\mu^{\mathrm{E}})^{-1}+(2\mu^{\mathrm{E}}+\lambda^{\mathrm{E}})^{-1})^{{-1/2}}\|\vdiv \bu_h+(2\mu^{\mathrm{E}}+\lambda^{\mathrm{E}})^{-1}p_h\|_{0,K}\\
&\lesssim \sqrt{\mu^{\mathrm{E}}}\|\vdiv(\bu-\bu_h)\|_{0,K}+(2\mu^{\mathrm{E}}+\lambda^{\mathrm{E}})^{-1/2}\|{p}-{p}_h\|_{0,K}.
\end{align*}
\end{proof}
Let $e$ be an interior edge (or interior facet in 3D) shared by two elements $K$ and $K'$. We assume that $b_e$,
the edge polynomial bubble function on $e$, is positive in the interior of the
patch $P_e$ formed by $K\cup K'$, and $b_e$ is zero on the boundary of the patch.
Then, also from  \cite{verfurth2013posteriori}, the following estimates hold:
\begin{gather}\label{edgee1}
\|q\|_{0,e}\lesssim \|b_e^{1/2}q\|_{0,e},\quad 
\|b_eq\|_{0,K}\lesssim h_e^{1/2}\|q\|_{0,e}, \quad 
\|\nabla(b_eq)\|_{0,K}\lesssim h_e^{-1/2}\|q\|_{0,e}\qquad \forall K\in P_e,
\end{gather}
where $q$ denotes the scalar-valued polynomial function defined on the edge $e$.
\begin{lemma}\label{lemE4}
There holds:
\begin{align*}
(\sum_{e\in\partial K}h_e(\mu^{\mathrm{E}})^{-1} \|\mathbf{R}_e\|_{0,e}^2)^{1/2}\lesssim
\sum_{K\in P_e}((\mu^{\mathrm{E}})^{-1/2}h_K\|\ff^{\mathrm{E}}-\ff_h^{\mathrm{E}}\|_{0,K}+(\mu^{\mathrm{E}})^{-1/2}\|p-p_h\|_{0,K}+\|\bomega-\bomega_h\|_{0,K}). 
\end{align*}
\end{lemma}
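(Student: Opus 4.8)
The plan is to use the classical edge-bubble technique underlying \eqref{edgee1}, carrying the $\mu^{\mathrm{E}}$-weights along so that robustness is preserved. Fix an interior edge (facet) $e=\partial K\cap\partial K'$ and set $P_e:=K\cup K'$. Let $\widetilde{\mathbf{R}}_e$ be a polynomial extension of $\mathbf{R}_e$ to $P_e$ and define $\bzeta:=b_e\widetilde{\mathbf{R}}_e$, extended by zero outside $P_e$; then $\bzeta\in\bH^1_0(\Omega)$, $\bzeta$ vanishes on $\partial P_e$, and $\bzeta|_e=b_e\mathbf{R}_e$. Applying the first inequality in \eqref{edgee1} componentwise yields $\|\mathbf{R}_e\|_{0,e}^2\lesssim\|b_e^{1/2}\mathbf{R}_e\|_{0,e}^2=\int_e\mathbf{R}_e\cdot\bzeta$, so everything reduces to estimating the latter quantity.

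The second step is to turn $\int_e\mathbf{R}_e\cdot\bzeta$ into volume integrals over $P_e$. Summing the elementwise Green's formulas for $\curl$ and $\vdiv$ over $K$ and $K'$, the boundary contributions on $\partial P_e$ drop (since $\bzeta=\cero$ there) and the two traces on $e$ assemble into the jump defining $\mathbf{R}_e$; inserting the definition of the interior residual $\mathbf{R}_1$ on each element and invoking the first equation of the continuous weak formulation \eqref{wfrbe} tested against $\bzeta$ (admissible because $\bzeta\in\bH^1_0(\Omega)$, and convenient in that it never requires $\curl\bomega$, which is unavailable since $\bomega\in\bL^2(\Omega)$ only), one arrives at
\[
2\int_e\mathbf{R}_e\cdot\bzeta=\int_{P_e}(\ff^{\mathrm{E}}-\ff_h^{\mathrm{E}})\cdot\bzeta-\int_{P_e}\mathbf{R}_1\cdot\bzeta-\sqrt{\mu^{\mathrm{E}}}\int_{P_e}(\bomega-\bomega_h)\cdot\curl\bzeta+\int_{P_e}(p-p_h)\,\vdiv\bzeta,
\]
where the signs are immaterial for what follows.

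Next, Cauchy--Schwarz combined with the remaining estimates in \eqref{edgee1}, namely $\|\bzeta\|_{0,P_e}\lesssim h_e^{1/2}\|\mathbf{R}_e\|_{0,e}$ and $\|\nabla\bzeta\|_{0,P_e}\lesssim h_e^{-1/2}\|\mathbf{R}_e\|_{0,e}$ (whence also $\|\curl\bzeta\|_{0,P_e},\|\vdiv\bzeta\|_{0,P_e}\lesssim h_e^{-1/2}\|\mathbf{R}_e\|_{0,e}$), bounds each term on the right; dividing by $\|\mathbf{R}_e\|_{0,e}$, multiplying by $h_e^{1/2}(\mu^{\mathrm{E}})^{-1/2}$ and using $h_e\simeq h_K$ for $K\in P_e$ gives
\[
h_e^{1/2}(\mu^{\mathrm{E}})^{-1/2}\|\mathbf{R}_e\|_{0,e}\lesssim(\mu^{\mathrm{E}})^{-1/2}h_e\|\ff^{\mathrm{E}}-\ff_h^{\mathrm{E}}\|_{0,P_e}+(\mu^{\mathrm{E}})^{-1/2}h_e\|\mathbf{R}_1\|_{0,P_e}+\|\bomega-\bomega_h\|_{0,P_e}+(\mu^{\mathrm{E}})^{-1/2}\|p-p_h\|_{0,P_e}.
\]
I would then invoke Lemma~\ref{lemE1} on each $K\in P_e$ to absorb the interior-residual term $(\mu^{\mathrm{E}})^{-1/2}h_e\|\mathbf{R}_1\|_{0,P_e}$ into $(\mu^{\mathrm{E}})^{-1/2}h_K\|\ff^{\mathrm{E}}-\ff_h^{\mathrm{E}}\|_{0,K}+(\mu^{\mathrm{E}})^{-1/2}\|p-p_h\|_{0,K}+\|\bomega-\bomega_h\|_{0,K}$, collect like terms, and sum over the at most $d+1$ edges of $K$, which delivers the asserted bound.

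The routine ingredients are the bubble-function inverse inequalities and Cauchy--Schwarz; the only genuinely delicate point --- as throughout this paper --- is the bookkeeping of the powers of $\mu^{\mathrm{E}}$, so that every constant hidden in $\lesssim$ stays independent of the Lam\'e parameters (here $\lambda^{\mathrm{E}}$ does not even enter), together with getting the signs right in the curl--divergence Green's identity and being careful to use the exact equation only in its $\bL^2$-compatible weak form rather than its strong form.
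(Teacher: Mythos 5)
Your proposal is correct and follows essentially the same route as the paper: an edge-bubble test function supported on the patch $P_e$, element-wise integration by parts to convert the jump integral into volume terms, use of the exact momentum balance to replace those terms by $\ff^{\mathrm{E}}-\ff_h^{\mathrm{E}}$, $\bomega-\bomega_h$, $p-p_h$ and the interior residual $\mathbf{R}_1$, Cauchy--Schwarz with the $\mu^{\mathrm{E}}$-weighted inverse estimates from \eqref{edgee1}, and absorption of the $\mathbf{R}_1$ term via Lemma~\ref{lemE1}. The only (harmless) deviations are your unscaled choice of test function, which you compensate for by dividing out $\|\mathbf{R}_e\|_{0,e}$ at the end, and your use of the weak rather than the strong form of the first equation, which is a slightly cleaner way of handling $\curl\bomega$ for $\bomega\in\bL^2(\Omega)$.
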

\begin{proof}
For  $e\in \mathcal{E}(\mathcal{T}_h)$ we  define locally 
%
$\bzeta_e=(\mu^{\mathrm{E}})^{-1}h_e\mathbf{R}_eb_e$. 
Therefore, relation (\ref{edgee1}) implies 
\[
h_e(\mu^{\mathrm{E}})^{-1}\|\mathbf{R}_e\|_{0,e}^2\lesssim \int_e \mathbf{R}_e\cdot ((\mu^{\mathrm{E}})^{-1}h_e\mathbf{R}_eb_e)=\int_{e}\mathbf{R}_e\cdot \bzeta_e.
\]
Since $[\![\bomega\times\nn]\!]_e=\cero$ and $[\![p\nn]\!]_e=\cero$, we have
 \begin{align*}
 \int_e [\![\sqrt{\mu^{\mathrm{E}}}(\bomega_h-\bomega)\times\nn+(p_h-p)\nn]\!]_e \cdot \bzeta_e&=\sum_{K\in P_e}\int_K(\sqrt{\mu^{\mathrm{E}}}\curl(\bomega_h-\bomega)+\nabla (p_h-p))\cdot \bzeta_e\\
 &\quad+\sum_{K\in P_e}\int_K(\sqrt{\mu^{\mathrm{E}}}(\bomega_h-\bomega)\cdot  \curl \bzeta_e+ (p_h-p) \nabla\cdot  \bzeta_e),
 \end{align*}
 where we have used integration by parts element-wise.
Recalling that $\ff^{\mathrm{E}}-\sqrt{\mu^{\mathrm{E}}}\curl\bomega -\nabla p=\cero|_K$, gives 
\begin{align*}
\frac{h_e}{\mu^{\mathrm{E}}}\|\mathbf{R}_e\|_{0,e}^2&\lesssim \sum_{K\in P_e}\int_{K} \left((\ff_h^{\mathrm{E}}-\ff^{\mathrm{E}})\cdot \bzeta_e
+\sqrt{\mu^{\mathrm{E}}}\int_K (\bomega_h-\bomega) \cdot \curl \bzeta_e +\int_K (p_h-p)\nabla\cdot \bzeta \right)+ \sum_{K\in P_e}\int_{K} \mathbf{R}_1 \cdot \bzeta_e.
\end{align*}
From Cauchy-Schwarz inequality we can then infer that 
\begin{align*}
h_e(\mu^{\mathrm{E}})^{-1}\|\mathbf{R}_e\|_{0,e}^2\lesssim 
&\sum_{K\in P_e}((\mu^{\mathrm{E}})^{-1/2}h_K\|\ff^{\mathrm{E}}-\ff_h^{\mathrm{E}}\|_{0,K}+(\mu^{\mathrm{E}})^{-1/2}\|p-p_h\|_{0,K}+\|\bomega-\bomega_h\|_{0,K})\times\\
&\quad((\mu^{\mathrm{E}})^{1/2}\|\bnabla \bzeta_e\|_{0,K}+(\mu^{\mathrm{E}})^{1/2}h_K^{-1}\|\bzeta_e\|_{0,K}).
\end{align*}
And the assertion of the lemma is proven after obtaining the bound 
\[
(\mu^{\mathrm{E}})^{1/2}\|\bnabla \bzeta_e\|_{0,K}+(\mu^{\mathrm{E}})^{1/2}h_K^{-1}\|\bzeta_e\|_{0,K}
\lesssim (\mu^{\mathrm{E}})^{1/2} h_K^{-1}\|\bzeta_e\|_{0,K} 
= h_e^{1/2}(\mu^{\mathrm{E}})^{-1/2}\|\mathbf{R}_e\|_{0,e}.
\]
\end{proof}
Now, we are in position to state the  efficiency   of the proposed estimator $\Theta$. 
\begin{theorem}[Efficiency estimate for the elasticity problem]\label{elasteff}
Let $(\bu,\bomega,p)$ be the solution to \eqref{wfrbe} and $(\bu_h,\bomega_h,p_h)$
the solution to \eqref{weakdisE} (or \eqref{weakdisEstab}). Also, let $\Theta,\widetilde{\Upsilon}$ be as in \eqref{estosc}. Then:
\[
\Theta\le C_{\mathrm{eff}}(\VERT(\bu-\bu_h,\bomega-\bomega_h,p-p_h)\VERT +\widetilde{\Upsilon}).
\]
\end{theorem}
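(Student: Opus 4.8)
The plan is to establish the efficiency bound by controlling each of the four local contributions to $\estE$ and then summing over $K\in\cT_h$. The four building blocks are exactly Lemmas~\ref{lemE1}--\ref{lemE4}, which were stated and proven above, so the strategy is to invoke them term by term. First, by definition,
\[
\Theta^2=\sum_{K\in\cT_h}\estE^2
=\sum_{K\in\cT_h}\Bigl(\tfrac{h_K^2}{\mu^{\mathrm E}}\|\mathbf R_1\|_{0,K}^2
+\sum_{e\in\partial K}\tfrac{h_e}{\mu^{\mathrm E}}\|\mathbf R_e\|_{0,e}^2
+\|\mathbf R_2\|_{0,K}^2
+\tfrac{1}{(\mu^{\mathrm E})^{-1}+(2\mu^{\mathrm E}+\lambda^{\mathrm E})^{-1}}\|R_3\|_{0,K}^2\Bigr).
\]
Lemma~\ref{lemE2} bounds the $\mathbf R_2$-term by $\|\bomega-\bomega_h\|_{0,K}^2+\mu^{\mathrm E}\|\curl(\bu-\bu_h)\|_{0,K}^2$, which are precisely two of the summands appearing in $\VERT(\bu-\bu_h,\bomega-\bomega_h,p-p_h)\VERT^2$. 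Lemma~\ref{lemE3} controls the $R_3$-term by $\mu^{\mathrm E}\|\vdiv(\bu-\bu_h)\|_{0,K}^2+(2\mu^{\mathrm E}+\lambda^{\mathrm E})^{-1}\|p-p_h\|_{0,K}^2$, again two of the summands in the triple norm. (Note that the $(\mu^{\mathrm E})^{-1}\|q_0\|_{0,\Omega}^2$ term in the triple norm is not needed on the right-hand side for these local bounds, so it may simply be dropped.)

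Next, Lemma~\ref{lemE1} gives $\tfrac{h_K^2}{\mu^{\mathrm E}}\|\mathbf R_1\|_{0,K}^2\lesssim (\mu^{\mathrm E})^{-1}h_K^2\|\ff^{\mathrm E}-\ff_h^{\mathrm E}\|_{0,K}^2+(\mu^{\mathrm E})^{-1}\|p-p_h\|_{0,K}^2+\|\bomega-\bomega_h\|_{0,K}^2$ (squaring the statement of the lemma); the first term is exactly $\UpsilonE^2$, the others are triple-norm summands. Here one should observe that $(\mu^{\mathrm E})^{-1}\|p-p_h\|_{0,K}^2$ must be dominated by $(2\mu^{\mathrm E}+\lambda^{\mathrm E})^{-1}\|p-p_h\|_{0,K}^2+(\mu^{\mathrm E})^{-1}\|(p-p_h)_0\|_{0,K}^2$ — this is where robustness in the Lam\'e constants enters, since $(\mu^{\mathrm E})^{-1}$ alone is not controlled by $(2\mu^{\mathrm E}+\lambda^{\mathrm E})^{-1}$ as $\lambda^{\mathrm E}\to\infty$. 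The mean-value/mean-value-zero split of the pressure (invoked just after Theorem~\ref{WPelasticity}) together with the stabilisation term in \eqref{weakdisEstab} is what makes this legitimate; alternatively, on each $K$ the full $(\mu^{\mathrm E})^{-1}\|p-p_h\|_{0,K}^2$ is trivially bounded by the corresponding term in the global triple norm. Finally, Lemma~\ref{lemE4} bounds $\sum_{e\in\partial K}\tfrac{h_e}{\mu^{\mathrm E}}\|\mathbf R_e\|_{0,e}^2$ by a sum over the patch $P_e$ of the same data-oscillation and triple-norm quantities; here one uses shape-regularity to ensure that, upon summing over $K\in\cT_h$, each element is counted only a bounded number of times, so the patch sums collapse into a global sum with a constant depending only on the mesh regularity.

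Putting the four estimates together, squaring where needed, summing over $K\in\cT_h$, and using finite overlap of the edge patches, one obtains
\[
\Theta^2\lesssim \VERT(\bu-\bu_h,\bomega-\bomega_h,p-p_h)\VERT^2+\widetilde\Upsilon^2,
\]
and taking square roots yields the claimed inequality $\Theta\le C_{\mathrm{eff}}\bigl(\VERT(\bu-\bu_h,\bomega-\bomega_h,p-p_h)\VERT+\widetilde\Upsilon\bigr)$, with $C_{\mathrm{eff}}$ independent of $h$, $\mu^{\mathrm E}$, and $\lambda^{\mathrm E}$. The only genuinely delicate point — and the one I expect to be the main obstacle — is the bookkeeping that keeps every constant \emph{robust} in the Lam\'e parameters: one must check that each weighting factor ($h_K^2/\mu^{\mathrm E}$, $h_e/\mu^{\mathrm E}$, and $[(\mu^{\mathrm E})^{-1}+(2\mu^{\mathrm E}+\lambda^{\mathrm E})^{-1}]^{-1}$) pairs off against exactly the right triple-norm summand so that no uncontrolled power of $\lambda^{\mathrm E}$ or $(\mu^{\mathrm E})^{-1}$ survives, rather than any new analytical difficulty beyond the four lemmas already in hand.
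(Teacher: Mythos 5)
Your proposal is correct and follows exactly the paper's route: the paper's proof of this theorem is the single line ``It suffices to combine Lemmas~\ref{lemE1}--\ref{lemE4},'' and you do precisely that, squaring, summing over elements, and using finite overlap of the edge patches. Your additional remark on pairing $(\mu^{\mathrm E})^{-1}\|p-p_h\|_{0,K}^2$ against the $(2\mu^{\mathrm E}+\lambda^{\mathrm E})^{-1}\|q\|_{0,\Omega}^2+(\mu^{\mathrm E})^{-1}\|q_0\|_{0,\Omega}^2$ part of the triple norm is a genuine subtlety that the paper leaves implicit (its own Lemmas~\ref{lemE1} and \ref{lemE4} already carry the $(\mu^{\mathrm E})^{-1/2}\|p-p_h\|_{0,K}$ factor without comment), so you are, if anything, more careful than the source.
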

\begin{proof}
It suffices to combine Lemmas~\ref{lemE1}--\ref{lemE4}.
\end{proof}

\section{Rotation-based poroelasticity with total pressure}\label{sec:poroelast}
In this section we propose a mixed finite element method for the approximation of linear poroelasticity equations, formulated in terms of displacement, rotation vector, fluid pressure, and total pressure. Then, we will present an \textit{a posteriori} error analysis.
%
\subsection{Continuous formulation}
We consider the steady poroelasticity equations written in terms of displacement $\bu$, fluid pressure $p$,
rescaled total pressure $\phi := \alpha p -(2\mu^{\mathrm{P}}+\lambda^{\mathrm{P}}) \vdiv\bu$,  and 
rescaled rotation vector $\bomega:= \sqrt{\mu^{\mathrm{P}}}\curl \bu$, where  $\alpha>0$ is
the Biot-Willis parameter, and $\lambda^{\mathrm{P}},\mu^{\mathrm{P}}$ are the Lam\'e constants. Moreover,  
$s^{\mathrm{P}}$ is a smooth fluid source term, $\kappa$ is the
permeability (isotropic and
satisfying $0<\kappa_1\leq \kappa(\bx)\leq \kappa_2<\infty$, for all
$\bx\in\Omega$), 
$c_0>0$ is the storativity coefficient, $\gg$ is gravity, $\ff^{\mathrm{P}}$ is the external load, 
and $\xi,\rho$ are the viscosity and density of the pore fluid, respectively.
The system reads  
\begin{subequations}
\begin{align}
    \sqrt{\mu^{\mathrm{P}}}\curl\bomega+\nabla \phi &= \ff^{\mathrm{P}} & \text{in $\Omega$,}  \label{eqP:momentum}
    \\
    \bomega-\sqrt{\mu^{\mathrm{P}}}\curl \bu &= \mathbf{0} & \text{in $\Omega$,}  \label{eqP:omega}
    \\
   (2\mu^{\mathrm{P}}+\lambda^{\mathrm{P}})^{-1} \phi+\vdiv\bu  -\alpha(2\mu^{\mathrm{P}}+\lambda^{\mathrm{P}})^{-1}p &= 0 & \text{in $\Omega$,}   \label{eqP:phi}
    \\
    \big[c_0+\alpha^2(\mu^{\mathrm{P}}+\lambda^{\mathrm{P}})^{-1}\big]p - \alpha(2\mu^{\mathrm{P}}+\lambda^{\mathrm{P}})^{-1}\phi- \xi^{-1}\vdiv \big{[}\kappa(\nabla p - \rho \boldsymbol{g})\big{]} &= s^{\mathrm{P}} & \text{in $\Omega$},     \label{eqP:mass}
\end{align}\end{subequations}
%
and we assume  that the domain is clamped and consider zero filtration flux on the boundary
\begin{equation*}
	\bu   =\cero \quad \text{on} \quad \partial\Omega,\qquad 
	\kappa\xi^{-1} (\nabla p - \rho\gg)\cdot\nn  = 0
	\quad \text{on}\quad \partial\Omega.
\end{equation*}


Testing each equation of \eqref{eqP:momentum}-\eqref{eqP:mass},
integrating by parts whenever adequate (see \cite[Theorem 2.11]{girault_book86}) and applying 
the boundary conditions we obtain:
\begin{align}
\nonumber	-\sqrt{\mu^{\mathrm{P}}}\int_{\Omega}\curl\bv\cdot \bomega+\int_{\Omega}\phi \vdiv \bv & = -\int_{\Omega}\ff^{\mathrm{P}}\cdot \bv,  
	\\
\int_{\Omega}\bomega\cdot \btheta-\sqrt{\mu^{\mathrm{P}}}\int_{\Omega}\btheta \cdot\curl \bu &= 0,  \label{weak-poro}
	\\
	(2\mu^{\mathrm{P}}+\lambda^{\mathrm{P}})^{-1} \int_{\Omega}\phi\psi+\int_{\Omega}\psi\vdiv\bu  -\alpha(2\mu^{\mathrm{P}}+\lambda^{\mathrm{P}})^{-1}\int_{\Omega}p\psi &= 0, 	\nonumber \\
\nonumber	-\big[c_0+\frac{\alpha^2}{(2\mu^{\mathrm{P}}+\lambda^{\mathrm{P}})}\big]\int_{\Omega}pq + \frac{\alpha}{2\mu^{\mathrm{P}}+\lambda^{\mathrm{P}}}\int_{\Omega}\phi q-\int_{\Omega} \frac{\kappa}{\xi}\nabla p\cdot \nabla q 
&= -\frac{\rho}{\xi}\int_{\Omega}\kappa\gg\cdot \nabla q -\int_{\Omega} s^{\mathrm{P}}q , 
\end{align}
for each $(\bv,\btheta ,\psi ,q )\in \mathbf{H}^1_0(\Omega)\times \mathbf{L}^2(\Omega)\times\mathrm{L}^2(\Omega)\times \mathrm{H}^1(\Omega)$.

We can regard the rotation and the rescaled total pressure $\phi$ as a single unknown  $\vomega:=(\bomega,\phi)$. This gives the unsymmetric variational form: 
find $(\vomega,\bu,p )\in\mathbf{H}\times\bV\times\rQ$ 
such that
\begin{subequations}
\label{eq:poro-weak-2}
\begin{align}
	a(\vomega,\vtheta)+b_1(\vtheta,\bu)-b_2(\vtheta,p)&= \; 0 &\forall\,\vtheta\in\mathbf{H}, \label{weak-uP}\\
	b_1(\vomega,\bv) &= \;F(\bv) &\forall\, \bv \in\bV, \label{weak-pP}\\
	b_2(\vomega,q)  -   c(p ,q)  &=\;G(q ) &\forall \,q\in\rQ, \label{weak-phi}
\end{align}\end{subequations}
where $\vtheta:=(\btheta,\psi)$, $\mathbf{H}:=\bL^2(\Omega)\times \rL^2(\Omega)$, $\bV:=\bH_0^1(\Omega)$, $\rQ:=\rH^1(\Omega)$, 
and the bilinear forms $a:\mathbf{H}\times\mathbf{H}\to \RR$, 
$b_1:\mathbf{H}\times\bV\to \RR$, $b_2:\mathbf{H}\times\rQ\to \RR$, $b_3:\mathbf{H}\times\rQ\to\RR$, $c:\rQ\times\rQ\to \RR$, 
and linear functionals $F:\bV\to\RR$, $G:\rQ\to\RR$ 
are specified in the following way
\begin{align*}
	& a(\vomega,\vtheta) := \int_{\Omega}\bomega\cdot \btheta + \frac{1}{2\mu^{\mathrm{P}}+\lambda^{\mathrm{P}}}\int_{\Omega}\phi\psi,\quad 
	 b_1(\vtheta,\bv):=-\sqrt{\mu^{\mathrm{P}}}\int_{\Omega}\btheta\cdot\curl\bv +\int_{\Omega}\psi\vdiv\bv, \quad b_2(\vtheta,p ):=\frac{\alpha}{2\mu^{\mathrm{P}}+\lambda^{\mathrm{P}}}\int_{\Omega}p\psi,\\
	& c(p ,q ):=\left[c_0+\frac{\alpha^2}{2\mu^{\mathrm{P}}+\lambda^{\mathrm{P}}}\right]\int_{\Omega}pq+\frac{1}{\xi}\int_{\Omega} \kappa\nabla p\cdot \nabla q,\quad 
	 F(\bv) := -\int_{\Omega} \ff^{\mathrm{P}}\cdot\bv,\quad G(q) := -\frac{\rho}{\xi} \int_{\Omega}\kappa \gg\cdot \nabla q 
	-  \int_{\Omega} s^{\mathrm{P}}q.
\end{align*}
Note that the displacement space $\rH_0(\curl,\Omega)\cap \rH_0(\vdiv,\Omega)$ is 
algebraically and topologically equivalent to $\bV$ if $\Omega$ is a polyhedral
bounded domain with Lipschitz  boundary 
 \cite[Lemma 2.5, Remark 2.7]{girault_book86}. 

The formulation in \eqref{eq:poro-weak-2}
can be also written, more concisely, as
\[
B_{\mathrm{P}}((\bu,\bomega,\phi,p),(\bv,\btheta,\psi,q))=F(\bv) + G(q),
\]
where the multilinear form (now having a subscript P, for \emph{poroelasticity}), is defined as 
\[
B_{\mathrm{P}}((\bu,\bomega,\phi,p),(\bv,\btheta,\psi,q)) := a(\vomega,\vtheta) +b_1(\vtheta,\bu)-b_2(\vtheta,p)+ b_1(\vomega,\bv) + b_2(\vomega,q)  -   c(p ,q).\]


The following result will be useful in the next section. 
\begin{theorem}\label{stab-poro}
For every $(\bu,\bomega,\phi,p)\in\mathbf{H}^1_0(\Omega)\times \mathbf{L}^2(\Omega)\times\mathrm{L}^2(\Omega)\times \mathrm{H}^1(\Omega)$, there exists $(\bv,\btheta,\psi,q)\in\mathbf{H}^1_0(\Omega)\times \mathbf{L}^2(\Omega)\times\mathrm{L}^2(\Omega)\times \mathrm{H}^1(\Omega)$ with $\VERT(\bv,\btheta,\psi,q)\VERT\le C_1\VERT(\bu,\bomega,\phi,p)\VERT$ such that
\[
B_{\mathrm{P}}((\bu,\bomega,\phi,p),(\bv,\btheta,\psi,q))\ge C_2 \VERT(\bu,\bomega,\phi,p)\VERT^2,
\]
where 
\begin{align*}
\VERT(\bv,\btheta,\psi,q)\VERT^2 & :=\mu^{\mathrm{P}}(\|\curl \bv\|_{0,\Omega}^2+\|\vdiv \bv\|_{0,\Omega}^2)+\|\btheta\|_{0,\Omega}^2+\frac{1}{\mu^{\mathrm{P}}}\|\psi_0\|_{0,\Omega}^2+\frac{1}{2\mu^{\mathrm{P}}+\lambda^{\mathrm{P}}}\|\psi\|_{0,\Omega}^2 \\
& \qquad \qquad \qquad \qquad +\left(c_0+\frac{\alpha^2}{2\mu^{\mathrm{P}}+\lambda^{\mathrm{P}}}\right)\|q\|_{0,\Omega}^2+\|\frac{\kappa}{\xi} \nabla q\|_{0,\Omega}^2.
\end{align*}
\end{theorem}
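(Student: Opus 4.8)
The plan is to mimic the structure of the proof of Theorem~\ref{WPelasticity}, exploiting the fact that $B_{\mathrm{P}}$ has essentially the same saddle-point structure as $B_{\mathrm{E}}$, with the extra fluid-pressure block $-c(p,q)$ attached. We work with the grouped variable $\vomega=(\bomega,\phi)$ so that the pair $(\vomega,\bu)$ plays the role of the elasticity triple $(\bu,\bomega,p)$: the forms $a(\vomega,\vtheta)$ and $b_1(\vtheta,\bu)$ are precisely the elasticity structure (rotation $L^2$-term, $\curl$-coupling, and $\vdiv$-coupling through the total pressure $\phi$). I would first record the inf-sup condition for $b_1$: for every $\psi_0\in L^2_0(\Omega)$ there is $\bv_0\in\bH^1_0(\Omega)$ with $(\psi_0,\vdiv\bv_0)_{0,\Omega}\ge C_\Omega(\mu^{\mathrm{P}})^{-1}\|\psi_0\|_{0,\Omega}^2$ and $(\mu^{\mathrm{P}})^{1/2}\|\bv_0\|_{1,\Omega}\le(\mu^{\mathrm{P}})^{-1/2}\|\psi_0\|_{0,\Omega}$, exactly as in the elasticity case.

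The key steps, in order. First, the diagonal test choice: take $\bv=-\bu$, $\btheta=\bomega$, $\psi=\phi$, $q=-p$. A direct computation cancels the off-diagonal couplings $b_1(\vtheta,\bu)+b_1(\vomega,\bv)$ and $-b_2(\vtheta,p)+b_2(\vomega,q)$ (the latter pair cancels because $b_2$ is symmetric in its roles here once the sign on $q$ is chosen correctly), leaving
\[
B_{\mathrm{P}}((\bu,\bomega,\phi,p),(-\bu,\bomega,\phi,-p))=\|\bomega\|_{0,\Omega}^2+\frac{1}{2\mu^{\mathrm{P}}+\lambda^{\mathrm{P}}}\|\phi\|_{0,\Omega}^2+c(p,p),
\]
and $c(p,p)=\bigl(c_0+\tfrac{\alpha^2}{2\mu^{\mathrm{P}}+\lambda^{\mathrm{P}}}\bigr)\|p\|_{0,\Omega}^2+\tfrac1\xi\|\kappa^{1/2}\nabla p\|_{0,\Omega}^2$, which already controls the $p$-part of the norm and the full $\phi$-part. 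Second, recover control of $\mu^{\mathrm{P}}\|\curl\bu\|_{0,\Omega}^2+\mu^{\mathrm{P}}\|\vdiv\bu\|_{0,\Omega}^2$ by testing with $\btheta=-\sqrt{\mu^{\mathrm{P}}}\curl\bu$, $\psi=\mu^{\mathrm{P}}\vdiv\bu$, $\bv=\cero$, $q=0$, picking up $\tfrac{\mu^{\mathrm{P}}}2\|\curl\bu\|^2+\tfrac{\mu^{\mathrm{P}}}2\|\vdiv\bu\|^2$ against lower-order spillover into $\|\bomega\|^2$, $\|\phi\|^2$, and also a cross term $b_2(\vtheta,p)$ with $\vtheta=(-\sqrt{\mu^{\mathrm{P}}}\curl\bu,\mu^{\mathrm{P}}\vdiv\bu)$ that produces $\mu^{\mathrm{P}}\vdiv\bu\cdot p$, absorbed by Young's inequality against the already-controlled $\|p\|^2$ and $\|\vdiv\bu\|^2$ terms. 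Third, recover $\tfrac{1}{\mu^{\mathrm{P}}}\|\phi_0\|_{0,\Omega}^2$ (the mean-zero part of the total pressure) using the inf-sup test function: $\bv=\bv_0$ associated with $\phi_0$, all other components zero; this yields $\ge C_\Omega(\mu^{\mathrm{P}})^{-1}\|\phi_0\|^2$ minus a spillover $\sqrt{\mu^{\mathrm{P}}}(\bomega,\curl\bv_0)$ bounded via Young by $\tfrac\epsilon2\|\bomega\|^2+\tfrac{1}{2\epsilon}\tfrac1{\mu^{\mathrm{P}}}\|\phi_0\|^2$, plus a term $b_2(\vomega,\cdot)$-type contribution from $\phi\,\vdiv\bv_0$ which is again Young-absorbed. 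Finally, form the combined test function $(\bv,\btheta,\psi,q)=(-\bu+\delta_1\bv_0,\ \bomega-\delta_2\sqrt{\mu^{\mathrm{P}}}\curl\bu,\ \phi+\delta_2\mu^{\mathrm{P}}\vdiv\bu,\ -p)$, expand $B_{\mathrm{P}}$ by linearity into the three pieces above, and choose $\epsilon,\delta_1,\delta_2$ small enough (e.g. $\epsilon=1/C_\Omega$, $\delta_1=1/(2\epsilon)$, $\delta_2=1/2$ as in the elasticity proof, shrinking if necessary to absorb the extra $\phi$- and $p$-cross terms) so the quadratic form dominates a fixed multiple of $\VERT(\bu,\bomega,\phi,p)\VERT^2$. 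The continuity bound $\VERT(\bv,\btheta,\psi,q)\VERT\le C_1\VERT(\bu,\bomega,\phi,p)\VERT$ follows from the triangle inequality together with $\|\sqrt{\mu^{\mathrm{P}}}\curl\bu\|=\sqrt{\mu^{\mathrm{P}}}\|\curl\bu\|$, $\|\mu^{\mathrm{P}}\vdiv\bu\|^2/(2\mu^{\mathrm{P}}+\lambda^{\mathrm{P}})\le\mu^{\mathrm{P}}\|\vdiv\bu\|^2$, and the inf-sup bound on $\bv_0$, exactly as in the closing line of Theorem~\ref{WPelasticity}.

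The main obstacle is bookkeeping the extra cross terms that did not appear in the pure-elasticity case, namely the $b_2$-couplings between $\phi$ (or its surrogates $\mu^{\mathrm{P}}\vdiv\bu$, $\phi_0$) and the fluid pressure $p$: when testing with the $\curl\bu/\vdiv\bu$ surrogate or the inf-sup function $\bv_0$, the $b_2(\vtheta,p)$ and $b_2(\vomega,q)$ terms generate products like $\tfrac{\alpha}{2\mu^{\mathrm{P}}+\lambda^{\mathrm{P}}}\,p\,\mu^{\mathrm{P}}\vdiv\bu$ that must be controlled Lamé-robustly. The trick is that each such product carries a factor $\tfrac{\alpha}{2\mu^{\mathrm{P}}+\lambda^{\mathrm{P}}}$, and Young's inequality splits it into a piece $\lesssim\bigl(c_0+\tfrac{\alpha^2}{2\mu^{\mathrm{P}}+\lambda^{\mathrm{P}}}\bigr)\|p\|^2$ (already controlled by $c(p,p)$) and a piece $\lesssim\tfrac{\mu^{\mathrm{P}}}{(2\mu^{\mathrm{P}}+\lambda^{\mathrm{P}})}\|\vdiv\bu\|^2\le\mu^{\mathrm{P}}\|\vdiv\bu\|^2$ (controlled by the $\curl\bu/\vdiv\bu$ step), so robustness is preserved precisely because $2\mu^{\mathrm{P}}+\lambda^{\mathrm{P}}\ge\mu^{\mathrm{P}}$ and $\alpha^2/(2\mu^{\mathrm{P}}+\lambda^{\mathrm{P}})$ appears in the norm. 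The limit $c_0\to0$ is handled transparently since $c_0$ enters the norm only through the coefficient $c_0+\tfrac{\alpha^2}{2\mu^{\mathrm{P}}+\lambda^{\mathrm{P}}}$, which stays bounded below away from the incompressible limit; the $\phi_0$-term provides the compensating control there, exactly as $\|q_0\|$ did in Theorem~\ref{WPelasticity}.
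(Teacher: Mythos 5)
Your overall strategy --- the inf-sup function $\bv_0$ for the mean-zero part of $\phi$, the diagonal test $(-\bu,\bomega,\phi,-p)$, the surrogate test $(\cero,-\sqrt{\mu^{\mathrm{P}}}\curl\bu,\mu^{\mathrm{P}}\vdiv\bu,0)$, and the final $\delta_1,\delta_2$-combination --- is exactly the paper's. However, there is a genuine computational error at the first key step. You claim that with $\vtheta=\vomega$ and $q=-p$ the two $b_2$-terms cancel, leaving
\[
B_{\mathrm{P}}((\bu,\bomega,\phi,p),(-\bu,\bomega,\phi,-p))=\|\bomega\|_{0,\Omega}^2+\tfrac{1}{2\mu^{\mathrm{P}}+\lambda^{\mathrm{P}}}\|\phi\|_{0,\Omega}^2+c(p,p).
\]
They do not cancel: $B_{\mathrm{P}}$ contains $-b_2(\vtheta,p)+b_2(\vomega,q)$, and with $\vtheta=\vomega$, $q=-p$ both terms equal $-\frac{\alpha}{2\mu^{\mathrm{P}}+\lambda^{\mathrm{P}}}\int_\Omega p\,\phi$, so they \emph{add}, producing the extra contribution $-\frac{2\alpha}{2\mu^{\mathrm{P}}+\lambda^{\mathrm{P}}}(p,\phi)_{0,\Omega}$ that appears explicitly in the paper's computation. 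Cancellation would require $q=+p$, but that flips the sign of $-c(p,q)$ and destroys coercivity of the fluid block; the sign choice $q=-p$ is forced, and the doubling of the $b_2$-coupling is the price of the unsymmetric structure of $B_{\mathrm{P}}$.

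This is not a cosmetic slip: $-\frac{2\alpha}{2\mu^{\mathrm{P}}+\lambda^{\mathrm{P}}}(p,\phi)_{0,\Omega}$ is precisely the delicate total-pressure/fluid-pressure coupling that makes the Biot case harder than the elasticity case, and it must be absorbed Lam\'e-robustly into the available $\frac{1}{2\mu^{\mathrm{P}}+\lambda^{\mathrm{P}}}\|\phi\|_{0,\Omega}^2$ and $\bigl(c_0+\frac{\alpha^2}{2\mu^{\mathrm{P}}+\lambda^{\mathrm{P}}}\bigr)\|p\|_{0,\Omega}^2$ contributions when the pieces are recombined. Your ``main obstacle'' paragraph does anticipate $b_2$-cross terms, but only those generated by the surrogate test (products of $\mu^{\mathrm{P}}\vdiv\bu$ against $p$); the dominant cross term from the diagonal test is asserted to vanish and is never treated, so the claimed lower bound at that step is false and the subsequent bookkeeping does not account for it. The remaining steps (control of $\curl\bu$, $\vdiv\bu$, $\phi_0$, and the continuity bound $\VERT(\bv,\btheta,\psi,q)\VERT\le C_1\VERT(\bu,\bomega,\phi,p)\VERT$) match the paper.
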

\begin{proof}
Analogously as in the proof of Theorem~\ref{WPelasticity},
we have that there exists $\bv_0\in\mathbf{H}^1_0(\Omega)$ such that 
\[
B_{\mathrm{P}}((\bu,\bomega,\phi,p),(\bv_0,\cero,0,0))\ge \frac{C_{\Omega}}{\mu^{\mathrm{P}}}\|\phi_0\|_{0,\Omega}^2-\sqrt{\mu^{\mathrm{P}}}(\bomega,\curl\bv_0)\ge\left(C_{\Omega}-\frac{1}{2\epsilon}\right)\frac{1}{\mu^{\mathrm{P}}}\|\phi_0\|_{0,\Omega}^2-\frac{\epsilon}{2}\|\bomega\|_{0,\Omega}^2.
\]
First we take $\bv=-\bu$, $\btheta=\bomega$, $\psi=\phi$ and $q=-p$. Consequently, 
\begin{align*}
B_{\mathrm{P}}((\bu,\bomega,\phi,p),(-\bu,\bomega,\phi,-p))&= \|\bomega\|_{0,\Omega}^2+(2\mu^{\mathrm{P}}+\lambda^{\mathrm{P}})^{-1}\|\phi\|_{0,\Omega}^2-2\alpha(2\mu^{\mathrm{P}}+\lambda^{\mathrm{P}})^{-1}(p,\phi)_{0,\Omega}\nonumber\\
&\quad +(c_0+\alpha^2(2\mu^{\mathrm{P}}+\lambda^{\mathrm{P}})^{-1})\|p\|_{0,\Omega}^2+\|\kappa/\xi\nabla p\|_{0,\Omega}^2.
\end{align*} 
Next, we choose $\bv=\cero$, $\btheta=-\sqrt{\mu^{\mathrm{P}}} \curl\bu$, $\psi=\mu^{\mathrm{P}}\vdiv\bu$ and $q=0$, and therefore 
\begin{align*}
& B_{\mathrm{P}}((\bu,\bomega,\phi,p),(\cero,-\sqrt{\mu^{\mathrm{P}}}\curl\bu,\mu^{\mathrm{P}}\vdiv\bu,0))\\
&\quad =\mu^{\mathrm{P}}\|\curl\bu\|_{0,\Omega}^2+\mu^{\mathrm{P}}\|\vdiv\bu\|_{0,\Omega}^2 -\sqrt{\mu^{\mathrm{P}}}(\bomega,\curl\bu)+\mu^{\mathrm{P}}(2\mu^{\mathrm{P}}+\lambda^{\mathrm{P}})^{-1}(\phi, \vdiv\bu)-\alpha\mu^{\mathrm{P}}(2\mu^{\mathrm{P}}+\lambda^{\mathrm{P}})^{-1} (p,\vdiv\bu)\nonumber\\
&\quad \ge \frac{\mu^{\mathrm{P}}}{2}\|\curl\bu\|_{0,\Omega}^2+\left(1-\frac{\mu^{\mathrm{P}}}{(2\mu^{\mathrm{P}}+\lambda^{\mathrm{P}})}\right)\mu^{\mathrm{P}}\|\vdiv\bu\|_{0,\Omega}^2- \frac{1}{2}\|\bomega\|_{0,\Omega}^2- \frac{\alpha^2}{2(2\mu^{\mathrm{P}}+\lambda^{\mathrm{P}})}\|p\|_{0,\Omega}^2-\frac{1}{2(2\mu^{\mathrm{P}}+\lambda^{\mathrm{P}})}\|\phi\|_{0,\Omega}^2.
\end{align*}
Finally, we can take $\bv=-\bu+\delta_1 \bv_0$, $\btheta=\bomega-\delta_2\sqrt{\mu^{\mathrm{P}}}\curl\bu$, $\psi=\phi+\delta_2\mu^{\mathrm{P}} \vdiv\bu$ and $q=-p$, to obtain
\begin{align}
B_{\mathrm{P}}((\bu,\bomega,\phi,p),(\bv,\btheta,\psi,q))&=B_{\mathrm{P}}((\bu,\bomega,\phi,p),(-\bu,\bomega,\phi,-p))+\delta_1 B_{\mathrm{P}}((\bu,\bomega,\phi,p),(\bv_0,\cero,0,0))\nonumber\\
&+\delta_2 B_{\mathrm{P}}((\bu,\bomega,\phi,p),(\cero,-\sqrt{\mu^{\mathrm{P}}}\curl\bu,\mu^{\mathrm{P}}\vdiv\bu,0))\nonumber\\
&\ge \left(1-\frac{\delta_1\epsilon}{2}-\frac{\delta_2}{2}\right)\|\bomega\|_{0,\Omega}^2+\frac{\mu^{\mathrm{P}}\delta_2}{2}\|\curl\bu\|_{0,\Omega}^2+\delta_1\left(1-\frac{\mu^{\mathrm{P}}}{(2\mu^{\mathrm{P}}+\lambda^{\mathrm{P}})}\right)\|\vdiv\bu\|_{0,\Omega}^2\nonumber\\
&\quad+\left(C_{\Omega}-\frac{1}{2\epsilon}\right)\frac{\delta_1}{\mu^{\mathrm{P}}}\|\phi_0\|_{0,\Omega}^2+\frac{1}{2\mu^{\mathrm{P}}+\lambda^{\mathrm{P}}}\left(\frac{1}{2}-\frac{\delta_2}{2}\right)\|\phi\|_{0,\Omega}^2+\|\kappa/\xi \nabla p\|_{0,\Omega}^2\nonumber\\
&\quad+\left(c_0+\frac{\alpha^2}{(2\mu^{\mathrm{P}}+\lambda^{\mathrm{P}})}\left(\frac{1}{2}-\frac{\delta_2}{2}\right)\right)\|p\|_{0,\Omega}^2.\nonumber
\end{align}
Choosing $\epsilon =1/C_{\Omega}$, $\delta_1 = 1/2\epsilon$ and $\delta_2=1/2$, we have
\begin{align*}
B_{\mathrm{P}}((\bu,\bomega,\phi,p),(\bv,\btheta,\psi,q))\ge \min\left\{\frac{C_{\Omega}^2}{4},\frac{1}{4}\right\} \VERT(\bu,\bomega,\phi,p)\VERT^2.
\end{align*}
And from that, the following estimate completes the proof 
\[
\VERT(\bv,\btheta,\psi,q)\VERT^2= \VERT(-\bu+\delta_1 \bv_0,\bomega-\delta_2\sqrt{\mu^{\mathrm{P}}}\curl\bu,\phi+\delta_2\mu^{\mathrm{P}} \vdiv\bu,-p)\VERT^2 \le 2 \VERT(\bu,\bomega,\phi,p)\VERT^2.
\]
\end{proof}

\subsection{Discrete spaces and Galerkin formulation}

With the same notation as in Section~\ref{sectdiscrete1},
%
we specify 
finite-dimensional 
for 
displacement, fluid pressure,  rotations, and total 
 pressure; as follows 
\begin{align}	
&\bV_h:=\{\bv_h \in \mathbf{C}(\overline{\Omega})\cap\bV: \bv_h|_{K} \in \bbP_{k+1}(K)^d,\ \forall K\in \cT_h\},\quad 
\nonumber	
\rQ_h:=\{q_h\in\mathrm{C}(\overline{\Omega})\cap\rQ: q_h|_K\in\bbP_{k+1}(K),\ \forall K\in\cT_{h}\},\\
\label{fe-spaces-porous}	
&\bW_h:=\{\btheta_h\in\mathbf{L}^2(\Omega): \btheta_h|_K\in\bbP_k(K)^d,\ \forall K\in\cT_{h}\},\quad
\rZ_h:=\{\psi_h\in\mathrm{L}^2(\Omega): \psi_h|_K\in\bbP_k(K),\ \forall K\in\cT_{h}\}.
\end{align}
Then the discrete formulation consists in finding $(\bu_h,\bomega_h,\phi_h,p_h)\in \bV_h\times\bW_h\times\rZ_h\times\rQ_h$ such that
 \begin{align}\label{weakdisP}
 B_{\mathrm{P}}((\bu_h,\bomega_h,\phi_h,p_h),(\bv,\btheta,\psi,q))=F(\bv) + G(q),
 \end{align}
 for all $(\bv,\btheta, \psi, q)\in \bV_h\times\bW_h\times\rZ_h\times\rQ_h$. 
Likewise, for each $k\ge0$, the modified (stabilised) discrete weak formulation of the rotation based poroelasticity
is: find $(\bu_h,\bomega_h,\phi_h,p_h)\in \bV_h\times\bW_h\times\rZ_h\times\rQ_h$ such that
 \begin{equation}\label{weakdisPstab}
 B_{\mathrm{P}}((\bu_h,\bomega_h,\phi_h,p_h),(\bv,\btheta,\psi,q))+\mu^{-1}\sum_{e\in\mathcal{E}(\mathcal{T}_h)}h_e\int_{e}[\![\phi_h]\!][\![\psi]\!]=-(\ff^{\mathrm{P}},\bv)_{0,\Omega}-\frac{\rho}{\xi}(\kappa\gg, \nabla q)_{0,\Omega} -(s^{\mathrm{P}},q)_{0,\Omega},
 \end{equation}
 for all $(\bv,\btheta,\psi,q)\in \bV_h\times\bW_h\times\rZ_h\times\rQ_h$.

The analysis of the continuous and discrete formulations is not found
in \cite{anaya_cmame19,anaya_sisc20}. For sake of completeness, we outline it in the Appendix. 

\subsection{\textit{A posteriori} error analysis}
%
First, we define the poroelastic local error estimator $\estP$  as 
\[
\estP^2:= \frac{h_K^2}{\mu^{\mathrm{P}}}\|\mathbf{R}_1
\|_{0,K}^2+\sum_{e\in\partial K}\frac{h_e}{\mu^{\mathrm{P}}} \|\mathbf{R}_e
\|_{0,e}^2+\|\mathbf{R}_2\|_{0,K}^2+\rho_d\|R_3\|_{0,K}^2
+\rho_1\|R_4\|_{0,K}^2 +\sum_{e\in\partial K} \rho_2\|{R}_e\|_{0,e}^2,
\]
where the elemental residuals are defined as:
\begin{align*}
\mathbf{R}_1 &:= \{\ff_h^{\mathrm{P}}-\sqrt{\mu^{\mathrm{P}}}\curl\bomega_h -\nabla \phi_h\}_K,\quad 
\mathbf{R}_2:= \{\bomega_h-\sqrt{\mu^{\mathrm{P}}}\curl\bu_h\}_K,\\
R_3&:= \{\vdiv{\bu_h}+(2\mu^{P}+\lambda^{\mathrm{P}})^{-1}\phi_h-\alpha(2\mu+\lambda^{\mathrm{P}})^{-1}p_h\}_K,\\
R_4&:=\{s^{\mathrm{P}}_h-(c_0+\alpha^2(2\mu+\lambda)^{-1}p_h+\alpha(2\mu^{\mathrm{P}}+\lambda^{\mathrm{P}})^{-1}\phi_h+\xi^{-1}\vdiv[\kappa(\nabla p_h-\rho \textbf{g})]\}_K,
\end{align*}
and the edge residuals are defined as
\[
\mathbf{R}_e:=\begin{cases}
\frac{1}{2}[\![\sqrt{\mu^{\mathrm{P}}}\bomega_h\times\nn+\phi_h\nn]\!]_e & e\in \mathcal{E}(\mathcal{T}_h)\setminus\Gamma\\
0 & e\in \Gamma
\end{cases}, \quad 
{R}_e:=\begin{cases}
\frac{1}{2}[\![\xi^{-1}\kappa(\nabla p_h-\rho \textbf{g})]\!]_e & e\in \mathcal{E}(\mathcal{T}_h)\setminus\Gamma\\
\xi^{-1}\kappa(\nabla p_h-\rho \textbf{g}) & e\in \Gamma
\end{cases},
\]
%
%
with the scaling constants taken as 
\[
\rho_1:=\min\{(c_0+\alpha^2(2\mu^{\mathrm{P}}+\lambda^{\mathrm{P}})^{-1})^{-1},h_K^2 \xi \kappa^{-1}\},\quad \rho_2:= \xi\kappa^{-1}h_e, \quad \rho_d:= ((\mu^{\mathrm{P}})^{-1}+(2\mu^{\mathrm{P}}+\lambda^{\mathrm{P}})^{-1})^{-1}.
\]
On the other hand, the definition of the poroelastic oscillation term $\UpsilonP$ is as follows:
\[
\UpsilonP^2= h_K^2 (\mu^{\mathrm{P}})^{-1}\|\ff^{\mathrm{P}}-\ff^{\mathrm{P}}_h\|_{0,K}^2+\rho_1 \|s^{\mathrm{P}}-s^{\mathrm{P}}_h\|_{0,K}^2.
\]
Finally,  the global residual error estimator   and   data oscillation terms are, respectively,  
\begin{equation}\label{estosc1}
\Psi^2 :=\sum_{K\in\mathcal{T}_h} \estP^2,\qquad\widehat{\Upsilon}^2 :=\sum_{K\in\mathcal{T}_h}\UpsilonP^2.
\end{equation}

\subsubsection{Reliability} 
%
In this section, we establish reliability of \eqref{estosc1}. The main ingredients are the stability theorem and the interpolation estimate to establish the upper bound. 
\begin{theorem}[Reliability for the Biot problem]
Let $(\bu,\bomega,\phi,p)$ and $(\bu_h,\bomega_h,\phi_h,p_h)$ be the solutions of the weak
formulations (\ref{weak-poro}) and (\ref{weakdisP}) (or (\ref{weakdisPstab})), respectively. Then the following reliability bound holds
\[
\VERT(\bu-\bu_h,\bomega-\bomega_h,\phi-\phi_h,p-p_h)\VERT\le C_{\mathrm{rel}} (\Psi+\widehat{\Upsilon}),
\]
where $C_{\mathrm{rel}}>0$ is a positive constant independent of mesh size and parameters.
\end{theorem}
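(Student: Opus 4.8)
The plan is to mirror exactly the structure of the reliability proof for the elasticity problem (Theorem 2.6), now using the stability result Theorem~\ref{stab-poro} in place of Theorem~\ref{WPelasticity}. First I would observe that the error tuple $(\bu-\bu_h,\bomega-\bomega_h,\phi-\phi_h,p-p_h)$ lies in $\mathbf{H}^1_0(\Omega)\times\mathbf{L}^2(\Omega)\times\mathrm{L}^2(\Omega)\times\mathrm{H}^1(\Omega)$, so Theorem~\ref{stab-poro} supplies a test tuple $(\bv,\btheta,\psi,q)$ with $\VERT(\bv,\btheta,\psi,q)\VERT\le C_1\VERT(\text{error})\VERT$ and
\[
C_2\VERT(\bu-\bu_h,\bomega-\bomega_h,\phi-\phi_h,p-p_h)\VERT^2\le B_{\mathrm{P}}((\bu-\bu_h,\bomega-\bomega_h,\phi-\phi_h,p-p_h),(\bv,\btheta,\psi,q)).
\]
Next I would exploit Galerkin orthogonality: subtracting \eqref{weakdisP} (or the stabilised version \eqref{weakdisPstab}, in which case the extra jump term on $\phi_h$ is handled exactly as the analogous term in the elasticity case and contributes to the edge-residual $\mathbf{R}_e$) from \eqref{weak-poro}, one may replace $\bv$ by $\bv-\bv_h$ and $q$ by $q-q_h$ with $\bv_h=I_h\bv\in\bV_h$, $q_h=I_h q\in\rQ_h$ the Clément interpolants, while $\btheta,\psi$ stay untouched since their spaces are discontinuous and contain piecewise constants. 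This rewrites the right-hand side as a sum of computable residual contributions tested against $\bv-\bv_h$, $q-q_h$, $\btheta$, $\psi$.

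The core of the argument is then the termwise estimation. I would integrate by parts element-by-element in the momentum block $b_1(\vtheta,\bu-\bu_h)+b_1(\vomega-\vomega_h,\bv-\bv_h)$ to expose the volumetric residual $\mathbf{R}_1=\ff_h^{\mathrm{P}}-\sqrt{\mu^{\mathrm{P}}}\curl\bomega_h-\nabla\phi_h$ and the interface jump $\mathbf{R}_e=\tfrac12[\![\sqrt{\mu^{\mathrm{P}}}\bomega_h\times\nn+\phi_h\nn]\!]$; the constitutive relations give $\mathbf{R}_2$ and $R_3$ directly as in Lemmas~\ref{lemE2}--\ref{lemE3}; and the Darcy block $c(p-p_h,q-q_h)$ together with the source term produces, after element-wise integration by parts of $\xi^{-1}\int\kappa\nabla p_h\cdot\nabla(q-q_h)$, the residual $R_4$ (the strong form of \eqref{eqP:mass} tested discretely) and the flux jump $R_e=\tfrac12[\![\xi^{-1}\kappa(\nabla p_h-\rho\gg)]\!]$ across interior faces, plus the boundary contribution $\xi^{-1}\kappa(\nabla p_h-\rho\gg)$ on $\Gamma$ coming from the Neumann condition not being satisfied discretely. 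Each piece is then bounded by Cauchy--Schwarz and the Clément estimates \eqref{clement}, now also in the $\mathrm{H}^1$-norm for the pressure component, with the weights $h_K^2/\mu^{\mathrm{P}}$, $h_e/\mu^{\mathrm{P}}$, $\rho_d$, $\rho_1$, $\rho_2$ inserted precisely so that the dual norm of the data against $\VERT(\bv,\btheta,\psi,q)\VERT$ reproduces $\estP$ and $\UpsilonP$. Summing over $K$, using finite overlap of the Clément patches, yields $B_{\mathrm{P}}(\cdots)\lesssim(\Psi+\widehat{\Upsilon})\VERT(\bv,\btheta,\psi,q)\VERT$, and combining with the two displayed inequalities and $\VERT(\bv,\btheta,\psi,q)\VERT\le C_1\VERT(\text{error})\VERT$ gives the claimed bound.

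The main obstacle is parameter-robust bookkeeping in the Darcy block, specifically getting the weight $\rho_1=\min\{(c_0+\alpha^2(2\mu^{\mathrm{P}}+\lambda^{\mathrm{P}})^{-1})^{-1},\,h_K^2\xi\kappa^{-1}\}$ to appear correctly for the interior residual $R_4$. The subtlety is that $q-q_h$ must be controlled both by the zeroth-order part of the triple norm, $(c_0+\alpha^2(2\mu^{\mathrm{P}}+\lambda^{\mathrm{P}})^{-1})^{1/2}\|q\|_{0,\Omega}$, and by the gradient part $\|\tfrac{\kappa}{\xi}\nabla q\|_{0,\Omega}$ through the Clément bound $h_K^{-1}\|q-q_h\|_{0,K}\lesssim|q|_{1,\omega_K}$; balancing these two routes forces the $\min$. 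One must also verify that $c_0\to 0$ and $\lambda^{\mathrm{P}}\to\infty$ limits are handled — the pressure-mean splitting $\phi=\phi_0+P_m\phi$ enters exactly here, as it did in Theorem~\ref{stab-poro}, and the $(\mu^{\mathrm{P}})^{-1}\|\psi_0\|_{0,\Omega}^2$ term in the norm must be matched by the inf-sup-based part of the residual estimate, which is why I isolate $\bv_0$ in the stability test function. The remaining bounds for the elasticity-type residuals $\mathbf{R}_1,\mathbf{R}_2,R_3,\mathbf{R}_e$ transfer verbatim from the arguments already used in Section~\ref{sec:elast} (with $\mu^{\mathrm{E}}$ replaced by $\mu^{\mathrm{P}}$ and $p$ by $\phi$), so no new ideas are needed there.
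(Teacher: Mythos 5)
Your proposal is correct and follows essentially the same route as the paper: the stability result of Theorem~\ref{stab-poro} applied to the error, Galerkin orthogonality to replace $\bv$ by $\bv-\bv_h$ and $q$ by $q-q_h$ via Cl\'ement interpolation (leaving $\btheta,\psi$ untouched), element-wise integration by parts to expose the volume and edge residuals, and Cauchy--Schwarz with the parameter-dependent weights. The paper's proof is stated very tersely, and your additional discussion of the $\rho_1$-balancing in the Darcy block and the pressure-mean splitting is a faithful elaboration of the steps the paper leaves implicit.
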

\begin{proof}
Since $(\bu-\bu_h,\bomega-\bomega_h,\phi-\phi_h,p-p_h)\in \mathbf{H}^1_0(\Omega)\times
\mathbf{L}^2(\Omega)\times\rL^2(\Omega)\times \mathrm{H}^1(\Omega)$, then   Theorem \ref{stab-poro} implies  
\[
C_2\VERT(\bu-\bu_h,\bomega-\bomega_h,\phi-\phi_h,p-p_h)\VERT^2\le B_{\mathrm{P}}((\bu-\bu_h,\bomega-\bomega_h,\phi-\phi_h,p-p_h),(\bv,\btheta,\psi,q)),
\]  
with $\VERT(\bv,\btheta,\psi,q)\VERT\le C_1 \VERT(\bu-\bu_h,\bomega-\bomega_h,\phi-\phi_h,p-p_h)\VERT$. 
From the definition of $B$, it follows that 
\begin{align*}
& B_{\mathrm{P}}((\bu-\bu_h,\bomega-\bomega_h,\phi-\phi_h,p-p_h),(\bv,\btheta,\psi,q))=-(\ff^{\mathrm{P}}-\ff_h^{\mathrm{P}},\bv-\bv_h)_{0,\Omega}-\rho\xi^{-1}(\kappa\gg, \nabla (q-q_h))_{0,\Omega}\\
&\quad \quad +(s^{\mathrm{P}}-s^{\mathrm{P}}_h,q-q_h)_{0,\Omega}-(\ff^{\mathrm{P}}_h,\bv-\bv_h)_{0,\Omega}+(s^{\mathrm{P}}_h,q-q_h)_{0,\Omega}-B_{\mathrm{P}}((\bu_h,\bomega_h,\phi_h,p_h),(\bv-\bv_h,\btheta,\psi,q-q_h)).
\end{align*}
Finally, applying integration by parts, Cauchy-Schwarz inequality and approximation results, yields 
\[
B_{\mathrm{P}}((\bu-\bu_h,\bomega-\bomega_h,\phi-\phi_h,p-p_h),(\bv,\btheta,\psi,q))\le C (\Psi+\widehat{\Upsilon}) \VERT(\bv,\btheta,\psi,q)\VERT.
\]

\end{proof}

\subsubsection{Efficiency} 
The following lemmas provide   upper bounds for each term defining $\estP$.

\begin{lemma}\label{lemP1}
There holds:
\[
h_K(\mu^{\mathrm{P}})^{-1/2}\|\mathbf{R}_1\|_{0,K}^2\lesssim (\mu^{\mathrm{P}})^{-1/2}h_K\|\ff^{\mathrm{P}}-\ff_h^{\mathrm{P}}\|_{0,K}+(\mu^{\mathrm{P}})^{-1/2}\|\phi-\phi_h\|_{0,K}+\|\bomega-\bomega_h\|_{0,K}.
\]
\end{lemma}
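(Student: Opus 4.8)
The plan is to mirror the argument used for \textbf{Lemma~\ref{lemE1}} in the elasticity section, since $\mathbf{R}_1$ has exactly the same structure here (with $\mu^{\mathrm{E}}$ replaced by $\mu^{\mathrm{P}}$ and $p$ replaced by the rescaled total pressure $\phi$), and the relevant strong equation \eqref{eqP:momentum}, namely $\sqrt{\mu^{\mathrm{P}}}\curl\bomega+\nabla\phi=\ff^{\mathrm{P}}$, plays the role that $\sqrt{\mu^{\mathrm{E}}}\curl\bomega+\nabla p=\ff^{\mathrm{E}}$ played before. Concretely, for each $K\in\cT_h$ I would set $\bzeta|_K := (\mu^{\mathrm{P}})^{-1}h_K^2\mathbf{R}_1 b_K$, which vanishes on $\partial K$, and use the bubble-function estimates \eqref{ele1} to obtain
\[
h_K^2(\mu^{\mathrm{P}})^{-1}\|\mathbf{R}_1\|_{0,K}^2 \lesssim \int_K \mathbf{R}_1\cdot\bzeta.
\]

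Next I would insert the strong momentum equation: since $\ff^{\mathrm{P}}-\sqrt{\mu^{\mathrm{P}}}\curl\bomega-\nabla\phi=\cero$ on $K$, subtracting it from the right-hand side and integrating by parts (using $\bzeta|_{\partial K}=\cero$, so no boundary term appears) gives
\[
h_K^2(\mu^{\mathrm{P}})^{-1}\|\mathbf{R}_1\|_{0,K}^2 \lesssim \int_K(\ff_h^{\mathrm{P}}-\ff^{\mathrm{P}})\cdot\bzeta + \sqrt{\mu^{\mathrm{P}}}\int_K(\bomega-\bomega_h)\cdot\curl\bzeta + \int_K(\phi-\phi_h)\,\vdiv\bzeta.
\]
Then Cauchy--Schwarz on each term, followed by the inverse-type bound
\[
(\mu^{\mathrm{P}})^{1/2}\|\bnabla\bzeta\|_{0,K}+(\mu^{\mathrm{P}})^{1/2}h_K^{-1}\|\bzeta\|_{0,K}\lesssim (\mu^{\mathrm{P}})^{1/2}h_K^{-1}\|\bzeta\|_{0,K}=h_K(\mu^{\mathrm{P}})^{-1/2}\|\mathbf{R}_1\|_{0,K}
\]
(again from \eqref{ele1}), lets me cancel one power of $h_K(\mu^{\mathrm{P}})^{-1/2}\|\mathbf{R}_1\|_{0,K}$ from both sides and arrive at the claimed estimate, with the data-oscillation contribution $(\mu^{\mathrm{P}})^{-1/2}h_K\|\ff^{\mathrm{P}}-\ff_h^{\mathrm{P}}\|_{0,K}$, the pressure contribution $(\mu^{\mathrm{P}})^{-1/2}\|\phi-\phi_h\|_{0,K}$, and the rotation contribution $\|\bomega-\bomega_h\|_{0,K}$.

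There is essentially no genuine obstacle here, since this is a routine localisation/scaling argument identical to Lemma~\ref{lemE1}; the only point requiring a little care is bookkeeping of the $\mu^{\mathrm{P}}$-weights so that the $\sqrt{\mu^{\mathrm{P}}}\|\curl(\bu-\bu_h)\|$-type quantities and the $(2\mu^{\mathrm{P}}+\lambda^{\mathrm{P}})^{-1/2}\|\phi\|$-type quantities appearing in the triple norm $\VERT\cdot\VERT$ of Theorem~\ref{stab-poro} are reproduced with constants independent of the Lam\'e parameters. In particular one should note that the $\phi$-term above is controlled by $(\mu^{\mathrm{P}})^{-1/2}\|\phi_0-\phi_{h,0}\|_{0,K}$ plus the mean-value piece, so that it is dominated by the $(\mu^{\mathrm{P}})^{-1}\|\psi_0\|^2$ and $(2\mu^{\mathrm{P}}+\lambda^{\mathrm{P}})^{-1}\|\psi\|^2$ entries of the norm; but at the level of the local bound stated in the lemma this refinement is not needed and the displayed inequality follows directly.
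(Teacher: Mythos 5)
Your proposal is correct and is essentially the paper's own argument: the paper proves Lemma~\ref{lemP1} simply by citing Lemma~\ref{lemE1}, and your write-up reproduces exactly that bubble-function/localisation proof with $\mu^{\mathrm{E}}\to\mu^{\mathrm{P}}$ and $p\to\phi$, including the correct cancellation of one factor of $h_K(\mu^{\mathrm{P}})^{-1/2}\|\mathbf{R}_1\|_{0,K}$ at the end.
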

\begin{proof}
It follows from Lemma \ref{lemE1}.
\end{proof}
\begin{lemma}\label{lemP2}
There holds:
\begin{align*}
\|\mathbf{R}_2\|_{0,K}\lesssim \|\bomega-\bomega_h\|_{0,K}+\sqrt{\mu^{\mathrm{P}}}\|\curl(\bu-\bu_h)\|_{0,K}.
\end{align*}
\end{lemma}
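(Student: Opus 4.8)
The plan is to follow verbatim the argument of Lemma~\ref{lemE2}, since the residual $\mathbf{R}_2$ and its underlying constitutive relation have exactly the same structure in the poroelastic setting. The key observation is that the exact solution $(\bu,\bomega,\phi,p)$ of \eqref{weak-poro} satisfies the pointwise identity \eqref{eqP:omega}, that is $\bomega-\sqrt{\mu^{\mathrm{P}}}\curl\bu=\cero$ in $\Omega$; this is a genuine equation (not merely a variational one), so none of the bubble-function/scaling machinery used for the other terms is needed here.

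Concretely, I would start from the elementwise definition $\mathbf{R}_2=\{\bomega_h-\sqrt{\mu^{\mathrm{P}}}\curl\bu_h\}_K$ and subtract the vanishing quantity $\bomega-\sqrt{\mu^{\mathrm{P}}}\curl\bu$ restricted to $K$, obtaining $\mathbf{R}_2=(\bomega_h-\bomega)-\sqrt{\mu^{\mathrm{P}}}\,\curl(\bu_h-\bu)$ a.e.\ in $K$ for every $K\in\cT_h$. Taking the $\rL^2(K)$-norm, applying the triangle inequality, and recalling that $\sqrt{\mu^{\mathrm{P}}}$ is a fixed constant then yields $\|\mathbf{R}_2\|_{0,K}\le\|\bomega-\bomega_h\|_{0,K}+\sqrt{\mu^{\mathrm{P}}}\,\|\curl(\bu-\bu_h)\|_{0,K}$, which is the claim (in fact with hidden constant equal to $1$).

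There is essentially no obstacle in the estimate itself; the only point worth noting is its role downstream. The two terms on the right-hand side are precisely two of the contributions entering the triple norm $\VERT(\bu-\bu_h,\bomega-\bomega_h,\phi-\phi_h,p-p_h)\VERT$ appearing in Theorem~\ref{stab-poro}, so this bound combines cleanly with Lemma~\ref{lemP1} and the remaining elemental- and edge-residual estimates to give the global efficiency bound for $\Psi$. The parameter-robust weights are already absorbed into the definition of $\estP$, so no extra care with the Lam\'e constants is required at this step.
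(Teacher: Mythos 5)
Your argument is exactly the paper's: the proof of Lemma~\ref{lemP2} is just the proof of Lemma~\ref{lemE2} with $\mu^{\mathrm{E}}$ replaced by $\mu^{\mathrm{P}}$, namely subtracting the pointwise identity $\bomega-\sqrt{\mu^{\mathrm{P}}}\curl\bu=\cero$ from $\mathbf{R}_2$ and applying the triangle inequality. Correct, with the same (trivial) constant.
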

\begin{proof}
It follows from Lemma \ref{lemE2}.
\end{proof}
\begin{lemma}\label{lemP3}
There holds:
\[
\rho_d^{1/2}\|\mathbf{R}_3\|_{0,K}\lesssim \sqrt{\mu^{P}}\|\vdiv(\bu-\bu_h)\|_{0,K}+(2\mu^{\mathrm{P}}+\lambda^{\mathrm{P}})^{-1/2}\|\phi-\phi_h\|_{0,K}+\alpha(2\mu^{\mathrm{P}}+\lambda^{\mathrm{P}})^{-1/2}\|{p}-{p}_h\|_{0,K}.
\]
\end{lemma}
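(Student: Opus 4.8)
The plan is to adapt the bubble-function argument from Lemma~\ref{lemE3} to the poroelastic residual $R_3 := \{\vdiv{\bu_h}+(2\mu^{\mathrm{P}}+\lambda^{\mathrm{P}})^{-1}\phi_h-\alpha(2\mu^{\mathrm{P}}+\lambda^{\mathrm{P}})^{-1}p_h\}_K$. The essential point is that $R_3$ is already a polynomial on $K$ and, because the discrete constitutive equation \eqref{eqP:phi} holds exactly only in the sense tested against $\rZ_h$ (not pointwise), the cleanest route avoids bubble functions altogether: one simply inserts the exact continuous identity $\vdiv\bu+(2\mu^{\mathrm{P}}+\lambda^{\mathrm{P}})^{-1}\phi-\alpha(2\mu^{\mathrm{P}}+\lambda^{\mathrm{P}})^{-1}p=0$ and rewrites
\[
R_3 = \vdiv(\bu_h-\bu)+(2\mu^{\mathrm{P}}+\lambda^{\mathrm{P}})^{-1}(\phi_h-\phi)-\alpha(2\mu^{\mathrm{P}}+\lambda^{\mathrm{P}})^{-1}(p_h-p).
\]
Then a triangle inequality in $L^2(K)$ gives three terms, which I bound separately.

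Second, I would pull out the scaling factor $\rho_d^{1/2} = ((\mu^{\mathrm{P}})^{-1}+(2\mu^{\mathrm{P}}+\lambda^{\mathrm{P}})^{-1})^{-1/2}$ and check that each resulting prefactor matches the norms appearing in $\VERT\cdot\VERT$ from Theorem~\ref{stab-poro}. For the divergence term, $\rho_d^{1/2}\le (2\mu^{\mathrm{P}}+\lambda^{\mathrm{P}})^{1/2}\le (3\mu^{\mathrm{P}}+\ldots)$; more carefully, since $\rho_d \le \min\{\mu^{\mathrm{P}}, 2\mu^{\mathrm{P}}+\lambda^{\mathrm{P}}\}$, we get $\rho_d^{1/2}\|\vdiv(\bu_h-\bu)\|_{0,K}\lesssim \sqrt{\mu^{\mathrm{P}}}\|\vdiv(\bu-\bu_h)\|_{0,K}$, matching the first term on the right of the claim. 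For the total-pressure term, $\rho_d^{1/2}(2\mu^{\mathrm{P}}+\lambda^{\mathrm{P}})^{-1}\le (2\mu^{\mathrm{P}}+\lambda^{\mathrm{P}})^{-1/2}$ because $\rho_d\le 2\mu^{\mathrm{P}}+\lambda^{\mathrm{P}}$, yielding $(2\mu^{\mathrm{P}}+\lambda^{\mathrm{P}})^{-1/2}\|\phi-\phi_h\|_{0,K}$. The analogous estimate with the extra factor $\alpha$ handles the fluid-pressure term, giving $\alpha(2\mu^{\mathrm{P}}+\lambda^{\mathrm{P}})^{-1/2}\|p-p_h\|_{0,K}$. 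Summing the three bounds produces exactly the stated inequality.

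The main obstacle — really a bookkeeping subtlety rather than a genuine difficulty — is making sure the scaling weight $\rho_d$ is used with the correct sign of exponent in each of the three estimates, since $\rho_d$ is a harmonic-type combination and one must repeatedly invoke $\rho_d \le \min\{\mu^{\mathrm{P}},\,2\mu^{\mathrm{P}}+\lambda^{\mathrm{P}}\}$ (equivalently $\rho_d^{-1} \ge \max\{(\mu^{\mathrm{P}})^{-1},(2\mu^{\mathrm{P}}+\lambda^{\mathrm{P}})^{-1}\}$) in the right direction. One also needs the elementary fact that $\|\vdiv(\bu-\bu_h)\|_{0,K}\le \|\curl(\bu-\bu_h)\|_{0,K}+\|\vdiv(\bu-\bu_h)\|_{0,K}$ is already controlled by $\VERT(\bu-\bu_h,\cdots)\VERT$, so no further interpolation or bubble machinery is required here; this is why the proof of this lemma is genuinely just a triangle inequality plus three scalar comparisons, and the paper can legitimately dispatch it in a line or two by analogy with Lemma~\ref{lemE3}.
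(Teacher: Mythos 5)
Your proposal is correct and follows essentially the same route as the paper: insert the exact identity $\vdiv\bu+(2\mu^{\mathrm{P}}+\lambda^{\mathrm{P}})^{-1}\phi-\alpha(2\mu^{\mathrm{P}}+\lambda^{\mathrm{P}})^{-1}p=0$, apply the triangle inequality, and absorb the weight via $\rho_d\le\min\{\mu^{\mathrm{P}},\,2\mu^{\mathrm{P}}+\lambda^{\mathrm{P}}\}$. No bubble functions are needed here, exactly as you observe.
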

\begin{proof}
Using the expression $\vdiv \bu+(2\mu^{\mathrm{P}}+\lambda^{\mathrm{P}})^{-1}\phi-\alpha(2\mu^{\mathrm{P}}+\lambda^{\mathrm{P}})^{-1}p=0$, we have
\begin{align*}
\rho_d^{1/2}\|\mathbf{R}_3\|_{0,K}&=\rho_d^{1/2}\|\vdiv{\bu}_h+(2\mu^{\mathrm{P}}+\lambda^{\mathrm{P}})^{-1}\phi_h-\alpha(2\mu^{\mathrm{P}}+\lambda^{\mathrm{P}})^{-1}p_h\|_{0,K}\\
&\lesssim \sqrt{\mu^{P}}\|\vdiv(\bu-\bu_h)\|_{0,K}+(2\mu^{\mathrm{P}}+\lambda^{\mathrm{P}})^{-1/2}\|\phi-\phi_h\|_{0,K}+\alpha(2\mu^{\mathrm{P}}+\lambda^{\mathrm{P}})^{-1/2}\|{p}-{p}_h\|_{0,K}.
\end{align*}
\end{proof}
\begin{lemma}\label{lemP4}
There holds:
\begin{align*}
h_K(\mu^{\mathrm{P}})^{-1/2}\|{R}_4\|_{0,K}^2&\lesssim (\rho_1)^{1/2}\|s^{\mathrm{P}}-s^{\mathrm{P}}_h\|_{0,K}+[c_0+\alpha^2(2\mu+\lambda)^{-1}]^{1/2}\|p-p_h\|_{0,K}+(\kappa/\xi)^{1/2}\|\nabla(p-p_h)\|_{0,K}\\
&\quad +(2\mu^{\mathrm{P}}+\lambda^{\mathrm{P}})^{-1/2}\|\phi-\phi_h\|_{0,K}.
\end{align*}
\end{lemma}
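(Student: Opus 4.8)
The plan is to adapt the interior-bubble argument of Lemma~\ref{lemE1} to the residual of the mass-conservation equation~\eqref{eqP:mass}; the new feature, compared with $\mathbf{R}_1$, is the second-order term $\vdiv[\kappa\nabla p_h]$, so one extra integration by parts is required. First I would localise: fix $K\in\cT_h$. The exact pressure $p$ satisfies \eqref{eqP:mass} pointwise a.e.\ on $K$ --- indeed $\vdiv[\kappa\nabla p]\in\rL^2(K)$ follows by testing \eqref{weak-phi} against functions in $\rH^1_0(K)$, all the remaining terms lying in $\rL^2(K)$ --- so subtracting the continuous equation from the definition of $R_4$ gives, on $K$,
\[
R_4 = (s^{\mathrm{P}}_h-s^{\mathrm{P}}) - \big[c_0+\alpha^2(2\mu^{\mathrm{P}}+\lambda^{\mathrm{P}})^{-1}\big](p_h-p) + \alpha(2\mu^{\mathrm{P}}+\lambda^{\mathrm{P}})^{-1}(\phi_h-\phi) + \xi^{-1}\vdiv\big[\kappa\nabla(p_h-p)\big],
\]
the gravity contribution cancelling; under the standing assumptions ($\kappa$ and $\gg$ piecewise polynomial, $s^{\mathrm{P}}_h$ a piecewise-polynomial projection of $s^{\mathrm{P}}$) the left-hand side is itself a piecewise polynomial, so the bubble machinery applies.

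Next I would take $v := \rho_1 R_4\,b_K$, extended by zero outside $K$, so that $v\in\rH^1_0(K)$, and invoke \eqref{ele1} to get $\rho_1\|R_4\|_{0,K}^2\lesssim\rho_1\|b_K^{1/2}R_4\|_{0,K}^2=\int_K R_4\,v$ together with $\|v\|_{0,K}\lesssim\rho_1\|R_4\|_{0,K}$ and $\|\nabla v\|_{0,K}\lesssim\rho_1 h_K^{-1}\|R_4\|_{0,K}$. Substituting the error expression for $R_4$ into $\int_K R_4\,v$ and integrating the term $\xi^{-1}\int_K\vdiv[\kappa\nabla(p_h-p)]\,v$ by parts (no boundary contribution, $v$ vanishing on $\partial K$) gives
\[
\int_K R_4\,v = \int_K(s^{\mathrm{P}}_h-s^{\mathrm{P}})v - \big[c_0+\alpha^2(2\mu^{\mathrm{P}}+\lambda^{\mathrm{P}})^{-1}\big]\!\int_K(p_h-p)v + \alpha(2\mu^{\mathrm{P}}+\lambda^{\mathrm{P}})^{-1}\!\int_K(\phi_h-\phi)v - \xi^{-1}\!\int_K\kappa\nabla(p_h-p)\cdot\nabla v.
\]
Estimating each term by Cauchy--Schwarz, inserting the bubble bounds for $\|v\|_{0,K}$ and $\|\nabla v\|_{0,K}$, and dividing through by $\rho_1^{1/2}\|R_4\|_{0,K}$ then leaves $\rho_1^{1/2}\|R_4\|_{0,K}$ bounded by a sum of terms of the form (coefficient)$\,\times\,\rho_1^{1/2}\,\times\,$(local error).

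The final step is to absorb those coefficients into the parameter-robust weights of the norm of Theorem~\ref{stab-poro}, which is exactly what the choice $\rho_1=\min\{[c_0+\alpha^2(2\mu^{\mathrm{P}}+\lambda^{\mathrm{P}})^{-1}]^{-1},\,h_K^2\xi\kappa^{-1}\}$ is designed for: from $\rho_1\le[c_0+\alpha^2(2\mu^{\mathrm{P}}+\lambda^{\mathrm{P}})^{-1}]^{-1}$ one gets $\rho_1^{1/2}[c_0+\alpha^2(2\mu^{\mathrm{P}}+\lambda^{\mathrm{P}})^{-1}]\le[c_0+\alpha^2(2\mu^{\mathrm{P}}+\lambda^{\mathrm{P}})^{-1}]^{1/2}$ and, since also $\rho_1\le\alpha^{-2}(2\mu^{\mathrm{P}}+\lambda^{\mathrm{P}})$, $\rho_1^{1/2}\,\alpha(2\mu^{\mathrm{P}}+\lambda^{\mathrm{P}})^{-1}\le(2\mu^{\mathrm{P}}+\lambda^{\mathrm{P}})^{-1/2}$; from $\rho_1\le h_K^2\xi\kappa^{-1}$ one gets $\rho_1^{1/2}h_K^{-1}\xi^{-1}\kappa\le(\kappa\xi^{-1})^{1/2}$ for the Darcy term; and $\rho_1^{1/2}$ stays in front of $\|s^{\mathrm{P}}-s^{\mathrm{P}}_h\|_{0,K}$. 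Collecting these estimates yields the asserted bound. The only genuine obstacle here is the bookkeeping: selecting, term by term, the branch of the $\min$ that keeps every constant bounded as $\lambda^{\mathrm{P}}\to\infty$, as $c_0\to0$, and as $\kappa/\xi$ degenerates; the analytic content --- one integration by parts plus the standard bubble estimates \eqref{ele1} --- is routine and parallels Lemma~\ref{lemE1}.
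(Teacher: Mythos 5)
Your proposal is correct and follows essentially the same route as the paper: an interior-bubble test function proportional to $\rho_1 R_4 b_K$, subtraction of the exact mass-conservation equation, one element-wise integration by parts for the $\vdiv[\kappa\nabla(\cdot)]$ term, Cauchy--Schwarz, the bubble estimates \eqref{ele1}, and the two branches of the $\min$ defining $\rho_1$ to absorb the coefficients. Your version is in fact slightly cleaner on the scaling of the test function (the paper writes $(\rho_1)^{-1}$ where its own subsequent computation requires $\rho_1$) and on justifying that the strong equation holds element-wise, but these are presentational, not substantive, differences.
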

\begin{proof}
For each $K\in \mathcal{T}_h$, we can take  
$\bzeta|_K=(\rho_1)^{-1}{R}_4b_K.$ Then, 
invoking (\ref{ele1}), we end up with  
\[
(\rho_1)^{-1}\|{R}_4\|_{0,K}^2\lesssim \int_K {R}_4 ((\rho_1)^{-1}{R}_4b_K) =\int_{K}{R}_4 \bzeta .
\]
Recall that $s-[c_0+\alpha^2(2\mu+\lambda)^{-1}]p+\alpha(2\mu^{\mathrm{P}}+\lambda^{\mathrm{P}})^{-1}\phi+\xi^{-1}\vdiv[\kappa(\nabla p-\rho \gg)]_K=0$. We subtract this from the last term, and then integrate using $\bzeta|_{\partial K}=\cero$, to obtain 
\begin{align*}
(\rho_1)^{-1}\|{R}_4\|_{0,K}^2&\lesssim \int_{K} (s^{\mathrm{P}}_h-s^{\mathrm{P}}) \bzeta +[c_0+\alpha^2(2\mu+\lambda)^{-1}]\int_K (p-p_h) \bzeta +\xi^{-1}\int_K \kappa\nabla(p-p_h)\cdot\bnabla \bzeta \\
&\quad+\alpha(2\mu+\lambda)^{-1}\int_K (\phi-\phi_h) \bzeta.
\end{align*}
Then, Cauchy-Schwarz inequality gives
\begin{align*}
(\rho_1)^{-1}\|{R}_4\|_{0,K}^2\lesssim &((\rho_1)^{1/2}\|s^{\mathrm{P}}-s^{\mathrm{P}}_h\|_{0,K}+[c_0+\alpha^2(2\mu+\lambda)^{-1}]^{1/2}\|p-p_h\|_{0,K}+\xi^{-1/2}\|\kappa^{1/2}\nabla(p-p_h)\|_{0,K}\\
&\quad +(2\mu^{\mathrm{P}}+\lambda^{\mathrm{P}})^{-1/2}\|\phi-\phi_h\|_{0,K})((\kappa/\xi)^{1/2}\|\bnabla \bzeta\|_{0,K}+(\rho_1)^{-1/2})\|\bzeta\|_{0,K}.
\end{align*}
And the proof follows after noting that 
\begin{align*}
(\frac{\kappa}{\xi})^{1/2}\|\bnabla \bzeta\|_{0,K}+(\rho_1)^{-1/2}\|\bzeta\|_{0,K}&\lesssim
(\frac{\kappa}{\xi})^{1/2}h_K^{-1}\| \bzeta\|_{0,K}+\rho_1^{-1/2}\|\bzeta\|_{0,K})\lesssim (\rho_1)^{-1/2} \|\bzeta\|_{0,K}
= (\rho_1)^{1/2}\|\mathbf{R}_4\|_{0,K}.
\end{align*}
\end{proof}
\begin{lemma}\label{lemP5}
There holds:
\begin{align*}
(\sum_{e\in\partial K}h_e(\mu^{\mathrm{P}})^{-1} \|\mathbf{R}_e\|_{0,e}^2)^{1/2}\lesssim \sum_{K\in P_e}((\mu^{\mathrm{P}})^{-1/2}h_K\|\ff^{\mathrm{P}}-\ff_h^{\mathrm{P}}\|_{0,K}+(\mu^{\mathrm{P}})^{-1/2}\|\phi-\phi_h\|_{0,K}+\|\bomega-\bomega_h\|_{0,K}).
\end{align*}
\end{lemma}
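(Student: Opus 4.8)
The plan is to mimic the proof of Lemma~\ref{lemE4} line for line, replacing the elastic material constant $\mu^{\mathrm{E}}$ by $\mu^{\mathrm{P}}$, the pressure $p$ by the total pressure $\phi$, and the momentum residual accordingly. First I would fix an interior edge $e\in\mathcal{E}(\mathcal{T}_h)\setminus\Gamma$ shared by the patch $P_e=K\cup K'$ and set the test function $\bzeta_e:=(\mu^{\mathrm{P}})^{-1}h_e\mathbf{R}_eb_e$, extended by zero outside $P_e$. The edge-bubble estimates \eqref{edgee1} immediately give
\[
\frac{h_e}{\mu^{\mathrm{P}}}\|\mathbf{R}_e\|_{0,e}^2\lesssim \int_e \mathbf{R}_e\cdot\bzeta_e .
\]
Next I would use the fact that the exact fields satisfy $[\![\bomega\times\nn]\!]_e=\cero$ and $[\![\phi\,\nn]\!]_e=\cero$ (since $\bomega,\phi$ have the continuity coming from $\sqrt{\mu^{\mathrm{P}}}\curl\bomega+\nabla\phi=\ff^{\mathrm{P}}\in\bL^2$), so the edge integral equals the jump of the discrete part only, and element-wise integration by parts over the two elements of $P_e$ converts it into volume integrals of $\sqrt{\mu^{\mathrm{P}}}\curl(\bomega_h-\bomega)+\nabla(\phi_h-\phi)$ against $\bzeta_e$ plus the lower-order terms $\sqrt{\mu^{\mathrm{P}}}(\bomega_h-\bomega)\cdot\curl\bzeta_e$ and $(\phi_h-\phi)\,\vdiv\bzeta_e$.

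Then, invoking the continuous momentum equation \eqref{eqP:momentum} in the form $\ff^{\mathrm{P}}-\sqrt{\mu^{\mathrm{P}}}\curl\bomega-\nabla\phi=\cero$ on each $K\in P_e$, the first volume integral is rewritten as $\int_K(\ff_h^{\mathrm{P}}-\ff^{\mathrm{P}})\cdot\bzeta_e+\int_K\mathbf{R}_1\cdot\bzeta_e$. Applying the Cauchy--Schwarz inequality term by term and the scaling bound
\[
(\mu^{\mathrm{P}})^{1/2}\|\bnabla\bzeta_e\|_{0,K}+(\mu^{\mathrm{P}})^{1/2}h_K^{-1}\|\bzeta_e\|_{0,K}\lesssim (\mu^{\mathrm{P}})^{1/2}h_K^{-1}\|\bzeta_e\|_{0,K}=h_e^{1/2}(\mu^{\mathrm{P}})^{-1/2}\|\mathbf{R}_e\|_{0,e},
\]
which follows from \eqref{edgee1} exactly as in Lemma~\ref{lemE4}, one cancels one power of $h_e^{1/2}(\mu^{\mathrm{P}})^{-1/2}\|\mathbf{R}_e\|_{0,e}$ from both sides. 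The term $\int_K\mathbf{R}_1\cdot\bzeta_e$ is controlled by $h_K(\mu^{\mathrm{P}})^{-1/2}\|\mathbf{R}_1\|_{0,K}$ and hence, by Lemma~\ref{lemP1}, by the desired right-hand side involving $\|\ff^{\mathrm{P}}-\ff_h^{\mathrm{P}}\|$, $\|\phi-\phi_h\|$, and $\|\bomega-\bomega_h\|$; alternatively $\|\mathbf{R}_1\|_{0,K}$ may simply be kept and absorbed since $\estP^2$ already contains the volumetric residual term. Summing over the (fixed, shape-regularity-bounded) number of edges $e\in\partial K$ and over $K\in P_e$ yields the stated bound. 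For boundary edges $\mathbf{R}_e=0$ by definition, so there is nothing to prove there.

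The only mild subtlety — the place I would be most careful — is the treatment of the $\mathbf{R}_1$ contribution: unlike the purely elastic case, I want the final bound to be expressed entirely in terms of exact-vs-discrete error norms of $\bu,\bomega,\phi$ (and the data oscillation), so I would invoke Lemma~\ref{lemP1} to re-expand $h_K(\mu^{\mathrm{P}})^{-1/2}\|\mathbf{R}_1\|_{0,K}$ rather than leave the estimator term in place; this is exactly parallel to how Lemma~\ref{lemE4} closes via the already-bounded $\mathbf{R}_1$. Everything else is routine scaling with bubble functions and poses no real obstacle, since the fluid-pressure and total-pressure couplings ($p$, $c_0$, $\kappa$) do not enter the momentum equation and therefore do not appear in $\mathbf{R}_e$ or in this lemma at all.
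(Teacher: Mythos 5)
Your proposal is correct and follows exactly the route the paper takes: the paper's proof of Lemma~\ref{lemP5} is literally ``It readily follows from Lemma~\ref{lemE4},'' and your write-up is precisely that adaptation (edge bubble $\bzeta_e=(\mu^{\mathrm{P}})^{-1}h_e\mathbf{R}_e b_e$, element-wise integration by parts using the zero jumps of the exact $\bomega$ and $\phi$, insertion of the exact momentum equation, Cauchy--Schwarz, and the inverse/scaling bound from \eqref{edgee1}), with the residual $\mathbf{R}_1$ contribution re-expanded via Lemma~\ref{lemP1} just as Lemma~\ref{lemE4} closes via the already-bounded volumetric residual.
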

\begin{proof}
It readily follows from Lemma \ref{lemE4}. 
\end{proof}
\begin{lemma}\label{lemP6}
There holds:
\begin{align*}
(\sum_{e\in\partial K}\rho_2 \|{R}_e\|_{0,e}^2)^{1/2}&\lesssim \sum_{K\in P_e} ((\rho_1)^{1/2}\|s^{\mathrm{P}}-s^{\mathrm{P}}_h\|_{0,K}+[c_0+\alpha^2(2\mu+\lambda)^{-1}]^{1/2}\|p-p_h\|_{0,K}+(\kappa/\xi)^{1/2}\|\nabla(p-p_h)\|_{0,K}\\
&\quad +(2\mu^{\mathrm{P}}+\lambda^{\mathrm{P}})^{-1/2}\|\phi-\phi_h\|_{0,K}).
\end{align*}
\end{lemma}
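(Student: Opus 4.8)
The plan is to mimic the edge-residual estimate of Lemma~\ref{lemE4}, now applied to the Darcy-type flux residual ${R}_e = \frac12[\![\xi^{-1}\kappa(\nabla p_h - \rho\gg)]\!]_e$ instead of the curl/pressure jump. The starting point is the scaling weight: since $\rho_2 = \xi\kappa^{-1}h_e$, the quantity $\rho_2\|{R}_e\|_{0,e}^2$ is dimensionally paired with $(\kappa/\xi)\|\nabla(p-p_h)\|_{0,K}^2$, the permeability-weighted gradient error, so the right-hand side of the claimed bound is exactly what a bubble-function argument should produce. Concretely, for an interior edge $e$ shared by $K,K'\in P_e$, I would set $\bzeta_e := \rho_2\,{R}_e\,b_e$ (extended polynomially off $e$ in the standard way) and use the edge-bubble inverse estimates \eqref{edgee1} to get $\rho_2\|{R}_e\|_{0,e}^2 \lesssim \int_e {R}_e\cdot\bzeta_e$.

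Next I would rewrite $\int_e {R}_e\cdot\bzeta_e$ by inserting the exact flux: because $[\![\xi^{-1}\kappa(\nabla p - \rho\gg)]\!]_e = \cero$ (continuity of the normal Darcy flux for the exact solution) and $[\![\xi^{-1}\kappa(\nabla p_h-\rho\gg)]\!]_e = 2{R}_e$, the edge term equals $\int_e [\![\xi^{-1}\kappa\nabla(p_h-p)]\!]_e\cdot\bzeta_e$. Integrating by parts element-wise over $P_e$ converts this into a sum over $K\in P_e$ of $\int_K \xi^{-1}\kappa\nabla(p_h-p)\cdot\bnabla\bzeta_e$ plus $\int_K \vdiv[\xi^{-1}\kappa\nabla(p_h-p)]\,\bzeta_e$. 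In the second of these I would substitute the PDE \eqref{eqP:mass}, i.e. $\xi^{-1}\vdiv[\kappa(\nabla p - \rho\gg)] = [c_0+\alpha^2(2\mu^{\mathrm{P}}+\lambda^{\mathrm{P}})^{-1}]p - \alpha(2\mu^{\mathrm{P}}+\lambda^{\mathrm{P}})^{-1}\phi - s^{\mathrm{P}}$, so that $\vdiv[\xi^{-1}\kappa\nabla(p_h-p)]$ is expressed through the interior residual $R_4$ together with the continuous-solution terms in $p$, $\phi$ and $s^{\mathrm{P}}$; this introduces $\|{R}_4\|_{0,K}$, which is in turn controlled by Lemma~\ref{lemP4}, plus the genuine error contributions $[c_0+\alpha^2(2\mu+\lambda)^{-1}]^{1/2}\|p-p_h\|_{0,K}$ and $(2\mu^{\mathrm{P}}+\lambda^{\mathrm{P}})^{-1/2}\|\phi-\phi_h\|_{0,K}$, and the oscillation $(\rho_1)^{1/2}\|s^{\mathrm{P}}-s^{\mathrm{P}}_h\|_{0,K}$.

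Then Cauchy--Schwarz on each piece, followed by the bubble-function bounds $\|\bnabla\bzeta_e\|_{0,K}\lesssim h_e^{-1/2}\|{R}_e\|_{0,e}$ and $\|\bzeta_e\|_{0,K}\lesssim h_e^{1/2}\|{R}_e\|_{0,e}$ (with the $\rho_2$ and $\rho_1$ weights tracked carefully, exactly as in the final display of Lemma~\ref{lemP4}), lets me divide through by $(\rho_2\|{R}_e\|_{0,e}^2)^{1/2}$ and arrive at the stated estimate for a single edge; summing over $e\in\partial K$ and using shape-regularity (finite overlap of the patches $P_e$) gives the form quoted in the lemma. For the boundary edges $e\subset\Gamma$ one uses the imposed condition $\kappa\xi^{-1}(\nabla p - \rho\gg)\cdot\nn = 0$ in place of the jump-continuity identity, which makes ${R}_e$ on $\Gamma$ itself the consistency defect and the same argument goes through with a half-patch.

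The main obstacle will be bookkeeping the parameter weights so that the constant is genuinely robust: one must verify that $\rho_2 = \xi\kappa^{-1}h_e$ and $\rho_1 = \min\{(c_0+\alpha^2(2\mu^{\mathrm{P}}+\lambda^{\mathrm{P}})^{-1})^{-1}, h_K^2\xi\kappa^{-1}\}$ interact correctly when the PDE substitution mixes the Darcy term $(\kappa/\xi)\nabla(p-p_h)$ with the reaction term $[c_0+\alpha^2(2\mu+\lambda)^{-1}]p$ — this is precisely why $\rho_1$ is defined as a minimum, so that whichever term dominates locally, the scaling $\rho_1^{-1/2}\|\bzeta_e\|_{0,K} \lesssim (\kappa/\xi)^{1/2}h_K^{-1}\|\bzeta_e\|_{0,K} + \rho_1^{-1/2}\|\bzeta_e\|_{0,K}$ closes with a uniform constant. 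Apart from that weight-chasing, the argument is a direct transcription of Lemma~\ref{lemE4}/Lemma~\ref{lemP4}.
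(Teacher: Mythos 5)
Your proposal follows essentially the same route as the paper: the edge bubble $\bzeta_e=\rho_2 R_e b_e$ with the estimates \eqref{edgee1}, element-wise integration by parts converting the flux jump into the interior residual $R_4$ (controlled as in Lemma~\ref{lemP4}) plus the $p$, $\phi$, and $s^{\mathrm{P}}$ error/oscillation terms, and a final Cauchy--Schwarz with the $\rho_1,\rho_2$ weights. The paper phrases the substitution through the weak form \eqref{weak-poro} rather than the strong equation \eqref{eqP:mass}, but this is the same computation, and your handling of the boundary edges via the no-flux condition matches the intended argument.
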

\begin{proof}
For  $e\in \mathcal{E}(\mathcal{T}_h)$  we can locally choose 
$\bzeta_e =\rho_2{R}_eb_e.$ Then, 
from (\ref{edgee1}), we readily have that 
\begin{align}
\rho_2\|{R}_e\|_{0,e}^2\lesssim \int_e {R}_e\cdot (\rho_2{R}_eb_e)=\int_{e}{R}_e\cdot \bzeta_e.\label{edge11}
\end{align}
And the weak form (\ref{weak-poro}) leads to  
\begin{align*}
\int_{\Omega}(\kappa/\xi)\nabla (p-p_{h})\cdot \nabla q_h&= (-\big[c_0+\frac{\alpha^2}{(2\mu^{\mathrm{P}}+\lambda^{\mathrm{P}})}\big]\int_{\Omega}pq_h + \frac{\alpha}{2\mu^{\mathrm{P}}+\lambda^{\mathrm{P}}}\int_{\Omega}\phi q_h)\\
&\quad+ (\frac{\rho}{\xi}\int_{\Omega}\kappa\gg\cdot \nabla q_h +\int_{\Omega} sq )-\int_{\Omega}(\kappa/\xi)\nabla p_{h}\cdot \nabla q_h, 
\end{align*}
for all $q_h\in V_h$. We can next apply integration by parts 
and choose 
$q_h=\bzeta_e$. Then, from (\ref{edge11}) we arrive at 
\begin{align*}
\rho_2\|{R}_e\|_{0,e}^2\lesssim& \sum_{K\in P_e}\int_K R_4 \bzeta_e+\sum_{K\in P_e}\int_{K}(\kappa/\xi)\nabla (p-p_{h})\cdot \bnabla \bzeta_e
+ \sum_{K\in P_e}\int_K (s^{\mathrm{P}}-s^{\mathrm{P}}_h)\bzeta_e\\
&-\big(c_0+\frac{\alpha^2}{(2\mu^{\mathrm{P}}+\lambda^{\mathrm{P}})}\big)\sum_{K\in P_e}\int_{K}(p-p_h)\bzeta_e + \frac{\alpha}{2\mu^{\mathrm{P}}+\lambda^{\mathrm{P}}}\sum_{K\in P_e}\int_{K}(\phi-\phi_h) \bzeta_e. 
\end{align*}
And the proof is completed after applying Cauchy-Schwarz inequality. 
\end{proof}
\begin{theorem}[Efficiency estimate for the Biot problem]\label{poroeff}
Let $(\bu,\bomega,\phi,p)$ and $(\bu_h,\bomega_h,\phi_h,p_h)$ be the solutions to the formulations (\ref{weak-poro}) and (\ref{weakdisP}) (or (\ref{weakdisPstab})), respectively. Then the following efficiency bound holds.
\[
\Psi\le C_{\mathrm{eff}}(\VERT(\bu-\bu_h,\bomega-\bomega_h,\phi-\phi_h,p-p_h)\VERT+\widehat{\Upsilon}),
\]
where $C_{\mathrm{eff}}>0$ is a constant independent of mesh size and model parameters.
\end{theorem}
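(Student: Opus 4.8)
The plan is to follow verbatim the strategy behind Theorem~\ref{elasteff}: assemble over the mesh the local bounds of Lemmas~\ref{lemP1}--\ref{lemP6}. Recall that $\Psi^2=\sum_{K\in\cT_h}\estP^2$, where $\estP^2$ collects the six contributions attached to $\mathbf{R}_1$, the momentum face residual $\mathbf{R}_e$, $\mathbf{R}_2$, $R_3$, $R_4$, and the Darcy face residual $R_e$. Each of the six lemmas gives --- up to a harmless typographical square on the left-hand side --- an estimate bounding a weighted $L^2$-norm of the corresponding residual (on $K$ or on $\partial K$) by a sum of weighted local norms of the four error components plus a local data-oscillation term. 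First I would rewrite the lemmas in this square-root form, then square them using $(a_1+\cdots+a_m)^2\le m(a_1^2+\cdots+a_m^2)$, and finally sum over $K\in\cT_h$. For the face-based contributions of Lemmas~\ref{lemP5} and~\ref{lemP6}, whose right-hand sides are patch sums over $P_e$, shape-regularity ensures that each element lies in a uniformly bounded number of such patches, so the usual finite-overlap argument gives $\sum_{K}\sum_{K'\in P_e}(\cdots)_{K'}\lesssim\sum_{K}(\cdots)_{K}$.

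After this assembly, the oscillation contributions that remain are exactly $h_K^2(\mu^{\mathrm P})^{-1}\|\ff^{\mathrm P}-\ff_h^{\mathrm P}\|_{0,K}^2$ and $\rho_1\|s^{\mathrm P}-s_h^{\mathrm P}\|_{0,K}^2$, whose sum is $\widehat\Upsilon^2$; and all error contributions are among $\mu^{\mathrm P}\|\curl(\bu-\bu_h)\|_{0,\Omega}^2$, $\mu^{\mathrm P}\|\vdiv(\bu-\bu_h)\|_{0,\Omega}^2$, $\|\bomega-\bomega_h\|_{0,\Omega}^2$, $(\mu^{\mathrm P})^{-1}\|(\phi-\phi_h)_0\|_{0,\Omega}^2$, $(2\mu^{\mathrm P}+\lambda^{\mathrm P})^{-1}\|\phi-\phi_h\|_{0,\Omega}^2$, $(c_0+\alpha^2(2\mu^{\mathrm P}+\lambda^{\mathrm P})^{-1})\|p-p_h\|_{0,\Omega}^2$, and $\|\frac{\kappa}{\xi}\nabla(p-p_h)\|_{0,\Omega}^2$, i.e. precisely the terms of $\VERT(\bu-\bu_h,\bomega-\bomega_h,\phi-\phi_h,p-p_h)\VERT^2$. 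Taking square roots then yields $\Psi\le C_{\mathrm{eff}}(\VERT(\bu-\bu_h,\bomega-\bomega_h,\phi-\phi_h,p-p_h)\VERT+\widehat\Upsilon)$.

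The delicate part is the parameter-robust bookkeeping of the weights, ensuring that no constant depending on $\lambda^{\mathrm P}$, $c_0$, $\kappa$ or $\alpha$ enters $C_{\mathrm{eff}}$. Two points require care. First, Lemmas~\ref{lemP1} and~\ref{lemP5} produce $\phi-\phi_h$ in the $(\mu^{\mathrm P})^{-1/2}$-weighted norm rather than with the weights that appear in the triple norm; here one splits $\phi-\phi_h=(\phi-\phi_h)_0+P_m(\phi-\phi_h)$, uses $\mu^{\mathrm P}\le 2\mu^{\mathrm P}+\lambda^{\mathrm P}$ for the zero-mean part, and controls the mean-value part via the relation $(2\mu^{\mathrm P}+\lambda^{\mathrm P})^{-1}\phi+\vdiv\bu-\alpha(2\mu^{\mathrm P}+\lambda^{\mathrm P})^{-1}p=0$ together with $\bu\in\bH^1_0(\Omega)$ (and its discrete counterpart), which ties $P_m(\phi-\phi_h)$ to $P_m(p-p_h)$. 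Second, in Lemmas~\ref{lemP4} and~\ref{lemP6} the constant $\rho_1=\min\{(c_0+\alpha^2(2\mu^{\mathrm P}+\lambda^{\mathrm P})^{-1})^{-1},h_K^2\xi\kappa^{-1}\}$ must be balanced simultaneously against $(c_0+\alpha^2(2\mu^{\mathrm P}+\lambda^{\mathrm P})^{-1})^{1/2}$ and against $(\kappa/\xi)^{1/2}h_K^{-1}$ on the respective right-hand sides; choosing $\rho_1$ as that minimum is exactly what makes both pairings uniformly bounded, and this is the only case distinction needed. Once these weightings are verified, the remainder is the routine assembly described above.
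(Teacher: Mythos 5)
Your overall route is exactly the paper's: Theorem~\ref{poroeff} is obtained by squaring and summing the local bounds of Lemmas~\ref{lemP1}--\ref{lemP6}, with shape-regularity giving finite overlap of the patches $P_e$ for the face contributions, and your identification of which triple-norm components and which oscillation terms appear on the right-hand side is correct. The paper itself compresses all of this into ``combine Lemmas~\ref{lemP1}--\ref{lemP6}'', so the assembly you describe is the intended argument.

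The one step that would fail is your treatment of the $(\mu^{\mathrm P})^{-1/2}\|\phi-\phi_h\|_{0,K}$ terms produced by Lemmas~\ref{lemP1} and~\ref{lemP5}. You are right that this is the delicate point: the triple norm only carries $(\mu^{\mathrm P})^{-1}\|(\phi-\phi_h)_0\|_{0,\Omega}^2$ together with the weaker weight $(2\mu^{\mathrm P}+\lambda^{\mathrm P})^{-1}$ on the full $\|\phi-\phi_h\|_{0,\Omega}^2$, so the mean-value part needs separate control. But the patch you propose does not deliver parameter robustness: testing the third equation with constants gives $P_m(\phi-\phi_h)=\alpha\,P_m(p-p_h)$, hence $(\mu^{\mathrm P})^{-1}\|P_m(\phi-\phi_h)\|_{0,\Omega}^2=\alpha^2(\mu^{\mathrm P})^{-1}\|P_m(p-p_h)\|_{0,\Omega}^2$, whereas the triple norm controls $p-p_h$ only with the weight $c_0+\alpha^2(2\mu^{\mathrm P}+\lambda^{\mathrm P})^{-1}$; bridging the two costs a factor of order $(2\mu^{\mathrm P}+\lambda^{\mathrm P})/\mu^{\mathrm P}$, which blows up as $\lambda^{\mathrm P}\to\infty$ and so destroys the claimed independence of $C_{\mathrm{eff}}$ from the Lam\'e constants. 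The robust repair is more elementary and local: $\mathbf{R}_1$ and $\mathbf{R}_e$ see $\phi_h$ only through $\nabla\phi_h$ and $[\![\phi_h\nn]\!]_e$, both invariant under adding a global constant, and in the proofs of Lemmas~\ref{lemP1} and~\ref{lemP5} the pressure error enters only through $\int(\phi-\phi_h)\,\vdiv\bzeta$ with $\bzeta$ vanishing on the boundary of $K$ (resp.\ of $P_e$), so that $\int\vdiv\bzeta=0$ and $\phi-\phi_h$ may be replaced by its zero-mean part $(\phi-\phi_h)_0$ at no cost. With that substitution the $(\mu^{\mathrm P})^{-1}$ weight in the triple norm applies directly and your assembly closes. (The same remark is needed for $p-p_h$ in Lemmas~\ref{lemE1} and~\ref{lemE4}; the paper leaves it implicit in both places.)
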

\begin{proof}
It is a direct consequence of combining Lemmas \ref{lemP1}--\ref{lemP6}.
\end{proof}


\section{Rotation-based elasticity-poroelasticity interface problem}\label{sec:interface}
\subsection{Continuous formulation}
Let $\Omega$ be now partitioned into non-overlapping and connected subdomains $\Omega^{\mathrm{E}}$, $\Omega^{\mathrm{P}}$ representing zones composed by the non-pay rock (linearly elastic domain) and a reservoir (poroelastic domain), respectively. We focus on the case where the reservoir is completely surrounded by the elastic subdomain,  such that the interface $\Sigma  = \partial \Omega^{\mathrm{P}} \cap \partial \Omega^{\mathrm{E}}$, coincides with the boundary of the pay zone. We consider that the normal unit vector $\bfitn$ on $\Sigma$ points from $\Omega^{\mathrm{P}}$ to $\Omega^{\mathrm{E}}$. The problem is stated as follows,  
which is as in \cite{anaya_sisc20}, except for the particular scaling used herein 
\begin{subequations}
\begin{align}
    \sqrt{\mu^{\mathrm{P}}}\curl\bomega^{\mathrm{P}}+\nabla \phi^{\mathrm{P}} &= \ff^{\mathrm{P}} & \text{in $\Omega^{\mathrm{P}}$,}  \label{eqP:momentumPE}
    \\
    \bomega^{\mathrm{P}}-\sqrt{\mu^{\mathrm{P}}}\curl \bu^{\mathrm{P}} &= \mathbf{0} & \text{in $\Omega^{\mathrm{P}}$,}  \label{eqP:omegaintPE}
    \\
   (2\mu^{\mathrm{P}}+\lambda^{\mathrm{P}})^{-1} \phi^{\mathrm{P}}+\vdiv\bu^{\mathrm{P}}  -\alpha(2\mu^{\mathrm{P}}+\lambda^{\mathrm{P}})^{-1}p^{\mathrm{P}} &= 0 & \text{in $\Omega^{\mathrm{P}}$,}   \label{eqP:phiPE}
    \\
    \big[c_0+\alpha^2(\mu^{\mathrm{P}}+\lambda^{\mathrm{P}})^{-1}\big]p^{\mathrm{P}} - \alpha(2\mu^{\mathrm{P}}+\lambda^{\mathrm{P}})^{-1}\phi^{\mathrm{P}}- \frac{1}{\xi}\vdiv \big{[}\kappa(\nabla p^{\mathrm{P}} - \rho \boldsymbol{g})\big{]} &= s^{\mathrm{P}} & \text{in $\Omega^{\mathrm{P}}$,}     \label{eqP:massPE}\\
\sqrt{\mu^{\mathrm{E}}}\curl \bomega^{\mathrm{E}}+\nabla p^{\mathrm{E}}&=\ff^{\mathrm{E}} & \text{in $\Omega^{\mathrm{E}}$},\\
\bomega-\sqrt{\mu^{\mathrm{E}}}\curl \bu^{\mathrm{E}} &=0&\text{in $\Omega^{\mathrm{E}}$},\\
\mathrm{div}\bu^{\mathrm{E}}+(2\mu^{\mathrm{E}}+\lambda^{\mathrm{E}})^{-1}p^{\mathrm{E}} &=0 &\text{in $\Omega^{\mathrm{E}}$},\\
\bu^{\mathrm{E}}&=0 &\text{on $\Gamma$},\\
\label{eq:transmission}
\bu^{\mathrm{P}}=\bu^{\mathrm{E}},\quad \sqrt{\mu^{\mathrm{P}}}\bomega^{\mathrm{P}}\times\bfitn+\phi^{\mathrm{P}}\bfitn=\sqrt{\mu^{\mathrm{E}}}\bomega^{\mathrm{E}}\times\bfitn+p^{\mathrm{E}}\bfitn,\quad \frac{\kappa}{\xi} (\nabla p^{\mathrm{P}} - \rho\gg)\cdot\nn  & = 0& \text{on $\Sigma$}.
\end{align}
\end{subequations}

The weak formulation of the rotation-based Biot's poroelasticity in $\Omega^{\mathrm{P}}$ is as follows:
\begin{align*}
-\sqrt{\mu^{\mathrm{P}}}\int_{\Omega^{\mathrm{P}}}\curl\bv^{\mathrm{P}}\cdot \bomega^{\mathrm{P}}+\int_{\Omega^{\mathrm{P}}}\phi^{\mathrm{P}}\mathrm{div}\bv^{\mathrm{P}} -\langle\sqrt{\mu^{\mathrm{E}}}\bomega^{\mathrm{E}}\times\bfitn+p^{\mathrm{E}}\bfitn, \bv^{\mathrm{P}}\rangle_{\Sigma}& = -\int_{\Omega^{\mathrm{P}}}\ff^{\mathrm{P}}\cdot \bv^{\mathrm{P}},  
	\\
\int_{\Omega^{\mathrm{P}}}\bomega^{\mathrm{P}}\cdot \btheta^{\mathrm{P}}-\sqrt{\mu^{\mathrm{E}}}\int_{\Omega^{\mathrm{P}}}\btheta^{\mathrm{P}} \cdot\curl \bu^{\mathrm{P}} &= 0,  	\\
	(2\mu^{\mathrm{P}}+\lambda^{\mathrm{P}})^{-1} \int_{\Omega^{\mathrm{P}}}\phi^{\mathrm{P}}\psi^{\mathrm{P}}+\int_{\Omega^{\mathrm{P}}}\psi^{\mathrm{P}}\vdiv\bu^{\mathrm{P}}  -\alpha(2\mu^{\mathrm{P}}+\lambda^{\mathrm{P}})^{-1}\int_{\Omega^{\mathrm{P}}}p^{\mathrm{P}}\psi^{\mathrm{P}} &= 0, 	\\
	-\big[c_0+\frac{\alpha^2}{(2\mu^{\mathrm{P}}+\lambda^{\mathrm{P}})}\big]\int_{\Omega^{\mathrm{P}}}p^{\mathrm{P}}q^{\mathrm{P}} + \frac{\alpha}{2\mu^{\mathrm{P}}+\lambda^{\mathrm{P}}}\int_{\Omega^{\mathrm{P}}}\phi^{\mathrm{P}} q^{\mathrm{P}}-\int_{\Omega^{\mathrm{P}}} \frac{\kappa}{\xi}\nabla p^{\mathrm{P}}\cdot \nabla q^{\mathrm{P}} 
&= -\frac{\rho}{\xi}\int_{\Omega^{\mathrm{P}}}\kappa\gg\cdot \nabla q^{\mathrm{P}} -\int_{\Omega^{\mathrm{P}}} s^{\mathrm{P}}q^{\mathrm{P}} , 
\end{align*}
for each $(\bv^{\mathrm{P}},\btheta^{\mathrm{P}} ,\psi^{\mathrm{P}} ,q^{\mathrm{P}} )\in \mathbf{H}^1(\Omega^{\mathrm{P}})\times \mathbf{L}^2(\Omega^{\mathrm{P}})\times\mathrm{L}^2(\Omega^{\mathrm{P}})\times \mathrm{H}^1(\Omega^{\mathrm{P}})$. Similarly, for the equations of linear 
elasticity in $\Omega^{\mathrm{E}}$ we get  
\begin{align*}
-\sqrt{\mu^{\mathrm{E}}}\int_{\Omega^{\mathrm{E}}} \curl\bv^{\mathrm{E}}\cdot\bomega^{\mathrm{E}} +\int_{\Omega^{\mathrm{E}}} p^{\mathrm{E}} \vdiv\bv^{\mathrm{E}}+\langle\sqrt{\mu^{\mathrm{E}}}\bomega^{\mathrm{E}}\times\bfitn+p^{\mathrm{E}}\bfitn, \bv^{\mathrm{E}}\rangle_{\Sigma}&=-\int_{\Omega^{\mathrm{E}}}\ff^{\mathrm{E}}\cdot \bv^{\mathrm{E}},\\
\int_{\Omega^{\mathrm{E}}}\bomega^{\mathrm{E}} \cdot\btheta^{\mathrm{E}} -\sqrt{\mu^{\mathrm{E}}}\int_{\Omega^{\mathrm{E}}}\btheta^{\mathrm{E}}\cdot\curl \bu^{\mathrm{E}} &=0,\\
\int_{\Omega}\vdiv\bu^{\mathrm{E}} q^{\mathrm{E}}+(2\mu^{\mathrm{E}}+\lambda^{\mathrm{E}})^{-1}\int_{\Omega^{\mathrm{E}}}p^{\mathrm{E}} q^{\mathrm{E}} &=0,
\end{align*}
for each $(\bv^{\mathrm{E}},\btheta^{\mathrm{E}} ,q^{\mathrm{E}} )\in \mathbf{H}^1_{\Gamma}(\Omega^{\mathrm{E}})\times \mathbf{L}^2(\Omega^{\mathrm{E}})\times\mathrm{L}^2(\Omega^{\mathrm{E}})$, where
$\mathbf{H}^1_{\Gamma}(\Omega^{\mathrm{E}})=\{\bv\in \mathbf{H}^1(\Omega^{\mathrm{E}}):\bv^{\mathrm{E}}=\cero\; \mbox{on}\;\Gamma\}.$ 
We define $\overrightarrow{\bomega}:= \{\bomega^{\mathrm{P}},\phi^{\mathrm{P}},\bomega^{\mathrm{E}},p^{\mathrm{E}}\}$ and write the weak formulation: find 
$(\overrightarrow{\bomega},\bu,p^{\mathrm{P}})\in \bH\times \bV\times \rQ^{\mathrm{P}}$ such that
\begin{align*}
a(\overrightarrow{\bomega},\overrightarrow{\btheta})+b_1(\overrightarrow{\btheta},\bu)-b_2(\overrightarrow{\btheta},p^{\mathrm{P}})&=0 &\forall \overrightarrow{\btheta}\in\bH,\\
b_1(\overrightarrow{\bomega},\bv)&=F(\bv) & \forall \bv\in\bV,\\
b_3(\overrightarrow{\bomega},q^{\mathrm{P}})-c(p^{\mathrm{P}},q^{\mathrm{P}})&=F(q^{\mathrm{P}}) & \forall q\in \rQ^{\mathrm{P}},
\end{align*} 
where $\overrightarrow{\btheta}:= (\btheta^{\mathrm{P}},\psi^{\mathrm{P}},\btheta^{\mathrm{E}},q^{\mathrm{E}})$. We define spaces as 
\begin{align*}
	\mathbf{H}:=\bL^2(\Omega^{\mathrm{P}})\times \rL^2(\Omega^{\mathrm{P}})\times \bL^2(\Omega^{\mathrm{E}})\times \rL^2(\Omega^{\mathrm{E}}),\quad
	\bV:=&\bH_0^1(\Omega), \quad 
	\rQ^{\mathrm{P}}:=\rH^1(\Omega^{\mathrm{P}}),
\end{align*}
and the bilinear forms $a:\mathbf{H}\times\mathbf{H}\to \RR$, 
$b_1:\mathbf{H}\times\bV\to \RR$, $b_2:\mathbf{H}\times\rQ^{\mathrm{P}}\to \RR$, $b_3:\mathbf{H}\times\rQ^{\mathrm{P}}\to\RR$, $c:\rQ^{\mathrm{P}}\times\rQ^{\mathrm{P}}\to \RR$, 
and linear functionals $F:\bV\to\RR$, $G:\rQ^{\mathrm{P}}\to\RR$ 
are specified in the following way
\begin{align*}
	& a(\vomega,\vtheta) := \int_{\Omega^{\mathrm{P}}}\bomega^{\mathrm{P}}\cdot \btheta^{\mathrm{P}} + \frac{1}{2\mu^{\mathrm{P}}+\lambda^{\mathrm{P}}}\int_{\Omega^{\mathrm{P}}}\phi^{\mathrm{P}}\psi^{\mathrm{P}}+\int_{\Omega^{\mathrm{E}}}\bomega^{\mathrm{E}}\cdot \btheta^{\mathrm{E}} + \frac{1}{2\mu^{\mathrm{E}}+\lambda^{\mathrm{E}}}\int_{\Omega^{\mathrm{E}}}p^{\mathrm{E}}q^{\mathrm{E}},\\
	 &b_1(\vtheta,\bv):=-\sqrt{\mu^{\mathrm{P}}}\int_{\Omega^{\mathrm{P}}}\btheta^{\mathrm{P}}\cdot\curl\bv +\int_{\Omega^{\mathrm{P}}}\psi^{\mathrm{P}}\vdiv\bv-\sqrt{\mu^{\mathrm{E}}}\int_{\Omega^{\mathrm{E}}}\btheta^{\mathrm{E}}\cdot\curl\bv +\int_{\Omega^{\mathrm{E}}}p^{\mathrm{E}}\vdiv\bv,\\
	& b_2(\vtheta,p^{\mathrm{P}} ):=\frac{\alpha}{(\lambda^{\mathrm{P}}+2\mu^{\mathrm{P}})}\int_{\Omega^{\mathrm{P}}}p^{\mathrm{P}}\psi^{\mathrm{P}},\qquad b_3(\vomega,q^{\mathrm{P}} ):=\frac{\alpha}{(2\mu^{\mathrm{P}}+\lambda^{\mathrm{P}})}\int_{\Omega^{\mathrm{P}}}q^{\mathrm{P}}\phi^{\mathrm{P}},\\
	& c(p^{\mathrm{P}} ,q^{\mathrm{P}} ):=\left[c_0+\frac{\alpha^2}{(2\mu^{\mathrm{P}}+\lambda^{\mathrm{P}})}\right]\int_{\Omega^{\mathrm{P}}}p^{\mathrm{P}}q^{\mathrm{P}}+\frac{1}{\xi}\int_{\Omega^{\mathrm{P}}} \kappa\nabla p^{\mathrm{P}}\cdot \nabla q^{\mathrm{P}},\\
	&F(\bv) := -\int_{\Omega^{\mathrm{P}}} \ff^{\mathrm{P}}\cdot\bv-\int_{\Omega^{\mathrm{E}}} \ff^{\mathrm{E}}\cdot\bv,\qquad G(q^{\mathrm{P}}) := -\frac{\rho}{\xi} \int_{\Omega^{\mathrm{P}}}\kappa \gg\cdot \nabla q^{\mathrm{P}} 
	-  \int_{\Omega^{\mathrm{P}}} s^{\mathrm{P}}q^{\mathrm{P}}.
\end{align*}
For the forthcoming analysis, we will consider the following $(\mu^{\mathrm{E}},\mu^{\mathrm{P}})-$dependent
norm (see, for instance, \cite[Remark~2.7]{girault_book86} for the case of a single-physics domain) for the displacements:
\begin{align*}
    \|\bv\|_{\bV}^2:=\mu^{\mathrm{P}}\|\curl\bv\|_{0,\Omega^{\mathrm{P}}}^2+\mu^{\mathrm{P}}\|\vdiv\bv\|_{0,\Omega^{\mathrm{P}}}^2+\mu^{\mathrm{E}}\|\curl\bv\|_{0,\Omega^{\mathrm{E}}}^2+\mu^{\mathrm{E}}\|\vdiv\bv\|_{0,\Omega^{\mathrm{E}}}^2,
\end{align*}
and $\mathbf{H}$ will be endowed with the norm
\begin{equation*}
\|\vtheta\|^2_{\mathbf{H}}:=\|\btheta^{\mathrm{P}}\|^2_{0,\Omega^{\mathrm{P}}}+\frac{1}{\mu^{\mathrm{P}}}\|\psi^{\mathrm{P}}_0\|^2_{0,\Omega^{\mathrm{P}}}+\frac{1}{2\mu^{\mathrm{P}}+\lambda^{\mathrm{P}}}\|\psi^{\mathrm{P}}\|^2_{0,\Omega^{\mathrm{P}}}+\|\btheta^{\mathrm{E}}\|^2_{0,\Omega^{\mathrm{E}}}+\frac{1}{\mu^{\mathrm{E}}}\|q^{\mathrm{E}}_0\|^2_{0,\Omega^{\mathrm{E}}}+\frac{1}{2\mu^{\mathrm{E}}+\lambda^{\mathrm{E}}}\|q^{\mathrm{E}}\|^2_{0,\Omega^{\mathrm{E}}}.
\end{equation*}
Now, we write down the compact form of the weak formulation as follows:
\begin{equation}\label{weakEP}
B_{\mathrm{I}}((\overrightarrow{\bomega},\bu,p^{\mathrm{P}}),(\overrightarrow{\btheta},\bv,q^{\mathrm{P}}))=F(\bv)+G(q),
\end{equation}
where the multilinear form now has a subscript I (for \emph{interface} problem), and it is defined as  
\[B_{\mathrm{I}}((\overrightarrow{\bomega},\bu,p^{\mathrm{P}}),(\overrightarrow{\btheta},\bv,q^{\mathrm{P}})):=a(\overrightarrow{\bomega},\overrightarrow{\btheta})+b_1(\overrightarrow{\btheta},\bu)-b_2(\overrightarrow{\btheta},p^{\mathrm{P}})
+b_1(\overrightarrow{\bomega},\bv)+b_3(\overrightarrow{\bomega},q^{\mathrm{P}})-c(p^{\mathrm{P}},q^{\mathrm{P}}).\]
We now turn our attention to the stability estimates. The following theorem will also be very useful in the forthcoming analysis.  
\begin{theorem}
For every $(\overrightarrow{\bomega},\bu,p^{\mathrm{P}})\in \bH\times \bV\times Q^{\mathrm{P}}$, there exits $(\overrightarrow{\btheta},\bv,q^{\mathrm{P}})\in \bH\times \bV\times Q^{\mathrm{P}}$ with $\VERT(\overrightarrow{\btheta},\bv,q^{\mathrm{P}})\VERT\le C_1\VERT(\overrightarrow{\bomega},\bu,p^{\mathrm{P}})\VERT$ such that
\[
B_{\mathrm{I}}((\overrightarrow{\bomega},\bu,p^{\mathrm{P}}),(\overrightarrow{\btheta},\bv,q^{\mathrm{P}}))\ge C_2 \VERT(\overrightarrow{\bomega},\bu,p^{\mathrm{P}})\VERT^2,
\]
where $\VERT(\overrightarrow{\btheta},\bv,q^{\mathrm{P}})\VERT^2:= \|\bv\|^2_{\bV}+\|\overrightarrow{\btheta}\|^2_{\bH}+\left(c_0+\frac{\alpha^2}{(2\mu^{\mathrm{P}}+\lambda^{\mathrm{P}})}\right)\|q^{\mathrm{P}}\|^2_{0,\Omega^{\mathrm{P}}}+\|\kappa/\xi (\nabla q^{\mathrm{P}})\|^2_{0,\Omega^{\mathrm{P}}}$.
\end{theorem}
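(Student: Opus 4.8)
The plan is to extend the arguments of Theorems~\ref{WPelasticity} and~\ref{stab-poro}, running them on the two subdomains simultaneously. A preliminary remark removes most of the interface-specific work: since every admissible $\bv\in\bV=\bH^1_0(\Omega)$ is single-valued across $\Sigma$, the two surface integrals $\langle\sqrt{\mu^{\mathrm{E}}}\bomega^{\mathrm{E}}\times\bfitn+p^{\mathrm{E}}\bfitn,\bv\rangle_{\Sigma}$ coming from the poroelastic and from the elastic momentum balances cancel, so that $b_1$ — and hence $B_{\mathrm{I}}$ — carries no surface term; the transmission conditions on $\Sigma$ are already encoded in the definition of the form. Consequently the only genuinely new ingredient, compared with the two single-physics proofs, is the inf-sup operator for $b_1$ on the composite domain.

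For that step I would use the geometry. Since the pay zone is compactly surrounded by the elastic region, $\partial\Omega^{\mathrm{P}}=\Sigma$ and $\partial\Omega^{\mathrm{E}}=\Gamma\cup\Sigma$, and the $\mathbf{H}$-norm measures the mean-value-zero parts subdomain by subdomain. Hence $\phi^{\mathrm{P}}$ has a mean-zero part $\phi^{\mathrm{P}}_0$ over $\Omega^{\mathrm{P}}$, and there is $\bv^{\mathrm{P}}_0\in\bH^1_0(\Omega^{\mathrm{P}})$ with $(\phi^{\mathrm{P}}_0,\vdiv\bv^{\mathrm{P}}_0)_{0,\Omega^{\mathrm{P}}}\gtrsim(\mu^{\mathrm{P}})^{-1}\|\phi^{\mathrm{P}}_0\|^2_{0,\Omega^{\mathrm{P}}}$ and $(\mu^{\mathrm{P}})^{1/2}\|\bv^{\mathrm{P}}_0\|_{1,\Omega^{\mathrm{P}}}\lesssim(\mu^{\mathrm{P}})^{-1/2}\|\phi^{\mathrm{P}}_0\|_{0,\Omega^{\mathrm{P}}}$; its extension by zero belongs to $\bV$ because $\bv^{\mathrm{P}}_0$ vanishes on $\partial\Omega^{\mathrm{P}}=\Sigma$. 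In the same way $p^{\mathrm{E}}$ has a mean-zero part $p^{\mathrm{E}}_0$ over $\Omega^{\mathrm{E}}$, giving $\bv^{\mathrm{E}}_0\in\bH^1_0(\Omega^{\mathrm{E}})$ (which vanishes on all of $\partial\Omega^{\mathrm{E}}=\Gamma\cup\Sigma$) with the analogous bounds in terms of $\mu^{\mathrm{E}}$, and its extension by zero again lies in $\bV$. Because the supports of $\bv^{\mathrm{P}}_0$ and $\bv^{\mathrm{E}}_0$ are disjoint, $\bv_0:=\bv^{\mathrm{P}}_0+\bv^{\mathrm{E}}_0$ stabilises both mean-zero parts without cross-contamination, each scaled by its own Lam\'e parameter — which is precisely what keeps the final constants independent of $\lambda^{\mathrm{P}},\lambda^{\mathrm{E}}$.

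With this at hand I would take the test tuple $(\overrightarrow{\btheta},\bv,q^{\mathrm{P}})$ as a weighted combination — with small parameters $\delta_1,\delta_2>0$ fixed at the end, as in the earlier proofs — of three choices: (i) the diagonal one, $\overrightarrow{\btheta}=\overrightarrow{\bomega}$, $\bv=-\bu$, $q^{\mathrm{P}}=-p^{\mathrm{P}}$, for which $b_1(\overrightarrow{\btheta},\bu)+b_1(\overrightarrow{\bomega},\bv)=0$, the coupling reduces to $-b_2(\overrightarrow{\btheta},p^{\mathrm{P}})+b_3(\overrightarrow{\bomega},q^{\mathrm{P}})=-\tfrac{2\alpha}{2\mu^{\mathrm{P}}+\lambda^{\mathrm{P}}}(p^{\mathrm{P}},\phi^{\mathrm{P}})_{0,\Omega^{\mathrm{P}}}$, and $a-c$ yields $\|\bomega^{\mathrm{P}}\|^2_{0,\Omega^{\mathrm{P}}}+\|\bomega^{\mathrm{E}}\|^2_{0,\Omega^{\mathrm{E}}}+\tfrac{1}{2\mu^{\mathrm{P}}+\lambda^{\mathrm{P}}}\|\phi^{\mathrm{P}}\|^2_{0,\Omega^{\mathrm{P}}}+\tfrac{1}{2\mu^{\mathrm{E}}+\lambda^{\mathrm{E}}}\|p^{\mathrm{E}}\|^2_{0,\Omega^{\mathrm{E}}}+(c_0+\tfrac{\alpha^2}{2\mu^{\mathrm{P}}+\lambda^{\mathrm{P}}})\|p^{\mathrm{P}}\|^2_{0,\Omega^{\mathrm{P}}}+\tfrac1\xi\|\kappa^{1/2}\nabla p^{\mathrm{P}}\|^2_{0,\Omega^{\mathrm{P}}}$; (ii) the inf-sup choice with $\bv=\bv_0$ and all remaining components set to zero; and (iii) the curl/divergence correction, which in each subdomain $\star\in\{\mathrm{P},\mathrm{E}\}$ uses the rotation component $-\sqrt{\mu^{\star}}\curl\bu$ and the total/elastic-pressure component $\mu^{\star}\vdiv\bu$ (with $q^{\mathrm{P}}=0$) to recover positive multiples of $\mu^{\star}\|\curl\bu\|^2_{0,\Omega^{\star}}$ and $\mu^{\star}\|\vdiv\bu\|^2_{0,\Omega^{\star}}$ at the cost of controllable negative multiples of $\|\bomega^{\star}\|^2$, of $\tfrac{1}{2\mu^{\star}+\lambda^{\star}}\|\,\cdot\,\|^2$, and of $\tfrac{\alpha^2}{2\mu^{\mathrm{P}}+\lambda^{\mathrm{P}}}\|p^{\mathrm{P}}\|^2$. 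Summing these, using the identity $\tfrac{1}{2\mu^{\mathrm{P}}+\lambda^{\mathrm{P}}}\|\phi^{\mathrm{P}}\|^2-\tfrac{2\alpha}{2\mu^{\mathrm{P}}+\lambda^{\mathrm{P}}}(p^{\mathrm{P}},\phi^{\mathrm{P}})+(c_0+\tfrac{\alpha^2}{2\mu^{\mathrm{P}}+\lambda^{\mathrm{P}}})\|p^{\mathrm{P}}\|^2=c_0\|p^{\mathrm{P}}\|^2+\tfrac{1}{2\mu^{\mathrm{P}}+\lambda^{\mathrm{P}}}\|\alpha p^{\mathrm{P}}-\phi^{\mathrm{P}}\|^2$ and absorbing the remaining cross terms by Young's inequality exactly as in Theorem~\ref{stab-poro}, I would obtain $B_{\mathrm{I}}\ge C_2\VERT(\overrightarrow{\bomega},\bu,p^{\mathrm{P}})\VERT^2$ with $C_2$ depending only on $\Omega$ and $\Omega^{\mathrm{P}}$; the companion bound $\VERT(\overrightarrow{\btheta},\bv,q^{\mathrm{P}})\VERT\le C_1\VERT(\overrightarrow{\bomega},\bu,p^{\mathrm{P}})\VERT$ then follows from the triangle inequality together with the two inf-sup estimates $(\mu^{\star})^{1/2}\|\bv^{\star}_0\|_{1,\Omega^{\star}}\lesssim(\mu^{\star})^{-1/2}\|\,\cdot\,\|_{0,\Omega^{\star}}$.

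The hard part is step (ii): exhibiting one displacement in $\bH^1_0(\Omega)$ that stabilises the mean-zero parts of $\phi^{\mathrm{P}}$ and of $p^{\mathrm{E}}$ at once, with the correct and separate $(\mu^{\mathrm{P}})^{-1},(\mu^{\mathrm{E}})^{-1}$ scalings and a constant robust in the Lam\'e coefficients. The subdomain-wise decoupling described above — mean-zero parts are taken on each subdomain, the pay zone is compactly embedded, so the subdomain right inverses of the divergence extend by zero into $\bV$ — is exactly what makes this work, and everything after it is the same bookkeeping as in the two single-physics theorems.
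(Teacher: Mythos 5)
Your proposal is correct and follows essentially the same route as the paper: the same subdomain-wise inf-sup right inverses of the divergence extended by zero across $\Sigma$ and glued into a single $\bv_0\in\bH^1_0(\Omega)$, the same three test choices (diagonal, inf-sup, and the $(-\sqrt{\mu^{\star}}\curl\bu,\mu^{\star}\vdiv\bu)$ correction in each subdomain), combined with weights $\delta_1,\delta_2$ and Young's inequality, and the same concluding norm bound. The only cosmetic difference is your completing-the-square identity for the $\phi^{\mathrm{P}}$--$p^{\mathrm{P}}$ cross term, where the paper instead absorbs it directly in the weighted combination; both yield the same parameter-robust constants.
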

\begin{proof}
Invoking the relevant inf-sup condition, for each $p^{\mathrm{E}}\in\rL^2(\Omega^{\mathrm{E}})$ and $\phi^{\mathrm{P}}\in\rL^2(\Omega^{\mathrm{P}})$,
we can find $\bv_0^{\mathrm{E}}\in \bH^1_0(\Omega^{\mathrm{E}})$ and $\bv_0^{\mathrm{P}}\in\bH^1_0(\Omega^{\mathrm{P}})$ such that
\begin{align*}
&(\vdiv\bv_0^{\mathrm{E}}, p^{\mathrm{E}})_{0,\Omega^{\mathrm{E}}}\ge C_{\Omega^{\mathrm{E}}}/ \mu^{\mathrm{E}}\|p^{\mathrm{E}}_0\|_{0,\Omega^{\mathrm{E}}}^2, \quad \sqrt{\mu^{\mathrm{E}}}\|\bnabla \bv_0^{\mathrm{E}}\|_{0,\Omega^{\mathrm{E}}}\le 1/\sqrt{\mu^{\mathrm{E}}}\|p^{\mathrm{E}}_0\|_{0,\Omega^{\mathrm{E}}},\\
&(\vdiv\bv_0^{\mathrm{P}}, \phi^{\mathrm{P}})_{0,\Omega^{\mathrm{P}}}\ge C_{\Omega^{\mathrm{P}}}/ \mu^{\mathrm{P}}\|\phi^{\mathrm{P}}_0\|_{0,\Omega^{\mathrm{P}}}^2, \quad \sqrt{\mu^{\mathrm{P}}}\|\bnabla \bv_0^{\mathrm{P}}\|_{0,\Omega^{\mathrm{P}}}\le 1/\sqrt{\mu^{\mathrm{P}}}\|\phi^{\mathrm{P}}_0\|_{0,\Omega^{\mathrm{P}}}.
\end{align*}
Hence, for $\bv_0\in\bH^1_0(\Omega)$ such that $\bv_0|_{\Omega^{\mathrm{E}}}=\bv_0^{\mathrm{E}}$ and $\bv_0|_{\Omega^{\mathrm{P}}}=\bv_0^{\mathrm{P}}$, we have
\begin{align*}
&B_{\mathrm{I}}((\overrightarrow{\bomega},\bu,p^{\mathrm{P}}),(\cero,\bv_0,0))=-\sqrt{\mu^{\mathrm{P}}}\int_{\Omega^{\mathrm{P}}}\bomega^{\mathrm{P}}\cdot\curl\bv_0^{\mathrm{P}} +\int_{\Omega^{\mathrm{P}}}\phi^{\mathrm{P}}\vdiv\bv_0^{\mathrm{P}}-\sqrt{\mu^{\mathrm{E}}}\int_{\Omega^{\mathrm{E}}}\bomega^{\mathrm{E}}\cdot\curl\bv_0^{\mathrm{E}} +\int_{\Omega^{\mathrm{E}}}p^{\mathrm{E}}\vdiv\bv_0^{\mathrm{E}}\\
&\qquad \qquad \ge \left(C_{\Omega^{\mathrm{P}}}-\frac{1}{2\epsilon_1}\right)\frac{1}{\mu^{\mathrm{P}}}\|\phi^{\mathrm{P}}_0\|_{0,\Omega^{\mathrm{P}}}^2+\left(C_{\Omega^{\mathrm{E}}}-\frac{1}{2\epsilon_2}\right)\frac{1}{\mu^{\mathrm{E}}}\|p^{\mathrm{E}}_0\|_{0,\Omega^{\mathrm{E}}}^2-\frac{\epsilon_2}{2}\|\bomega^{\mathrm{E}}\|_{0,\Omega^{\mathrm{E}}}-\frac{\epsilon_1}{2}\|\bomega^{\mathrm{P}}\|_{0,\Omega^{\mathrm{P}}}.
\end{align*}
Selecting $\overrightarrow{\btheta}=\overrightarrow{\bomega}$, $\bv=-\bu$ and $q^{\mathrm{P}}=-p^{\mathrm{P}}$ we have
\begin{align*}
&B_{\mathrm{I}}((\overrightarrow{\bomega},\bu,p^{\mathrm{P}}),(\overrightarrow{\bomega},-\bu,-p^{\mathrm{P}}))=  \|\bomega^{\mathrm{E}}\|^2_{0,\Omega^{\mathrm{E}}}+(2\mu^{\mathrm{E}}+\lambda^{\mathrm{E}})^{-1}\|p^{\mathrm{E}}\|^2_{0,\Omega^{\mathrm{E}}}+\|\bomega^{\mathrm{P}}\|^2_{0,\Omega^{\mathrm{P}}}+(2\mu^{\mathrm{P}}+\lambda^{\mathrm{P}})^{-1}\|\phi^{\mathrm{P}}\|^2_{0,\Omega^{\mathrm{P}}}\nonumber\\
&\qquad \qquad \qquad -2\alpha(2\mu^{\mathrm{P}}+\lambda^{\mathrm{P}})^{-1}(p^{\mathrm{P}},\phi^{\mathrm{P}})_{0,\Omega^{\mathrm{P}}}
+(c_0+\alpha^2(2\mu^{\mathrm{P}}+\lambda^{\mathrm{P}})^{-1})\|p^{\mathrm{P}}\|^2_{0,\Omega^{\mathrm{P}}}+\|(\kappa/\xi)^{1/2}\nabla p\|^2_{0,\Omega^{\mathrm{P}}}.
\end{align*}
Next, we take $\overrightarrow{\btheta}_1:=(-\sqrt{\mu^{\mathrm{P}}}\curl\bu^{\mathrm{P}},\mu^{\mathrm{P}}\vdiv \bu^{\mathrm{P}},-\sqrt{\mu^{\mathrm{E}}}\curl\bu^{\mathrm{E}},\mu^{\mathrm{E}}\vdiv \bu^{\mathrm{E}})\in\bH$, $\bv=\cero$ and $q^{\mathrm{P}}=0$. Then 
\begin{align*}
&B_{\mathrm{I}}((\overrightarrow{\bomega},\bu,p^{\mathrm{P}}),(\overrightarrow{\btheta}_1,\cero,0))\\
&=\mu^{\mathrm{E}}\|\curl\bu^{\mathrm{E}}\|^2_{0,\Omega^{\mathrm{E}}}+\mu^{\mathrm{E}}\|\vdiv\bu^{\mathrm{E}}\|^2_{0,\Omega^{\mathrm{E}}}-\sqrt{\mu^{\mathrm{E}}}(\bomega^{\mathrm{E}},\curl\bu^{\mathrm{E}})_{0,\Omega^{\mathrm{E}}}
+\mu^{\mathrm{E}}/(2\mu^{\mathrm{E}}+\lambda^{\mathrm{E}})(p^{\mathrm{E}}, \vdiv\bu^{\mathrm{E}})_{0,\Omega^{\mathrm{E}}}\\
&\quad+\mu^{\mathrm{P}}\|\curl\bu^{\mathrm{P}}\|^2_{0,\Omega^{\mathrm{P}}}+\mu^{\mathrm{P}}\|\vdiv\bu^{\mathrm{P}}\|^2_{0,\Omega^{\mathrm{P}}}-\sqrt{\mu^{\mathrm{P}}}(\bomega^{\mathrm{P}},\curl\bu^{\mathrm{P}})_{0,\Omega^{\mathrm{P}}}+\mu^{\mathrm{P}}(2\mu^{\mathrm{P}}+\lambda^{\mathrm{P}})^{-1}(\phi^{\mathrm{P}}, \vdiv\bu^{\mathrm{P}})_{0,\Omega^p}\nonumber\\
&\quad-\alpha\mu^{\mathrm{P}}(2\mu^{\mathrm{P}}+\lambda^{\mathrm{P}})^{-1} (p^{\mathrm{P}},\vdiv\bu^{\mathrm{P}})_{0,\Omega^{\mathrm{P}}}\nonumber\\
&\ge \frac{\mu^{\mathrm{P}}}{2}\|\curl\bu^{\mathrm{P}}\|^2_{0,\Omega^{\mathrm{P}}}+\left(1-\frac{\mu^{\mathrm{P}}}{(2\mu^{\mathrm{P}}+\lambda^{\mathrm{P}})}\right)\mu^{\mathrm{P}}\|\vdiv\bu^{\mathrm{P}}\|^2_{0,\Omega^{\mathrm{P}}}
 - \frac{1}{2}\|\bomega^{\mathrm{P}}\|^2_{0,\Omega^{\mathrm{P}}}- \frac{\alpha^2}{2(2\mu^{\mathrm{P}}+\lambda^{\mathrm{P}})}\|p^{\mathrm{P}}\|^2_{0,\Omega^{\mathrm{P}}}\\
& -\frac{1}{2(2\mu^{\mathrm{P}}+\lambda^{\mathrm{P}})}\|\phi^{\mathrm{P}}\|_{0,\Omega^{\mathrm{P}}}^2+ \frac{\mu^{\mathrm{E}}}{2}\|\curl\bu^{\mathrm{E}}\|^2+\frac{\mu^{\mathrm{E}}}{2}\|\vdiv\bu^{\mathrm{E}}\|^2_{0,\Omega^{\mathrm{E}}} - \frac{1}{2}\|\bomega^{\mathrm{E}}\|^2_{0,\Omega^{\mathrm{E}}}- \frac{\mu^{\mathrm{E}}}{2(2\mu^{\mathrm{E}}+\lambda^{\mathrm{E}})^2}\|p^{\mathrm{E}}\|^2_{0,\Omega^{\mathrm{E}}},\nonumber
\end{align*}
Then we can make the choice $\bv=-\bu+\delta_1 \bv_0$, $\overrightarrow{\btheta}=\overrightarrow{\bomega}+\delta_2\overrightarrow{\btheta}_1$ and $q^{\mathrm{P}}=-p^{\mathrm{P}}$, leading to 
\begin{align*}
&B_{\mathrm{I}}((\overrightarrow{\bomega},\bu,p^{\mathrm{P}}),(\overrightarrow{\bomega}+\delta_2\overrightarrow{\btheta}_1,-\bu+\delta_1 \bv_0,-p^{\mathrm{P}}))\\
&=B_{\mathrm{I}}((\overrightarrow{\bomega},\bu,p^{\mathrm{P}}),(\overrightarrow{\bomega},-\bu,-p^{\mathrm{P}}))+\delta_1B_{\mathrm{I}}((\overrightarrow{\bomega},\bu,p^{\mathrm{P}}),(\cero,\bv_0,0))+\delta_2B_{\mathrm{I}}((\overrightarrow{\bomega},\bu,p^{\mathrm{P}}),(\overrightarrow{\btheta}_1,\cero,0))\\
&\ge\left(1-\frac{\delta_1\epsilon_2}{2}-\frac{\delta_2}{2}\right)\|\bomega^{\mathrm{E}}\|^2_{0,\Omega^{\mathrm{E}}}+\delta_2\frac{\mu^{\mathrm{E}}}{2}\|\curl\bu^{\mathrm{E}}\|^2_{0,\Omega^{\mathrm{E}}}+\delta_2\frac{\mu^{\mathrm{E}}}{2}\|\vdiv\bu^{\mathrm{E}}\|^2_{0,\Omega^{\mathrm{E}}}+\delta_1\left(C_{\Omega^{\mathrm{E}}}-\frac{1}{2\epsilon_2}\right)\frac{1}{\mu^{\mathrm{E}}}\|p_0^{\mathrm{E}}\|^2_{0,\Omega^{\mathrm{E}}}\\
&\quad+\frac{1}{2\mu^{\mathrm{E}}+\lambda^{\mathrm{E}}}\left(1-\frac{\delta_2\mu^{\mathrm{E}}}{2\mu^{\mathrm{E}}+\lambda^{\mathrm{E}}}\right)\|p^{\mathrm{E}}\|^2_{0,\Omega^{\mathrm{E}}}+\left(1-\frac{\delta_1\epsilon_1}{2}-\frac{\delta_2}{2}\right)\|\bomega^{\mathrm{P}}\|^2_{0,\Omega^{\mathrm{P}}}+\frac{\mu^{\mathrm{P}}\delta_2}{2}\|\curl\bu^{\mathrm{P}}\|^2_{0,\Omega^{\mathrm{P}}}\\
&\quad+\delta_1\left(1-\frac{\mu^{\mathrm{P}}}{(2\mu^{\mathrm{P}}+\lambda^{\mathrm{P}})}\right)\|\vdiv\bu^{\mathrm{P}}\|^2_{0,\Omega^{\mathrm{P}}}+\left(C_{\Omega^{\mathrm{P}}}-\frac{1}{2\epsilon_2}\right)\frac{\delta_1}{\mu^{\mathrm{P}}}\|\phi_0^{\mathrm{P}}\|^2_{0,\Omega^{\mathrm{P}}}+\frac{1}{2\mu^{\mathrm{P}}+\lambda^{\mathrm{P}}}\left(\frac{1}{2}-\frac{\delta_2}{2}\right)\|\phi^{\mathrm{P}}\|^2_{0,\Omega^{\mathrm{P}}}\\
&\quad+\|(\kappa/\xi)^{1/2} \nabla p^{\mathrm{P}}\|^2_{0,\Omega^{\mathrm{P}}}+\left(c_0+\frac{\alpha^2}{(2\mu^{\mathrm{P}}+\lambda^{\mathrm{P}})}\left(\frac{1}{2}-\frac{\delta_2}{2}\right)\right)\|p^{\mathrm{P}}\|^2_{0,\Omega^{\mathrm{P}}}.
\end{align*}
Assuming the values $\epsilon_1=\epsilon_2 =\min\{1/C_{\Omega^{\mathrm{E}}},1/C_{\Omega^{\mathrm{P}}}\}$, $\delta_1 = 1/2\epsilon_1$ and $\delta_2=1/2$, we then have
{\small $$B_{\mathrm{I}}((\overrightarrow{\bomega},\bu,p^{\mathrm{P}}),(\overrightarrow{\btheta},\bv,q^{\mathrm{P}}))\ge \min\left\{\min\left\{\frac{C_{\Omega^{\mathrm{E}}}^2}{4},\frac{C_{\Omega^{\mathrm{P}}}^2}{4}\right\},\frac{1}{4}\right\} \VERT(\overrightarrow{\bomega},\bu,p^{\mathrm{P}})\VERT^2.$$}
And finally, the proof concludes after realising that 
\[
\VERT(\overrightarrow{\btheta},\bv,q^{\mathrm{P}})\VERT^2= \VERT(\overrightarrow{\bomega}+\delta_2\overrightarrow{\btheta}_1,-\bu+\delta_1 \bv_0,-p^{\mathrm{P}})\VERT^2\le 2 \VERT(\overrightarrow{\bomega},\bu,p^{\mathrm{P}})\VERT^2.
\]
\end{proof}

\subsection{Discrete spaces and Galerkin formulation} 
Let $\{\cT_{h}\}_{h>0}$ be a shape-regular
family of partitions of the closed domain 
$\bar\Omega$, conformed by tetrahedra (or triangles 
in 2D) $K$ of diameter $h_K$, with mesh size
$h:=\max\{h_K:\; K\in\cT_{h}\}$. In addition, we assume that 
the mesh is conforming with the interface. This is achieved, for example, by generating conforming simplicial meshes for  $\Omega^{\mathrm{P}}$ and for $\Omega^{\mathrm{E}}$ and 
requiring that they match on $\Sigma$ so that the union of the sub-domain meshes is a triangulation of $\Omega^{\mathrm{P}} \cup \Sigma \cup \Omega^{\mathrm{E}}$. 
We specify the finite-dimensional subspaces   for 
displacement, fluid pressure,  rotations, and total 
 pressure; as follows 
\begin{align*}	
&\bV_h:=\{\bv_h \in \mathbf{C}(\overline{\Omega})\cap\bV: \bv_h|_{K} \in \bbP_{k+1}(K)^d,\ \forall K\in \cT_h\},\quad 	
\rQ_h^{\mathrm{P}}:=\{q_h^{\mathrm{P}}\in\mathrm{C}(\overline{\Omega^{\mathrm{P}}}): q_h^{\mathrm{P}}|_K\in\bbP_{k+1}(K),\ \forall K\in\cT_{h}\},\\
&\bW_h^{\mathrm{P}}:=\{\btheta_h^{\mathrm{P}}\in\mathbf{L}^2(\Omega^{\mathrm{P}}): \btheta_h^{\mathrm{P}}|_K\in\bbP_k(K)^d,\ \forall K\in\cT_{h}\}, \quad
\bW_h^{\mathrm{E}}:=\{\btheta_h^{\mathrm{E}}\in\mathbf{L}^2(\Omega^{\mathrm{E}}): \btheta_h^{\mathrm{E}}|_K\in\bbP_k(K)^d,\ \forall K\in\cT_{h}\},\\ 
 & \rZ_h^{\mathrm{P}}:=\{\psi_h^{\mathrm{P}}\in\mathrm{L}^2(\Omega^{\mathrm{P}}): \psi_h^{\mathrm{P}}|_K\in\bbP_k(K),\ \forall K\in\cT_{h}\},\quad 
  \rZ_h^{\mathrm{E}}:=\{q_h^{\mathrm{E}}\in\mathrm{L}^2(\Omega^{\mathrm{E}}): q_h^{\mathrm{E}}|_K\in\bbP_k(K),\ \forall K\in\cT_{h}\}.
\end{align*}
Define $\overrightarrow{\bomega}_h:= \{\bomega^{\mathrm{P}}_h,\phi^{\mathrm{P}}_h,\bomega^{\mathrm{E}}_h,p^{\mathrm{E}}_h\}\in \bW_h^{\mathrm{P}}\times \rZ_h^{\mathrm{P}}\times \bW_h^{\mathrm{E}}\times \rZ_h^{\mathrm{E}} := \bH_h$. The discrete weak formulation of the rotation based elasticity is read as: find $(\overrightarrow{\bomega}_h,\bu_h,p^{\mathrm{P}}_h)\in \bH_h\times\bV_h\times\rQ_h^{\mathrm{P}}$ such that
 \begin{align}\label{weakdisEP}
 B_{\mathrm{I}}((\overrightarrow{\bomega}_h,\bu_h,p^{\mathrm{P}}_h),(\overrightarrow{\btheta},\bv,q^{\mathrm{P}}))=F(\bv)+G(q),
 \end{align}
 for all $(\overrightarrow{\btheta},\bv,q^{\mathrm{P}})\in  \bH_h\times\bV_h\times\rQ_h^{\mathrm{P}}$. 
For each $k\ge0$, the modified (stablized) discrete weak formulation of the rotation based elasticity is read as: find $(\overrightarrow{\bomega}_h,\bu_h,p^{\mathrm{P}}_h)\in  \bH_h\times\bV_h\times\rQ_h^{\mathrm{P}}$ such that
 \begin{align}\label{weakdisEPstab}
 B_{\mathrm{I}}((\overrightarrow{\bomega}_h,\bu_h,p^{\mathrm{P}}_h),(\overrightarrow{\btheta},\bv,q^{\mathrm{P}}))+\!\!\sum_{e\in\mathcal{E}(\mathcal{T}_h)\cap\Omega^{\mathrm{E}}}\frac{h_e}{\mu^{\mathrm{E}}}\int_{e}[\![p^{\mathrm{E}}_h]\!][\![q^{\mathrm{E}}]\!]+\sum_{e\in\mathcal{E}(\mathcal{T}_h)\cap\Omega^{\mathrm{P}}}\frac{h_e}{\mu^{\mathrm{P}}}\int_{e}[\![\phi_h^{\mathrm{P}}]\!][\![\psi^{\mathrm{P}}]\!]=F(\bv)+G(q),
 \end{align}
 for all $(\overrightarrow{\btheta},\bv,q^{\mathrm{P}})\in  \bH_h\times\bV_h\times\rQ_h^{\mathrm{P}}$.


\subsection{\textit{A posteriori} error analysis}
%
Let  $\estE^2$,  $ \estP^2$ and  $\estint^2$ be the elasticity estimator (\emph{cf.} \eqref{estosc}), the poroelasticity estimator (\emph{cf.} \eqref{estosc1})
and the interface estimator (see below), respectively. Then we define 
\[
\Xi^2 :=\sum_{K\in\mathcal{T}_h\cap\Omega^{\mathrm{E}}} \estE^2+\sum_{K\in\mathcal{T}_h\cap\Omega^{\mathrm{P}}} \estP^2+\sum_{e\in \mathcal{E}(\mathcal{T}_h)\cap\Sigma} 
\estint^2, 
\]
where 
\[
\estint^2:= h_e (\mu^{\mathrm{E}}+\mu^{\mathrm{P}})^{-1}\|\mathbf{R}_{\Sigma}\|_{0,e}^2+h_e \xi \kappa^{-1}\|\widehat{R}_{\Sigma}\|_{0,e}^2,
\]
and 
\[
\mathbf{R}_{\Sigma}:= \{\sqrt{\mu^{\mathrm{P}}} \bomega^{\mathrm{P}}_h\times \nn+\phi^{\mathrm{P}}_h \nn -\sqrt{\mu^{\mathrm{E}}} \bomega^{\mathrm{E}}_h\times \nn- p^{\mathrm{E}}_h\nn\}, \quad 
\widehat{R}_{\Sigma}:=\{\kappa\xi^{-1}(\nabla p^{\mathrm{P}}_h-\rho \gg)\cdot \nn\}.
\]
Next we define the global data oscillations term $\Upsilon$ as
\begin{align*}
\Upsilon^2:=\sum_{K\in\mathcal{T}_h\cap\Omega^{\mathrm{E}}}\UpsilonE^2+\sum_{K\in\mathcal{T}_h\cap\Omega^{\mathrm{P}}}\UpsilonP^2,
\end{align*}
where $\UpsilonE$ and $\UpsilonP$ are the local data oscillations for elasticity and poroelasticity, respectively.
\subsubsection{Reliability estimate}
In this section, we prove the reliability bound for the interface estimator. 
\begin{theorem}[Reliability for the transmission problem]
Let $(\overrightarrow{\bomega},\bu,p^{\mathrm{P}})$ and $(\overrightarrow{\bomega}_h,\bu_h,p^{\mathrm{P}}_h)$ be the solutions of the weak formulations (\ref{weakEP}) and (\ref{weakdisEP}) (or (\ref{weakdisEPstab})), respectively. Then the following reliability bound holds
\[
\VERT(\overrightarrow{\bomega}-\overrightarrow{\bomega}_h,\bu-\bu_h,p^{\mathrm{P}}-p^{\mathrm{P}}_h)\VERT\le C_{\mathrm{rel}} (\Xi+\Upsilon),
\]
where $C_{\mathrm{rel}}>0$ is a positive constant independent of mesh size and parameters.
\end{theorem}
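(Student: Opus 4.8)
The plan is to follow the template of the two reliability proofs already established for the elasticity and Biot sub-problems, adjoining the interface contributions. Since $\bu-\bu_h\in\bH^1_0(\Omega)$ is conforming across $\Sigma$ and $p^{\mathrm{P}}-p^{\mathrm{P}}_h\in\rH^1(\Omega^{\mathrm{P}})$, the full error tuple lies in $\bH\times\bV\times\rQ^{\mathrm{P}}$, so the stability theorem for $B_{\mathrm{I}}$ just proved furnishes a test tuple $(\overrightarrow{\btheta},\bv,q^{\mathrm{P}})$ with $\VERT(\overrightarrow{\btheta},\bv,q^{\mathrm{P}})\VERT\le C_1\VERT(\overrightarrow{\bomega}-\overrightarrow{\bomega}_h,\bu-\bu_h,p^{\mathrm{P}}-p^{\mathrm{P}}_h)\VERT$ and $C_2\VERT(\overrightarrow{\bomega}-\overrightarrow{\bomega}_h,\bu-\bu_h,p^{\mathrm{P}}-p^{\mathrm{P}}_h)\VERT^2\le B_{\mathrm{I}}((\overrightarrow{\bomega}-\overrightarrow{\bomega}_h,\bu-\bu_h,p^{\mathrm{P}}-p^{\mathrm{P}}_h),(\overrightarrow{\btheta},\bv,q^{\mathrm{P}}))$. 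First I would perform the usual Galerkin-orthogonality reduction: only the displacement and fluid-pressure test components carry derivatives, so replace $\bv$ by $\bv-\bv_h$ and $q^{\mathrm{P}}$ by $q^{\mathrm{P}}-q^{\mathrm{P}}_h$ with $\bv_h\in\bV_h$, $q^{\mathrm{P}}_h\in\rQ^{\mathrm{P}}_h$ Cl\'ement interpolants (the $\bL^2/\rL^2$ components of $\overrightarrow{\btheta}$ are left untouched), and subtract the discrete equation \eqref{weakdisEP} (or \eqref{weakdisEPstab}) tested against $(\overrightarrow{\btheta},\bv_h,q^{\mathrm{P}}_h)$; this converts the right-hand side into data-oscillation pairings involving $\ff^{\mathrm{E}}-\ff^{\mathrm{E}}_h$, $\ff^{\mathrm{P}}-\ff^{\mathrm{P}}_h$ and $s^{\mathrm{P}}-s^{\mathrm{P}}_h$, plus $-B_{\mathrm{I}}((\overrightarrow{\bomega}_h,\bu_h,p^{\mathrm{P}}_h),(\overrightarrow{\btheta},\bv-\bv_h,q^{\mathrm{P}}-q^{\mathrm{P}}_h))$.

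Next I would integrate by parts element-wise, separately over $\cT_h\cap\Omega^{\mathrm{E}}$ and $\cT_h\cap\Omega^{\mathrm{P}}$. Inside each subdomain this reproduces exactly the interior residuals and interior-edge jump residuals already present in $\estE$ on the elastic side and in $\estP$ on the poroelastic side (including the Darcy residuals $R_4$ and $R_e$), whose contributions are bounded verbatim as in the proofs of the elasticity and Biot reliability theorems, i.e.\ by Cauchy--Schwarz together with the weighted Cl\'ement estimates \eqref{clement}, giving $\lesssim(\Xi+\Upsilon)\VERT(\overrightarrow{\btheta},\bv,q^{\mathrm{P}})\VERT$ for those parts. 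The genuinely new ingredients are the surface integrals on $\Sigma$: in the $b_1$-form the $\Omega^{\mathrm{P}}$- and $\Omega^{\mathrm{E}}$-side traces combine, using that $\bv-\bv_h$ is single-valued on $\Sigma$, into $\langle\mathbf{R}_{\Sigma},\bv-\bv_h\rangle_{\Sigma}$, and in the Darcy term $c(p^{\mathrm{P}}_h,\cdot)$ together with the gravity functional $G$ they combine into $\langle\widehat{R}_{\Sigma},q^{\mathrm{P}}-q^{\mathrm{P}}_h\rangle_{\Sigma}$; crucially the exact solution satisfies the transmission conditions in \eqref{eq:transmission}, so the continuous counterparts of $\mathbf{R}_{\Sigma}$ and $\widehat{R}_{\Sigma}$ vanish and nothing else survives on $\Sigma$. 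Each face is then handled by $\|\mathbf{R}_{\Sigma}\|_{0,e}\|\bv-\bv_h\|_{0,e}$ and $\|\widehat{R}_{\Sigma}\|_{0,e}\|q^{\mathrm{P}}-q^{\mathrm{P}}_h\|_{0,e}$, followed by the edge Cl\'ement/trace bounds $\|\bv-\bv_h\|_{0,e}\lesssim h_e^{1/2}\|\bnabla\bv\|_{0,\omega_e}$ and $\|q^{\mathrm{P}}-q^{\mathrm{P}}_h\|_{0,e}\lesssim h_e^{1/2}\|\bnabla q^{\mathrm{P}}\|_{0,\omega_e\cap\Omega^{\mathrm{P}}}$ on the element patch $\omega_e$ of $e$, so that the weights $h_e(\mu^{\mathrm{E}}+\mu^{\mathrm{P}})^{-1}$ and $h_e\xi\kappa^{-1}$ in $\estint$ are precisely what reproduces $\|\bv\|_{\bV}$ and $\|(\kappa/\xi)^{1/2}\bnabla q^{\mathrm{P}}\|_{0,\Omega^{\mathrm{P}}}$ after summation. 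Collecting all pieces gives $C_2\VERT(\text{error})\VERT^2\lesssim(\Xi+\Upsilon)\VERT(\overrightarrow{\btheta},\bv,q^{\mathrm{P}})\VERT\le C_1(\Xi+\Upsilon)\VERT(\text{error})\VERT$, and cancelling one factor of $\VERT(\text{error})\VERT$ yields the claim.

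I expect the main obstacle to be the parameter-robust treatment of the $\Sigma$-terms, and of the $\mathbf{R}_{\Sigma}$-term in particular: the displacement test $\bv$ is globally $\bH^1_0(\Omega)$, but its energy norm $\|\bv\|_{\bV}$ weights the two subdomains by the two (possibly very different) shear moduli, whereas a Cl\'ement patch straddling $\Sigma$ mixes $\|\bnabla\bv\|_{0,\Omega^{\mathrm{E}}}$ and $\|\bnabla\bv\|_{0,\Omega^{\mathrm{P}}}$; one therefore has to estimate $\|\bv-\bv_h\|_{0,e}$ (and likewise the patch decompositions arising from $\mathbf{R}_1$ and the interior-edge residuals on elements touching $\Sigma$) from the appropriate side, so that the weight $(\mu^{\mathrm{E}}+\mu^{\mathrm{P}})^{-1}$, which is comparable up to a factor two to $\min\{(\mu^{\mathrm{E}})^{-1},(\mu^{\mathrm{P}})^{-1}\}$ and absorbs the $\sqrt{\mu}$-scaling built into $\mathbf{R}_{\Sigma}$, closes the estimate without a constant that degenerates in the incompressible limit $\mu^{\mathrm{E}},\mu^{\mathrm{P}}\to\infty$. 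The $\widehat{R}_{\Sigma}$-term is comparatively benign because $q^{\mathrm{P}}$ and its interpolant live entirely in $\Omega^{\mathrm{P}}$ and $\Sigma=\partial\Omega^{\mathrm{P}}$, so only a one-sided trace estimate on $\Omega^{\mathrm{P}}$ is involved. Finally, for the stabilised discretisation \eqref{weakdisEPstab} the jump-stabilisation terms are added and subtracted and are dominated by the corresponding edge estimators, exactly as in the single-physics reliability proofs, so they do not alter the bound.
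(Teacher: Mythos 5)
Your proposal is correct and follows essentially the same route as the paper's proof: invoke the stability theorem for $B_{\mathrm{I}}$ to obtain a suitable test tuple, use Galerkin orthogonality to insert interpolants in the $\bv$ and $q^{\mathrm{P}}$ components, then integrate by parts element-wise in each subdomain and apply Cauchy--Schwarz with the weighted Cl\'ement estimates. The paper's own argument is stated much more tersely; your expanded treatment of the interface residuals $\mathbf{R}_{\Sigma}$, $\widehat{R}_{\Sigma}$ and of the $(\mu^{\mathrm{E}}+\mu^{\mathrm{P}})^{-1}$ weighting is consistent with what the paper leaves implicit.
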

\begin{proof}
Since $(\overrightarrow{\bomega}-\overrightarrow{\bomega}_h,\bu-\bu_h,p^{\mathrm{P}}-p_h^{\mathrm{P}})\in \bH\times \bV\times\rQ^{\mathrm{P}}$, then from stability theorem, we have  
\[
C_2\VERT(\overrightarrow{\bomega}-\overrightarrow{\bomega}_h,\bu-\bu_h,p^{\mathrm{P}}-p_h^{\mathrm{P}})\VERT^2\le B_{\mathrm{I}}((\overrightarrow{\bomega}-\overrightarrow{\bomega}_h,\bu-\bu_h,p^{\mathrm{P}}-p_h^{\mathrm{P}}),(\overrightarrow{\btheta},\bv,q^{\mathrm{P}})),
\] 
with $\VERT(\overrightarrow{\btheta},\bv,q^{\mathrm{P}})\VERT\le C_1 \VERT(\overrightarrow{\bomega}-\overrightarrow{\bomega}_h,\bu-\bu_h,p^{\mathrm{P}}-p_h^{\mathrm{P}})\VERT$. 
And from the definition of the continuous and discrete weak forms, it follows that:
\begin{align*}
&B_{\mathrm{I}}((\overrightarrow{\bomega}-\overrightarrow{\bomega}_h,\bu-\bu_h,p^{\mathrm{P}}-p_h^{\mathrm{P}}),(\overrightarrow{\btheta},\bv,q^{\mathrm{P}}))\\
&=-(\ff^{\mathrm{P}}-\ff_h^{\mathrm{P}},\bv-\bv_h)_{0,\Omega^{\mathrm{P}}}-(\ff^{\mathrm{E}}-\ff_h^{\mathrm{E}},\bv-\bv_h)_{0,\Omega^{\mathrm{E}}}-\frac{\rho}{\xi}(\kappa\gg, \nabla (q-q_h))_{0,\Omega^{\mathrm{P}}} +(s^{\mathrm{P}}-s^{\mathrm{P}}_h,q-q_h)_{0,\Omega^{\mathrm{P}}} \\
&\quad-(\ff_h^{\mathrm{P}},\bv-\bv_h)_{0,\Omega^{\mathrm{P}}}-(\ff_h^{\mathrm{E}},\bv-\bv_h)_{0,\Omega^{\mathrm{E}}} +(s^{\mathrm{P}}_h,q-q_h)_{0,\Omega^{\mathrm{P}}} 
-B_{\mathrm{I}}((\overrightarrow{\bomega}_h,\bu_h,p_h^{\mathrm{P}}),(\overrightarrow{\btheta},\bv-\bv_h,q^{\mathrm{P}}-q^{\mathrm{P}}_h)).
\end{align*}
Applying integration by parts, Cauchy-Schwarz inequality and approximation results,  yields 
\[
B_{\mathrm{I}}((\overrightarrow{\bomega}-\overrightarrow{\bomega}_h,\bu-\bu_h,p^{\mathrm{P}}-p_h^{\mathrm{P}}),(\overrightarrow{\btheta},\bv,q^{\mathrm{P}}))\le C (\Xi+\Upsilon) \VERT(\overrightarrow{\btheta},\bv,q^{\mathrm{P}})\VERT.\]
\end{proof}
\subsubsection{Efficiency bound}
\begin{lemma}\label{eleEP1}
The following estimates are satisfied 
\[
\Theta  \lesssim \VERT(\overrightarrow{\bomega}-\overrightarrow{\bomega}_h,\bu-\bu_h,p^{\mathrm{P}}-p^{\mathrm{P}}_h)\VERT+\Upsilon,\qquad 
\Psi \lesssim \VERT(\overrightarrow{\bomega}-\overrightarrow{\bomega}_h,\bu-\bu_h,p^{\mathrm{P}}-p^{\mathrm{P}}_h)\VERT+\Upsilon.
\]
\end{lemma}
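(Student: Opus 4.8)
The plan is to derive both bounds by \emph{localisation}, reusing the single-physics efficiency lemmas already proved. The key observation is that on the interior of either subdomain the interface problem degenerates to a problem that has already been analysed: on every element $K\subset\Omega^{\mathrm{E}}$ the strong form of the transmission system reduces to the rotation-based elasticity equations \eqref{rbepeq} with the superscript-$\mathrm{E}$ data, so the element residuals $\mathbf{R}_1,\mathbf{R}_2,R_3$ entering $\estE$ are literally those of Section~\ref{sec:elast}, and for any face $e$ interior to $\Omega^{\mathrm{E}}$ the jump residual $\mathbf{R}_e$ coincides with the one there; likewise on $\Omega^{\mathrm{P}}$ for the Biot system and the residuals $\mathbf{R}_1,\mathbf{R}_2,R_3,R_4,\mathbf{R}_e,R_e$ defining $\estP$. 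Moreover, the $\Omega^{\mathrm{E}}$-part (resp. $\Omega^{\mathrm{P}}$-part) of the interface energy norm $\VERT(\overrightarrow{\bomega}-\overrightarrow{\bomega}_h,\bu-\bu_h,p^{\mathrm{P}}-p^{\mathrm{P}}_h)\VERT$ dominates precisely the single-physics energy quantities that appear on the right-hand sides of Lemmas~\ref{lemE1}--\ref{lemE4} (resp. Lemmas~\ref{lemP1}--\ref{lemP6}).

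First I would treat $\Theta$. The interior-bubble estimates of Lemmas~\ref{lemE1}, \ref{lemE2}, \ref{lemE3} use a test function $\bzeta$ supported in one element $K\subset\Omega^{\mathrm{E}}$, so they transfer verbatim and bound the volumetric part of $\estE$ on each such $K$ by the local energy error over $K$ plus the data oscillation $\UpsilonE$. The edge estimate of Lemma~\ref{lemE4} uses an edge bubble supported on the patch $P_e=K\cup K'$; since $e$ is interior to $\Omega^{\mathrm{E}}$ and the mesh is conforming with $\Sigma$, both $K$ and $K'$ lie in $\Omega^{\mathrm{E}}$, hence $P_e\subset\Omega^{\mathrm{E}}$ and the proof of Lemma~\ref{lemE4} applies unchanged. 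Summing $\estE^2$ over $K\in\mathcal{T}_h\cap\Omega^{\mathrm{E}}$ and using shape-regularity, so that each patch $P_e$ meets only a bounded number of elements, the patch-sums collapse into a single sum of squared local energy errors over $\Omega^{\mathrm{E}}$, which is at most $\VERT(\overrightarrow{\bomega}-\overrightarrow{\bomega}_h,\bu-\bu_h,p^{\mathrm{P}}-p^{\mathrm{P}}_h)\VERT^2$, while the oscillation terms assemble into $\Upsilon^2$. Taking square roots gives $\Theta\lesssim\VERT(\overrightarrow{\bomega}-\overrightarrow{\bomega}_h,\bu-\bu_h,p^{\mathrm{P}}-p^{\mathrm{P}}_h)\VERT+\Upsilon$.

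The bound for $\Psi$ is obtained in exactly the same manner, with Lemmas~\ref{lemE1}--\ref{lemE4} replaced by Lemmas~\ref{lemP1}--\ref{lemP6} and $\Omega^{\mathrm{E}}$ replaced by $\Omega^{\mathrm{P}}$. The additional Darcy-type residuals $R_4$ and $R_e$, together with the scalings $\rho_1,\rho_2,\rho_d$, are controlled by their own element and edge-patch arguments (Lemmas~\ref{lemP4}, \ref{lemP6}), whose supports again lie entirely inside $\Omega^{\mathrm{P}}$; summing and using shape-regularity yields $\Psi\lesssim\VERT(\overrightarrow{\bomega}-\overrightarrow{\bomega}_h,\bu-\bu_h,p^{\mathrm{P}}-p^{\mathrm{P}}_h)\VERT+\Upsilon$.

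The only genuine obstacle is the geometry at the interface. For an element $K\subset\Omega^{\mathrm{E}}$ with a face on $\Sigma$, an edge bubble attached to that face would be supported on a patch straddling $\Sigma$, across which the two sub-problems obey different equations, so such a patch is not admissible in the elastic bubble argument; the same issue arises on the poroelastic side. This is precisely why the faces on $\Sigma$ are excluded from $\estE$ and $\estP$ as they enter $\Xi^2$ (the edge sums there run only over faces interior to the respective subdomain) and are instead collected in the separate interface estimator $\estint$ through the transmission residual $\mathbf{R}_{\Sigma}$, whose efficiency is established on its own. With this bookkeeping every bubble patch used above lies within a single subdomain and no interface face is double-counted, so the localisation is clean and the two inequalities follow; all the analytic content is inherited from the single-physics efficiency lemmas.
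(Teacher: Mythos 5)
Your proposal is correct and follows essentially the same route as the paper, which simply invokes the elasticity and poroelasticity efficiency theorems (Theorems~\ref{elasteff} and~\ref{poroeff}, i.e.\ the assembled Lemmas~\ref{lemE1}--\ref{lemE4} and~\ref{lemP1}--\ref{lemP6}) restricted to $\Omega^{\mathrm{E}}$ and $\Omega^{\mathrm{P}}$ respectively. Your explicit remark that bubble patches touching $\Sigma$ are excluded from $\estE$ and $\estP$ and deferred to $\estint$ is exactly the bookkeeping the paper leaves implicit, and is a welcome clarification.
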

\begin{proof}
The first bound follows from Theorem \ref{poroeff}, while the second one follows from Theorem \ref{elasteff}.
\end{proof}
\begin{lemma}\label{eleEP3}
There holds:
{\small\begin{align*}
&(\sum_{e\in\Sigma}h_e(\mu^{\mathrm{E}}+\mu^{\mathrm{P}})^{-1} \|\mathbf{R}_{\Sigma}\|_{0,e}^2)^{1/2}\lesssim \sum_{e\in\Sigma}\Big(\sum_{K\in P_e\cap\Omega^{\mathrm{E}}}((\mu^{\mathrm{E}})^{-1/2}h_K\|\ff^{\mathrm{E}}-\ff_h^{\mathrm{E}}\|_{0,K}+(\mu^{\mathrm{E}})^{-1/2}\|p^{\mathrm{E}}-p_h^{\mathrm{E}}\|_{0,K}+\|\bomega^{\mathrm{E}}-\bomega_h^{\mathrm{E}}\|_{0,K})\\
&\quad+\sum_{K\in P_e\cap\Omega^{\mathrm{P}}}((\mu^{\mathrm{P}})^{-1/2}h_K\|\ff^{\mathrm{P}}-\ff_h^{\mathrm{P}}\|_{0,K}+(\mu^{\mathrm{P}})^{-1/2}\|\phi^{\mathrm{P}}-\phi_h^{\mathrm{P}}\|_{0,K}+\|\bomega^{\mathrm{P}}-\bomega_h^{\mathrm{P}}\|_{0,K})\Big).
\end{align*}}
\end{lemma}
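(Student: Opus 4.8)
The plan is to reproduce, on a patch straddling the interface $\Sigma$, the edge-residual argument already used for Lemma~\ref{lemE4} (and reused in Lemma~\ref{lemP5}), the only structural change being that the second transmission condition in \eqref{eq:transmission} now plays the role that interior continuity of $\sqrt{\mu}\,\bomega\times\nn+p\,\nn$ played in the single-physics setting.

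Fix $e\in\mathcal{E}(\mathcal{T}_h)\cap\Sigma$. Since the mesh is conforming with the interface, $e$ is shared by exactly one element $K^{\mathrm{E}}\in\mathcal{T}_h\cap\Omega^{\mathrm{E}}$ and one element $K^{\mathrm{P}}\in\mathcal{T}_h\cap\Omega^{\mathrm{P}}$; put $P_e:=K^{\mathrm{E}}\cup K^{\mathrm{P}}$ and let $b_e$ be the associated edge bubble, positive in the interior of $P_e$ and vanishing on $\partial P_e$. I would set $\bzeta_e:=(\mu^{\mathrm{E}}+\mu^{\mathrm{P}})^{-1}h_e\,\mathbf{R}_{\Sigma}\,b_e$, extended by zero off $P_e$. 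Since $\mathbf{R}_\Sigma|_e$ is a vector polynomial (a $\times\nn$ part plus an $\nn$ part, both treated component-wise) and $b_e$ is continuous, $\bzeta_e\in\mathbf{H}^1(\Omega)$ vanishes on $\partial P_e$ and is an admissible test function. The edge-bubble bounds \eqref{edgee1}, applied componentwise, give $h_e(\mu^{\mathrm{E}}+\mu^{\mathrm{P}})^{-1}\|\mathbf{R}_\Sigma\|_{0,e}^2\lesssim\int_e\mathbf{R}_\Sigma\cdot\bzeta_e$, together with $\|\bzeta_e\|_{0,K}\lesssim(\mu^{\mathrm{E}}+\mu^{\mathrm{P}})^{-1}h_e^{3/2}\|\mathbf{R}_\Sigma\|_{0,e}$ and $\|\nabla\bzeta_e\|_{0,K}\lesssim(\mu^{\mathrm{E}}+\mu^{\mathrm{P}})^{-1}h_e^{1/2}\|\mathbf{R}_\Sigma\|_{0,e}$ for $K\in P_e$, using $h_K\simeq h_e$.

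Next I would insert the exact transmission condition $\sqrt{\mu^{\mathrm{P}}}\bomega^{\mathrm{P}}\times\nn+\phi^{\mathrm{P}}\nn=\sqrt{\mu^{\mathrm{E}}}\bomega^{\mathrm{E}}\times\nn+p^{\mathrm{E}}\nn$ on $\Sigma$ to rewrite $\mathbf{R}_\Sigma|_e$ purely in terms of the errors $\bomega^{\mathrm{E}}-\bomega^{\mathrm{E}}_h$, $p^{\mathrm{E}}-p^{\mathrm{E}}_h$, $\bomega^{\mathrm{P}}-\bomega^{\mathrm{P}}_h$, $\phi^{\mathrm{P}}-\phi^{\mathrm{P}}_h$, and then integrate by parts element-wise on $K^{\mathrm{E}}$ and $K^{\mathrm{P}}$. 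The boundary contributions on $\partial K^{\mathrm{E}}\setminus e$ and $\partial K^{\mathrm{P}}\setminus e$ vanish because $\bzeta_e=\cero$ there, while the two contributions on $e$ recombine (after the transmission cancellation of the exact parts) into $\int_e\mathbf{R}_\Sigma\cdot\bzeta_e$. What remains are volume terms over $K^{\mathrm{E}}$ and $K^{\mathrm{P}}$ of precisely the shape met in Lemma~\ref{lemE4}: one group testing $\sqrt{\mu^{\bullet}}\curl(\bomega^{\bullet}-\bomega^{\bullet}_h)+\nabla(\varrho^{\bullet}-\varrho^{\bullet}_h)$ against $\bzeta_e$, with $\varrho^{\mathrm{E}}=p^{\mathrm{E}}$ and $\varrho^{\mathrm{P}}=\phi^{\mathrm{P}}$, and one group testing the errors against $\curl\bzeta_e$ and $\vdiv\bzeta_e$. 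On the first group I would substitute the strong equations $\ff^{\mathrm{E}}=\sqrt{\mu^{\mathrm{E}}}\curl\bomega^{\mathrm{E}}+\nabla p^{\mathrm{E}}$ in $\Omega^{\mathrm{E}}$ and $\ff^{\mathrm{P}}=\sqrt{\mu^{\mathrm{P}}}\curl\bomega^{\mathrm{P}}+\nabla\phi^{\mathrm{P}}$ in $\Omega^{\mathrm{P}}$, turning each integrand into $(\ff^{\bullet}_h-\ff^{\bullet})\cdot\bzeta_e-\mathbf{R}_1^{\bullet}\cdot\bzeta_e$, where $\mathbf{R}_1^{\mathrm{E}}$ and $\mathbf{R}_1^{\mathrm{P}}$ denote the elemental residuals $\mathbf{R}_1$ of Sections~\ref{sec:elast} and~\ref{sec:poroelast}; these interior-residual leftovers are then reabsorbed through Lemmas~\ref{lemE1} and~\ref{lemP1}.

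Finally, a Cauchy--Schwarz step on each remaining term, combined with the $\bzeta_e$ estimates above and with the elementary inequalities $(\mu^{\mathrm{E}}+\mu^{\mathrm{P}})^{-1/2}\le\min\{(\mu^{\mathrm{E}})^{-1/2},(\mu^{\mathrm{P}})^{-1/2}\}$ and $\sqrt{\mu^{\bullet}}\le\sqrt{\mu^{\mathrm{E}}+\mu^{\mathrm{P}}}$, yields $h_e(\mu^{\mathrm{E}}+\mu^{\mathrm{P}})^{-1}\|\mathbf{R}_\Sigma\|_{0,e}^2$ bounded by the asserted local quantities over $P_e\cap\Omega^{\mathrm{E}}$ and $P_e\cap\Omega^{\mathrm{P}}$, multiplied by $h_e^{1/2}(\mu^{\mathrm{E}}+\mu^{\mathrm{P}})^{-1/2}\|\mathbf{R}_\Sigma\|_{0,e}$; dividing through and summing over $e\in\Sigma$ gives the claim. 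The step I would watch most closely — and essentially the only place this differs in substance from Lemma~\ref{lemE4} — is the weight bookkeeping: one must verify that the single Lamé weight $(\mu^{\mathrm{E}}+\mu^{\mathrm{P}})^{-1}$ carried by $\bzeta_e$ is dominated from above by whichever of $(\mu^{\mathrm{E}})^{-1}$ or $(\mu^{\mathrm{P}})^{-1}$ accompanies the error term generated on that side of $\Sigma$, so that robustness in both $\mu^{\mathrm{E}}$ and $\mu^{\mathrm{P}}$ survives; the two inequalities just quoted make this routine, but it is precisely where a careless scaling would break the parameter-robustness.
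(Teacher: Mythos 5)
Your proposal is correct and follows essentially the same route as the paper: the same interface test function $\bzeta_e=(\mu^{\mathrm{E}}+\mu^{\mathrm{P}})^{-1}h_e\mathbf{R}_{\Sigma}b_e$, the same edge-bubble estimates, element-wise integration by parts with the transmission condition cancelling the exact contributions, substitution of the strong momentum equations on each side, and the final Cauchy--Schwarz with the scaling bounds $(\mu^{\bullet})^{1/2}h_K^{-1}\|\bzeta_e\|_{0,K}\lesssim h_e^{1/2}(\mu^{\mathrm{E}}+\mu^{\mathrm{P}})^{-1/2}\|\mathbf{R}_\Sigma\|_{0,e}$. Your explicit attention to the $(\mu^{\mathrm{E}}+\mu^{\mathrm{P}})^{-1}$ versus $(\mu^{\bullet})^{-1}$ bookkeeping is exactly the point the paper's closing inequalities address.
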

\begin{proof}
For each $e\in \mathcal{E}(\mathcal{T}_h)\cap\Sigma$, we locally define 
$ \bzeta_e=(\mu^{\mathrm{E}}+\mu^{\mathrm{P}})^{-1}h_e\mathbf{R}_{\Sigma}b_e.$ 
Using (\ref{edgee1}) implies 
\[
h_e(\mu^{\mathrm{E}}+\mu^{\mathrm{P}})^{-1}\|\mathbf{R}_{\Sigma}\|_{0,e}^2\lesssim \int_e \mathbf{R}_{\Sigma}\cdot ((\mu^{\mathrm{E}}+\mu^{\mathrm{P}})^{-1}h_e\mathbf{R}_{\Sigma}b_e) =\int_{e}\mathbf{R}_{\Sigma}\cdot \bzeta_e.
\]
Integration by parts gives
{\small
 \begin{align*}
 \int_e &\sqrt{\mu^{\mathrm{E}}}(\bomega_h^{\mathrm{E}}-\bomega^{\mathrm{E}})\times\nn +(p_h^{\mathrm{E}}-p^{\mathrm{E}})\nn  \cdot \bzeta_e -\sqrt{\mu^{\mathrm{P}}}(\bomega_h^{\mathrm{P}}-\bomega^{\mathrm{P}})\times\nn +(\phi_h^{\mathrm{P}}-\phi^{\mathrm{P}})\nn \cdot \bzeta_e\\
 &=\sum_{K\in P_e\cap\Omega^{\mathrm{E}}}\int_K(\sqrt{\mu^{\mathrm{E}}}\curl(\bomega_h^{\mathrm{E}}-\bomega^{\mathrm{E}})+\nabla (p_h^{\mathrm{E}}-p^{\mathrm{E}})\cdot \bzeta_e+\sum_{K\in P_e\cap\Omega^{\mathrm{E}}}\int_K(\sqrt{\mu^{\mathrm{E}}}(\bomega_h^{\mathrm{E}}-\bomega^{\mathrm{E}})\cdot  \curl \bzeta_e+ (p_h^{\mathrm{E}} -p^{\mathrm{E}})\nabla\cdot  \bzeta_e) \\
&\quad + \sum_{K\in P_e\cap\Omega^{\mathrm{P}}}\int_K(\sqrt{\mu^{\mathrm{P}}}\curl(\bomega_h^{\mathrm{P}}-\bomega^{\mathrm{P}})+\nabla (\phi_h^{\mathrm{P}}-\phi^{\mathrm{P}})\cdot \bzeta_e +\sum_{K\in P_e\cap\Omega^{\mathrm{P}}}\int_K(\sqrt{\mu^{\mathrm{P}}}(\bomega_h^{\mathrm{P}}-\bomega^{\mathrm{P}})\cdot  \curl \bzeta_e+ (\phi_h^{\mathrm{P}} -\phi^{\mathrm{P}})\nabla\cdot  \bzeta_e).
 \end{align*}}
Recall that $\ff^{\mathrm{P}}-\sqrt{\mu^{\mathrm{P}}}\curl\bomega^{\mathrm{P}} -\nabla p^{\mathrm{P}}=\cero|_K$ and $\ff^{\mathrm{E}}-\sqrt{\mu^{\mathrm{E}}}\curl\bomega^{\mathrm{E}} -\nabla p^{\mathrm{E}}=\cero|_K$. Then, we have
\begin{align*}
\frac{h_e}{\mu^{\mathrm{E}}+\mu^{\mathrm{P}}}\|\mathbf{R}_{\Sigma}\|_{0,e}^2 
&\lesssim \sum_{K\in P_e\cap\Omega^{\mathrm{E}}}\int_{K} \left((\ff_h^{\mathrm{E}}-\ff^{\mathrm{E}})\cdot \bzeta_e +\sqrt{\mu^{\mathrm{E}}}\int_K (\bomega_h^{\mathrm{E}}-\bomega) \cdot \curl \bzeta_e +\int_K (p_h^{\mathrm{E}}-p^{\mathrm{E}})\nabla\cdot \bzeta \right)\\
&\quad+ \sum_{K\in P_e\cap\Omega^{\mathrm{E}}}\int_{K} \mathbf{R}_1^{\mathrm{E}} \cdot \bzeta_e+ \sum_{K\in P_e\cap\Omega^{\mathrm{P}}}\int_{K} \mathbf{R}_1^{\mathrm{P}} \cdot \bzeta_e \\
&\quad+ \sum_{K\in P_e\cap\Omega^{\mathrm{P}}}\int_{K} \left((\ff_h^{\mathrm{P}}-\ff^{\mathrm{P}})\cdot \bzeta_e +\sqrt{\mu^{\mathrm{P}}}\int_K (\bomega_h^{\mathrm{P}}-\bomega) \cdot \curl \bzeta_e +\int_K (\phi_h^{\mathrm{P}}-\phi^{\mathrm{P}})\nabla\cdot \bzeta \right).
\end{align*}
Next we can apply Cauchy-Schwarz inequality, leading to 
\begin{align*}
\frac{h_e}{\mu^{\mathrm{E}}+\mu^{\mathrm{P}}}\|\mathbf{R}_e\|_{0,e}^2 & \lesssim 
\sum_{K\in P_e\cap\Omega^{\mathrm{E}}}((\mu^{\mathrm{E}})^{-1/2}h_K\|\ff^{\mathrm{E}}-\ff_h^{\mathrm{E}}\|_{0,K}+(\mu^{\mathrm{E}})^{-1/2}\|p^{\mathrm{E}}-p_h^{\mathrm{E}}\|_{0,K}+\|\bomega^{\mathrm{E}}-\bomega_h^{\mathrm{E}}\|_{0,K})\times\\
&\qquad\qquad\quad((\mu^{\mathrm{E}})^{1/2}\|\bnabla \bzeta\|_{0,K}+(\mu^{\mathrm{E}})^{1/2}h_K^{-1}\|\bzeta\|_{0,K})\\
&\quad+\sum_{K\in P_e\cap\Omega^{\mathrm{P}}}((\mu^{\mathrm{P}})^{-1/2}h_K\|\ff^{\mathrm{P}}-\ff_h^{\mathrm{P}}\|_{0,K}+(\mu^{\mathrm{P}})^{-1/2}\|\phi^{\mathrm{P}}-\phi_h^{\mathrm{P}}\|_{0,K}+\|\bomega^{\mathrm{P}}-\bomega_h^{\mathrm{P}}\|_{0,K})\times\\
&\qquad\qquad\quad((\mu^{\mathrm{P}})^{1/2}\|\bnabla \bzeta\|_{0,K}+(\mu^{\mathrm{P}})^{1/2}h_K^{-1}\|\bzeta\|_{0,K}).
\end{align*}
And the sought estimate is then a consequence of the bounds 
\begin{align*}
(\mu^{\mathrm{E}})^{1/2}\|\bnabla \bzeta\|_{0,K}+(\mu^{\mathrm{E}})^{1/2}h_K^{-1}\|\bzeta\|_{0,K}
&\lesssim (\mu^{\mathrm{E}})^{1/2} h_K^{-1}\|\bzeta\|_{0,K}\lesssim h_e^{1/2}(\mu^{\mathrm{E}}+\mu^{\mathrm{P}})^{-1/2}\|\mathbf{R}_e\|_{0,e},\\
(\mu^{\mathrm{P}})^{1/2}\|\bnabla \bzeta\|_{0,K}+(\mu^{\mathrm{P}})^{1/2}h_K^{-1}\|\bzeta\|_{0,K}
&\lesssim (\mu^{\mathrm{P}})^{1/2} h_K^{-1}\|\bzeta\|_{0,K}\lesssim h_e^{1/2}(\mu^{\mathrm{E}}+\mu^{\mathrm{P}})^{-1/2}\|\mathbf{R}_e\|_{0,e}.
\end{align*}
\end{proof}
\begin{lemma}\label{eleEP4}
The following bound holds 
\begin{align*}
\Big(\sum_{e\in\mathcal{E}(\mathcal{T}_h)\cap\Sigma}\estint^2\Big)^{1/2}\lesssim \VERT(\overrightarrow{\bomega}-\overrightarrow{\bomega}_h,\bu-\bu_h,p^{\mathrm{P}}-p^{\mathrm{P}}_h)\VERT+\Upsilon.
\end{align*}
\end{lemma}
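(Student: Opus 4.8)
The plan is to split the interface estimator into its two natural contributions and reduce each to results already established. Writing
\[
\sum_{e\in\mathcal{E}(\mathcal{T}_h)\cap\Sigma}\estint^2
=\underbrace{\sum_{e\in\mathcal{E}(\mathcal{T}_h)\cap\Sigma}\frac{h_e}{\mu^{\mathrm{E}}+\mu^{\mathrm{P}}}\|\mathbf{R}_{\Sigma}\|_{0,e}^2}_{=:\,T_1}
+\underbrace{\sum_{e\in\mathcal{E}(\mathcal{T}_h)\cap\Sigma}h_e\frac{\xi}{\kappa}\|\widehat{R}_{\Sigma}\|_{0,e}^2}_{=:\,T_2},
\]
the term $T_1$ (residual of the traction transmission condition) will be bounded via Lemma~\ref{eleEP3}, whereas $T_2$ (residual of the zero-flux interface condition, which here is imposed on \emph{all} of $\Sigma=\partial\Omega^{\mathrm{P}}$) will be handled by transcribing, onto the interface edges, the bubble-function argument of Lemma~\ref{lemP6}. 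Adding the two bounds and taking square roots will give the claim.

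For $T_1$: Lemma~\ref{eleEP3} bounds $T_1^{1/2}$ by a sum, over $e\in\mathcal{E}(\mathcal{T}_h)\cap\Sigma$ and over the elements $K\in P_e$, of two kinds of terms. The terms $(\mu^{\mathrm{E}})^{-1/2}h_K\|\ff^{\mathrm{E}}-\ff^{\mathrm{E}}_h\|_{0,K}$ and $(\mu^{\mathrm{P}})^{-1/2}h_K\|\ff^{\mathrm{P}}-\ff^{\mathrm{P}}_h\|_{0,K}$ are, up to the square root, exactly the data oscillations $\UpsilonE$, $\UpsilonP$, so their $\ell^2$-aggregate is dominated by $\Upsilon$. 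The remaining terms $(\mu^{\mathrm{E}})^{-1/2}\|p^{\mathrm{E}}-p^{\mathrm{E}}_h\|_{0,K}$, $\|\bomega^{\mathrm{E}}-\bomega^{\mathrm{E}}_h\|_{0,K}$, $(\mu^{\mathrm{P}})^{-1/2}\|\phi^{\mathrm{P}}-\phi^{\mathrm{P}}_h\|_{0,K}$ and $\|\bomega^{\mathrm{P}}-\bomega^{\mathrm{P}}_h\|_{0,K}$ are each dominated by the pertinent local contribution of $\VERT(\overrightarrow{\bomega}-\overrightarrow{\bomega}_h,\bu-\bu_h,p^{\mathrm{P}}-p^{\mathrm{P}}_h)\VERT$, exactly as in the proofs of Theorems~\ref{elasteff} and~\ref{poroeff}. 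Since the edge patches $P_e$ with $e\subset\Sigma$ overlap with uniformly bounded multiplicity by shape-regularity, summing yields $T_1\lesssim\VERT(\overrightarrow{\bomega}-\overrightarrow{\bomega}_h,\bu-\bu_h,p^{\mathrm{P}}-p^{\mathrm{P}}_h)\VERT^2+\Upsilon^2$.

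For $T_2$: since $\Sigma=\partial\Omega^{\mathrm{P}}$ and the interface flux condition reads $\kappa\xi^{-1}(\nabla p^{\mathrm{P}}-\rho\gg)\cdot\nn=0$ on $\Sigma$, the quantity $\widehat{R}_{\Sigma}$ plays, for the interface, the role of the boundary edge residual $R_e$ with $e\subset\Gamma$ in the poroelastic estimator, with the same scaling $\rho_2=\xi\kappa^{-1}h_e$. I would therefore repeat Lemma~\ref{lemP6} edge by edge: for $e\subset\Sigma$ let $K\subset\Omega^{\mathrm{P}}$ be the poroelastic element adjacent to $e$, set $\bzeta_e=\rho_2\widehat{R}_{\Sigma}b_e$ (supported on $K$), invoke the edge-bubble estimates \eqref{edgee1}, integrate by parts on $K$, and replace the resulting boundary integral on $e$ using the third equation of \eqref{weakEP} tested with $q^{\mathrm{P}}=\bzeta_e\in\rQ^{\mathrm{P}}$ — restricted to $\Omega^{\mathrm{P}}$ this equation has exactly the structure of the Darcy equation in \eqref{weak-poro}, because $c(\cdot,\cdot)$, $b_3$, $G$ and the flux condition only involve $\Omega^{\mathrm{P}}$-quantities. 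Bounding the interior residual $R_4$ by Lemma~\ref{lemP4} and applying Cauchy--Schwarz, one gets, for each $e\subset\Sigma$,
\[
h_e\frac{\xi}{\kappa}\|\widehat{R}_{\Sigma}\|_{0,e}^2
\lesssim \rho_1\|s^{\mathrm{P}}-s^{\mathrm{P}}_h\|_{0,K}^2
+\Big(c_0+\frac{\alpha^2}{2\mu^{\mathrm{P}}+\lambda^{\mathrm{P}}}\Big)\|p^{\mathrm{P}}-p^{\mathrm{P}}_h\|_{0,K}^2
+\frac{\kappa}{\xi}\|\nabla(p^{\mathrm{P}}-p^{\mathrm{P}}_h)\|_{0,K}^2
+\frac{1}{2\mu^{\mathrm{P}}+\lambda^{\mathrm{P}}}\|\phi^{\mathrm{P}}-\phi^{\mathrm{P}}_h\|_{0,K}^2.
\]
Summing over $e\subset\Sigma$, using the bounded overlap of the adjacent elements, bounding the first term by $\Upsilon^2$ and the remaining ones by $\VERT(\overrightarrow{\bomega}-\overrightarrow{\bomega}_h,\bu-\bu_h,p^{\mathrm{P}}-p^{\mathrm{P}}_h)\VERT^2$, gives $T_2\lesssim\VERT(\overrightarrow{\bomega}-\overrightarrow{\bomega}_h,\bu-\bu_h,p^{\mathrm{P}}-p^{\mathrm{P}}_h)\VERT^2+\Upsilon^2$, and combining with the bound for $T_1$ finishes the argument.

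The genuinely new point — and the step I would be most careful about — is the replacement of the interface boundary integral in the $T_2$ estimate: one must verify that testing \eqref{weakEP} with a bubble supported on a single poroelastic element touching $\Sigma$ reproduces, after element-wise integration by parts, precisely the identity exploited in Lemma~\ref{lemP6}; this hinges on $\Sigma$ being the entire boundary of $\Omega^{\mathrm{P}}$ and on the zero-flux condition being prescribed there. Everything else is a transcription of the efficiency arguments for the two sub-problems together with the finite-overlap property of edge patches.
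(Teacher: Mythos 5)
Your proposal is correct and follows essentially the same route as the paper: the paper's proof is the one-line remark that the bound ``follows straightforwardly from Lemma~\ref{lemP6}'', which, read together with the preceding Lemma~\ref{eleEP3} for the traction residual $\mathbf{R}_{\Sigma}$, is exactly your decomposition into $T_1$ and $T_2$. Your expanded treatment of $T_2$ --- treating $\widehat{R}_{\Sigma}$ as a boundary edge residual for the poroelastic subdomain and rerunning the bubble-function argument of Lemma~\ref{lemP6} with the test function supported on the adjacent element of $\Omega^{\mathrm{P}}$ --- is precisely the detail the paper leaves implicit.
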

\begin{proof}
It follows straightforwardly from Lemma \ref{lemP6}. 
\end{proof}
\begin{theorem}[Efficiency estimate]
Let $(\overrightarrow{\bomega},\bu,p^{\mathrm{P}})$ and $(\overrightarrow{\bomega}_h,\bu_h,p^{\mathrm{P}}_h)$ be the solutions to (\ref{weakEP}) and (\ref{weakdisEP}) (or (\ref{weakdisEPstab})), respectively. Then, the following reliability bound holds
\[
\Xi\le C_{\mathrm{eff}} (\VERT(\overrightarrow{\bomega}-\overrightarrow{\bomega}_h,\bu-\bu_h,p^{\mathrm{P}}-p^{\mathrm{P}}_h)\VERT+\Upsilon),
\]
where $C_{\mathrm{eff}}>0$ is a   constant independent of $h$ and of the sensible model parameters.
\end{theorem}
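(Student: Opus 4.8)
The plan is to exploit the additive structure of $\Xi^2$, which splits into the elasticity contribution on $\mathcal{T}_h\cap\Omega^{\mathrm{E}}$, the poroelasticity contribution on $\mathcal{T}_h\cap\Omega^{\mathrm{P}}$, and the purely interfacial contribution on $\mathcal{E}(\mathcal{T}_h)\cap\Sigma$, and to bound each of the three pieces separately. First I would write
\[
\Xi^2 = \sum_{K\in\mathcal{T}_h\cap\Omega^{\mathrm{E}}}\estE^2 + \sum_{K\in\mathcal{T}_h\cap\Omega^{\mathrm{P}}}\estP^2 + \sum_{e\in\mathcal{E}(\mathcal{T}_h)\cap\Sigma}\estint^2,
\]
and observe that the first two sums are exactly the (sub-domain restricted) global elastic and poroelastic estimators $\Theta$ and $\Psi$ evaluated on the transmission solution. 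By Lemma~\ref{eleEP1} each of them is controlled by $\VERT(\overrightarrow{\bomega}-\overrightarrow{\bomega}_h,\bu-\bu_h,p^{\mathrm{P}}-p^{\mathrm{P}}_h)\VERT+\Upsilon$; the point to check is only that the element-local error contributions produced in Lemmas~\ref{lemE1}--\ref{lemE4} and \ref{lemP1}--\ref{lemP6} are genuine pieces of the restrictions to $\Omega^{\mathrm{E}}$ and $\Omega^{\mathrm{P}}$ of the interface energy norm (the displacement part $\|\cdot\|_{\bV}$ carries the correct $\mu^{\mathrm{E}}$- and $\mu^{\mathrm{P}}$-weights on the two sub-domains, and $\|\cdot\|_{\bH}$ together with the $\kappa/\xi$- and $c_0$-weighted pressure norms supplies the remaining terms), so no new ingredient is needed here.

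For the interfacial term I would invoke Lemma~\ref{eleEP4}, which already asserts $\bigl(\sum_{e\in\mathcal{E}(\mathcal{T}_h)\cap\Sigma}\estint^2\bigr)^{1/2}\lesssim \VERT(\overrightarrow{\bomega}-\overrightarrow{\bomega}_h,\bu-\bu_h,p^{\mathrm{P}}-p^{\mathrm{P}}_h)\VERT+\Upsilon$; this in turn rests on Lemma~\ref{eleEP3} for the momentum-type jump $\mathbf{R}_{\Sigma}$ (edge bubble $\bzeta_e=(\mu^{\mathrm{E}}+\mu^{\mathrm{P}})^{-1}h_e\mathbf{R}_{\Sigma}b_e$, elementwise integration by parts on the two elements of the patch $P_e$ straddling $\Sigma$, and the strong momentum equations on each side) and on the argument of Lemma~\ref{lemP6} for the Darcy-flux jump $\widehat{R}_{\Sigma}$ (test function $\bzeta_e=\rho_2\widehat{R}_{\Sigma}b_e$ and the mass equation, the residual on $\Sigma$ being one-sided because of the no-flow transmission condition). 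Collecting the three bounds and using $\sqrt{a+b+c}\le\sqrt a+\sqrt b+\sqrt c$ then yields $\Xi\le C_{\mathrm{eff}}\bigl(\VERT(\overrightarrow{\bomega}-\overrightarrow{\bomega}_h,\bu-\bu_h,p^{\mathrm{P}}-p^{\mathrm{P}}_h)\VERT+\Upsilon\bigr)$, with $C_{\mathrm{eff}}$ independent of $h$ and of the model parameters: every constant entering the bubble-function estimates depends only on shape-regularity, while the parameter scalings are absorbed by the weights $(\mu^{\mathrm{E}}+\mu^{\mathrm{P}})^{-1}$ and $\xi\kappa^{-1}h_e$ built into $\estint^2$.

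The main obstacle is bookkeeping rather than a genuinely new estimate: one must check that when the element- and edge-local right-hand sides coming from Lemmas~\ref{eleEP1}, \ref{eleEP3} and the \ref{lemP6}-type argument are summed over all $K$ and all interfacial edges, the finite-overlap property of the patches $\{P_e\}$ (guaranteed by shape-regularity) collapses them into the single global quantity $\VERT(\overrightarrow{\bomega}-\overrightarrow{\bomega}_h,\bu-\bu_h,p^{\mathrm{P}}-p^{\mathrm{P}}_h)\VERT+\Upsilon$ with a parameter-robust constant. A related point requiring a little care is the treatment of the mean-value part of the total and elastic pressures in the incompressible limit, which is handled exactly as in the single-physics situations of Sections~\ref{sec:elast} and \ref{sec:poroelast}. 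Once this assembly is in place the statement is immediate.
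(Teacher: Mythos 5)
Your proposal is correct and follows essentially the same route as the paper: the paper's proof likewise bounds the elastic and poroelastic contributions via Lemma~\ref{eleEP1} (i.e.\ Theorems~\ref{elasteff} and~\ref{poroeff} restricted to the sub-domains) and the interfacial contribution via Lemmas~\ref{eleEP3}--\ref{eleEP4}, then sums the three pieces. Your additional remarks on patch overlap and parameter weights are sensible bookkeeping but do not change the argument.
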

\begin{proof}
The bound results from combining Lemmas \ref{eleEP1}--\ref{eleEP4}.
\end{proof}
\section{Computational examples}\label{sec:results}

The accuracy of the three finite element discretisations and the robustness of the corresponding  \textit{a posteriori} error estimators will be demonstrated in this section. As usual, such robustness is quantified in terms of the effectivity index of a given computable indicator $\Phi \in \{\Theta,\Psi,\Xi\}$, i.e., the ratio between the total actual error and the estimated error   
\[ \texttt{eff}(\Phi) =  (\texttt{e}^2_{\bu} + \texttt{e}^2_p + \dots)^{1/2}/ \Phi_K,\]
and $\texttt{eff}$ is expected to remain constant independently of the number of degrees of freedom associated with each mesh refinement. The direct solver UMFPACK is used for all linear systems, and the algorithms are implemented in the FEniCS library \cite{alnaes_ans15}, using multiphenics \cite{multiphenics} for the handling of subdomains and incorporation of restricted finite element spaces.  

\begin{table}[t]
\setlength{\tabcolsep}{3.5pt}
 \begin{center}
{\footnotesize  \begin{tabular}{|rccccccc|}
\hline 
DoFs & $h$ & $\texttt{e}_{\bomega}$ & $\texttt{r}_{\bomega}$ & $\texttt{e}_{\bu}$ & $\texttt{r}_{\bu}$ & \texttt{e} & $\texttt{eff}(\Theta)$ $\vphantom{\int^X_X}$\\
\hline 
\multicolumn{8}{|c|}{$E = 1$, $\nu = 0.25$, $ k = 0 $ $\vphantom{\int^X_X}$}\\
\hline 
   114 & 0.3536 & 2.14e+0 & 0.00 & 2.64e+0 & 0.00 & 3.40e+0 & 0.249\\
   418 & 0.1768 & 1.11e+0 & 0.95 & 1.40e+0 & 0.92 & 1.79e+0 & 0.248\\
  1602 & 0.0884 & 5.61e-01 & 0.99 & 7.07e-01 & 0.98 & 9.02e-01 & 0.246\\
  6274 & 0.0442 & 2.81e-01 & 1.00 & 3.54e-01 & 1.00 & 4.52e-01 & 0.245\\
 24834 & 0.0221 & 1.40e-01 & 1.00 & 1.77e-01 & 1.00 & 2.26e-01 & 0.244\\
 98818 & 0.0110 & 7.02e-02 & 1.00 & 8.85e-02 & 1.00 & 1.13e-01 & 0.244\\
\hline 
\multicolumn{8}{|c|}{$E = 1$, $\nu = 0.25$, $ k = 1 $ $\vphantom{\int^X_X}$}\\
\hline
   354 & 0.3536 & 5.33e-01 & 1.63 & 7.65e-01 & 1.51 & 9.32e-01 & 0.146\\
  1346 & 0.1768 & 1.43e-01 & 1.90 & 2.09e-01 & 1.87 & 2.53e-01 & 0.151\\
  5250 & 0.0884 & 3.67e-02 & 1.96 & 5.17e-02 & 2.02 & 6.34e-02 & 0.148\\
 20738 & 0.0442 & 9.24e-03 & 1.99 & 1.28e-02 & 2.02 & 1.58e-02 & 0.146\\
 82434 & 0.0221 & 2.32e-03 & 2.00 & 3.18e-03 & 2.01 & 3.93e-03 & 0.146\\
328706 & 0.0110 & 5.79e-04 & 2.00 & 7.93e-04 & 2.00 & 9.82e-04 & 0.146\\
 \hline 
\multicolumn{8}{|c|}{$E = 10^5$, $\nu = 0.499$, $ k = 0 $ $\vphantom{\int^X_X}$}\\
\hline
   114 & 0.3536 & 6.43e+2 & 0.00 & 7.24e+2 & 0.00 & 9.68e+2 & 0.222\\
   418 & 0.1768 & 3.19e+2 & 1.01 & 4.02e+2 & 0.85 & 5.14e+2 & 0.247\\
  1602 & 0.0884 & 1.61e+2 & 0.99 & 2.04e+2 & 0.98 & 2.60e+2 & 0.245\\
  6274 & 0.0442 & 8.08e+1 & 1.00 & 1.02e+2 & 1.00 & 1.30e+2 & 0.244\\
 24834 & 0.0221 & 4.04e+1 & 1.00 & 5.11e+1 & 1.00 & 6.51e+1 & 0.244\\
 98818 & 0.0110 & 2.02e+1 & 1.00 & 2.55e+1 & 1.00 & 3.26e+1 & 0.244\\
  \hline 
\multicolumn{8}{|c|}{$E = 10^5$, $\nu = 0.499$, $ k = 1 $ $\vphantom{\int^X_X}$}\\
\hline
   354 & 0.3536 & 2.37e+2 & 0.00 & 4.18e+2 & 0.00 & 4.81e+2 & 0.172\\
  1346 & 0.1768 & 4.14e+1 & 2.52 & 5.94e+1 & 2.82 & 7.24e+1 & 0.150\\
  5250 & 0.0884 & 1.06e+1 & 1.97 & 1.49e+1 & 1.99 & 1.83e+1 & 0.148\\
 20738 & 0.0442 & 2.67e+0 & 1.99 & 3.68e+0 & 2.02 & 4.55e+0 & 0.146\\
 82434 & 0.0221 & 6.68e-01 & 2.00 & 9.17e-01 & 2.01 & 1.13e+0 & 0.146\\
328706 & 0.0110 & 1.67e-01 & 2.00 & 2.29e-01 & 2.00 & 2.83e-01 & 0.145\\
 \hline
  \end{tabular}}
  
  \smallskip
  \caption{Example 1A: Errors, convergence rates, and effectivity indexes under uniform mesh refinement. Smooth manufactured solutions for the rotation-based elasticity problem.}
\label{table:ex01A}
 \end{center}
\end{table}

We start with a simple case of manufactured polynomial solutions on $\Omega = (0,1)^2$ where both displacement and fluid pressure vanish on $\partial\Omega$ 
\[
p(x,y) = xy(1-x)(a-y), \quad 
\bu(x,y) = \begin{pmatrix} \pi \sin^2(\pi x)\sin(\pi y)\cos(\pi y)+p(x,y)/2\lambda \\-\pi \sin(\pi x)\cos(\pi y)\sin^2(\pi y)+p(x,y)/2\lambda\end{pmatrix}.
\]
To ensure the zero boundary condition for displacement, we choose $a=1$. In the interface problem, we choose $a=0.5$ for fluid pressure.
Unless specified otherwise, all parameters (except the Poisson ratio) are taken equal to 1. For the transmission problem the interface is the horizontal segment located on $y = \frac{1}{2}$, and the porous domain is below the interface. 
A sequence of successively refined uniform meshes is constructed and exact and estimated errors between these closed-form solutions and the finite element approximations (in this case focusing on the first and second-order schemes, with $k=0$ and $k=1$) are computed. The results are collected in Tables~\ref{table:ex01A}, \ref{table:ex01B}, \ref{table:ex01C}, where the convergence rates are computed as 
\begin{equation}
\label{eq:uniform-rate}
\texttt{r}_{(\cdot)}  =\log(e_{(\cdot)}/\tilde{e}_{(\cdot)})[\log(h/\tilde{h})]^{-1},
\end{equation}
where $e,\tilde{e}$ denote errors generated on two
consecutive  meshes of size $h$ and~$\tilde{h}$. 

\begin{table}[t!]
\setlength{\tabcolsep}{3.5pt}
 \begin{center}
{\footnotesize  \begin{tabular}{|rccccccccc|}
\hline 
DoFs & $h$ & $\texttt{e}_{\vomega}$ & $\texttt{r}_{\vomega}$ & $\texttt{e}_{\bu}$ & $\texttt{r}_{\bu}$  & $\texttt{e}_{p}$ & $\texttt{r}_{p}$ & \texttt{e} & $\texttt{eff}(\Psi)$ $\vphantom{\int^X_X}$\\
\hline 
\multicolumn{10}{|c|}{$E = 1$, $\nu = 0.25$, $\kappa = 1$, $ k = 0 $ $\vphantom{\int^X_X}$}\\
\hline 
   139 & 0.3536 & 2.14e+0 & -- & 2.64e+0 & -- & 5.51e-02 & -- & 3.40e+0 & 0.249\\
   499 & 0.1768 & 1.11e+0 & 0.95 & 1.40e+0 & 0.92 & 2.91e-02 & 0.92 & 1.79e+0 & 0.248\\
  1891 & 0.0884 & 5.61e-01 & 0.99 & 7.07e-01 & 0.98 & 1.50e-02 & 0.96 & 9.02e-01 & 0.246\\
  7363 & 0.0442 & 2.81e-01 & 1.00 & 3.54e-01 & 1.00 & 7.57e-03 & 0.98 & 4.52e-01 & 0.245\\
 29059 & 0.0221 & 1.40e-01 & 1.00 & 1.77e-01 & 1.00 & 3.80e-03 & 0.99 & 2.26e-01 & 0.244\\
115459 & 0.0110 & 7.02e-02 & 1.00 & 8.85e-02 & 1.00 & 1.90e-03 & 1.00 & 1.13e-01 & 0.244\\
\hline 
\multicolumn{10}{|c|}{$E = 1$, $\nu = 0.25$,  $\kappa = 1$, $ k = 1 $ $\vphantom{\int^X_X}$}\\
\hline
   435 & 0.3536 & 5.33e-01 & -- & 7.65e-01 & -- & 7.19e-03 & -- & 9.32e-01 & 0.146\\
  1635 & 0.1768 & 1.43e-01 & 1.90 & 2.09e-01 & 1.87 & 1.96e-03 & 1.88 & 2.53e-01 & 0.151\\
  6339 & 0.0884 & 3.67e-02 & 1.96 & 5.17e-02 & 2.02 & 5.11e-04 & 1.94 & 6.34e-02 & 0.148\\
 24963 & 0.0442 & 9.24e-03 & 1.99 & 1.28e-02 & 2.02 & 1.30e-04 & 1.97 & 1.58e-02 & 0.146\\
 99075 & 0.0221 & 2.32e-03 & 2.00 & 3.18e-03 & 2.01 & 3.29e-05 & 1.99 & 3.93e-03 & 0.146\\
394755 & 0.0110 & 5.79e-04 & 2.00 & 7.93e-04 & 2.00 & 8.26e-06 & 1.99 & 9.82e-04 & 0.146\\
 \hline 
\multicolumn{10}{|c|}{$E = 10^5$, $\nu = 0.499$,  $\kappa = 1$, $ k = 0 $ $\vphantom{\int^X_X}$}\\
\hline
   139 & 0.3536 & 6.43e+2 & -- & 7.24e+2 & -- & 5.10e-02 & -- & 9.68e+2 & 0.222\\
   499 & 0.1768 & 3.19e+2 & 1.01 & 4.02e+2 & 0.85 & 2.85e-02 & 0.84 & 5.14e+2 & 0.247\\
  1891 & 0.0884 & 1.61e+2 & 0.99 & 2.04e+2 & 0.98 & 1.49e-02 & 0.94 & 2.60e+2 & 0.245\\
  7363 & 0.0442 & 8.08e+1 & 1.00 & 1.02e+2 & 1.00 & 7.55e-03 & 0.98 & 1.30e+2 & 0.244\\
 29059 & 0.0221 & 4.04e+1 & 1.00 & 5.11e+1 & 1.00 & 3.80e-03 & 0.99 & 6.51e+1 & 0.244\\
115459 & 0.0110 & 2.02e+1 & 1.00 & 2.55e+1 & 1.00 & 1.90e-03 & 1.00 & 3.26e+1 & 0.244\\
  \hline 
\multicolumn{10}{|c|}{$E = 10^5$, $\nu = 0.499$,  $\kappa = 1$, $ k = 1 $ $\vphantom{\int^X_X}$}\\
\hline
   435 & 0.3536 & 2.37e+2 & -- & 4.18e+2 & --& 7.16e-03 & -- & 4.81e+2 & 0.172\\
  1635 & 0.1768 & 4.14e+1 & 2.52 & 5.94e+1 & 2.82 & 1.96e-03 & 1.87 & 7.24e+1 & 0.150\\
  6339 & 0.0884 & 1.06e+1 & 1.97 & 1.49e+1 & 1.99 & 5.11e-04 & 1.94 & 1.83e+1 & 0.148\\
 24963 & 0.0442 & 2.67e+0 & 1.99 & 3.68e+0 & 2.02 & 1.30e-04 & 1.97 & 4.55e+0 & 0.146\\
 99075 & 0.0221 & 6.68e-01 & 2.00 & 9.17e-01 & 2.01 & 3.29e-05 & 1.99 & 1.13e+0 & 0.146\\
394755 & 0.0110 & 1.67e-01 & 2.00 & 2.29e-01 & 2.00 & 8.26e-06 & 1.99 & 2.83e-01 & 0.145\\
 \hline
 \multicolumn{10}{|c|}{$E = 10^5$, $\nu = 0.499$,  $\kappa = 10^{-12}$, $ k = 0 $ $\vphantom{\int^X_X}$}\\
\hline
   139 & 0.3536 & 6.43e+2 & -- & 7.24e+2 & -- & 1.81e-03 & -- & 9.68e+2 & 0.222\\
   499 & 0.1768 & 3.19e+2 & 1.01 & 4.02e+2 & 0.85 & 4.62e-04 & 1.97 & 5.14e+2 & 0.247\\
  1891 & 0.0884 & 1.61e+2 & 0.99 & 2.04e+2 & 0.98 & 1.18e-04 & 1.96 & 2.60e+2 & 0.245\\
  7363 & 0.0442 & 8.08e+1 & 1.00 & 1.02e+2 & 1.00 & 3.02e-05 & 1.97 & 1.30e+2 & 0.244\\
 29059 & 0.0221 & 4.04e+1 & 1.00 & 5.11e+1 & 1.00 & 7.72e-06 & 1.97 & 6.51e+1 & 0.244\\
115459 & 0.0110 & 2.02e+1 & 1.00 & 2.55e+1 & 1.00 & 2.00e-06 & 1.95 & 3.26e+1 & 0.244\\
  \hline
 \multicolumn{10}{|c|}{$E = 10^5$, $\nu = 0.499$,  $\kappa = 10^{-12}$, $ k = 1 $ $\vphantom{\int^X_X}$}\\
\hline
   435 & 0.3536 & 2.37e+2 & -- & 4.18e+2 & -- & 9.97e-04 & -- & 4.81e+2 & 0.172\\
  1635 & 0.1768 & 4.14e+1 & 2.52 & 5.94e+1 & 2.82 & 3.60e-05 & 4.79 & 7.24e+1 & 0.150\\
  6339 & 0.0884 & 1.06e+1 & 1.97 & 1.49e+1 & 1.99 & 5.40e-06 & 2.74 & 1.83e+1 & 0.148\\
 24963 & 0.0442 & 2.67e+0 & 1.99 & 3.68e+0 & 2.02 & 1.15e-06 & 2.23 & 4.55e+0 & 0.146\\
 99075 & 0.0221 & 6.68e-01 & 2.00 & 9.17e-01 & 2.01 & 2.73e-07 & 2.08 & 1.13e+0 & 0.146\\
394755 & 0.0110 & 1.67e-01 & 2.00 & 2.29e-01 & 2.00 & 6.69e-08 & 2.03 & 2.83e-01 & 0.145\\
\hline
  \end{tabular}}
  
  \smallskip
  \caption{Example 1B: Errors (combining rotation and total pressure into $\vomega$), convergence rates, and effectivity indexes under uniform mesh refinement. Smooth manufactured solutions for the rotation-based Biot problem.}
\label{table:ex01B}
 \end{center}
\end{table}

The expected $O(h^{k+1})$ convergence is observed for all fields in their respective norms, accordingly to the theory from \cite{anaya_cmame19,anaya_sisc20}, 
and the effectivity index is close to constant for all mesh refinements. This same behaviour is seen even when the elastic or the poroelastic material is nearly incompressible (setting $E= 10^5, \nu = 0.499$) and when the poroelastic material it is nearly impermeable (setting $\kappa = 10^{-12}$), and we also note that the effectivity index is slightly modified, but it is still constant and not affected by the different parameter scaling, again confirming the robustness of the estimators. The variation in efficiency is natural as our analysis only focuses on $h$-adaptivity based \textit{a posteriori} error estimation (and an extension to $hp$-adaptivity based \textit{a posteriori} error estimators might help to overcome such a variation).

\begin{table}[t!]
\setlength{\tabcolsep}{3.5pt}
 \begin{center}
 {\footnotesize \begin{tabular}{|rcccccccccccccc|}
\hline 
DoFs  & $\texttt{e}_{\bomega^{\mathrm{P}}}$ & $\texttt{r}_{\bomega^{\mathrm{P}}}$& $\texttt{e}_{\phi^{\mathrm{P}}}$ & $\texttt{r}_{\phi^{\mathrm{P}}}$  & $\texttt{e}_{p^{\mathrm{P}}}$ & $\texttt{r}_{p^{\mathrm{P}}}$ & $\texttt{e}_{\bu}$ & $\texttt{r}_{\bu}$ & $\texttt{e}_{\bomega^{\mathrm{E}}}$ & $\texttt{r}_{\bomega^{\mathrm{E}}}$ & $\texttt{e}_{p^{\mathrm{E}}}$ & $\texttt{r}_{p^{\mathrm{E}}}$ & \texttt{e} & $\texttt{eff}(\Xi)$ $\vphantom{\int^X_X}$\\
\hline 
\multicolumn{15}{|c|}{$E^{\mathrm{E}} = E^{\mathrm{P}} = 1$, $\nu^{\mathrm{E}} = 0.25$, $\nu^{\mathrm{P}} = 0.25$, $\kappa = 1$, $k=0$  $\vphantom{\int^X_X}$}\\
\hline 
   139 & 1.5258 & --   & 0.1982  & --   & 3.28e-02 & 0.00 & 2.6433 & --   & 1.48e+0 & --   & 1.86e-01 & --   & 3.4036 & 0.281\\
   499 & 0.7902 & 0.95 & 0.0833  & 1.25 & 1.33e-02 & 1.30 & 1.3992 & 0.92 & 7.73e-01 & 0.94 & 8.41e-02 & 1.14 & 1.7870 & 0.294\\
  1891 & 0.3984 & 0.99 & 0.0365  & 1.19 & 6.37e-03 & 1.07 & 0.7071 & 0.98 & 3.91e-01 & 0.98 & 3.82e-02 & 1.14 & 0.9023 & 0.298\\
  7363 & 0.1995 & 1.00 & 0.0170  & 1.10 & 3.20e-03 & 0.99 & 0.3541 & 1.00 & 1.96e-01 & 1.00 & 1.77e-02 & 1.11 & 0.4519 & 0.298\\
 29059 & 0.0998 & 1.00 & 0.0082  & 1.05 & 1.60e-03 & 1.00 & 0.1771 & 1.00 & 9.80e-02 & 1.00 & 8.48e-03 & 1.07 & 0.2260 & 0.298\\
115459 & 0.0499 & 1.00 & 0.0040  & 1.02 & 8.01e-04 & 1.00 & 0.0885 & 1.00 & 4.90e-02 & 1.00 & 4.15e-03 & 1.03 & 0.1130 & 0.298\\
 \hline 
\multicolumn{15}{|c|}{$E^{\mathrm{E}} = E^{\mathrm{P}} = 1$, $\nu^{\mathrm{E}} = 0.25$, $\nu^{\mathrm{P}} = 0.25$, $\kappa = 1$, $k = 1$  $\vphantom{\int^X_X}$}\\
\hline 
   435 & 0.3750 & -- & 0.0334  & -- & 3.90e-03 & -- & 0.7644 & -- & 3.75e-01 & -- & 3.62e-02 & -- & 0.9317 & 0.151\\
  1635 & 0.1009 & 1.89 & 0.0073  & 2.20 & 1.08e-03 & 1.86 & 0.2092 & 1.87 & 1.01e-01 & 1.90 & 7.92e-03 & 2.19 & 0.2534 & 0.154\\
  6339 & 0.0259 & 1.96 & 0.0018  & 2.05 & 2.83e-04 & 1.93 & 0.0517 & 2.02 & 2.58e-02 & 1.96 & 1.90e-03 & 2.06 & 0.0634 & 0.150\\
 24963 & 0.0065 & 1.99 & 0.0004  & 2.02 & 7.25e-05 & 1.96 & 0.0128 & 2.02 & 6.51e-03 & 1.99 & 4.68e-04 & 2.02 & 0.0158 & 0.148\\
 99075 & 0.0016 & 2.00 & 0.0001  & 2.01 & 1.83e-05 & 1.98 & 0.0032 & 2.01 & 1.63e-03 & 2.00 & 1.16e-04 & 2.01 & 0.0039 & 0.148\\
394755 & 0.0004 & 2.00 & 2.48e-05  & 2.01 & 4.62e-06 & 1.99 & 0.0008 & 2.00 & 4.08e-04 & 2.00 & 2.90e-05 & 2.00 & 0.0010 & 0.147\\
\hline 
\multicolumn{15}{|c|}{$E^{\mathrm{E}} = 10^5$,  $E^{\mathrm{P}} = 10^5$, $\nu^{\mathrm{E}} = 0.499$, $\nu^{\mathrm{P}} = 0.499$, $\kappa = 1$, $k=0$ $\vphantom{\int^X_X}$}\\
\hline 
   139 & 450.02 & -- & 66.730  & -- & 2.14e-02 & -- & 723.84 & -- & 4.50e+2 & -- & 6.67e+1 & -- & 968.45 & 0.241\\
   499 & 225.32 & 1.00 & 13.844  & 2.27 & 1.19e-02 & 0.85 & 402.24 & 0.85 & 2.25e+2 & 1.00 & 1.38e+1 & 2.27 & 513.55 & 0.293\\
  1891 & 113.83 & 0.99 & 6.1821  & 1.16 & 6.24e-03 & 0.93 & 203.74 & 0.98 & 1.14e+2 & 0.99 & 6.18e+0 & 1.16 & 259.81 & 0.296\\
  7363 & 57.065 & 1.00 & 2.8910  & 1.10 & 3.18e-03 & 0.97 & 102.12 & 1.00 & 5.71e+1 & 1.00 & 2.89e+0 & 1.10 & 130.22 & 0.297\\
 29059 & 28.551 & 1.00 & 1.4069  & 1.04 & 1.60e-03 & 0.99 & 51.085 & 1.00 & 2.86e+1 & 1.00 & 1.41e+0 & 1.04 & 65.145 & 0.297\\
115459 & 14.277 & 1.00 & 0.6965  & 1.01 & 8.01e-04 & 1.00 & 25.543 & 1.00 & 1.43e+1 & 1.00 & 6.97e-01 & 1.01 & 32.575 & 0.297\\
\hline 
\multicolumn{15}{|c|}{$E^{\mathrm{E}} = 10^5$,  $E^{\mathrm{P}} = 10^5$, $\nu^{\mathrm{E}} = 0.499$, $\nu^{\mathrm{P}} = 0.499$, $\kappa = 1$, $k=1$ $\vphantom{\int^X_X}$}\\
\hline 
   435 & 143.37 & -- & 87.195  & -- & 3.85e-03 & -- & 418.47 & -- & 1.43e+2 & -- & 8.72e+1 & -- & 481.08 & 0.175\\
  1635 & 29.208 & 2.30 & 2.0812  & 5.39 & 1.07e-03 & 1.84 & 59.380 & 2.82 & 2.92e+1 & 2.30 & 2.08e+0 & 5.39 & 72.394 & 0.153\\
  6339 & 7.4723 & 1.97 & 0.3699  & 2.49 & 2.83e-04 & 1.93 & 14.911 & 1.99 & 7.47e+0 & 1.97 & 3.70e-01 & 2.49 & 18.283 & 0.150\\
 24963 & 1.8825 & 1.99 & 0.0907  & 2.03 & 7.25e-05 & 1.96 & 3.6848 & 2.02 & 1.88e+0 & 1.99 & 9.07e-02 & 2.03 & 4.5477 & 0.148\\
 99075 & 0.4716 & 2.00 & 0.0224  & 2.02 & 1.83e-05 & 1.98 & 0.9172 & 2.01 & 4.72e-01 & 2.00 & 2.24e-02 & 2.02 & 1.1345 & 0.148\\
394755 & 0.1180 & 2.00 & 0.0056  & 2.01 & 4.62e-06 & 1.99 & 0.2290 & 2.00 & 1.18e-01 & 2.00 & 5.57e-03 & 2.01 & 0.2834 & 0.147\\
\hline 
\multicolumn{15}{|c|}{$E^{\mathrm{E}} = 10^5$,  $E^{\mathrm{P}} = 10^5$, $\nu^{\mathrm{E}} = 0.499$, $\nu^{\mathrm{P}} = 0.499$, $\kappa = 10^{-12}$, $k=0$ $\vphantom{\int^X_X}$}\\
\hline
   139 & 450.02 & -- & 66.730  & -- & 8.47e-04 & -- & 723.84 & -- & 4.50e+2 & -- & 6.67e+1 & -- & 968.45 & 0.241\\
   499 & 225.32 & 1.00 & 13.844  & 2.27 & 2.15e-04 & 1.98 & 402.24 & 0.85 & 2.25e+2 & 1.00 & 1.38e+1 & 2.27 & 513.55 & 0.293\\
  1891 & 113.83 & 0.99 & 6.1821  & 1.16 & 5.59e-05 & 1.94 & 203.74 & 0.98 & 1.14e+2 & 0.99 & 6.18e+0 & 1.16 & 259.81 & 0.296\\
  7363 & 57.065 & 1.00 & 2.8910  & 1.10 & 1.45e-05 & 1.95 & 102.12 & 1.00 & 5.71e+1 & 1.00 & 2.89e+0 & 1.10 & 130.22 & 0.297\\
 29059 & 28.551 & 1.00 & 1.4069  & 1.04 & 3.79e-06 & 1.93 & 51.085 & 1.00 & 2.86e+1 & 1.00 & 1.41e+0 & 1.04 & 65.145 & 0.297\\
115459 & 14.277 & 1.00 & 0.6965  & 1.01 & 1.02e-06 & 1.90 & 25.543 & 1.00 & 1.43e+1 & 1.00 & 6.97e-01 & 1.01 & 32.575 & 0.297\\
 \hline 
\multicolumn{15}{|c|}{$E^{\mathrm{E}} = 10^5$,  $E^{\mathrm{P}} = 10^5$, $\nu^{\mathrm{E}} = 0.499$, $\nu^{\mathrm{P}} = 0.499$, $\kappa = 10^{-12}$, $k=1$ $\vphantom{\int^X_X}$}\\
\hline
   435 & 143.37 & -- & 87.195  & -- & 6.56e-04 & -- & 418.47 & -- & 1.43e+2 & -- & 8.72e+1 & -- & 481.08 & 0.175\\
  1635 & 29.208 & 2.30 & 2.0812  & 5.39 & 2.18e-05 & 4.91 & 59.380 & 2.82 & 2.92e+1 & 2.30 & 2.08e+0 & 5.39 & 72.394 & 0.153\\
  6339 & 7.4723 & 1.97 & 0.3699  & 2.49 & 3.41e-06 & 2.68 & 14.911 & 1.99 & 7.47e+0 & 1.97 & 3.70e-01 & 2.49 & 18.283 & 0.150\\
 24963 & 1.8825 & 1.99 & 0.0907  & 2.03 & 7.81e-07 & 2.12 & 3.6848 & 2.02 & 1.88e+0 & 1.99 & 9.07e-02 & 2.03 & 4.5477 & 0.148\\
 99075 & 0.4716 & 2.00 & 0.0224  & 2.02 & 1.90e-07 & 2.04 & 0.9172 & 2.01 & 4.72e-01 & 2.00 & 2.24e-02 & 2.02 & 1.1345 & 0.148\\
394755 & 0.1180 & 2.00 & 0.0056  & 2.01 & 4.71e-08 & 2.01 & 0.2290 & 2.00 & 1.18e-01 & 2.00 & 5.57e-03 & 2.01 & 0.2834 & 0.147\\
\hline 
 \end{tabular}}
  
  \smallskip
  \caption{Example 1C: Errors, convergence rates, and effectivity indexes under uniform mesh refinement. Smooth manufactured solutions for the rotation-based interfacial elasticity/poroelasticity problem with $k =0,1$.}
\label{table:ex01C}
 \end{center}
\end{table}

For the second and third examples, we employ adaptive mesh refinement consisting in the usual steps of solving, then computing the local and global estimators, marking, refining, and smoothing. The marking of elements for refinement follows the classical D\"orfler approach \cite{dorfler_sinum96}: a given $K\in \cT_h$ is \emph{marked} (added to the marking set $\cM_h\subset\cT_h$)  whenever the local error indicator $\Phi_K$ satisfies 
$ \sum_{K \in \cM_h} \Phi^2_K \geq \zeta \sum_{K\in\cT_h} \Phi_K^2$,
where $0<\zeta<1$ is a user-defined parameter (meaning that one refines elements that contribute to a proportion $\zeta$ of the total squared error). Elements in $\cM_h$ are then refined (their diameter is halved) and an additional smoothing step is applied before starting a new iteration of the algorithm. When computing convergence rates under uniform refinement, we use the following modification to \eqref{eq:uniform-rate}:  
\[\texttt{r}_{(\cdot)} = -2\log(e_{(\cdot)}/\tilde{e}_{(\cdot)})[\log({\tt DoF}/\widetilde{{\tt DoF}})]^{-1}.\]

The second example investigates again the accuracy of the three numerical methods but this time we use the L-shaped domain $\Omega = (-1,1)^2\setminus (0,1)^2$ and the transmission problem has the interface defined as the segment going from the reentrant corner $(0,0)$ to the bottom-left corner of the domain $(-1,-1)$, and the porous domain is the one above the interface. In addition to the singularity of the geometry, we use manufactured solutions with sharp gradients near the reentrant corner 
\[
p^{\mathrm{P}}(x,y) = \exp(-25(x^2+y^2)), \qquad 
\bu(x,y) = \bigl( \exp(-50(x^2+y^2)),\exp(-50(x^2+y^2)) \bigr)^{\tt t}.
\]
It is expected that the convergence of the methods is hindered due to the lack of regularity of the exact solutions whenever one follows a uniform mesh refinement. Such a slower error decay is clearly observed in the top half of Table~\ref{table:ex2}, while adaptive mesh refinement (with a D\"orfler constant of $\zeta = 0.001$) yields asymptotic optimal convergence evidenced on the bottom half of the table, where also one reaches much smaller errors using a fraction of the degrees of freedom needed in the uniform case. Here we focus on the methods with $k=1$, and samples of adaptively refine meshes and approximate solutions are portrayed in Figure~\ref{fig:ex02}. In this case we have used the following contrast of parameters between the subdomains 
$c_0 = 0$, $\alpha = 1$, $E^\mathrm{E} = 10$,  
 $E^\mathrm{P} = 1$,  
 $\nu^\mathrm{E} = 0.25$, $\nu^\mathrm{P} = 0.45$,  
$\xi = 1$, $\kappa = 10^{-3}$.

\begin{figure}[t!]
\begin{center}
\includegraphics[width=0.23\textwidth]{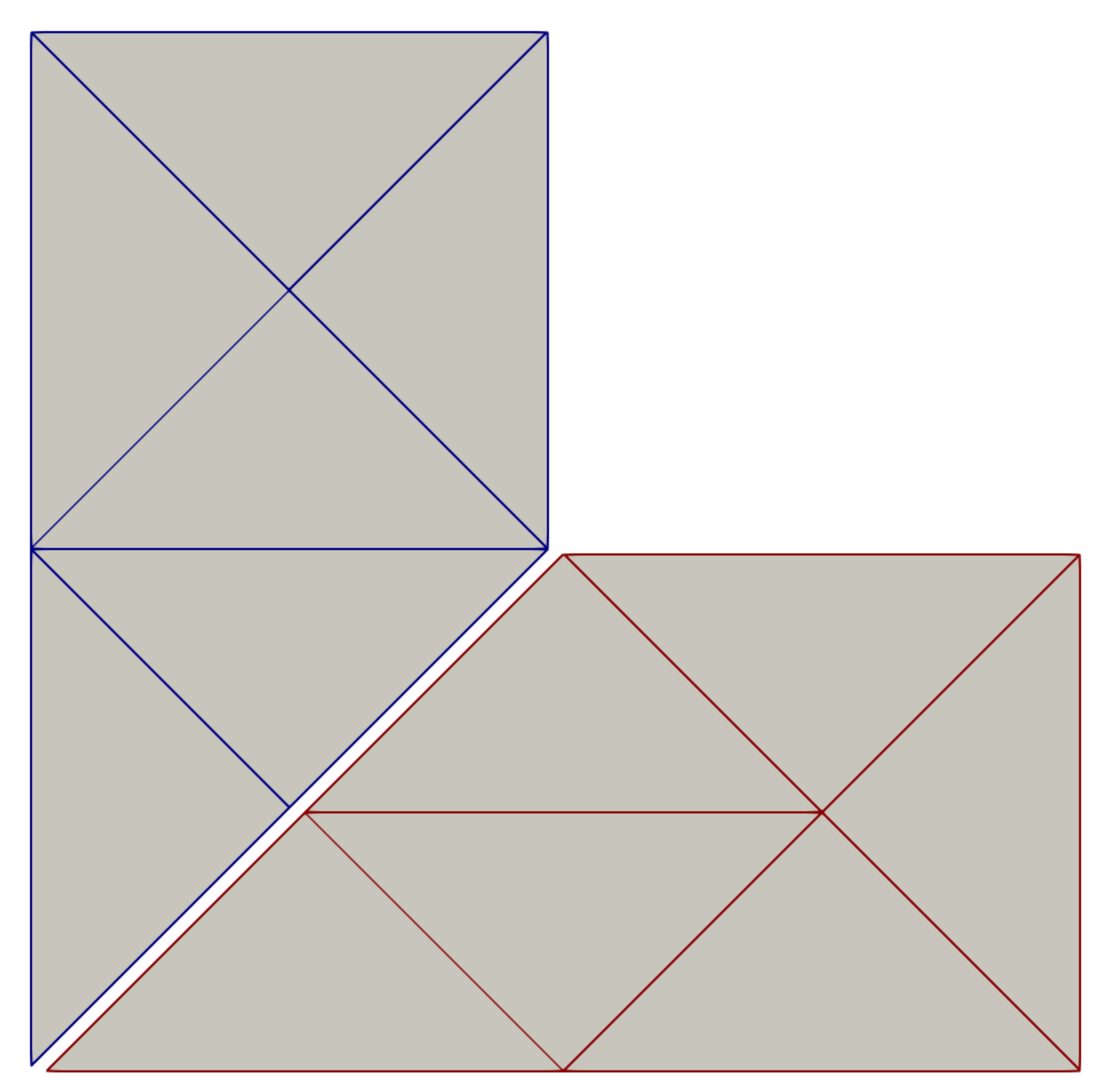}
\includegraphics[width=0.23\textwidth]{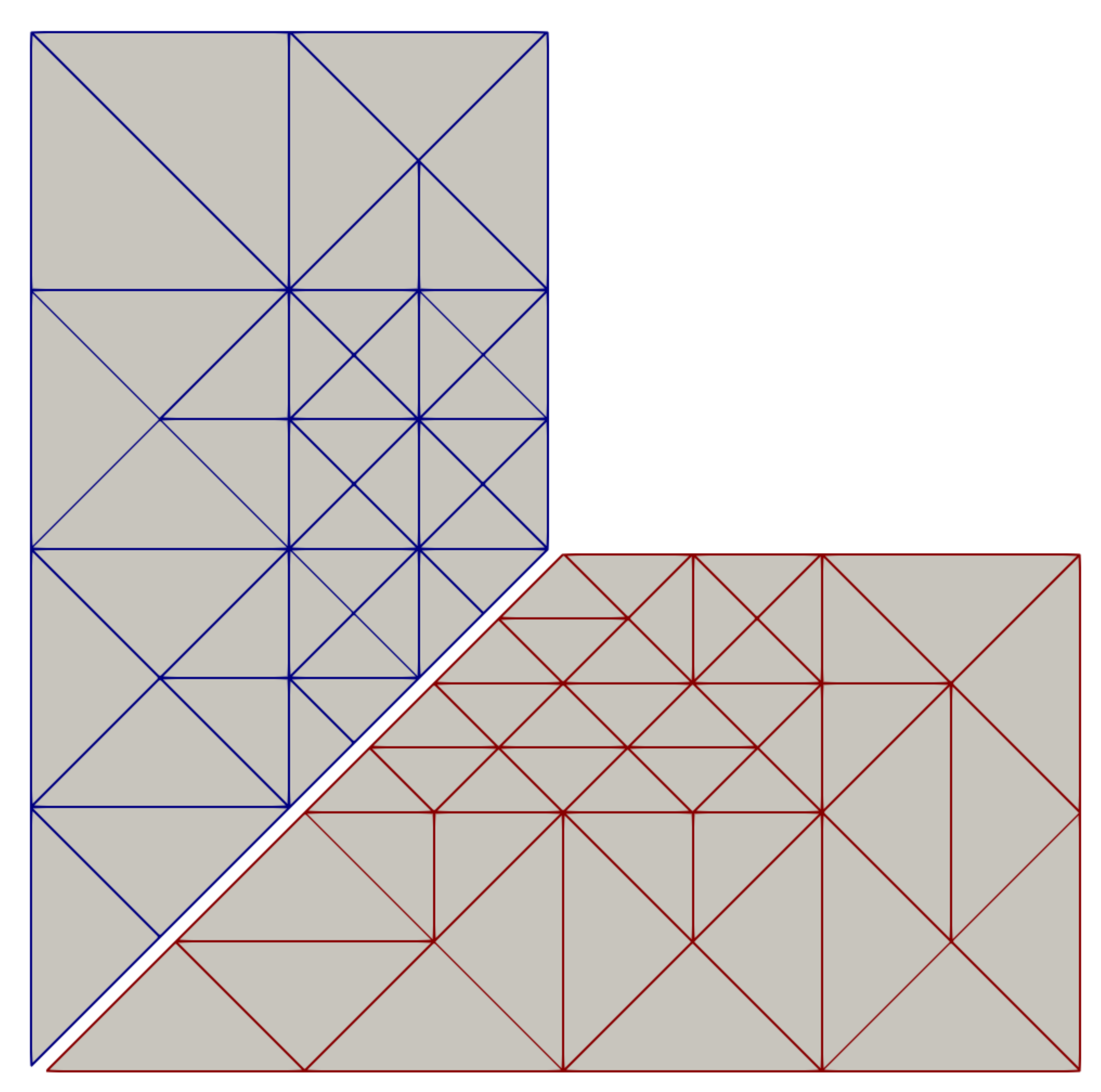}
\includegraphics[width=0.23\textwidth]{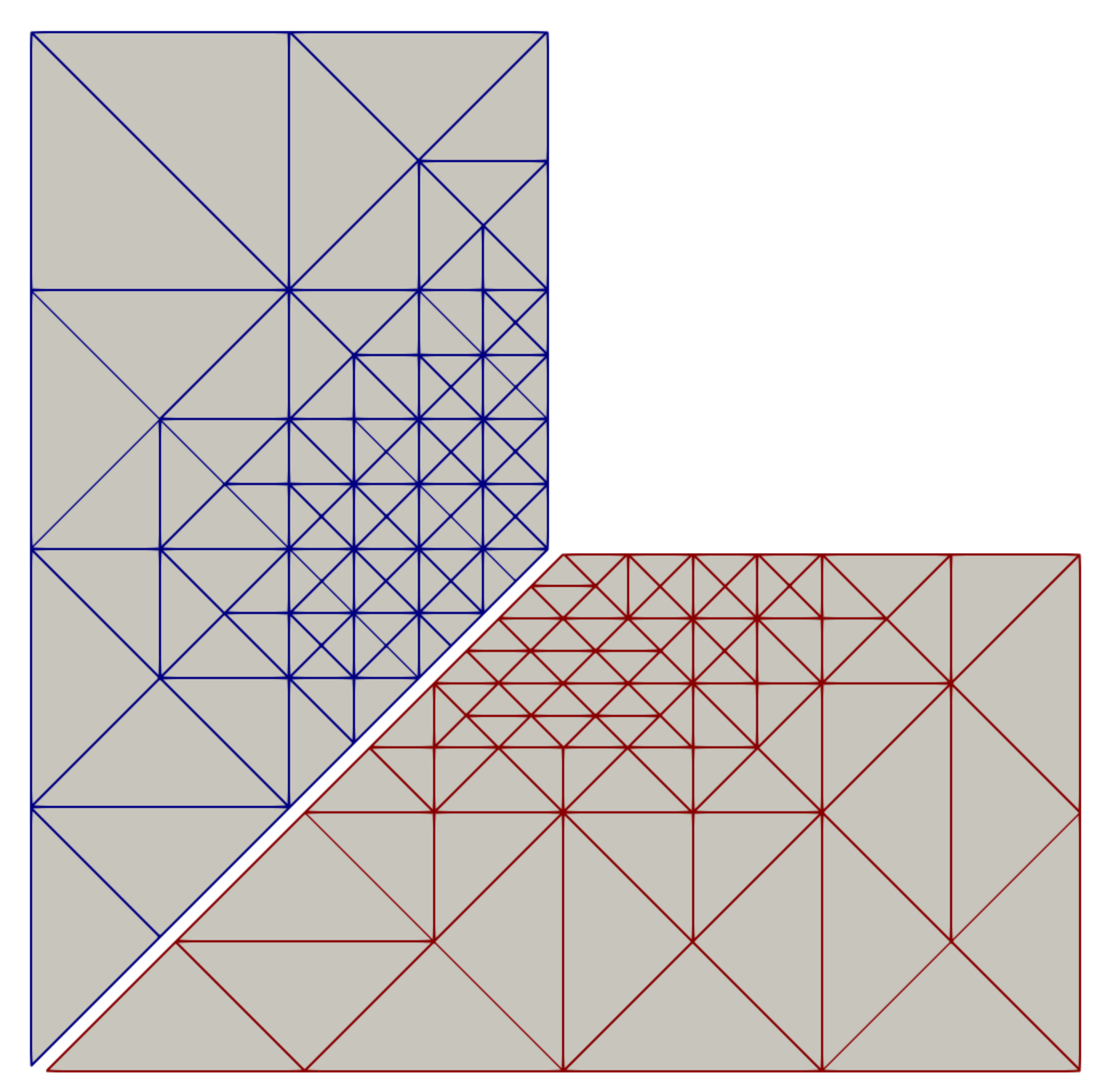}
\includegraphics[width=0.23\textwidth]{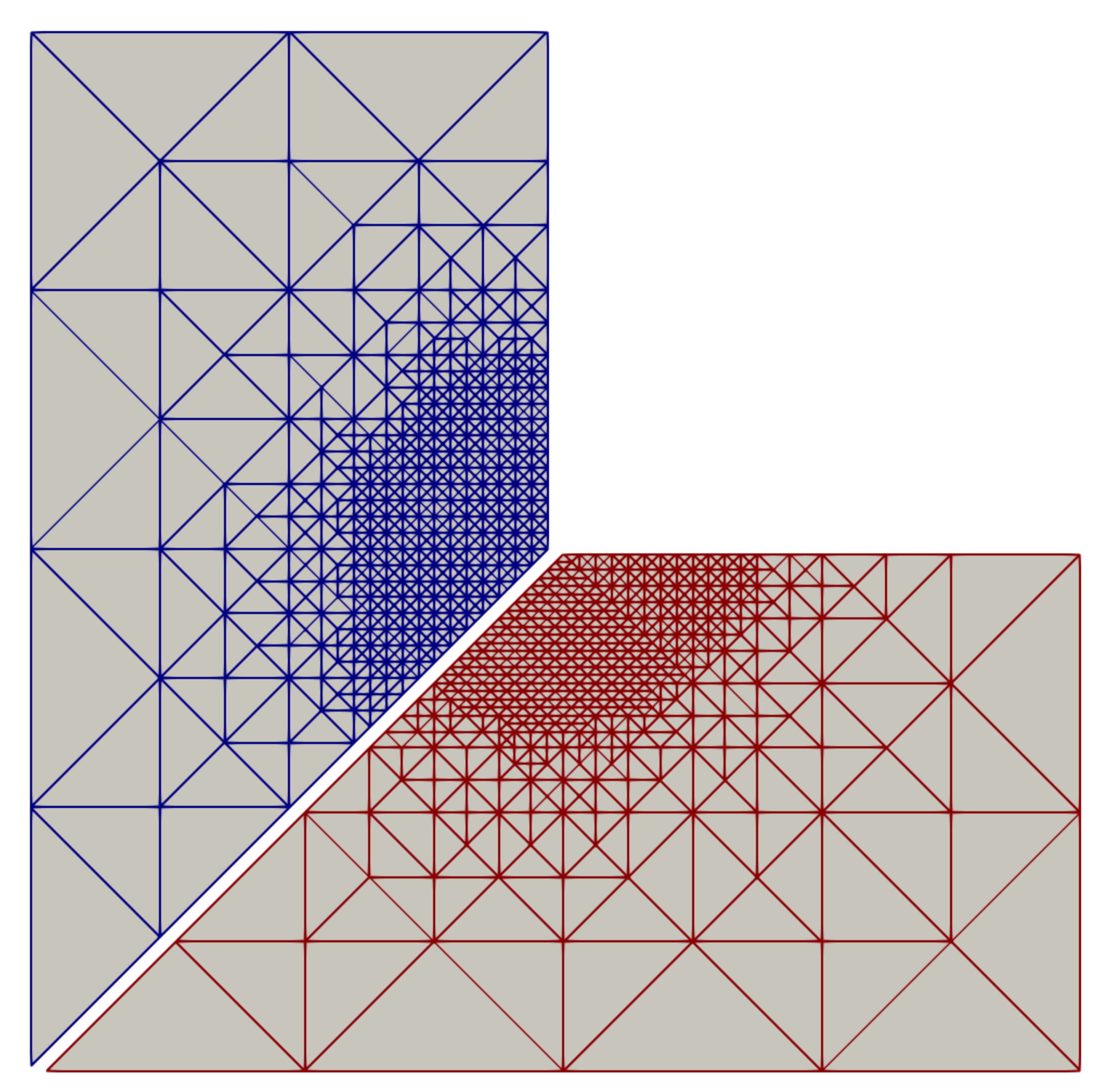}\\[1ex]
\includegraphics[width=0.3\textwidth]{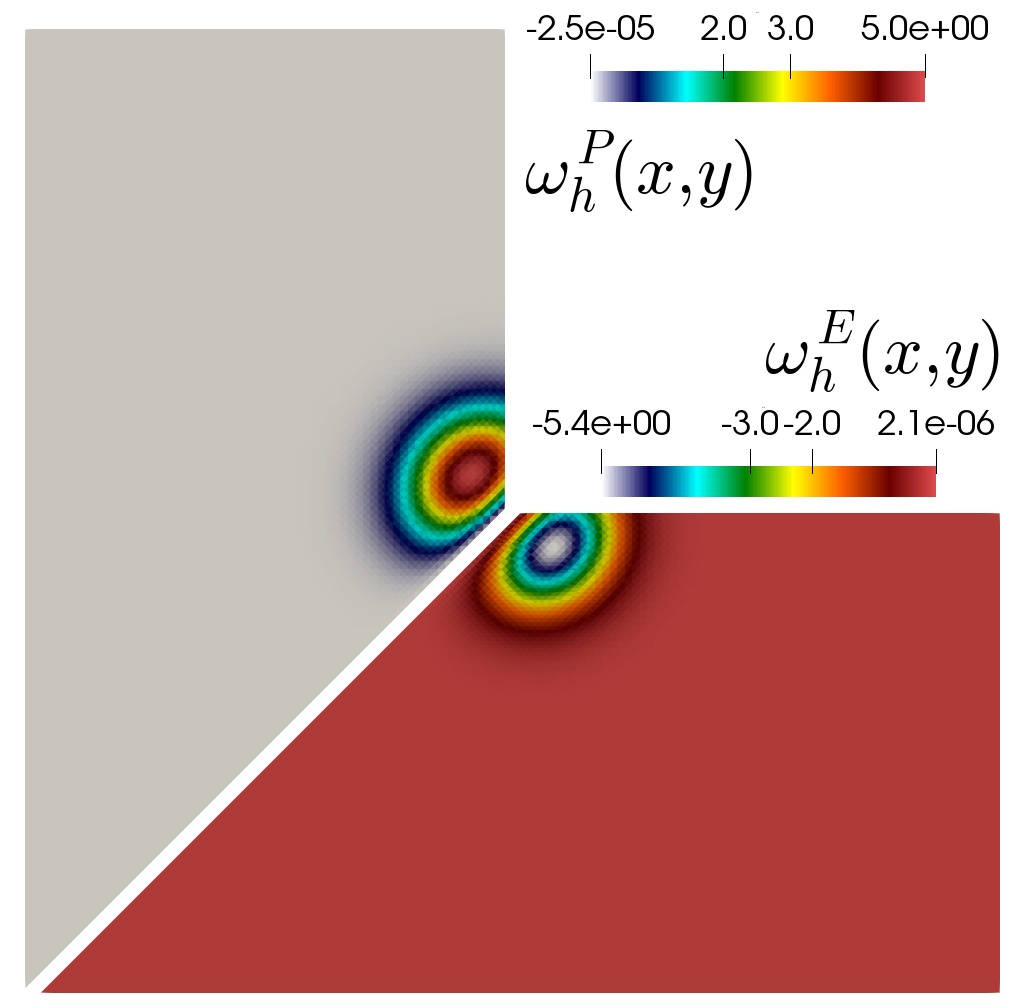}
\includegraphics[width=0.3\textwidth]{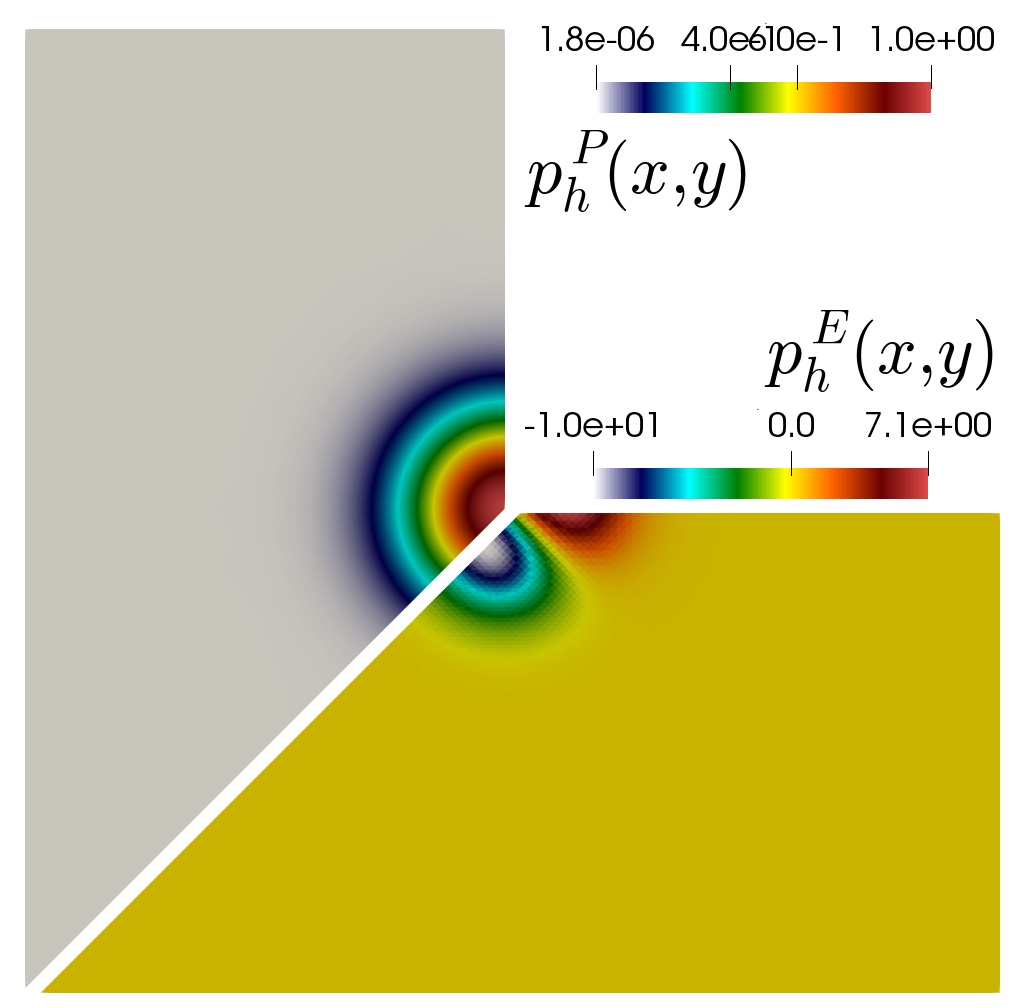}
\includegraphics[width=0.3\textwidth]{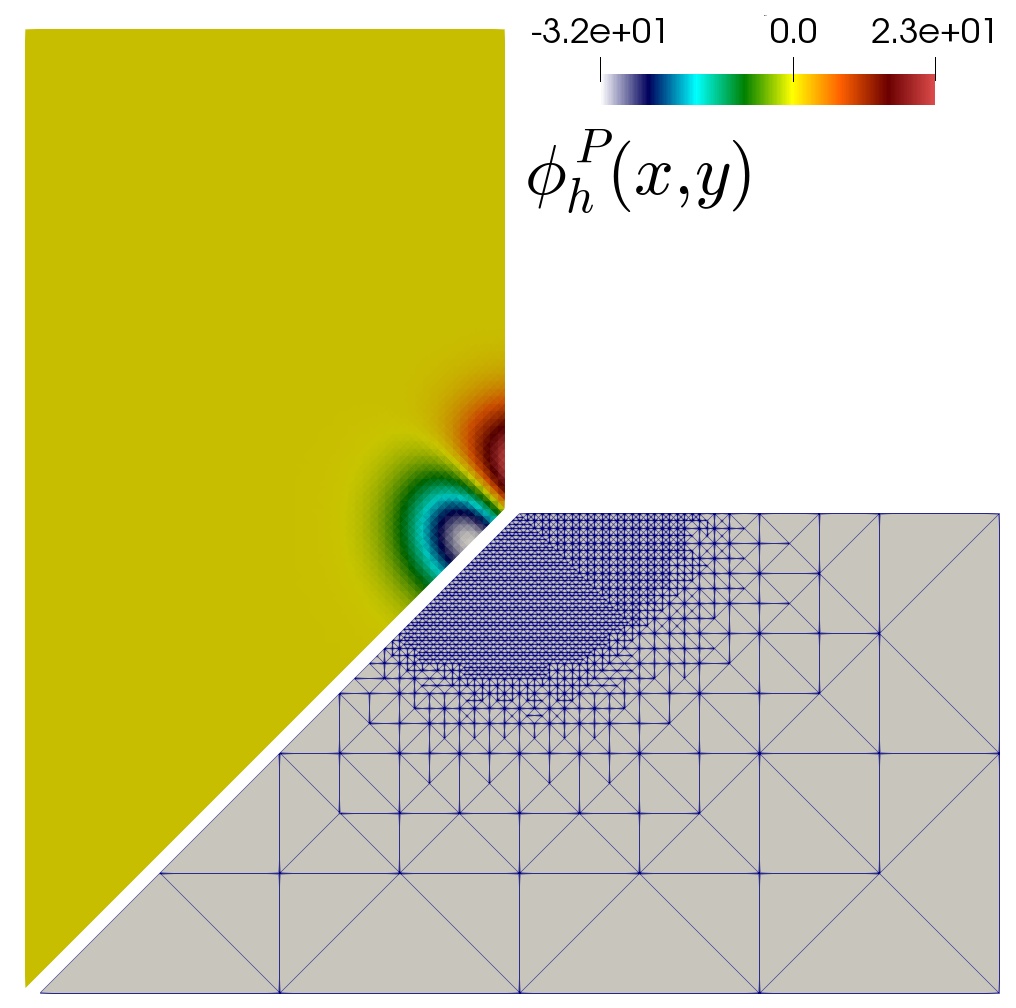}
 \end{center}
\caption{Example 2. Adaptively refined meshes (top) and  approximate solutions (bottom) for the Biot/elasticity problem.}
\label{fig:ex02}
 \end{figure}

 \begin{table}[t!]
\setlength{\tabcolsep}{3.5pt}
 \begin{center}
 {\footnotesize \begin{tabular}{|rcccccccccccccc|}
\hline 
DoFs  & $\texttt{e}_{\bomega^{\mathrm{P}}}$ & $\texttt{r}_{\bomega^{\mathrm{P}}}$& $\texttt{e}_{\phi^{\mathrm{P}}}$ & $\texttt{r}_{\phi^{\mathrm{P}}}$  & $\texttt{e}_{p^{\mathrm{P}}}$ & $\texttt{r}_{p^{\mathrm{P}}}$ & $\texttt{e}_{\bu}$ & $\texttt{r}_{\bu}$ & $\texttt{e}_{\bomega^{\mathrm{E}}}$ & $\texttt{r}_{\bomega^{\mathrm{E}}}$ & $\texttt{e}_{p^{\mathrm{E}}}$ & $\texttt{r}_{p^{\mathrm{E}}}$ & \texttt{e} & $\texttt{eff}(\Xi)$ $\vphantom{\int^X_X}$\\
\hline 
\multicolumn{15}{|c|}{With uniform mesh refinement}\\
\hline 
   157 & 8.56e-01 & -- & 7.86e+0 & -- & 1.36e+0 & -- & 1.61e+0 & -- & 9.86e-01 & -- & 2.35e+0 & -- & 8.246 & 0.071\\
   575 & 3.80e-01 & 1.17 & 3.05e+0 & 1.37 & 3.64e-01 & 1.90 & 7.07e-01 & 1.19 & 4.15e-01 & 1.25 & 1.16e+0 & 1.02 & 4.199 & 0.151\\
  2203 & 2.22e-01 & 0.77 & 1.74e+0 & 0.81 & 1.77e-01 & 1.05 & 3.92e-01 & 0.85 & 2.22e-01 & 0.90 & 6.19e-01 & 0.91 & 1.823 & 0.087\\
  8627 & 7.72e-02 & 1.52 & 4.39e-01 & 1.99 & 4.10e-02 & 2.11 & 1.14e-01 & 1.78 & 5.98e-02 & 1.89 & 1.54e-01 & 2.01 & 0.765 & 0.091\\
 34147 & 2.66e-02 & 1.54 & 1.09e-01 & 2.01 & 1.09e-02 & 1.92 & 3.45e-02 & 1.72 & 1.58e-02 & 1.92 & 4.10e-02 & 1.91 & 0.319 & 0.034\\
135875 & 1.97e-02 & 0.44 & 3.79e-02 & 1.53 & 3.64e-03 & 1.58 & 2.16e-02 & 0.68 & 7.28e-03 & 1.12 & 1.37e-02 & 1.58 & 0.248 & 0.152\\
\hline 
\multicolumn{15}{|c|}{With adaptive mesh refinement}\\
\hline 
   157 & 8.57e-01 & -- & 7.86e+0 & -- & 1.36e+0 & -- & 1.62e+0 & -- & 9.87e-01 & 0.00 & 2.36e+0 & -- & 8.239 & 0.081\\
   551 & 3.79e-01 & 1.30 & 3.05e+0 & 1.51 & 3.65e-01 & 2.10 & 7.07e-01 & 1.32 & 4.17e-01 & 1.37 & 1.16e+0 & 1.13 & 3.201 & 0.081\\
  1020 & 2.21e-01 & 1.75 & 1.74e+0 & 1.82 & 1.77e-01 & 2.36 & 3.91e-01 & 1.92 & 2.22e-01 & 2.04 & 6.18e-01 & 2.05 & 1.822 & 0.087\\
  2307 & 7.36e-02 & 2.69 & 4.38e-01 & 3.38 & 4.20e-02 & 3.52 & 1.11e-01 & 3.09 & 5.90e-02 & 3.25 & 1.53e-01 & 3.43 & 0.463 & 0.091\\
  5779 & 1.81e-02 & 3.06 & 1.06e-01 & 3.10 & 1.21e-02 & 2.71 & 2.74e-02 & 3.04 & 1.45e-02 & 3.06 & 3.96e-02 & 2.94 & 0.112 & 0.089\\
 20209 & 4.59e-03 & 2.19 & 2.67e-02 & 2.20 & 3.21e-03 & 2.12 & 6.94e-03 & 2.19 & 3.63e-03 & 2.21 & 1.02e-02 & 2.18 & 0.028 & 0.089\\
 70299 & 1.15e-03 & 2.22 & 6.69e-03 & 2.22 & 1.23e-03 & 1.54 & 1.74e-03 & 2.22 & 9.07e-04 & 2.22 & 2.56e-03 & 2.21 & 0.007 & 0.090\\
\hline 
 \end{tabular}}
  
  \smallskip
  \caption{Example 2: Errors, convergence rates, and effectivity indexes under uniform vs adaptive mesh refinement for the rotation-based interfacial elasticity/poroelasticity problem on the L-shaped domain, with $k=1$.}
\label{table:ex2}
 \end{center}
\end{table} 

The last test illustrates the use of mesh adaptivity guided by the \textit{a posteriori} error estimator $\Xi$ on an interface elasticity/poroelasticity problem applied to oil reservoir poromechanics, similarly to the test in \cite[Sect. 8.2]{girault_cmame20} (see also \cite{girault_ogst19,anaya_sisc20}). In CO$_2$ sequestration 
in deep subsurface reservoirs one is interested in the distribution of pressure and displacement across the interface between the non-pay rock and the aquifer zones in the case where the poroelastic domain is an array of thin-walled structures fully surrounded by an elastic region. The multi-domain is the unit cube $\Omega = (0,1)^3$\,m$^3$ and the aquifer array has a width of 0.015\,m. A well is represented by 
a localised source $s^\mathrm{P}(x,y,z)=s_0\exp(-1000[(x-0.5)^2+(y-0.5)^2+(z-0.5)^2])$. This is an injection zone of relatively small radius reaching the centre of the pay zone at $(0.5,0.5,0.5)$. On the surface of the non-pay 
rock  we impose the sliding condition $\bu\cdot\nn = 0$. The interfacial conditions are as in \eqref{eq:transmission}. 
The simulation uses  the following values for the model parameters 
$s_0=0.5$, $c_0 = 10^{-3}$,  $\alpha = 0.75$, $E^\mathrm{E} = 5\cdot 10^3$, 
 $E^\mathrm{P} = 10^{3}$,  
 $\nu^\mathrm{E} = 0.3$, $\nu^\mathrm{P} = 0.45$,  
$\xi = 10^{-3}$, $\kappa = 10^{-7}$, $\gg = (0,0,-9.81)^{\tt t}$.

Initial coarse meshes are constructed for both subdomains, then we solve the coupled transmission problem, and then apply six steps of the iterative mesh refinement strategy based on the estimator $\Xi$. To observe how the adaptivity takes place on both elastic and poroelastic domains, we plot in Figure~\ref{fig:ex03}, samples of the approximate solutions on the first three steps of adaptive mesh refinement. The concentration of amount of fluid near the centre of the pay zone is seen in the first row, and we can also see the concentration of refinement near the interface in all panels.  

\begin{figure}[t!]
\begin{center}
\includegraphics[width=0.3\textwidth]{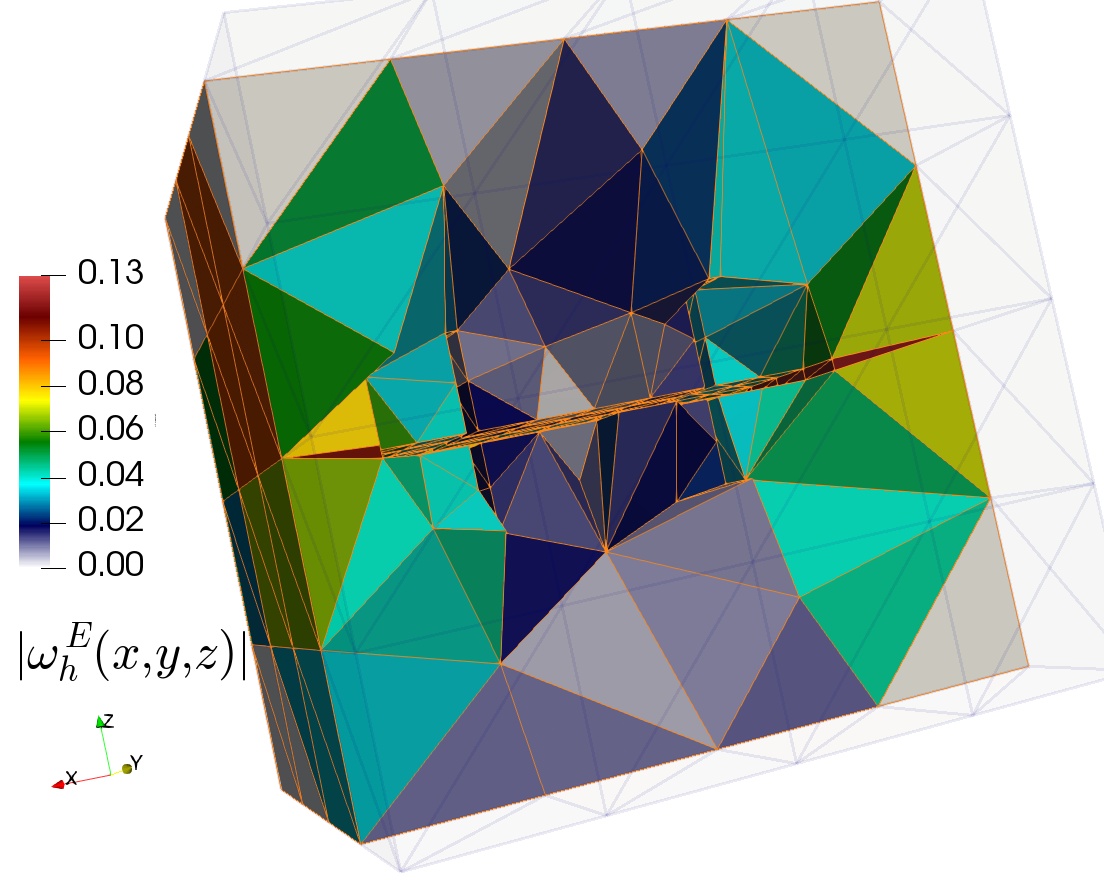}
\includegraphics[width=0.3\textwidth]{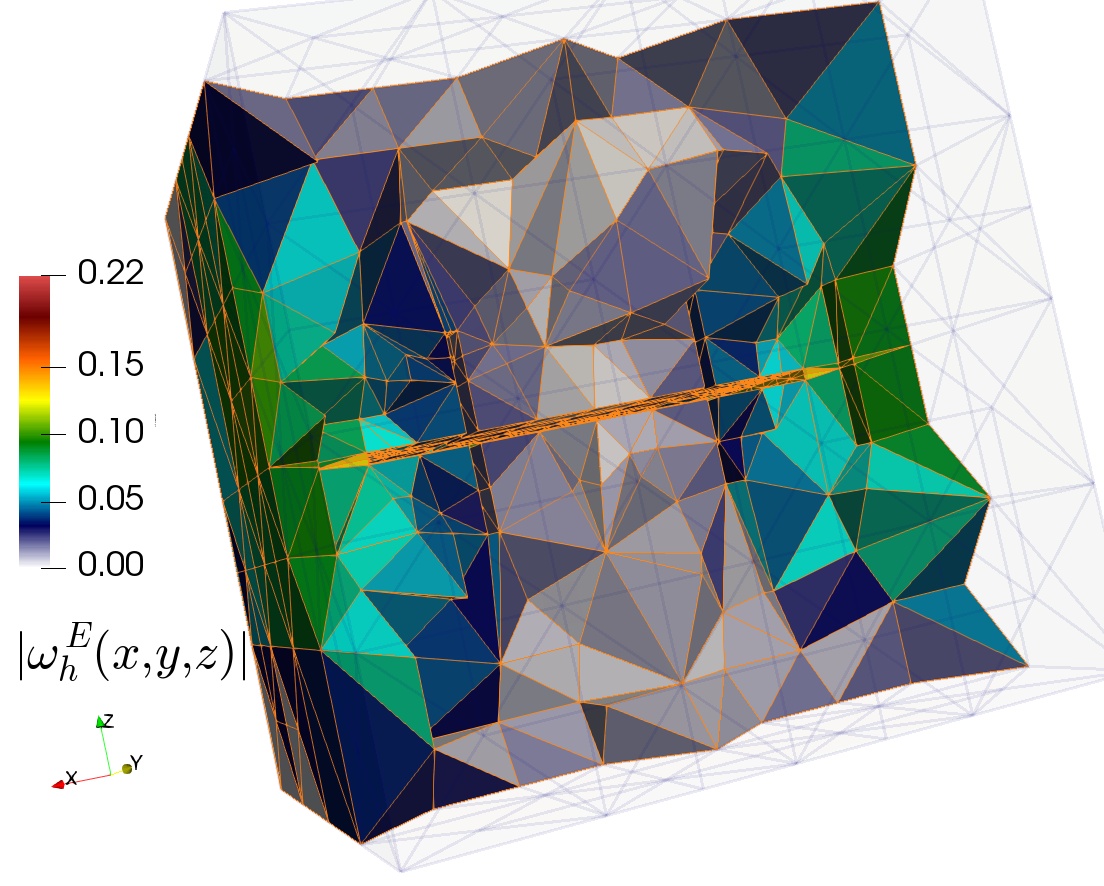}
\includegraphics[width=0.3\textwidth]{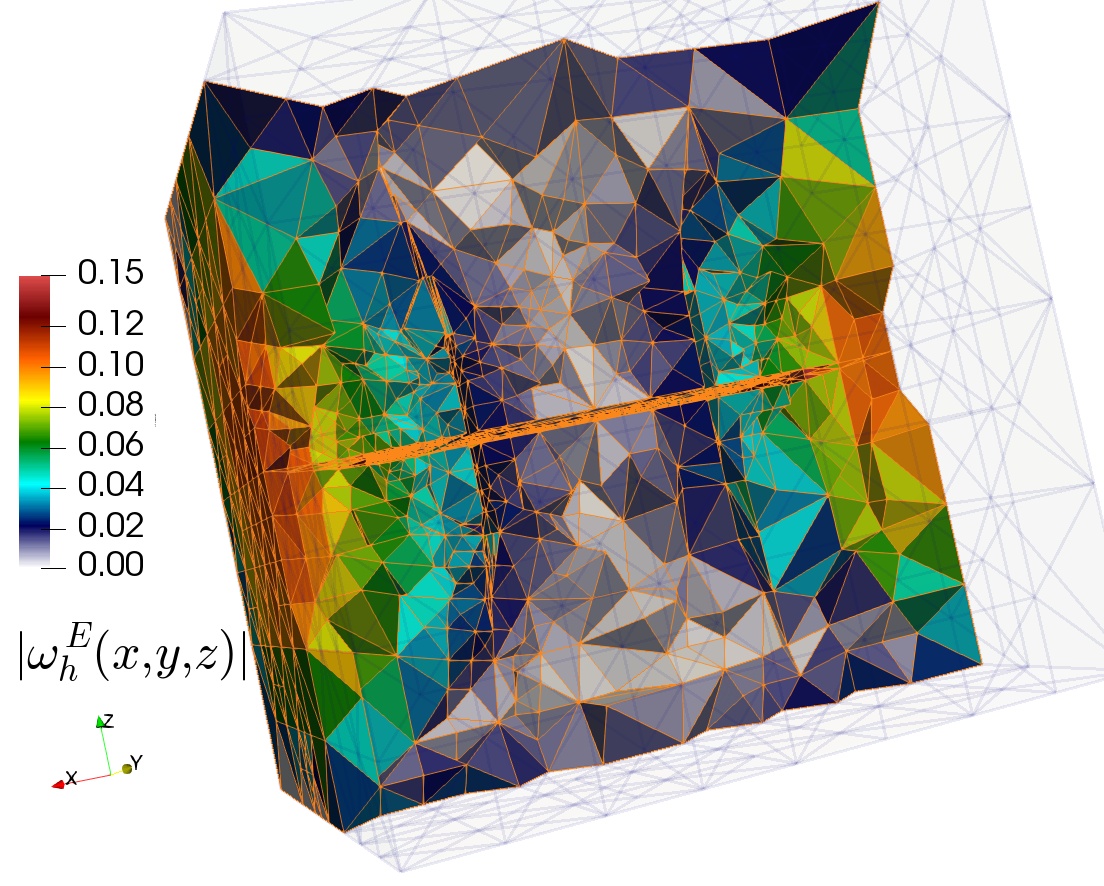}\\
\includegraphics[width=0.3\textwidth]{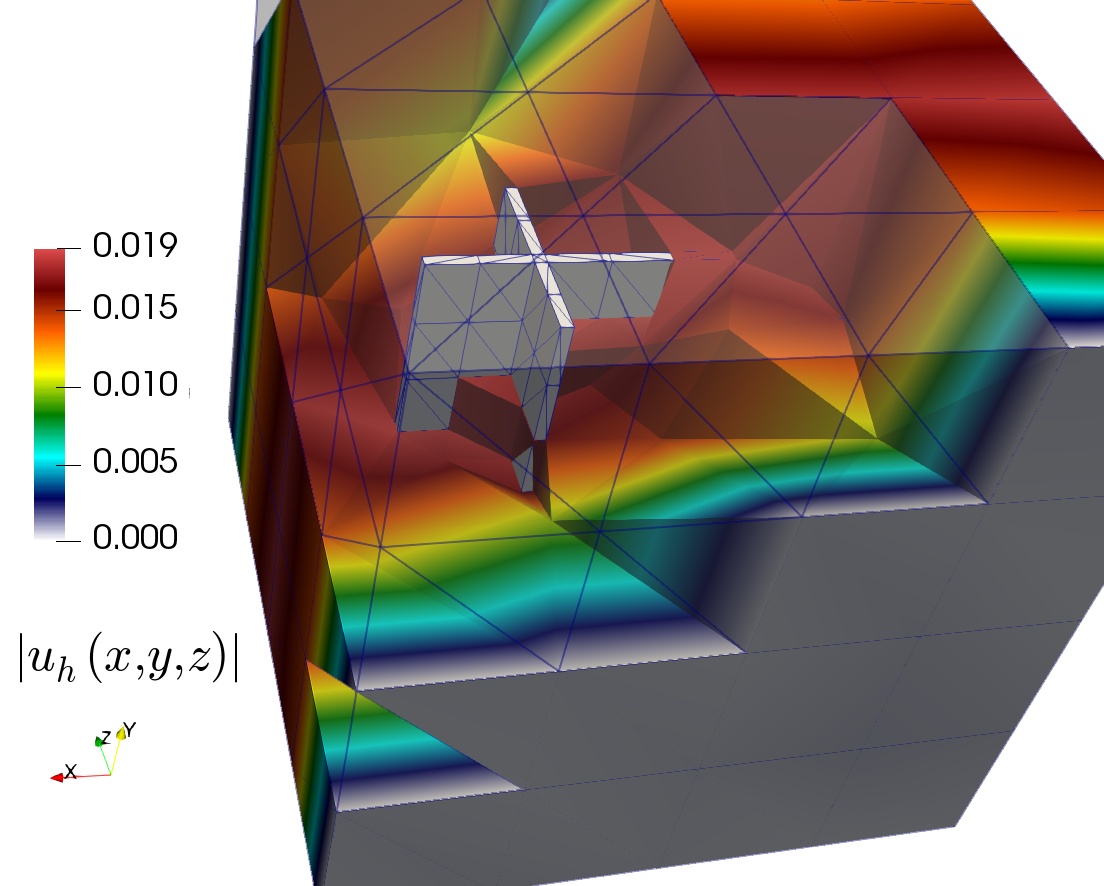}
\includegraphics[width=0.3\textwidth]{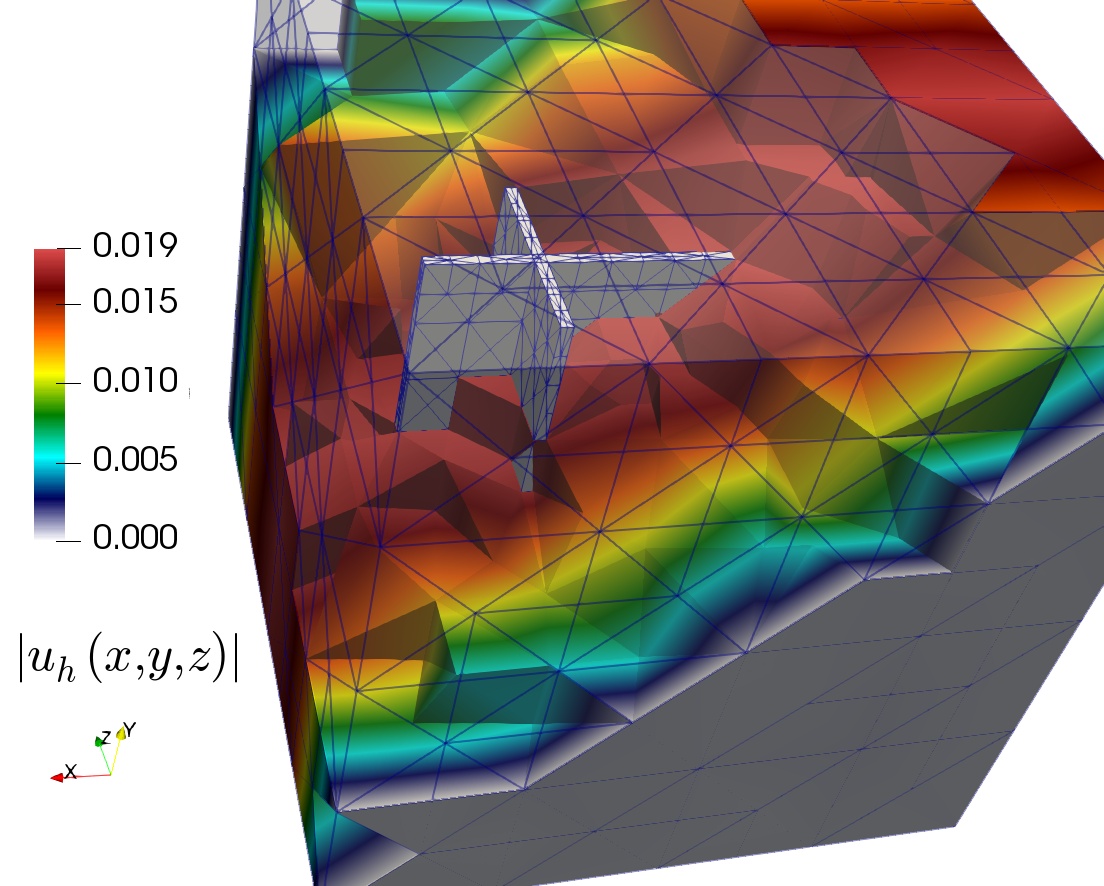}
\includegraphics[width=0.3\textwidth]{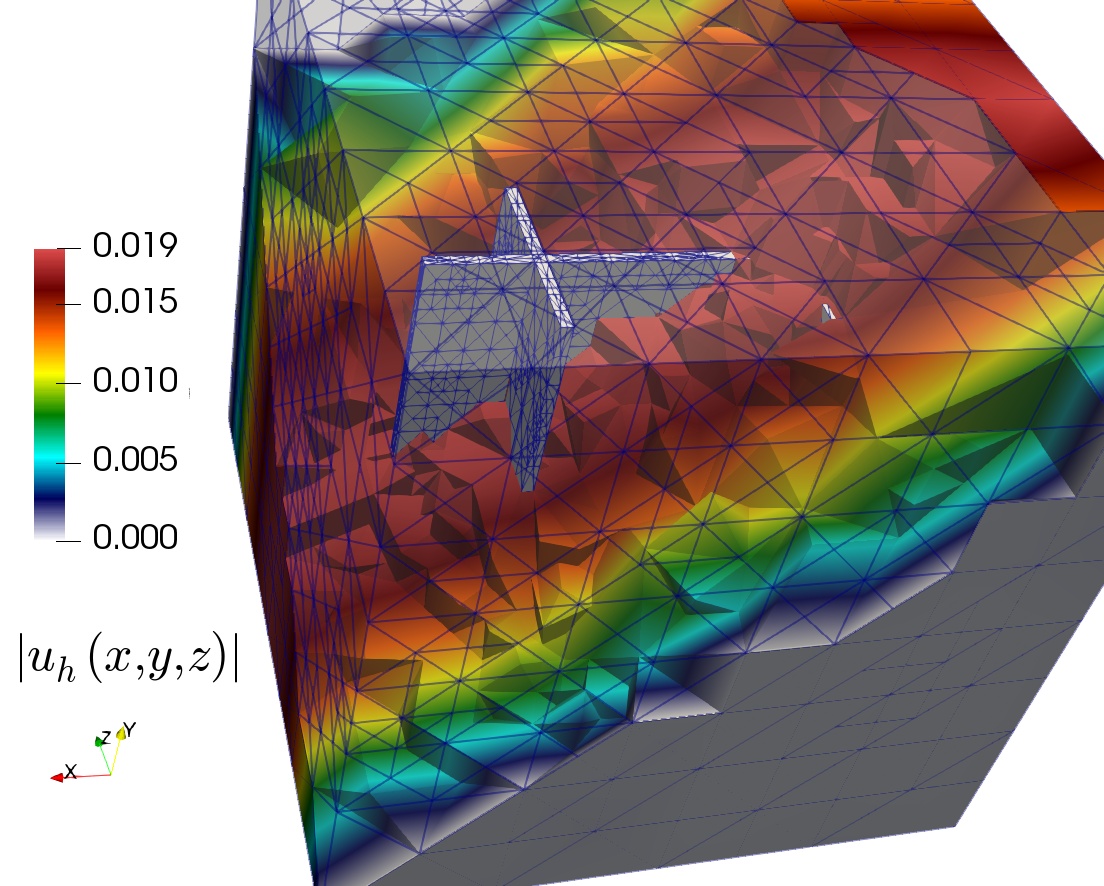}\\
\includegraphics[width=0.3\textwidth]{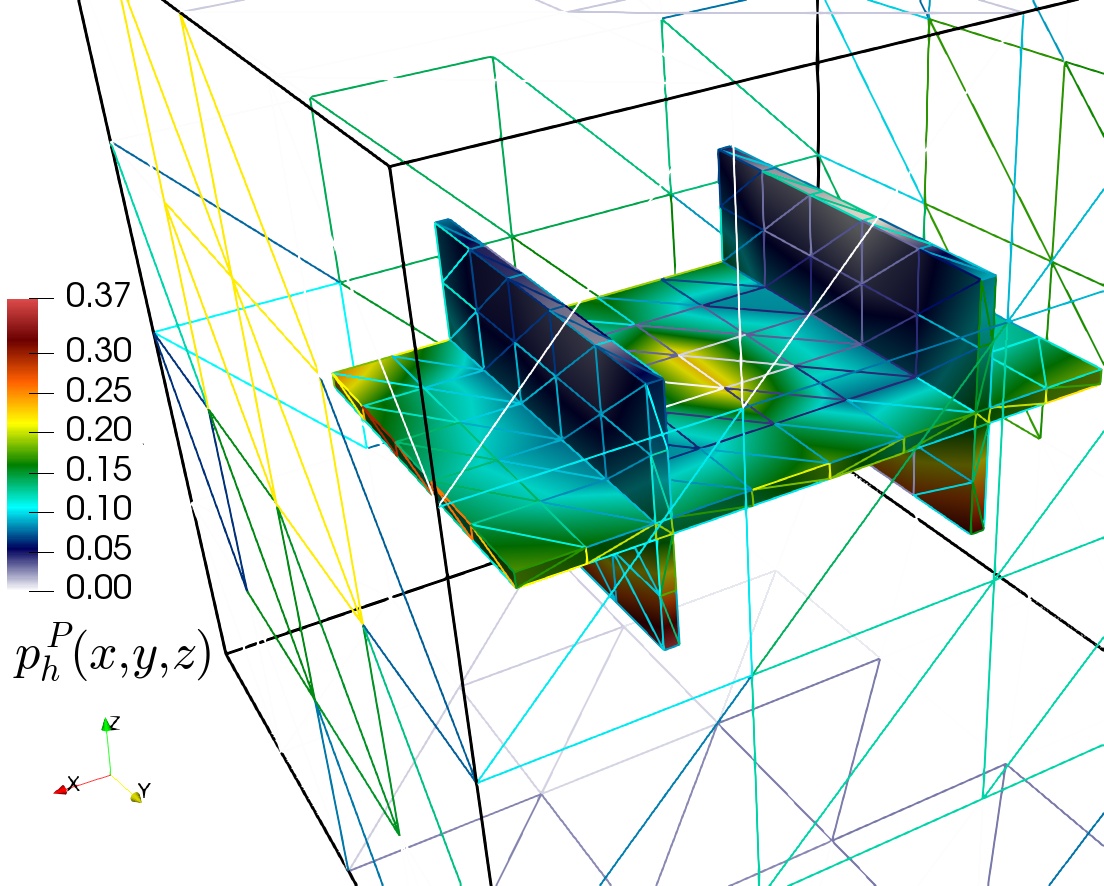}
\includegraphics[width=0.3\textwidth]{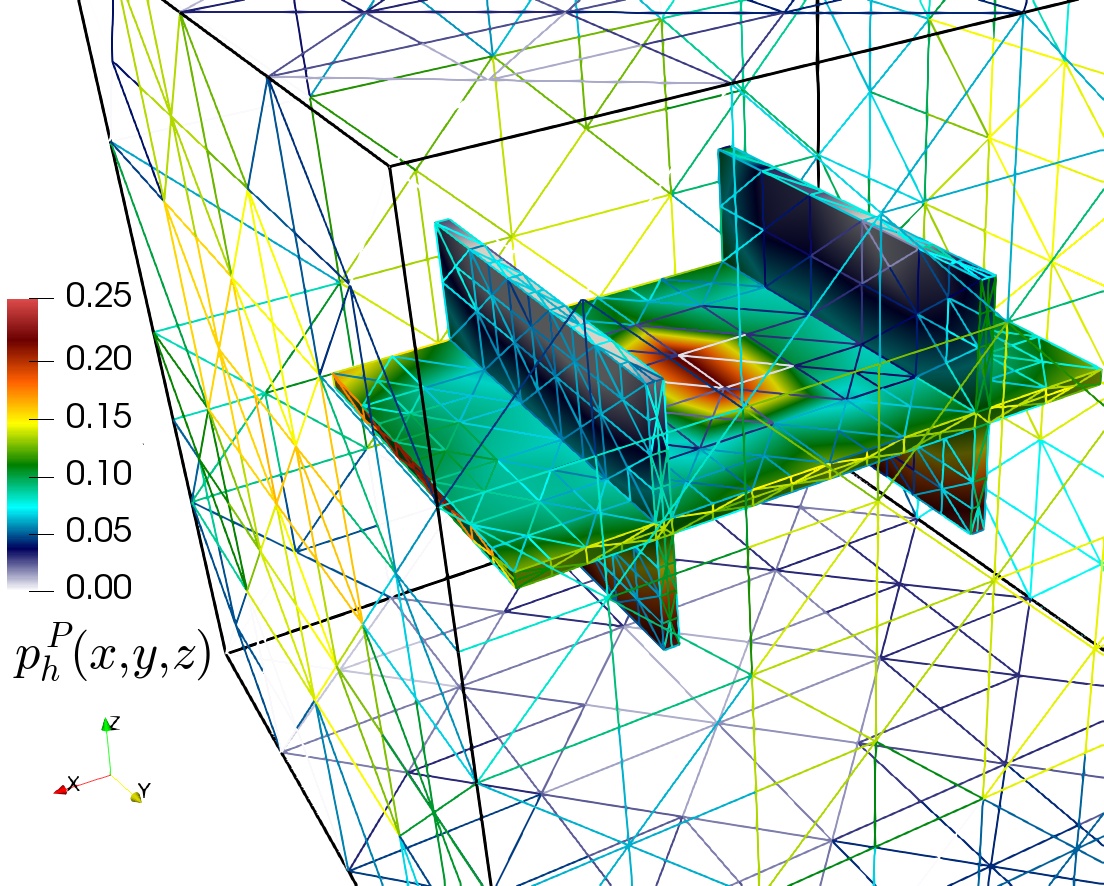}
\includegraphics[width=0.3\textwidth]{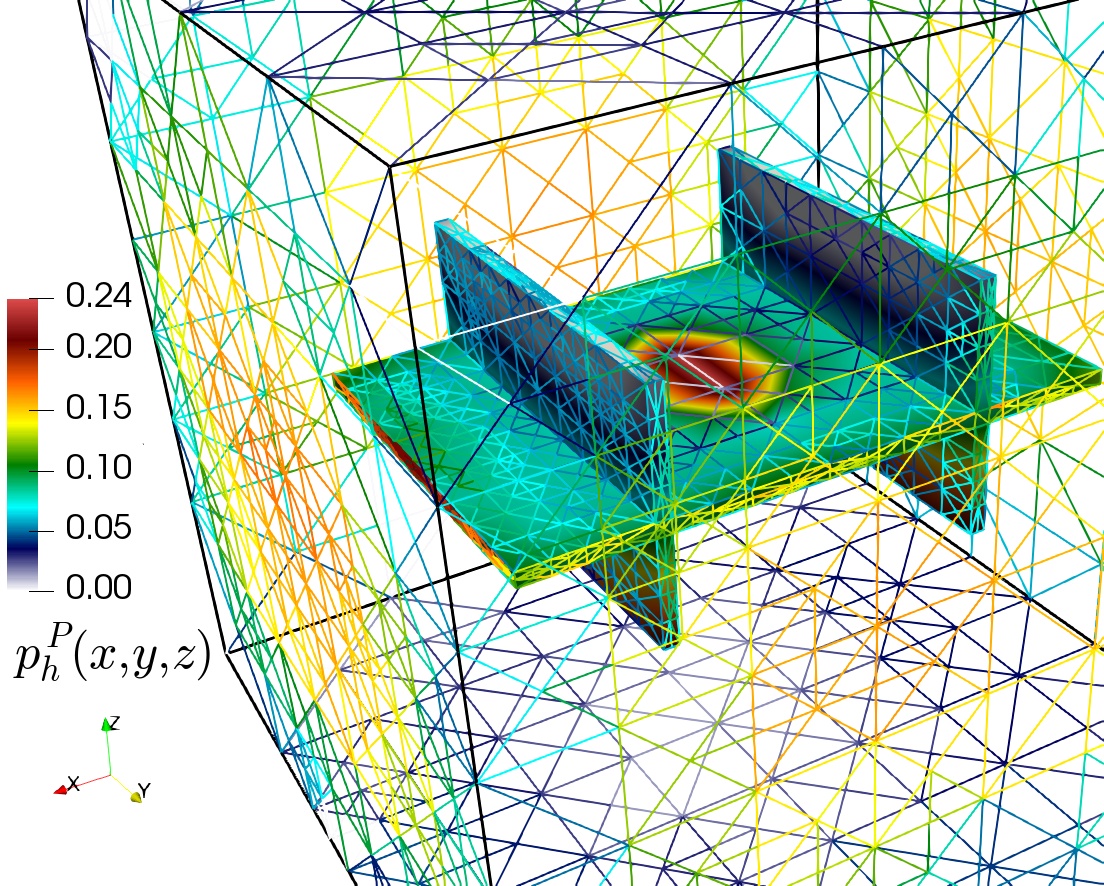}
\end{center}
\caption{Example 3. Approximate elastic rotation on a horizontally clipped elastic geometry (top), displacement on a diagonally clipped domain (centre), and fluid pressure on a zoomed poroelastic domain (bottom); for three steps of adaptive refinement for the Biot/elasticity application in fractured reservoirs.}
\label{fig:ex03}
 \end{figure}

\bibliographystyle{siam} 
\bibliography{BibFile-elastPoroelast}

\begin{appendix}

\section{Well-posedness analysis for rotation-based poroelasticity}\label{sec:poroelast-wellp}
The bilinear forms and the linear functionals
appearing in the variational problem (\emph{cf.} Section \ref{sec:poroelast}) of interest are all bounded
by constants independent of $\mu^{\mathrm{P}}$ and $\lambda^{\mathrm{P}}$ \cite{anaya_sisc20,anaya_cmame19}. In addition, we have the following result. 

%
\begin{lemma}\label{stability}
Let $(\vomega,\bu, p) \in \mathbf{H} \times\bV\times \rQ$, where $\vomega=(\bomega,\phi)$,
be a solution of the system \eqref{weak-uP}-\eqref{weak-phi}, then there exists a constant $C > 0$, such that
\begin{equation}\label{stability-bound-aux}
	\VERT(\bu,\bomega,\phi,p)\VERT  \leq C  \big\{(\mu^{\mathrm{P}})^{-1/2}\|\ff^{\mathrm{P}}\|_{0, \Omega}+\|(\kappa/\xi)^{1/2}\rho\gg\|_{0, \Omega} +\rho_1^{1/2}\|s^{\mathrm{P}}\|_{0, \Omega} \big\},
\end{equation}
where $\rho_1=\min\left(\left(c_0+\frac{\alpha^2}{(2\mu^{\mathrm{P}}+\lambda^{\mathrm{P}})}\right)^{-1},(\kappa/\xi)^{-1}\right)$.
\end{lemma}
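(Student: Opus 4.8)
The plan is to obtain \eqref{stability-bound-aux} directly from the inf--sup estimate of Theorem~\ref{stab-poro} by testing against the tuple it produces and bounding the resulting linear functional. Given the solution $(\vomega,\bu,p)$ with $\vomega=(\bomega,\phi)$, Theorem~\ref{stab-poro} supplies $(\bv,\btheta,\psi,q)$ with $\VERT(\bv,\btheta,\psi,q)\VERT\le C_1\VERT(\bu,\bomega,\phi,p)\VERT$ and $B_{\mathrm{P}}((\bu,\bomega,\phi,p),(\bv,\btheta,\psi,q))\ge C_2\VERT(\bu,\bomega,\phi,p)\VERT^2$. Since $(\bu,\bomega,\phi,p)$ solves \eqref{weak-uP}--\eqref{weak-phi}, the left-hand side equals $F(\bv)+G(q)$, so it suffices to show that
\[
|F(\bv)+G(q)|\le C\big\{(\mu^{\mathrm{P}})^{-1/2}\|\ff^{\mathrm{P}}\|_{0,\Omega}+\|(\kappa/\xi)^{1/2}\rho\gg\|_{0,\Omega}+\rho_1^{1/2}\|s^{\mathrm{P}}\|_{0,\Omega}\big\}\,\VERT(\bv,\btheta,\psi,q)\VERT,
\]
with $C$ independent of $\mu^{\mathrm{P}}$ and $\lambda^{\mathrm{P}}$; combining this with the two displayed inequalities and dividing by $\VERT(\bu,\bomega,\phi,p)\VERT$ (trivial if it vanishes) yields the claim.

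I then bound the functional term by term. For $F(\bv)=-\int_\Omega\ff^{\mathrm{P}}\cdot\bv$ I would use Cauchy--Schwarz, the Poincaré inequality on $\bH^1_0(\Omega)$, and the norm equivalence $\|\bv\|_{1,\Omega}\lesssim\|\curl\bv\|_{0,\Omega}+\|\vdiv\bv\|_{0,\Omega}$ on $\bH^1_0(\Omega)$ (as in \cite[Remark~2.7]{girault_book86}); inserting the balancing factors $(\mu^{\mathrm{P}})^{1/2}$ and $(\mu^{\mathrm{P}})^{-1/2}$ gives $|F(\bv)|\lesssim(\mu^{\mathrm{P}})^{-1/2}\|\ff^{\mathrm{P}}\|_{0,\Omega}\VERT(\bv,\btheta,\psi,q)\VERT$. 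For the gravity contribution in $G$ I would write $\tfrac{\rho}{\xi}\int_\Omega\kappa\gg\cdot\nabla q=\int_\Omega(\kappa/\xi)^{1/2}\rho\gg\cdot(\kappa/\xi)^{1/2}\nabla q$, apply Cauchy--Schwarz, and use $0<\kappa_1\le\kappa$ to pass from $\|(\kappa/\xi)^{1/2}\nabla q\|_{0,\Omega}$ to $\|\tfrac{\kappa}{\xi}\nabla q\|_{0,\Omega}\le\VERT(\bv,\btheta,\psi,q)\VERT$, which is one of the terms of the norm.

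The delicate step is the source term $\int_\Omega s^{\mathrm{P}}q$, where I need $\|q\|_{0,\Omega}\le C\rho_1^{1/2}\VERT(\bv,\btheta,\psi,q)\VERT$ with $C$ independent of the Lamé parameters. I would split $q=\bar q+(q-\bar q)$ with $\bar q$ the mean of $q$ over $\Omega$, and estimate $\rho_1^{-1}\|q\|_{0,\Omega}^2\le\big(c_0+\tfrac{\alpha^2}{2\mu^{\mathrm{P}}+\lambda^{\mathrm{P}}}\big)\|q\|_{0,\Omega}^2+\tfrac{\kappa}{\xi}\|q\|_{0,\Omega}^2$ using $\rho_1^{-1}=\max\{\cdot,\cdot\}\le(\cdot)+(\cdot)$. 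The first summand is a term of $\VERT(\bv,\btheta,\psi,q)\VERT^2$; for the second I would control $\tfrac{\kappa}{\xi}\|q-\bar q\|_{0,\Omega}^2$ by the Poincaré--Wirtinger inequality together with $\tfrac{\kappa}{\xi}\|\nabla q\|_{0,\Omega}^2\lesssim\|\tfrac{\kappa}{\xi}\nabla q\|_{0,\Omega}^2$ (again $\kappa_1\le\kappa\le\kappa_2$), and control the mean part $\tfrac{\kappa}{\xi}|\Omega|\,\bar q^2$ via $c_0|\Omega|\,\bar q^2\le\big(c_0+\tfrac{\alpha^2}{2\mu^{\mathrm{P}}+\lambda^{\mathrm{P}}}\big)\|q\|_{0,\Omega}^2$ at the expense of a factor $\kappa_2/(c_0\xi)$.

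The main obstacle, and the only nontrivial point, is precisely the parameter bookkeeping in this last step: one must verify that every constant produced (the Poincaré and Poincaré--Wirtinger constants, the $\bH^1_0$ norm equivalence, the $\max$-versus-sum split, and the mean-value manipulation) depends only on $\Omega$, $c_0$, $\alpha$, $\xi$, $\kappa_1$, $\kappa_2$, and never on $\mu^{\mathrm{P}}$ or $\lambda^{\mathrm{P}}$ — the term $\tfrac{\alpha^2}{2\mu^{\mathrm{P}}+\lambda^{\mathrm{P}}}$ being harmless since it is nonnegative and enters $\VERT\cdot\VERT^2$ with a favorable sign. Assembling the three term bounds and the continuity/coercivity pair $(C_1,C_2)$ from Theorem~\ref{stab-poro} then completes the proof, with $\rho_1$ as defined in the statement.
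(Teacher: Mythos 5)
Your proposal is correct and follows essentially the same route as the paper: the paper's proof is exactly ``apply Theorem~\ref{stab-poro}, replace $B_{\mathrm{P}}$ by $F(\bv)+G(q)$ via the equations, and conclude by Cauchy--Schwarz,'' and you simply spell out the weighted Cauchy--Schwarz steps that the paper leaves implicit. Your identification of the $\rho_1$-weighted bound on $\int_\Omega s^{\mathrm{P}}q$ as the one place where a genuine argument (Poincar\'e--Wirtinger plus the $c_0>0$ control of the mean) is needed is accurate and is a useful addition to the paper's terse treatment.
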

\begin{proof}
Using Theorem \ref{stab-poro} implies
\[
C_2 \VERT(\bu,\bomega,\phi,p)\VERT^2\le B_{\mathrm{P}}((\bu,\bomega,\phi,p),(\bv,\btheta,\psi,q))
= F(\bv)+G(q),
\]
with $\VERT(\bv,\btheta,\psi,q)\VERT\le C_1\VERT(\bu,\bomega,\phi,p)\VERT$, where $C_1$ and $C_2$ are constants given in Theorem \ref{stab-poro}. And \eqref{stability-bound-aux} results from applying Cauchy-Schwarz inequality. 
\end{proof}


\subsection{Solvability of the continuous problem}\label{subsec:solvability}
Let us rewrite \eqref{weak-uP}-\eqref{weak-phi} as: find $\vec{\bu} := (\vomega,\bu,p) \in \bX := \mathbf{H}\times\bV \times \rQ$ such that
$(\mathcal{S} + \mathcal{T}) \vec{\bu} = \mathcal{F}$,
where the linear operators $\mathcal{S}:\bX \rightarrow \bX^\star$, $\mathcal{T}:\bX \rightarrow \bX^\star$, and $\mathcal{F} \in \bX^\star$ are defined as
\begin{align*}
	\langle\mathcal{S}(\vec{\bu}), \vec{\bv}\rangle :&= a(\vomega, \vtheta) + b_1(\vtheta,\bu) -b_1(\vomega,\bv)+c(p,q),
	\\
	\langle\mathcal{T}(\vec{\bu}), \vec{\bv}\rangle :&= -b_2(\vtheta,p) - b_2(\vomega,q),\qquad 
	\langle\mathcal{F}, \vec{\bv}\rangle := -F(\bv) - G(q),
\end{align*}
for all $\vec{\bu} := (\vomega,\bu,p),\, \vec{\bv} := (\vtheta,\bv,q) \in \bX$, where  $\langle\cdot,\cdot\rangle$ is the duality pairing between $\bX$ and its dual $\bX^\star$.

\begin{lemma}\label{well-posedness1}
The operator $\mathcal{S}:\bX \rightarrow \bX^\star$ is invertible.
\end{lemma}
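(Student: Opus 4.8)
The plan is to show that $\mathcal{S}$ is a bounded bijective linear operator by establishing a global inf-sup condition for the bilinear form $\langle\mathcal{S}(\cdot),\cdot\rangle$ on $\bX\times\bX$ (together with the corresponding nondegeneracy in the second argument), and then invoke the Babu\v{s}ka--Ne\v{c}as theory (or equivalently the Banach--Ne\v{c}as--Babu\v{s}ka theorem). Concretely, I would first record that $\mathcal{S}$ is bounded: each of $a$, $b_1$, $c$ is continuous with constants independent of the Lam\'e parameters (as already noted at the start of the appendix, citing \cite{anaya_sisc20,anaya_cmame19}), so $|\langle\mathcal{S}(\vec{\bu}),\vec{\bv}\rangle|\lesssim \VERT\vec{\bu}\VERT\,\VERT\vec{\bv}\VERT$. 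The operator $\mathcal{S}$ is precisely the ``saddle-point part'' of $B_{\mathrm{P}}$ in which the coupling term $b_2$ involving $p$ in the momentum/total-pressure block has been dropped; what remains is a symmetric-in-structure system of the form $a(\vomega,\vtheta)+b_1(\vtheta,\bu)-b_1(\vomega,\bv)+c(p,q)$, which decouples the pressure block $c(p,q)$ (coercive on $\rQ$ by definition of the norm, since $c(q,q)=(c_0+\alpha^2(2\mu^{\mathrm{P}}+\lambda^{\mathrm{P}})^{-1})\|q\|_{0,\Omega}^2+\xi^{-1}\|\kappa^{1/2}\nabla q\|_{0,\Omega}^2$) from the $(\vomega,\bu)$-block.

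The heart of the argument is therefore the inf-sup stability of the $(\vomega,\bu)$-subsystem $a(\vomega,\vtheta)+b_1(\vtheta,\bu)-b_1(\vomega,\bv)$ on $\mathbf{H}\times\bV$. Here I would reuse the test-function construction already carried out in the proof of Theorem~\ref{stab-poro}: given $(\bomega,\phi,\bu)$, choose $\bv=-\bu+\delta_1\bv_0$, $\btheta=\bomega-\delta_2\sqrt{\mu^{\mathrm{P}}}\curl\bu$, $\psi=\phi+\delta_2\mu^{\mathrm{P}}\vdiv\bu$, where $\bv_0\in\bH^1_0(\Omega)$ comes from the inf-sup condition $\sup_{\bv}(\phi_0,\vdiv\bv)/\|\bv\|\gtrsim\|\phi_0\|$ and controls the mean-zero part $\phi_0$ of $\phi$. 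Exactly as in Theorem~\ref{stab-poro} (with the $p$-terms simply absent), the choices $\epsilon=1/C_\Omega$, $\delta_1=1/(2\epsilon)$, $\delta_2=1/2$ yield
\[
a(\vomega,\vtheta)+b_1(\vtheta,\bu)-b_1(\vomega,\bv)\ge \min\Big\{\tfrac{C_\Omega^2}{4},\tfrac14\Big\}\big(\|\bomega\|_{0,\Omega}^2+\mu^{\mathrm{P}}\|\curl\bu\|_{0,\Omega}^2+\mu^{\mathrm{P}}\|\vdiv\bu\|_{0,\Omega}^2+\tfrac{1}{\mu^{\mathrm{P}}}\|\phi_0\|_{0,\Omega}^2+\tfrac{1}{2\mu^{\mathrm{P}}+\lambda^{\mathrm{P}}}\|\phi\|_{0,\Omega}^2\big),
\]
with $\VERT(\vtheta,\bv)\VERT\lesssim\VERT(\vomega,\bu)\VERT$. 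Adding the coercive pressure block $c(q,q)$ (tested with $q=p$) gives the full inf-sup bound for $\langle\mathcal{S}(\cdot),\cdot\rangle$ on $\bX$. Since the form is ``almost symmetric'' — it is symmetric up to the sign of the off-diagonal $b_1$ coupling, which does not affect the supremum over the second argument — the analogous bound over $\vec{\bu}$ for fixed $\vec{\bv}$ follows by the same construction with signs adjusted, so the transpose nondegeneracy holds as well. By the Banach--Ne\v{c}as--Babu\v{s}ka theorem, $\mathcal{S}$ is an isomorphism from $\bX$ onto $\bX^\star$, i.e.\ invertible.

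The main obstacle, as usual in these parameter-robust analyses, is bookkeeping the $\mu^{\mathrm{P}}$- and $\lambda^{\mathrm{P}}$-weights so that the inf-sup constant and the continuity constant of $\mathcal{S}^{-1}$ are genuinely independent of the Lam\'e parameters (and degenerate gracefully as $\lambda^{\mathrm{P}}\to\infty$ or $c_0\to 0$); this is precisely why the weighted norm $\VERT\cdot\VERT$ including the $\tfrac1{\mu^{\mathrm{P}}}\|\psi_0\|_{0,\Omega}^2$ term is used, and one must check that the mean-value splitting $\phi=\phi_0+P_m\phi$ interacts correctly with the $\vdiv$ control — exactly the point already handled in Theorem~\ref{stab-poro}, so in the write-up I would state the result as a direct corollary of that construction rather than redoing the estimates. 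A minor secondary point is verifying that $\mathcal{S}$ maps into $\bX^\star$ and is well-defined on all of $\bX$, which is immediate from the boundedness of $a$, $b_1$, $c$.
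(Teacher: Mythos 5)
Your proof is correct, but it takes a different route from the paper. The paper also observes that $\mathcal{S}$ decouples into the $(\vomega,\bu)$ saddle-point block and the coercive pressure block, but then simply solves the two uncoupled problems separately: the pressure equation $c(p,q)=F_{\rQ}(q)$ by Lax--Milgram, and the $(\vomega,\bu)$ system $a(\vomega,\vtheta)+b_1(\vtheta,\bu)=F_{\mathbf{H}}(\vtheta)$, $b_1(\vomega,\bv)=F_{\bV}(\bv)$ by a direct appeal to the abstract Babu\v{s}ka--Brezzi theory (inf-sup for $b_1$ plus coercivity of $a$ on the kernel), without writing out test functions. You instead keep the system monolithic, build a global inf-sup condition for $\langle\mathcal{S}(\cdot),\cdot\rangle$ by recycling the explicit test-function construction of Theorem~\ref{stab-poro} with the $b_2$-coupling and $c$-sign suitably adjusted, and close with Banach--Ne\v{c}as--Babu\v{s}ka; your remark that the antisymmetry of the $b_1$ coupling yields the transpose nondegeneracy for free is the right way to dispose of the second BNB hypothesis. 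What your version buys is an explicit, parameter-robust bound on $\mathcal{S}^{-1}$ in the weighted norm $\VERT\cdot\VERT$ (which the paper does not need at this point, since the Fredholm argument only requires invertibility and gets the robust a priori bound separately from Lemma~\ref{stability}); what the paper's version buys is brevity, at the cost of leaving the verification of the Babu\v{s}ka--Brezzi hypotheses (in particular coercivity of $a$ on $\ker b_1$ with respect to the $\mathbf{H}$-norm containing the $\mu^{-1}\|\psi_0\|^2$ term) implicit. Both are sound; yours is essentially a self-contained expansion of the step the paper calls ``straightforward.''
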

\begin{proof}
First, for a given functional $\mathcal{F} := (\mathcal{F}_{\mathbf{H}},\mathcal{F}_{\bV},\mathcal{F}_{\rQ})$, observe that establishing the invertibility of $\mathcal{S}$ is equivalent to proving the unique solvability of the operator problem
\begin{align}\label{mainproblemS}
	\mathcal{S}(\vec{\bu}) = \mathcal{F}.
\end{align}	
Furthermore, proving unique solvability of  \eqref{mainproblemS} is in turn equivalent to proving the unique solvability of the two following uncoupled problems: find $(\vomega,\bu) \in \mathbf{H}\times\bV$ such that
\begin{align}\label{problemS1a}
	\begin{split}
	a(\vomega,\vtheta)+b_1(\vtheta,\bu) &\;=\; F_{\mathbf{H}}(\vtheta)\qquad\forall\,\vtheta\in\mathbf{H},
	\\
	b_1(\vomega,\bv)&\;=\; F_{\bV}(\bv)\qquad\forall\,\bv\in\bV,
	\end{split}
\end{align}
and: find $p \in \rQ$, such that
\begin{align}\label{subproblemS2}
	c(p,q) = F_{\rQ}(q) \qquad\forall q\in\rQ,
\end{align}
where $F_{\mathbf{H}}$, $F_{\bV}$, and $F_{\rQ}$ are the functionals induced by $\mathcal{F}_{\mathbf{H}}$, $\mathcal{F}_{\bV}$, and $\mathcal{F}_{\rQ}$, respectively. 
The unique solvability of \eqref{subproblemS2}
follows by virtue of the Lax-Milgram lemma, and the well-posedness
of (\ref{problemS1a}) follows from a straightforward application of the
Babu\v ska-Brezzi theory.
\end{proof}

\begin{lemma}\label{well-posedness2}
The operator $\mathcal{T}:\bX \rightarrow \bX^\star$ is compact.
\end{lemma}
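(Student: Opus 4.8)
The plan is to exploit the structure of $\mathcal{T}$: it couples only the scalar fields through the $\rL^2$-pairings hidden in $b_2$, namely $b_2(\vtheta,p)=\tfrac{\alpha}{2\mu^{\mathrm{P}}+\lambda^{\mathrm{P}}}(p,\psi)_{0,\Omega}$ with $p\in\rQ=\rH^1(\Omega)$, $\psi\in\rL^2(\Omega)$, and $b_2(\vomega,q)=\tfrac{\alpha}{2\mu^{\mathrm{P}}+\lambda^{\mathrm{P}}}(\phi,q)_{0,\Omega}$ with $\phi\in\rL^2(\Omega)$, $q\in\rQ$. Writing $\mathcal{T}=\mathcal{T}_1+\mathcal{T}_2$ according to these two contributions, it suffices to prove that each $\mathcal{T}_i$ is compact (a sum of compact operators being compact), and this will follow from the compactness of the Rellich embedding $E:\rH^1(\Omega)\hookrightarrow\rL^2(\Omega)$.

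First I would dispatch $\mathcal{T}_1$. It factors as $\bX\xrightarrow{\;\pi_p\;}\rH^1(\Omega)\xrightarrow{\;E\;}\rL^2(\Omega)\xrightarrow{\;L_1\;}\bX^\star$, where $\pi_p$ is the bounded coordinate projection $\vec{\bu}\mapsto p$, $E$ is the compact Sobolev embedding, and $L_1$ sends $r\in\rL^2(\Omega)$ to the functional $\vec{\bv}\mapsto-\tfrac{\alpha}{2\mu^{\mathrm{P}}+\lambda^{\mathrm{P}}}(r,\psi)_{0,\Omega}$; this $L_1$ is bounded because $\|\psi\|_{0,\Omega}\le(2\mu^{\mathrm{P}}+\lambda^{\mathrm{P}})^{1/2}\VERT\vec{\bv}\VERT$ by the definition of $\VERT\cdot\VERT$. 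A composition of bounded operators with a compact one being compact, $\mathcal{T}_1$ is compact. For $\mathcal{T}_2$ the delicate point is that $\phi$ has merely $\rL^2$ regularity, so one cannot extract a strongly convergent subsequence from a bounded family of $\phi$'s directly; the remedy is that the conjugate test variable $q$ lies in $\rH^1(\Omega)$. Concretely, $\mathcal{T}_2$ factors as $\bX\xrightarrow{\;\pi_\phi\;}\rL^2(\Omega)\cong\rL^2(\Omega)^\star\xrightarrow{\;E^\star\;}\rH^1(\Omega)^\star\xrightarrow{\;L_2\;}\bX^\star$, where $\pi_\phi$ is the bounded projection $\vec{\bu}\mapsto\phi$, $E^\star$ is the adjoint of $E$ (compact, by Schauder's theorem), and $L_2$ maps a functional $g$ on $\rH^1(\Omega)$ to $\vec{\bv}\mapsto-\tfrac{\alpha}{2\mu^{\mathrm{P}}+\lambda^{\mathrm{P}}}g(q)$, which is bounded since $\|q\|_{\rH^1(\Omega)}\lesssim\VERT\vec{\bv}\VERT$ (using $\kappa\ge\kappa_1>0$, $\xi>0$, and $\alpha>0$, so that $\VERT\cdot\VERT$ controls both $\|\nabla q\|_{0,\Omega}$ and $\|q\|_{0,\Omega}$). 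Hence $\mathcal{T}_2$, and therefore $\mathcal{T}$, is compact.

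Equivalently, one may argue at the level of sequences: for a bounded $\{\vec{\bu}_n=((\bomega_n,\phi_n),\bu_n,p_n)\}\subset\bX$, Rellich yields a subsequence with $p_n\to p$ in $\rL^2(\Omega)$ and reflexivity yields, after a further extraction, $\phi_n\rightharpoonup\phi$ in $\rL^2(\Omega)$; then $\|\mathcal{T}_1\vec{\bu}_n-\mathcal{T}_1\vec{\bu}\|_{\bX^\star}\lesssim\|p_n-p\|_{0,\Omega}\to0$ by Cauchy--Schwarz, while for the $\mathcal{T}_2$-part one uses that $\{q:\VERT\vec{\bv}\VERT\le1\}$ is bounded in $\rH^1(\Omega)$, hence $\rL^2(\Omega)$-precompact, and that a weakly convergent sequence in $\rL^2(\Omega)$ converges uniformly on precompact sets, so $\sup_{\VERT\vec{\bv}\VERT\le1}|(\phi_n-\phi,q)_{0,\Omega}|\to0$. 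I expect this last observation — transferring the compactness to the $b_2(\vomega,\cdot)$ term, where $\phi$ lives only in $\rL^2$, by invoking the compact embedding on the conjugate slot — to be the only genuinely non-routine step.
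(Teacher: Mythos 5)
Your proof is correct and follows essentially the same route as the paper: both arguments reduce the compactness of $\mathcal{T}$ to the compact embedding $\rH^1(\Omega)\hookrightarrow\rL^2(\Omega)$, handling the $b_2(\vtheta,p)$ contribution directly through this embedding and the $b_2(\vomega,q)$ contribution through the adjoint (Schauder), which is exactly what the paper encodes via its operator $\mathbb{B}$ and its adjoint $\mathbb{B}^\star$. Your version merely spells out the factorisations and the parameter-dependent bounds on the coordinate projections in more detail.
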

\begin{proof}
	We begin by defining the operator $\mathbb{B}:\mathrm{L}^2(\Omega)\rightarrow\rQ$ as
	\begin{align*}
	\langle\mathbb{B}(\psi),q\rangle_{0,\Omega} &:= \alpha(2\mu^{\mathrm{P}}+\lambda^{\mathrm{P}})^{-1}\int_{\Omega}q\psi \qquad \forall\, q\in\rQ, \forall\,\psi\in\mathrm{L}^2(\Omega).
	\end{align*}
	This operator is the composition of a compact injection and a continuous 
	map and it is therefore compact. 
And	denoting by $\mathbb{B}^\star$ the adjoint of $\mathbb{B}$, we infer that the following map is also compact
	\[\mathcal{T}(\vec{\bu}) = ((\cero,-\mathbb{B}(\phi),\cero,0),\cero,-\mathbb{B}^\star(p)).\]
\end{proof}

\begin{lemma}\label{well-posedness3}
The operator $(\mathcal{S}+\mathcal{T}):\bX \rightarrow \bX^\star$ is injective.
\end{lemma}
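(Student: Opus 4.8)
The plan is to show that $(\mathcal{S}+\mathcal{T})\vec{\bu} = \cero$ forces $\vec{\bu} = \cero$. Suppose $\vec{\bu} := (\vomega,\bu,p) \in \bX$ satisfies $(\mathcal{S}+\mathcal{T})\vec{\bu} = \cero$. Unfolding the definitions of $\mathcal{S}$ and $\mathcal{T}$, this means that $\vec{\bu}$ solves the homogeneous version of the weak formulation \eqref{weak-uP}--\eqref{weak-phi}, i.e.
\begin{align*}
a(\vomega,\vtheta)+b_1(\vtheta,\bu)-b_2(\vtheta,p)&= 0 &\forall\,\vtheta\in\mathbf{H},\\
b_1(\vomega,\bv) &= 0 &\forall\, \bv \in\bV,\\
b_2(\vomega,q)  -   c(p ,q)  &= 0 &\forall \,q\in\rQ,
\end{align*}
which is precisely the variational problem \eqref{eq:poro-weak-2} with $\ff^{\mathrm{P}} = \cero$, $s^{\mathrm{P}} = 0$, and $\gg = \cero$, so $F \equiv 0$ and $G \equiv 0$.

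Next I would invoke the a priori bound from Lemma~\ref{stability} (equivalently, directly apply the stability estimate of Theorem~\ref{stab-poro}). Since $\vec{\bu}$ solves the homogeneous system, the right-hand side of \eqref{stability-bound-aux} vanishes, giving $\VERT(\bu,\bomega,\phi,p)\VERT = 0$. By the definition of the triple-norm in Theorem~\ref{stab-poro}, this controls $\mu^{\mathrm{P}}\|\curl\bu\|_{0,\Omega}^2 + \mu^{\mathrm{P}}\|\vdiv\bu\|_{0,\Omega}^2 + \|\btheta\|_{0,\Omega}^2 + \tfrac{1}{2\mu^{\mathrm{P}}+\lambda^{\mathrm{P}}}\|\psi\|_{0,\Omega}^2 + (c_0 + \tfrac{\alpha^2}{2\mu^{\mathrm{P}}+\lambda^{\mathrm{P}}})\|q\|_{0,\Omega}^2 + \|\tfrac{\kappa}{\xi}\nabla q\|_{0,\Omega}^2$ (with $(\bv,\btheta,\psi,q)$ replaced by $(\bu,\bomega,\phi,p)$). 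Hence $\bomega = \cero$, $\phi = 0$, $p = 0$, and $\curl\bu = \cero$, $\vdiv\bu = \cero$ in $\Omega$. Since the displacement space $\bV = \bH_0^1(\Omega)$ is algebraically and topologically equivalent to $\rH_0(\curl,\Omega)\cap\rH_0(\vdiv,\Omega)$ on the Lipschitz polyhedral domain $\Omega$ (as recalled after \eqref{eq:poro-weak-2}, citing \cite[Lemma 2.5, Remark 2.7]{girault_book86}), the vanishing of $\curl\bu$ and $\vdiv\bu$ together with the homogeneous boundary condition forces $\bu = \cero$. Therefore $\vec{\bu} = \cero$ and $\mathcal{S}+\mathcal{T}$ is injective.

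The only subtlety worth flagging is the treatment of the mean-value part of $\phi$: the triple-norm only contains $\tfrac{1}{\mu^{\mathrm{P}}}\|\psi_0\|_{0,\Omega}^2$ for the mean-zero component, but it also contains the full $\tfrac{1}{2\mu^{\mathrm{P}}+\lambda^{\mathrm{P}}}\|\psi\|_{0,\Omega}^2$, so $\phi = 0$ still follows directly. I expect no real obstacle here, since the whole argument is a packaging of the stability result already proved in Theorem~\ref{stab-poro} together with the standard equivalence of displacement spaces; the main point is simply to recognize that a nontrivial kernel element of $\mathcal{S}+\mathcal{T}$ would be a nontrivial solution of the homogeneous problem, which the a priori estimate excludes.
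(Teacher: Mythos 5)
Your proof is correct and follows essentially the same route as the paper: identify the kernel of $\mathcal{S}+\mathcal{T}$ with the solution set of the homogeneous problem and invoke the a priori bound of Lemma~\ref{stability} with $F=G=0$ to conclude everything vanishes. Your extra remark that $\curl\bu=\cero$ and $\vdiv\bu=\cero$ force $\bu=\cero$ only via the equivalence of $\bH^1_0(\Omega)$ with $\rH_0(\curl,\Omega)\cap\rH_0(\vdiv,\Omega)$ is a detail the paper leaves implicit, and it is a welcome clarification rather than a deviation.
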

\begin{proof}
It is sufficient to show that the only solution to the homogeneous problem
\begin{align*}
	a(\vomega,\vtheta)+b_1(\vtheta,\bu)-b_2(\vtheta,p)&= \; 0\qquad \forall\,\vtheta\in\mathbf{H},\\
	b_1(\vomega,\bv) &= \;0\qquad \forall\, \bv \in\bV,\\
	b_2(\vomega,q)  -   c(p ,q)  &=\;0\qquad \forall \,q\in\rQ,
\end{align*}
is the null-vector in  $\bX$. Thus, from Lemma \ref{stability},
and the fact that $F=G=0$, we have $\bu=\cero$, $\vomega=\cero$, $p=0$. 
\end{proof}

By virtue of Lemmas \ref{stability}, \ref{well-posedness1}, \ref{well-posedness2}, and \ref{well-posedness3}, and the abstract Fredholm alternative theorem, one straightforwardly derives the main result of this section, stated in the upcoming theorem.

\begin{theorem}\label{wellposedness-continuous}
There exists a unique solution $(\vomega,\bu,p )\in\mathbf{H}\times\bV\times\rQ$, where $\vomega=(\bomega,\phi)$,
to \eqref{weak-uP}-\eqref{weak-phi}. Furthermore, there exists a positive constant $C>0$, such that
\begin{align*}
\left\VERT(\bu,\bomega,\phi,p)\right\VERT  \leq C  \big\{(\mu^{\mathrm{P}})^{-1/2}\|\ff^{\mathrm{P}}\|_{0, \Omega}+\|(\kappa/\xi)^{1/2}\rho\gg\|_{0, \Omega} +\rho_1^{1/2}\|s^{\mathrm{P}}\|_{0, \Omega} \big\},
\end{align*}
where $\rho_1=\min\left(\left(c_0+\frac{\alpha^2}{(2\mu^{\mathrm{P}}+\lambda^{\mathrm{P}})}\right)^{-1},(\kappa/\xi)^{-1}\right)$.
\end{theorem}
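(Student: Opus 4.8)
The plan is to assemble the four preceding lemmas by means of the Fredholm alternative. First I would observe that the variational system \eqref{weak-uP}--\eqref{weak-phi} is equivalent to the operator equation $(\mathcal{S}+\mathcal{T})\vec{\bu}=\mathcal{F}$ on the Hilbert space $\bX=\mathbf{H}\times\bV\times\rQ$ set up just before Lemma \ref{well-posedness1}: reading off the definitions of $a$, $b_1$, $b_2$, $c$, $F$, $G$, a tuple $\vec{\bu}=(\vomega,\bu,p)$ solves the operator equation if and only if it solves the weak formulation, and the right-hand side $\mathcal{F}$ encodes exactly the data $\ff^{\mathrm{P}}$, $\rho\gg$, $s^{\mathrm{P}}$.

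Next I would use Lemma \ref{well-posedness1} to conclude that $\mathcal{S}:\bX\to\bX^\star$ is an isomorphism, hence $\mathcal{S}^{-1}:\bX^\star\to\bX$ is bounded, and factor $\mathcal{S}+\mathcal{T}=\mathcal{S}\,(\mathcal{I}+\mathcal{S}^{-1}\mathcal{T})$. By Lemma \ref{well-posedness2} the operator $\mathcal{T}$ is compact, so $\mathcal{S}^{-1}\mathcal{T}:\bX\to\bX$ is compact and $\mathcal{I}+\mathcal{S}^{-1}\mathcal{T}$ is a compact perturbation of the identity. The Fredholm alternative then says that $\mathcal{I}+\mathcal{S}^{-1}\mathcal{T}$ is bijective if and only if it is injective. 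From Lemma \ref{well-posedness3}, $\mathcal{S}+\mathcal{T}$ is injective, and since $\mathcal{S}$ is injective this forces $\mathcal{I}+\mathcal{S}^{-1}\mathcal{T}$ to be injective as well; therefore $\mathcal{I}+\mathcal{S}^{-1}\mathcal{T}$, and consequently $\mathcal{S}+\mathcal{T}$, is an isomorphism of $\bX$ onto $\bX^\star$. This gives existence and uniqueness of $(\vomega,\bu,p)\in\mathbf{H}\times\bV\times\rQ$.

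For the stability estimate I would then simply invoke Lemma \ref{stability} on the unique solution: that lemma already delivers the bound on $\VERT(\bu,\bomega,\phi,p)\VERT$ in terms of $(\mu^{\mathrm{P}})^{-1/2}\|\ff^{\mathrm{P}}\|_{0,\Omega}$, $\|(\kappa/\xi)^{1/2}\rho\gg\|_{0,\Omega}$ and $\rho_1^{1/2}\|s^{\mathrm{P}}\|_{0,\Omega}$, with $\rho_1=\min\bigl((c_0+\alpha^2(2\mu^{\mathrm{P}}+\lambda^{\mathrm{P}})^{-1})^{-1},(\kappa/\xi)^{-1}\bigr)$, which is exactly the asserted inequality.

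I do not anticipate a substantial obstacle; the only points needing care are the functional-analytic bookkeeping — checking that the abstract Fredholm alternative genuinely applies in this Hilbert-space setting, so that the equivalence ``bijective $\iff$ injective'' is legitimate for the compact perturbation $\mathcal{I}+\mathcal{S}^{-1}\mathcal{T}$ — and verifying that the operator equation and the variational problem share the same right-hand side, so that the data-dependence furnished by Lemma \ref{stability} transfers without change to the solution produced by the Fredholm argument.
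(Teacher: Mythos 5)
Your proposal is correct and follows exactly the route the paper takes: it combines Lemma \ref{well-posedness1} (invertibility of $\mathcal{S}$), Lemma \ref{well-posedness2} (compactness of $\mathcal{T}$), and Lemma \ref{well-posedness3} (injectivity of $\mathcal{S}+\mathcal{T}$) via the Fredholm alternative applied to $\mathcal{I}+\mathcal{S}^{-1}\mathcal{T}$, and then obtains the continuous-dependence bound directly from Lemma \ref{stability}. The paper states this derivation only in outline, and your write-up is a faithful, correctly detailed version of that same argument.
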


\section{\textit{A priori} error analysis for rotation-based poroelasticity}\label{sec:poroelast-FE}
%
Denoting $\bW_h\times \rZ_h:=\mathbf{H}_h$, 
a Galerkin 
scheme for \eqref{weak-uP}-\eqref{weak-phi} is: find $(\vomega_h,\bu_h,p_h):=((\bomega_h,\phi_h),\bu_h,p_h) \in \mathbf{H}_h\times\bV_h\times\rQ_h$ 
such that
\begin{align}
	a(\vomega_h,\vtheta_h)+b_1(\vtheta_h,\bu_h)-b_2(\vtheta_h,p_h)&= \; 0& \forall\,\vtheta_h:=(\btheta_h,\psi_h)\in\mathbf{H}_h, \label{weak-uP-G}\\
	b_1(\vomega_h,\bv_h) &= \;F(\bv_h) &\forall\, \bv_h \in\bV_h, \label{weak-pP-G}\\
	b_3(\vomega_h,q_h)  -   c(p_h ,q_h)  &=\;G(q_h ) &\forall \,q_h\in\rQ_h. \label{weak-phi-G}
\end{align}

\subsection{Stability of the discrete problem}

All bilinear forms and functionals introduced in
Section~\ref{sec:poroelast} preserve stability 
on the discrete spaces. Also,  
 $a(\cdot,\cdot)$, and $c(\cdot,\cdot)$ 
maintain  coercivity  on  $\mathbf{H}_h$
and $\rQ_h$, respectively. 
Such stability properties permit us to establish the well-posedness of  \eqref{weak-uP-G}-\eqref{weak-phi-G}.
\begin{theorem}\label{wellposedness-h}
There exists a unique solution $(\vomega_h,\bu_h,p_h) \in \mathbf{H}_h\times\bV_h\times\rQ_h$, where $\vomega_h=(\bomega_h,\phi_h)$, to \eqref{weak-uP-G}-\eqref{weak-phi-G}. Furthermore, there exists a positive constant $C_{\mathrm{Stab}}>0$, independent of $h$, $\mu^{\mathrm{P}},\lambda^{\mathrm{P}}$, such that
\begin{align*}
\left\VERT(\bu_h,\bomega_h,\phi_h,p_h)\right\VERT  \leq C  \big\{(\mu^{\mathrm{P}})^{-1/2}\|\ff^{\mathrm{P}}\|_{0, \Omega}+\|(\kappa/\xi)^{1/2}\rho\gg\|_{0, \Omega} +\rho_1^{1/2}\|s^{\mathrm{P}}\|_{0, \Omega} \big\},
\end{align*}
where $\rho_1=\min\bigl((c_0+\frac{\alpha^2}{(2\mu^{\mathrm{P}}+\lambda^{\mathrm{P}})})^{-1},(\kappa/\xi)^{-1}\bigr)$.
\end{theorem}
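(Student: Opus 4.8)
\textbf{Proof plan for Theorem~\ref{wellposedness-h}.}
The strategy mirrors, at the discrete level, the argument used in Appendix~\ref{sec:poroelast-wellp} for the continuous problem, together with the stability estimate of Theorem~\ref{stab-poro} transferred to the discrete spaces. First I would observe that, since $\bV_h\times\bW_h\times\rZ_h\times\rQ_h$ is a finite-dimensional system and $B_{\mathrm P}$ is a square (multilinear) form on it, existence and uniqueness of a solution are equivalent, so it suffices to prove the a priori bound; the bound in turn yields injectivity of the discrete operator and hence invertibility. The key point is that the inf--sup/coercivity argument in the proof of Theorem~\ref{stab-poro} is built from test functions of the form $(-\bu,\bomega,\phi,-p)$, $(\cero,-\sqrt{\mu^{\mathrm P}}\curl\bu,\mu^{\mathrm P}\vdiv\bu,0)$ and the Fortin-type function $\bv_0$; the first two lie in the discrete spaces whenever $(\bu_h,\bomega_h,\phi_h,p_h)$ does (because $\curl\bV_h\subset\bW_h$ and $\vdiv\bV_h\subset\rZ_h$ for the chosen polynomial degrees), and the last one is supplied by the discrete inf--sup condition for the pair $(\bV_h,\rZ_h)$ controlling the mean-zero part of $\phi_h$. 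I would therefore state explicitly (citing \cite{anaya_cmame19,anaya_sisc20} and the Stokes-stable nature of the $\bbP_{k+1}$--$\bbP_k$ pair, or the jump stabilisation in \eqref{weakdisPstab} for the mean-value part) that a discrete analogue of Theorem~\ref{stab-poro} holds with a constant $C_2>0$ independent of $h$, $\mu^{\mathrm P}$, $\lambda^{\mathrm P}$.

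With the discrete stability estimate in hand, the proof is then a verbatim repetition of Lemma~\ref{stability}: given the solution $(\vomega_h,\bu_h,p_h)$, pick the discrete test tuple $(\bv_h,\btheta_h,\psi_h,q_h)$ furnished by the discrete inf--sup result with $\VERT(\bv_h,\btheta_h,\psi_h,q_h)\VERT\le C_1\VERT(\bu_h,\bomega_h,\phi_h,p_h)\VERT$, so that
\[
C_2\,\VERT(\bu_h,\bomega_h,\phi_h,p_h)\VERT^2
\le B_{\mathrm P}\big((\bu_h,\bomega_h,\phi_h,p_h),(\bv_h,\btheta_h,\psi_h,q_h)\big)
= F(\bv_h)+G(q_h).
\]
Then I bound $|F(\bv_h)|\le (\mu^{\mathrm P})^{-1/2}\|\ff^{\mathrm P}\|_{0,\Omega}\,(\mu^{\mathrm P})^{1/2}\|\bv_h\|_{1,\Omega}$ and split $G(q_h)$ into the gravity term, estimated by $\|(\kappa/\xi)^{1/2}\rho\gg\|_{0,\Omega}\,\|(\kappa/\xi)^{1/2}\nabla q_h\|_{0,\Omega}$, and the source term, estimated by $\rho_1^{1/2}\|s^{\mathrm P}\|_{0,\Omega}\,\rho_1^{-1/2}\|q_h\|_{0,\Omega}$; each of the second factors is controlled by $\VERT(\bv_h,\btheta_h,\psi_h,q_h)\VERT\le C_1\VERT(\bu_h,\bomega_h,\phi_h,p_h)\VERT$ by definition of the norm. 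Dividing through by $\VERT(\bu_h,\bomega_h,\phi_h,p_h)\VERT$ gives the claimed estimate with $C = C_1/C_2$; uniqueness (hence existence) follows by setting $\ff^{\mathrm P}=\gg=s^{\mathrm P}=0$ and concluding the solution vanishes.

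The only genuine obstacle is establishing the discrete counterpart of Theorem~\ref{stab-poro} with an $h$-independent constant, and within it the discrete inf--sup condition that provides $\bv_{0,h}\in\bV_h$ with $(\vdiv\bv_{0,h},\phi_{0,h})_{0,\Omega}\gtrsim (\mu^{\mathrm P})^{-1}\|\phi_{0,h}\|_{0,\Omega}^2$ and $(\mu^{\mathrm P})^{1/2}\|\bv_{0,h}\|_{1,\Omega}\lesssim(\mu^{\mathrm P})^{-1/2}\|\phi_{0,h}\|_{0,\Omega}$: this is exactly the Stokes-type inf--sup stability of the Taylor--Hood-like pair $(\bbP_{k+1},\bbP_k)$ (on shape-regular simplicial meshes, for $k\ge 1$; for $k=0$ one relies instead on the jump stabilisation term in \eqref{weakdisPstab} to control the mean-value part of $\phi_h$), which I would invoke rather than reprove. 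Everything else — the algebraic manipulations choosing $\epsilon=1/C_\Omega$, $\delta_1=1/(2\epsilon)$, $\delta_2=1/2$, and the bound $\VERT(\bv_h,\btheta_h,\psi_h,q_h)\VERT^2\le 2\VERT(\bu_h,\bomega_h,\phi_h,p_h)\VERT^2$ — carries over word for word from the continuous proof, since all the test functions involved are admissible discretely.
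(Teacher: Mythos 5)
Your proposal follows exactly the route the paper takes: its proof of Theorem~\ref{wellposedness-h} is literally ``It follows as in the proof of Lemmas \ref{stability} and \ref{well-posedness3}'', i.e.\ one reruns the stability argument of Theorem~\ref{stab-poro} with discrete test functions (admissible because $\curl\bV_h\subset\bW_h$ and $\vdiv\bV_h\subset\rZ_h$), obtains the a priori bound from $F$ and $G$ exactly as in Lemma~\ref{stability}, and concludes uniqueness, hence existence, from finite dimensionality. You are in fact more explicit than the paper about the one genuine hypothesis --- a uniform discrete inf--sup condition for $(\bV_h,\rZ_h)$, or the jump stabilisation in \eqref{weakdisPstab}, to control the mean-zero part of $\phi_h$ --- which the paper merely asserts; note only that $(\bbP_{k+1},\bbP_k)$ with \emph{discontinuous} pressure is not the Taylor--Hood pair, so its inf--sup stability is not automatic for every $k\ge 1$ on arbitrary shape-regular meshes, and the stabilised formulation is the safer justification.
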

\begin{proof}
It follows as in the proof of Lemmas \ref{stability} and \ref{well-posedness3}. 
\end{proof}

\subsection{\textit{A priori} error bounds}

Approximation  properties  of  the  spaces  in  \eqref{fe-spaces-porous} (see, e.g.,  \cite{brezzi_book91})
produce the following theoretical rate of convergence:
 \[
 \left\VERT(\bu-\tilde{\bu},\bomega-\tilde{\bomega},\phi-\tilde{\phi},p-\tilde{p})\right\VERT\le C\,h^{\min\{s,k+1\}} (\|\bomega\|_{s, \Omega}+\sqrt{\mu^{\mathrm{P}}}\|\bu\|_{s+1, \Omega}+\rho_{\phi}\|\phi\|_{s, \Omega}+\rho_p\|p\|_{s+1,\Omega}),
 \]
 where $\rho_{\phi}=\sqrt{1/\mu^{\mathrm{P}}}+\sqrt{1/(2\mu^{\mathrm{P}}+\lambda^{\mathrm{P}})}$, $\rho_{p}=\max\bigl((c_0+\frac{\alpha^2}{(2\mu^{\mathrm{P}}+\lambda^{\mathrm{P}})})^{1/2},(\kappa/\xi)^{1/2}\bigr)$,  and $C>0$.
\begin{theorem}
In  addition  to  the  hypotheses of Theorems
$\mathrm{\ref{wellposedness-continuous}}$ and $\mathrm{\ref{wellposedness-h}}$, assume  that  there exists $s >0$ such  that
$\bomega \in \mathbf{H}^s(\Omega)$, $\bu\in \mathbf{H}^{1+s}(\Omega)$, $\phi \in \mathrm{H}^s(\Omega)$, $p\in \mathrm{H}^{1+s}(\Omega)$. Then,  there  exists  $C_{\mathrm{conv}} > 0$, independent of $h$ and $\lambda^{\mathrm{P}}$, such that with the discrete spaces $\mathrm{(\ref{fe-spaces-porous})}$, there holds
\[
	\left\VERT(\bu-\bu_h,\bomega-\bomega_h,\phi-\phi_h,p-p_h)\right\VERT  \leq C_{\mathrm{conv}}\,h^{\min\{s,k+1\}} (\|\bomega\|_{s, \Omega}+\sqrt{\mu^{\mathrm{P}}}\|\bu\|_{s+1, \Omega}+\rho_{\phi}\|\phi\|_{s, \Omega}+\rho_p\|p\|_{s+1,\Omega}).
\]
\end{theorem}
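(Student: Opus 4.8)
The plan is to obtain the estimate as a parameter-robust quasi-optimality (C\'ea/Strang) bound followed by insertion of the interpolation inequality already displayed just above the statement. The first and central step is to verify that the discrete formulation \eqref{weak-uP-G}--\eqref{weak-phi-G} (equivalently \eqref{weakdisP}, or \eqref{weakdisPstab}) inherits a \emph{global discrete inf-sup condition} for $B_{\mathrm{P}}$ with a constant independent of $h$, $\mu^{\mathrm{P}}$, $\lambda^{\mathrm{P}}$ and $c_0$. I would do this by repeating, line by line, the argument of Theorem~\ref{stab-poro} on $\bH_h\times\bV_h\times\rQ_h$: the only ingredient that is not automatically inherited is the existence of a discrete velocity $\bv_{0,h}\in\bV_h$ realising $(\vdiv\bv_{0,h},\phi_{0,h})_{0,\Omega}\gtrsim (\mu^{\mathrm{P}})^{-1}\|\phi_{0,h}\|_{0,\Omega}^{2}$ with $\sqrt{\mu^{\mathrm{P}}}\|\bv_{0,h}\|_{1,\Omega}\lesssim (\mu^{\mathrm{P}})^{-1/2}\|\phi_{0,h}\|_{0,\Omega}$, i.e.\ a discrete inf-sup for the divergence between $\bV_h$ and the mean-value-zero part of the discrete total-pressure space; for the unstabilised scheme this is the usual Stokes stability of the chosen pair, and for \eqref{weakdisPstab} the jump term $\mu^{-1}\sum_e h_e\int_e[\![\phi_h]\!][\![\psi_h]\!]$ supplies the missing control over the pressure jumps in the standard way. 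The algebraic test-function choices $\bv=-\bu_h+\delta_1\bv_{0,h}$, $\btheta=\bomega_h-\delta_2\sqrt{\mu^{\mathrm{P}}}\curl\bu_h$, etc., then go through verbatim.

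Next I would use Galerkin orthogonality. Since the continuous and discrete problems share the same right-hand side and $\bV_h\times\bW_h\times\rZ_h\times\rQ_h$ is a subspace of the continuous test space, one has $B_{\mathrm{P}}((\bu-\bu_h,\bomega-\bomega_h,\phi-\phi_h,p-p_h),(\bv_h,\btheta_h,\psi_h,q_h))=0$ for the plain Galerkin scheme, and for \eqref{weakdisPstab} it equals the (weakly consistent) stabilisation residual $\mu^{-1}\sum_e h_e\int_e[\![\phi]\!][\![\psi_h]\!]$. Combining the discrete inf-sup with the uniform boundedness of all bilinear forms and functionals (recalled at the start of Appendix~\ref{sec:poroelast-wellp}), a standard Strang-lemma argument gives, for every $(\bv_h,\btheta_h,\psi_h,q_h)\in\bV_h\times\bW_h\times\rZ_h\times\rQ_h$,
\[
\VERT(\bu-\bu_h,\bomega-\bomega_h,\phi-\phi_h,p-p_h)\VERT \;\lesssim\; \VERT(\bu-\bv_h,\bomega-\btheta_h,\phi-\psi_h,p-q_h)\VERT + \mathcal{C}_{\mathrm{stab}},
\]
with hidden constant independent of $h$ and of the model parameters, where $\mathcal{C}_{\mathrm{stab}}$ denotes the stabilisation consistency contribution; by a trace/inverse inequality $\mathcal{C}_{\mathrm{stab}}$ is of the same $O(h^{\min\{s,k+1\}})$ order as the right-hand side of the theorem and is absorbed (for the unstabilised scheme $\mathcal{C}_{\mathrm{stab}}=0$).

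Finally I would take $(\bv_h,\btheta_h,\psi_h,q_h)$ to be the componentwise interpolants/projections of $(\bu,\bomega,\phi,p)$: a Scott--Zhang (or Cl\'ement) interpolation into the $H^1$-conforming spaces $\bV_h$ and $\rQ_h$ and the $\mathrm{L}^2$-orthogonal projections onto the discontinuous spaces $\bW_h$ and $\rZ_h$; substituting these into the best-approximation inequality above and invoking exactly the interpolation estimate stated immediately before the theorem yields the claimed bound with $C_{\mathrm{conv}}$ independent of $h$ and $\lambda^{\mathrm{P}}$. The main obstacle is the first step: making sure the discrete inf-sup constant — and hence the entire chain of constants — stays bounded away from zero uniformly as $\lambda^{\mathrm{P}}\to\infty$ and $c_0\to 0$. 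This is precisely where the weighted structure of $\VERT\cdot\VERT$ (treating $(\mu^{\mathrm{P}})^{-1}\|\psi_0\|_{0,\Omega}^{2}$ and $(2\mu^{\mathrm{P}}+\lambda^{\mathrm{P}})^{-1}\|\psi\|_{0,\Omega}^{2}$ separately) and the pressure-jump stabilisation are indispensable, mirroring the continuous argument of Theorem~\ref{stab-poro}; everything else (uniform boundedness, interpolation, triangle inequality) is routine.
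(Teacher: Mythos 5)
Your proposal is correct and follows essentially the same route as the paper: a C\'ea/quasi-optimality argument combining the discrete stability of $B_{\mathrm{P}}$ (the discrete counterpart of Theorem~\ref{stab-poro}), Galerkin orthogonality, continuity of the forms, and the componentwise interpolation estimates. You are in fact more explicit than the paper on the one non-trivial point --- that the discrete inf-sup for $\vdiv$ between $\bV_h$ and the mean-zero part of the discrete total-pressure space (or, alternatively, the jump stabilisation) must be verified rather than simply inherited from the continuous level --- and on the consistency contribution of the stabilised scheme, both of which the paper only asserts.
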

\begin{proof}
Using triangle inequality we can split the error into two parts
\begin{align*}
\left\VERT(\bu-\bu_h,\bomega-\bomega_h,\phi-\phi_h,p-p_h)\right\VERT \le \left\VERT(\bu-\tilde{\bu},\bomega-\tilde{\bomega},\phi-\tilde{\phi},p-\tilde{p})\right\VERT +\left\VERT(\tilde{\bu}-\bu_h,\tilde{\bomega}-\bomega_h,\tilde{\phi}-\phi_h,\tilde{p}-p_h)\right\VERT. 
\end{align*}
Then we can estimate the first term thanks to approximation results. To estimate the second term, we use the stability result given in Theorem \ref{stab-poro}, then
\begin{align*}
C_1\left\VERT(\tilde{\bu}-\bu_h,\tilde{\bomega}-\bomega_h,\tilde{\phi}-\phi_h,\tilde{p}-p_h)\right\VERT^2&\le B_{\mathrm{P}}(\tilde{\bu}-\bu_h,\tilde{\bomega}-\bomega_h,\tilde{\phi}-\phi_h,\tilde{p}-p_h),(\bv,\btheta,\psi,q))\\
&\le B_{\mathrm{P}}(\tilde{\bu}-\bu,\tilde{\bomega}-\bomega,\tilde{\phi}-\phi,\tilde{p}-p),(\bv,\btheta,\psi,q)),
\end{align*}
with $\left\VERT(\bv,\btheta,{\psi},q)\right\VERT\le C_2 \left\VERT(\tilde{\bu}-\bu_h,\tilde{\bomega}-\bomega_h,\tilde{\phi}-\phi_h,\tilde{p}-p_h)\right\VERT$. We can then invoke the 
continuity results to get 
\begin{align*}
C_1\left\VERT(\tilde{\bu}-\bu_h,\tilde{\bomega}-\bomega_h,\tilde{\phi}-\phi_h,\tilde{p}-p_h)\right\VERT&\le C_2 \left\VERT(\tilde{\bu}-\bu,\tilde{\bomega}-\bomega,\tilde{\phi}-\phi,\tilde{p}-p)\right\VERT.
\end{align*}
\end{proof}
\end{appendix}

\end{document}